\newcommand{\zz}{\ensuremath{\mathbb{Z}}}
\newcommand{\nn}{\ensuremath{\mathbb{N}}}
\newcommand{\ff}{\ensuremath{\mathbb{F}}}
\newcommand{\aaa}{\ensuremath{\mathbb{A}}}
\newcommand{\bA}{\ensuremath{\textbf{A}}}
\newcommand{\bB}{\ensuremath{\textbf{B}}}
\newcommand{\bC}{\ensuremath{\textbf{C}}}
\newcommand{\bD}{\ensuremath{\textbf{D}}}
\newcommand{\bI}{\ensuremath{\textbf{I}}}
\newcommand{\bSet}{\ensuremath{\textbf{Set}}}
\newcommand{\bAlg}{\ensuremath{\textbf{Alg}}}
\newcommand{\bGrp}{\ensuremath{\textbf{Grp}}}
\newcommand{\bSch}{\ensuremath{\textbf{Sch}}}
\newcommand{\bMod}{\ensuremath{\textbf{Mod}}}
\newcommand{\bCRing}{\ensuremath{\textbf{CRing}}}
\newcommand{\bDis}{\ensuremath{\textbf{Dis}}}
\newcommand{\bThin}{\ensuremath{\textbf{Thin}}}
\newcommand{\bSh}{\ensuremath{\textbf{Sh}}}
\newcommand{\mX}{\ensuremath{\mathcal{X}}}
\newcommand{\mY}{\ensuremath{\mathcal{Y}}}
\newcommand{\mZ}{\ensuremath{\mathcal{Z}}}
\newcommand{\mF}{\ensuremath{\mathcal{F}}}
\newcommand{\mG}{\ensuremath{\mathcal{G}}}
\newcommand{\mU}{\ensuremath{\mathcal{U}}}
\newcommand{\mD}{\ensuremath{\mathcal{D}}}
\newcommand{\mS}{\ensuremath{\mathcal{S}}}
\newcommand{\mP}{\ensuremath{\mathcal{P}}}
\newcommand{\mE}{\ensuremath{\mathcal{E}}}
\newcommand{\mH}{\ensuremath{\mathcal{H}}}
\newcommand{\mV}{\ensuremath{\mathcal{V}}}
\newcommand{\sO}{\ensuremath{\mathscr{O}}}
\newcommand{\fr}{\ensuremath{\mathfrak{r}}}
\newcommand{\fp}{\ensuremath{\mathfrak{p}}}
\newcommand{\fq}{\ensuremath{\mathfrak{q}}}
\newcommand{\fa}{\ensuremath{\mathfrak{a}}}
\newcommand{\fb}{\ensuremath{\mathfrak{b}}}
\newcommand{\fd}{\ensuremath{\mathfrak{d}}}
\newcommand{\free}{\ensuremath{\text{free}}}
\newcommand{\CRP}{\ensuremath{\text{CRP}}}
\newcommand{\loc}{\ensuremath{\text{loc}}}
\newcommand{\Sum}{\ensuremath{\text{Sum}}}
\newcommand{\Hom}{\ensuremath{\text{Hom}}}
\newcommand{\Nat}{\ensuremath{\text{Nat}}}
\newcommand{\op}{\ensuremath{\text{op}}}
\newcommand{\Mat}{\ensuremath{\text{Mat}}}
\newcommand{\eval}{\ensuremath{\text{Eval}}}
\newcommand{\GL}{\ensuremath{\text{GL}}}
\newcommand{\Fl}{\ensuremath{\text{Fl}}}
\newcommand{\id}{\ensuremath{\text{id}}}
\newcommand{\Sub}{\ensuremath{\text{Sub}}}
\newcommand{\Gr}{\ensuremath{\text{Gr}}}
\newcommand{\Perm}{\ensuremath{\text{Perm}}}
\newcommand{\End}{\ensuremath{\text{End}}}
\newcommand{\Spec}{\ensuremath{\text{Spec}}}
\newcommand{\Stab}{\ensuremath{\text{Stab}}}
\newcommand{\Orb}{\ensuremath{\text{Orb}}}
\newcommand{\Supp}{\ensuremath{\text{Supp}}}
\newcommand{\fppf}{\ensuremath{\text{fppf}}}
\newcommand{\fin}{\ensuremath{\text{fin}}}
\newcommand{\im}{\operatorname{im}}
\newcommand{\sh}{\operatorname{sh}}
\newcommand{\Span}{\ensuremath{\text{span}}}
\theoremstyle{definition}
\newmdtheoremenv{frm-thm}{Theorem}
\newmdtheoremenv{frm-def}{Definition}
\newmdtheoremenv{frm-lem}{Lemma}
\newtheorem{definition}{Definition}
\newtheorem{example}{Example}
\newtheorem{remark}{Remark}
\newtheorem{proof techniques}{Proof Techniques}
\newtheorem{lemma}{Lemma}
\newtheorem{corollary}{Corollary}
\newtheorem{note}{Note}
\newtheorem{theorem}{Theorem}
\newtheorem{proposition}{Proposition}
\newtheorem*{theorem*}{Theorem}
\begin{document}

\title{Well-Ordered Flag Spaces as Functors of Points}
\author{Nathaniel Gallup}
\date{}
\maketitle


\begin{abstract}
Using Grothendieck's ``functor of points'' approach to algebraic geometry, we define a new infinite-dimensional algebro-geometric flag space as a $k$-functor (for $k$ a ring) which maps a $k$-algebra $R$ to the set of certain well-ordered chains of submodules of an infinite rank free $R$-module. This generalizes the well known construction of a $k$-functor that is represented by the classical (i.e. finite-dimensional) full flag scheme. We prove that as in the finite-dimensional case, there is an action of a general linear group on our flag space, that the stabilizer of the standard flag is the subgroup $B$ of upper triangular matrices, and that the Bruhat decomposition holds, meaning that our space is covered by the disjoint Schubert cells $\sh(B \sigma B) / B$ indexed by permutations $\sigma$ of an infinite set. Finally, in the case of flags indexed by the ordinal $\omega + 1$, we define an analog of the Bruhat order on this infinite permutation group and prove that when $k$ is a domain, Ehresmann's closure relations still hold, i.e. that the closure $\overline{\sh(B \sigma B) / B}$ is covered by the Schubert cells indexed by permutations smaller than $\sigma$ in the infinite Bruhat order. 
\end{abstract}


\section{Introduction}

The goal of this paper is to introduce a new type of infinite-dimensional algebro-geometric flag space. It began with the observation that the proof of the Bruhat decomposition
\begin{equation*}
\Fl_n(k) = \bigsqcup_{\sigma \in S_n} B \sigma B / B
\end{equation*}
(where $k$ is a field and $B := B(k) \subseteq \GL_n(k)$ is the subgroup of upper triangular matrices, see \cite{lusztig2010} for a thorough review of the topic) still works, with only minor adjustments, if $\Fl_n$ is replaced by the set of certain increasing chains of subspaces in an infinite-dimensional vector space that are indexed by a \textbf{well-ordered}\footnote{This well-ordered condition originates from the notion of transfinite d\'{e}vissage which was introduced by Kaplansky in \cite{kaplansky1958} where he used it to prove the remarkable result that every projective module is a direct sum of countably generated submodules (see also \cite[{058T}]{stacks-project} for a nice treatment). Kaplansky d\'{e}vissage was also used heavily by Raynaud-Gruson \cite{raynaud-gruson1971} and Perry \cite{perry2010} to prove faithfully flat descent of projectivity.} set $I$, and $S_n$ is replaced by the group of permutations on the set of immediate successors $I_s \subseteq I$.

Classically, there are many methods used to ``geometrize'' (i.e. give the structure of a variety on) the set $\text{Fl}_n(k)$ of full flags in $k^n$. One method is to find open affine covering charts. A second is to embed $\text{Fl}_n(k)$ into a large projective space using the Pl\"{u}cker embedding, and show that the image is a Zariski closed set (see \cite{fulton1997} 9.1). Yet a third is to notice that the general linear group $G := \GL_n(k)$ acts transitively on the set of full flags, and the stabilizer of the standard flag is $B$. This gives a set-theoretic bijection $G / B \overset{\sim}{\leftrightarrow} \Fl_n(k)$, and it happens that the action of $B$ on $G$ by right multiplication is particularly nice (the action is free and the coordinate ring of $B$ is finitely generated projective over $k$), so the quotient $G / B$ is a variety as well. All three of these constructions give isomorphic variety structures on $\text{Fl}_k(n)$. Unfortunately, none of these seems to generalize easily to the set of $I$-flags for $I$ infinite. The first method fails because the corresponding charts are no longer affine. The second fails because no exterior power of an infinite dimensional vector space vanishes. And the third fails because the coordinate ring of $B$ is no longer finitely generated. 

This problem of geometrizing a set of flags in a given vector space is an example of a \emph{moduli problem}. A more modern method of solving such problems is Grothendieck's ``functor of points'' approach to algebraic geometry. Applied to our current situation, it gives a natural description of the flag variety $\Fl_n$ as the covariant functor from the category $\bAlg_k$ of algebras over a ring $k$ to the category $\bSet$ of sets which sends a $k$-algebra $R$ to the set of sequences $F_1 \subseteq F_2 \subseteq \ldots \subseteq F_n$ of direct summands of $R^n$ where $F_r$ has (projective) rank $r$. This functor is then represented by the (scheme corresponding to the) classical variety described above in the sense that it is isomorphic to the functor $\Hom_{\bSch/k}(\Spec(-) , \Fl_n(k) )$. 

The functor of points method \textbf{does} seem to translate well to the case of infinite $I$-flags. In Sections \ref{sec: submodule filtrations} and \ref{sec: flags} we give a careful definition of an $I$-flag, and in Section \ref{sec: full flag functor}, using methods developed in Section \ref{sec: the grassmannian}, we define a $k$-functor $\Fl_I$ generalizing that above. Although this functor is no longer representable by a scheme, we will show that at least the following theorem holds. 

\begin{theorem*}
The $k$-functor $\Fl_I$ is a sheaf in the fppf topology. 
\end{theorem*}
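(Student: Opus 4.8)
The plan is to verify the two conditions that characterize sheaves for the fpqc (hence, \emph{a fortiori}, the fppf) topology on $\bAlg_k$: that $\Fl_I$ carries finite products of rings to products of sets (in particular $\Fl_I$ of the zero ring is a single point), and that $\Fl_I$ satisfies descent along every faithfully flat ring map $R \to R'$, i.e.\ that the diagram
\[ \Fl_I(R) \longrightarrow \Fl_I(R') \rightrightarrows \Fl_I(R' \otimes_R R') \]
is an equalizer. The product condition should be immediate from the definition: a submodule of $(R_1 \times R_2)^{(I)} = R_1^{(I)} \times R_2^{(I)}$ is a product of submodules, and each clause defining an $I$-flag (the chain condition, continuity at limit ordinals, and the rank and projectivity conditions on successive quotients) may be checked componentwise. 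So the content is the descent condition along a fixed faithfully flat $R \to R'$.

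First I would recall faithfully flat descent for submodules of a fixed module (equivalently, descent for quasi-coherent subsheaves of $\sO^{(I)}$ on $\Spec R$): since $R \to R'$ faithfully flat makes $R^{(I)} \hookrightarrow {R'}^{(I)}$ injective, the maps $F' \mapsto R^{(I)} \cap F'$ and $F \mapsto \im\bigl(F \otimes_R R' \to {R'}^{(I)}\bigr)$ are mutually inverse bijections between submodules $F \subseteq R^{(I)}$ and those submodules $F' \subseteq {R'}^{(I)}$ whose two pullbacks to $(R' \otimes_R R')^{(I)}$ coincide. Applying this termwise to a flag $(F'_i)_{i \in I}$ over $R'$ carrying a descent datum yields submodules $F_i := R^{(I)} \cap F'_i$ with $F_i \otimes_R R' = F'_i$, and these are forced, which simultaneously gives injectivity of $\Fl_I(R) \to \Fl_I(R')$ and uniqueness of any descent. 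What remains is to show that $(F_i)_{i \in I}$ is again an $I$-flag.

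For this I would check each defining clause separately, using faithful flatness as a detection principle. The inclusions: $F_i \subseteq F_j$ if and only if $F'_i \subseteq F'_j$, so the chain and its order type are inherited. Continuity: since flat base change commutes with the filtered unions $\bigcup_{\nu < \mu} F_\nu$, the identity $F_\mu = \bigcup_{\nu < \mu} F_\nu$ can be tested after $\otimes_R R'$, where it becomes the known identity $F'_\mu = \bigcup_{\nu < \mu} F'_\nu$. Admissibility: each successive quotient $F_s / F_i$ (for $s$ the immediate successor of $i$), and, where required, each $R^{(I)}/F_i$, becomes after $\otimes_R R'$ the corresponding projective $R'$-module of the prescribed rank; faithfully flat descent of projectivity --- the Raynaud--Gruson theorem recorded in the footnote above, in Perry's formulation --- then yields projectivity over $R$, and surjectivity of $\Spec R' \to \Spec R$ recovers the rank function. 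This makes $(F_i)$ an $I$-flag and completes the verification of the equalizer condition.

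I expect the genuine difficulty to be mostly one of coverage and bookkeeping: one must make sure that \emph{every} condition in the (transfinite, somewhat involved) definition of an $I$-flag from Sections~\ref{sec: submodule filtrations}--\ref{sec: flags} is literally an fppf-local condition, the delicate ones being projectivity of the quotients --- which forces the appeal to faithfully flat descent of projectivity rather than a naive splitting argument --- and the limit-ordinal continuity clause, which rests on flat base change commuting with transfinite unions. A cleaner, more structural presentation, if the Grassmannian-type functors of Section~\ref{sec: the grassmannian} have already been shown to be fppf sheaves, is to exhibit $\Fl_I$ as the subfunctor of a product of such Grassmannians cut out by the inclusion and continuity relations and to note that these relations define a subsheaf; the descent computations above are precisely what justifies that last assertion.
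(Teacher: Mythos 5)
Your plan is correct and follows essentially the same route as the paper: verify (FP1) and the equalizer condition (FP2), descend each member $F_i$ of the flag as a submodule of the free module (the paper's $E_i=(\varphi^{I_s})^{-1}(\ker(\delta^{I_s}-\epsilon^{I_s})\cap F_i)$, which it justifies via the Amitsur complex and Demazure--Gabriel's Lemma 3.14 rather than your citation of fpqc descent of submodules), then use Raynaud--Gruson/Perry descent of projectivity together with surjectivity of $\Spec(S)\to\Spec(R)$ for (FL1)--(FL2), and compatibility of flat base change with the nested union plus faithful flatness for the limit-ordinal clause (FL3). The only differences are presentational, including your closing remark about cutting $\Fl_I$ out of a product of Grassmannians, which is in effect how the paper organizes its argument by reusing Theorem \ref{thm: grassmannian is an fppf sheaf} termwise.
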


Hence we can still study some of its geometric properties. In Section \ref{sec: the bruhat decomposition}, for each permutation $\sigma \in \Perm(I_s)$, we define a subfunctor $X_\sigma^\circ$ called a \emph{Schubert cell} of the flag functor which can be identified with $\sh(B \sigma B) / B \subseteq \GL_{I_s} / B$\footnote{Here $\sh( - )$ denotes the fppf sheafification.} and we prove the following theorem, which we call the \emph{infinite Burhat decomposition}.

\begin{theorem*}
Let $I$ be a well-ordered set. The subfunctors $X^\circ_\sigma$ for $\sigma \in \Perm(I_s)$ are disjoint and cover the $k$-functor $\Fl_I$. 
\end{theorem*}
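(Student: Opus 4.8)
The plan is to deduce both statements from the behaviour of the group $\GL_{I_s}$ on the standard flag, reducing to classical facts about $\GL$ over a field by means of the sheaf property of $\Fl_I$ proved above together with faithfully flat descent. The first ingredient I would establish is \emph{fppf-local transitivity}: for every $k$-algebra $R$ and every $F_\bullet \in \Fl_I(R)$ there is an fppf cover $R \to R'$ and a $g \in \GL_{I_s}(R')$ with $g \cdot E_\bullet = F_\bullet|_{R'}$, where $E_\bullet$ is the standard flag. Since the stabilizer of $E_\bullet$ is $B$ (proved earlier in the paper), this identifies $\Fl_I$ with the fppf sheafification of the quotient functor $\GL_{I_s}/B$, and so reduces the theorem to the \emph{infinite Bruhat decomposition of $\GL_{I_s}$ itself}: the presheaf images of the double cosets $B\sigma B$, $\sigma \in \Perm(I_s)$, are pairwise disjoint, and their fppf sheaf-union is all of $\GL_{I_s}$. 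The covering half then amounts to showing that every $g \in \GL_{I_s}(R')$ lies, fppf-locally on $\Spec R'$, in a single $B\sigma B$; the disjointness half will be checked over a field.

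For fppf-local transitivity and for the existence of a Bruhat representative I would use \emph{transfinite Gaussian elimination}, i.e.\ a recursion along the well-order on $I$ --- this is the Kaplansky-style d\'{e}vissage alluded to in the introduction. After an fppf base change one may assume that each rank-one line bundle $F_i / F_{i^-}$ ($i \in I_s$) appearing in $F_\bullet$ is trivial; the need to arrange this for the infinitely many indices at once is one place the sheafification is genuinely forced. One then builds, recursively in $i$, a vector $v_i \in F_i$ lifting a chosen generator of $F_i/F_{i^-}$ at successor stages, using the continuity and exhaustiveness axioms in the definition of $\Fl_I$ to treat limit stages and the top index; the matrix with columns $(v_i)_{i \in I_s}$ carries $E_\bullet$ to $F_\bullet$, and one must check that it indeed defines an element of $\GL_{I_s}(R')$, i.e.\ that the row/column-finiteness built into $\GL_{I_s}$ survives passage to the limit. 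To place such a $g$ in a double coset one performs the usual column clearing by right multiplication by $B$ together with row clearing by left multiplication by the relevant unipotent subgroup, again organized as a transfinite recursion: at each successor stage, working over a localization in which the relevant leading coordinate is a unit, one uses it to clear the entries above it and to its left, and the positions of these leading coordinates assemble into the permutation $\sigma \in \Perm(I_s)$. The delicate point throughout is that the \emph{infinite} composite of elementary operations so produced must converge to a genuine element of $\GL_{I_s}$; I expect this to follow from the finiteness conditions imposed on flags and on the group, but verifying it at every limit ordinal is a real part of the work.

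For disjointness I would attach to a flag $F_\bullet \in \Fl_I(R)$ the invariant $\sigma_{F_\bullet} \colon I_s \to I_s$ defined by $\sigma_{F_\bullet}(i) = \min\{\, j \in I_s : F_i \cap E_j \supsetneq F_{i^-} \cap E_j \,\}$, the minimum existing precisely because $I$ is well-ordered. Over a field this is a permutation of $I_s$, and one checks --- by the same intersection-rank bookkeeping as in the finite case, carried out on a normal-form representative of each double coset --- that $\sigma_{gE_\bullet} = \sigma$ for every field-valued point $g$ of $B\sigma B$. If a section of $X^\circ_\sigma \cap X^\circ_\tau$ over a nonzero $R$ existed, then base-changing along an fppf cover and then to the residue field at a prime would put a single flag over a field simultaneously in $B\sigma B/B$ and $B\tau B/B$; comparing the invariant forces $\sigma = \tau$, so in fact the $X^\circ_\sigma$ partition $\Fl_I$. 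I would stress that $\sigma_{F_\bullet}$ is not constant on $\Spec R$ for general $R$ --- over a product of two fields it can take two values --- so the covering assertion must genuinely be read, and proved, fppf-locally, which is exactly its formulation in the theorem. This is also where I expect the main obstacle to lie: since for a $\sigma$ moving infinitely many indices membership in $B\sigma B$ is cut out by infinitely many conditions on the base, propagating the field-level Bruhat decomposition (which is classical) to the fppf-sheaf statement --- rather than the elimination itself --- is the step that requires the full strength of the machinery developed in the earlier sections.
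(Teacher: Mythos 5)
Your disjointness argument is sound and is essentially the paper's: combine the two fppf covers witnessing membership in $X^\circ_\sigma$ and $X^\circ_\tau$ over a tensor product (nonzero by faithful flatness), pass to a residue field, and compare a relative-position invariant of the pair $(F,E)$, which over a field is exactly the permutation $w_{F,E}$ the paper constructs from a gradation of $E\cap F$. That half needs only minor bookkeeping to be complete.

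The covering half, however, has a genuine gap, and it comes from proving (or rather, needing) far more than the theorem asserts. In this paper ``cover'' is defined only on field-valued points: the claim is that $\Fl_I(K)=\bigcup_\sigma X^\circ_\sigma(K)$ for every $k$-algebra $K$ that is a field. For that, no fppf-local transitivity over general rings is required: over a field every pair of flags is in constant free relative position, so the set-theoretic Bruhat decomposition for vector spaces proved earlier gives $\Fl_I(K)=\bigsqcup_\sigma B(K)\sigma E$, and since $B\sigma E\subseteq X^\circ_\sigma$ the covering follows immediately. Your proposed first ingredient --- that every $F\in\Fl_I(R)$ becomes $gE$ after a single fppf base change, equivalently that $\Fl_I\cong\sh(\GL_{I_s}/B)$ --- is precisely the statement the paper explicitly does \emph{not} know how to prove: it is the question of whether $\mF_I=\Fl_I$, i.e.\ whether the infinitely many rank-one quotients $F_i/F_{<i}$ can be simultaneously trivialized by one faithfully flat \emph{finitely presented} extension. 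The finite-$I$ argument composes one fppf extension per index, and an infinite composite (or colimit) of fppf extensions need not be finitely presented; the paper notes that a positive answer would settle the Stacks Project question 05VF. Your transfinite Gaussian elimination names this obstruction (``verifying it at every limit ordinal is a real part of the work'') but does not resolve it, and the analogous fppf-local normal form for arbitrary $g\in\GL_{I_s}(R)$ is likewise not available (indeed $\GL_{I_s,E}(R)\subsetneq\GL_{I_s}(R)$ in general). So as written your route proves the field-point statement only conditionally on an open problem, whereas the intended proof reduces directly to the linear-algebra Bruhat decomposition over a field.
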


In the classical case, Ehresmann (\cite{ehresmann1934}, see also \cite{chevalley1994}) proved that the double cosets $B \sigma B$ and the Schubert cells $X_\sigma^\circ = B \sigma B / B$ satisfy the following remarkable closure property. 
\begin{equation}\label{eq: closure of a schubert cell}
\overline{B \sigma B} = \bigsqcup_{\tau \leq \sigma} B \tau B \ \ \ \ \ \ \ \ \overline{ X_\sigma^\circ } = \bigsqcup_{\tau \leq \sigma} X_\tau^\circ
\end{equation}
Here ``$\tau \leq \sigma$'' refers to the \emph{Bruhat order} on the symmetric group $S_n$. In Section \ref{sec: the infinite bruhat order}, for the case of $I = \omega + 1$, we give a definition and several characterizations of an analog of the Bruhat order on $\Perm(\nn)$ which we call the \emph{infinite Burhat order} and in Section \ref{sec: closure of the schubert cells} we prove that the following functorial version of Ehresmann's closure properties holds. 

\begin{theorem*}
Let $k$ be a \textbf{domain} and let $\sigma \in \Perm(\nn)$. Then we have that $\overline{B \sigma B}$ is covered by the disjoint subfunctors $B \tau B$ for $\tau \leq \sigma$ and in the quotient the Schubert subfunctor $\overline{\sh(B \sigma B) / B}$ is covered by the disjoint subfunctors $\sh(B \tau B) / B$ for $\tau \leq \sigma$. 
\end{theorem*}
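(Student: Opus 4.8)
The plan is to pin down $\overline{B\sigma B}$ by \emph{rank conditions}, exactly as one describes matrix Schubert varieties, and to read the cell decomposition off that description; I expect the density statement, in the case that $\sigma$ has infinitely many inversions, to be the main obstacle, and this is where the hypothesis that $k$ is a domain should genuinely be used.

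First I would recall (or extract from the infinite Bruhat decomposition of Section~\ref{sec: the bruhat decomposition}) the matrix form of a Schubert cell: for a $k$-algebra $R$, a point $g \in \GL_{I_s}(R)$ lies in $B\sigma B(R)$ if and only if, for all $i,j \in \nn$, a certain \emph{finite} submatrix $g^{(i,j)}$ of $g$ — the one computing $\dim(F_i \cap E_j)$, where $E_\bullet$ is the standard flag fixed by $B$ — has rank equal to the value $r_\sigma(i,j)$ of the rank function of the permutation matrix of $\sigma$; this value is a finite count, hence makes sense verbatim for $\sigma \in \Perm(\nn)$. I then set
\[
Z_\sigma(R) := \bigl\{\, g \in \GL_{I_s}(R) \ :\ \operatorname{rank} g^{(i,j)} \le r_\sigma(i,j)\ \text{ for all } i,j \in \nn \,\bigr\},
\]
which is the vanishing locus of a family of minors, hence a closed subfunctor of $\GL_{I_s}$ containing $B\sigma B$, so $\overline{B\sigma B} \subseteq Z_\sigma$. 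Intersecting with the Bruhat decomposition $\GL_{I_s} = \bigsqcup_\tau B\tau B$ and using the matrix form: $B\tau B \cap Z_\sigma$ equals $B\tau B$ when $r_\tau \le r_\sigma$ pointwise, and is empty otherwise — emptiness because on $B\tau B$ one of the $r_\tau(i,j)$-minors of $g^{(i,j)}$ is a unit (indeed equal to $1$ in the Schubert normal form of the cell), which, once $r_\sigma(i,j) < r_\tau(i,j)$, is incompatible with the vanishing of all $(r_\sigma(i,j)+1)$-minors by Laplace expansion. Since by the characterization of the infinite Bruhat order in Section~\ref{sec: the infinite bruhat order} the condition $r_\tau \le r_\sigma$ pointwise is equivalent to $\tau \le \sigma$, this yields $Z_\sigma = \bigsqcup_{\tau \le \sigma} B\tau B$, hence $\overline{B\sigma B} \subseteq \bigsqcup_{\tau \le \sigma} B\tau B$, with disjointness already supplied by the Bruhat decomposition.

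The remaining — and essential — inclusion is $Z_\sigma \subseteq \overline{B\sigma B}$, i.e.\ $B\tau B \subseteq \overline{B\sigma B}$ for every $\tau \le \sigma$; equivalently, every closed subfunctor $Z$ with $B\sigma B \subseteq Z$ contains each such $B\tau B$. Given such a $Z$ and a point $x \in B\tau B(R)$ (one may reduce to $R$ a domain), I would construct an integral test scheme $\Spec A \to \GL_{I_s}$ whose generic fibre lands in $B\sigma B$ and whose closed fibre recovers $x$; then the pullback of $Z$ is a closed subscheme of $\Spec A$ containing the dense generic point, hence all of $\Spec A$, forcing $x \in Z$. When $\sigma$ has finite support, $\sigma$ and all $\tau \le \sigma$ lie in a finite symmetric group, and one produces the family from the classical Ehresmann theorem via the block inclusion $\GL_n \hookrightarrow \GL_{I_s}$, after checking that cells and rank conditions are compatible with it. The hard case is when $\sigma$ has infinitely many inversions: there is then no reduced word for $\sigma$ and no finite chain of Bruhat covers joining $\sigma$ to an arbitrary $\tau \le \sigma$, so the classical induction via $\overline{BsB}\cdot\overline{BwB}$ is unavailable. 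Here the proposal is either (i) to degenerate directly, writing $g$ in the Schubert normal form and scaling its free (above-pivot) coordinates to zero along $\Spec k[t]$, which requires only that the finitely many minors controlling each $g^{(i,j)}$ behave correctly — a condition one can verify entry by entry — or (ii) to push forward along the truncation maps from $\Fl_{\omega+1}$ onto the partial-flag functors and reduce the density of $B\sigma B$ in $Z_\sigma$ stage by stage. Establishing the compatibility of the rank description with the chosen degeneration (resp.\ truncation) is, in my view, the technical crux of the theorem, and it is exactly here that integrality of the base enters.

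Finally, the quotient statement follows formally. The map $q\colon \GL_{I_s} \to \GL_{I_s}/B$ followed by fppf sheafification is faithfully flat with $B$ acting freely on the right, so flat pullback commutes with closure and $q^{-1}\bigl(\overline{q(Y)}\bigr) = \overline{q^{-1}(q(Y))} = \overline{Y}$ for every right-$B$-stable subfunctor $Y$. Applying this to $Y = B\sigma B$ and to each $Y = B\tau B$, together with the inclusions above, gives $\overline{\sh(B\sigma B)/B} = \bigsqcup_{\tau \le \sigma} \sh(B\tau B)/B$, the disjointness being inherited from upstairs.
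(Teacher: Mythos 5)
Your first half is essentially the paper's construction: your $Z_\sigma$ is the paper's closed subfunctor $Y_\sigma$, cut out by vanishing of minors encoding corner-rank bounds, and the disjointness/covering statements (intersect with the field-valued Bruhat decomposition, use the region criterion $\tau\le\sigma \iff r_{p,q}(\tau)\ge r_{p,q}(\sigma)$) go through as you say, provided you use the ranks of the \emph{corner} submatrices $r_{\geq p,q}$ that are genuinely invariant under the upper-triangular $B\times B$ action (the naive northwest ranks are not constant on a cell, so be careful which submatrix $g^{(i,j)}$ you mean). The genuine gap is the inclusion you yourself flag as the crux: $B\tau B\subseteq\overline{B\sigma B}$ for every $\tau\le\sigma$. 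Your degeneration (i) does not prove it: in Schubert normal form, scaling the free above-pivot coordinates to zero along $\Spec k[t]$ is a family that stays inside $B\sigma B$ and specializes to the permutation matrix $\sigma$ itself; it never reaches a smaller cell, because leaving the cell requires degenerating pivot (unit) entries, i.e.\ changing the corner-rank profile, which killing free coordinates cannot do. Your option (ii) is a direction, not an argument, so the heart of the theorem is left unproved.

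For comparison, the paper closes exactly this gap in two steps. For a single Bruhat cover $\sigma=\tau(p,q)<\tau$ it writes down the explicit curve $\sigma L_{q,p}(x)\in\GL_\nn(k[x])$, checks by row and column operations that it lies in $B\tau B$ over every point where $x$ is a unit and equals $\sigma$ at $x=0$, and uses the domain hypothesis precisely here, via the lemma that $\overline{\mD(x)}=h^{k[x]}$ when $k[x]$ is a domain, to conclude that any closed, $B\times B$-stable subfunctor containing $\tau$ contains $\sigma$; finite chains follow by induction. Since, as you note, there may be no finite chain of covers joining $\tau$ to $\sigma$, the paper then proves a limit statement: there is a decreasing sequence $\tau t_1\cdots t_n$ of one-step covers converging to the smaller permutation in the compact-open topology on $\Perm(\nn)$ (with the inverses converging too), and any closed subfunctor $\mZ$ with $B\sigma B\subseteq\mZ\subseteq\GL_{X,Y}$ is of the form $V(\fb)$ inside a representable functor, so each defining polynomial involves only finitely many entries of $g$ and $g^{-1}$; eventual coordinatewise agreement of the sequence with its limit then forces the limit into $\mZ$. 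Any repair of your proposal needs both ingredients (the convergent chain of covers and the finite-variable description of closed subfunctors). Separately, your quotient step relies on the quotient map $\GL_\nn\to\GL_\nn/B$ being ``faithfully flat'' with ``flat pullback commuting with closure,'' which has no established meaning for this fppf-sheaf quotient; the paper instead argues directly that for any closed $\mX\supseteq\sh(B\sigma B)/B$ the preimage $\pi^{-1}(\mX)$ is closed and contains $B\sigma B$, hence each $\overline{B\tau B}$, so $\sh(B\tau B)/B\subseteq\sh(\mX)=\mX$ because closed subfunctors of fppf sheaves are sheaves, and then checks the covering inside $Y_\sigma/B$ on field-valued points.
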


Our definition of an infinite-dimensional flag space is of course by no means the first. In \cite{dimitrov-penkov2004} (see also \cite{ignatyev-penkov2017}), Ivan Dimitrov and Ivan Penkov defined flag spaces very similar to ours, except that their \emph{generalized flags} are indexed by countable totally ordered sets in which every element has an immediate successor or an immediate predecessor (this, of course, is satisfied by any well-ordered set), and are compatible with a given fixed flag. These spaces are filtered colimits of flag varieties with closed immersions as the inclusions, i.e. they are \emph{ind-varieties}. In \cite{fresse-penkov2015} Penkov and Lucas Fresse proved a Bruhat decomposition for these generalized flag spaces as well as many geometric properties of the Schubert ind-varieties. The approach in this research program is certainly topically different from ours (they do not use functors of points), but their definitions actually do give a fundamentally different object from the one that we study. If we restrict to the case of flags indexed by $\omega + 1$, their generalized flag space consists of maximal ascending chains of subspaces which are \textbf{eventually equal} to a given fixed flag. The group which acts transitively in this case is a filtered colimit of the groups $\GL_n(k)$ which is a proper subgroup of $\GL_\nn(k)$, (whereas our group is all of the latter), and their ``Weyl group'' is the set of permutations of $\nn$ with finite support (whereas ours is all of $\Perm(\nn)$).

Recall that constant rank-$n$ projective $R$-modules are the same as Zariski locally free modules of rank $n$, and that in algebraic geometry a module satisfying the latter definition is often called a \emph{vector bundle}. Thus one can think of the flag functor as assigning to a $k$-algebra $R$ the set of certain chains of nested vector bundles. In \cite{drinfeld2006}, Vladimir Drinfeld surveys his work on infinite-dimensional vector bundles in algebraic geometry. Although he mentions results from Kaplansky \cite{kaplansky1958}, Raynaud-Gruson \cite{raynaud-gruson1971}, and Perry \cite{perry2010}, about discrete infinite-dimensional vector bundles (which is the topic of this paper), his results there are about Tate vector bundles (i.e. those which have locally linearly compact fibers). 

\subsection{Acknowledgements}

The author would like to thank Greg Kuperberg, Eric Babson, Steve Sawin, and Will Sawin for many helpful discussions over the past few years, and Will Sawin in particular for suggesting the functorial algebraic geometry approach from which essentially this entire paper sprung.


\section{Background}

In this section we fix notation and explain some background needed for the rest of the paper. It is quite long and the reader is encouraged to skip to Section \ref{sec: the grassmannian} and refer back when needed. This material will all be well-known to experts and we give proofs only when we could not find an appropriate reference. 

\subsection{Categories}
Categories will be denoted by bold letters, such as $\bC$, and functors will usually be denoted by calligraphic letters, such as $\mF$. If $\bC$ is a locally small category containing objects $A , B$, we denote by $\Hom_\bC(A , B)$ the set of morphisms in $\bC$ from $A$ to $B$. If $\bD$ is another category we denote the category of functors from $\bC$ to $\bD$ by $[\bC , \bD]$. Morphisms in this category are natural transformations. In particular, for any category $\bC$ we have the identity functor $\id_\bC : \bC \to \bC$. If $\mF : \bC \rightleftarrows \bD: \mG$ are functors such that $\mG \circ \mF = \id_\bC$ and $\mF\circ \mG = \id_\bD$, then we say that $\mF$ and $\mG$ are \emph{inverses} and that $\bC$ and $\bD$ are \emph{isomorphic categories}, although this situation is not very common. 

The \emph{opposite category} $\bC^\op$ is obtained from $\bC$ by keeping the objects the same and reversing all the arrows, so that every morphism $f : A \to B$ in $\bC$ corresponds to a unique morphism $f^\op : B \to A$ in $\bC^\op$. In this way, a contravariant functor from $\bC \to \bD$ is exactly the same as a covariant functor $\bC^\op \to \bD$. 


\subsection{Natural Transformations}

If $\mX , \mY : \bC \to \bD$ are functors, we denote the (possibly proper) class of natural transformations from $\mX$ to $\mY$ by $\Nat(\mX , \mY)$. We denote the \emph{identity natural transformation} from a functor $\mF$ to itself by $1_\mF$. 

There are two ways to compose a natural transformation and a functor to get another natural transformation. Suppose that $\mE , \mF : \bB \to \bC$ and $\mG,\mH : \bC \to  \bD$ are functors and $\eta : \mE \to \mF$ and $\epsilon : \mG \to \mH$ are natural transformations. Then we define the natural transformation $\mG \eta : \mG \circ \mE \to \mG \circ \mF$ by specifying for each object $x$ in $\bB$ the morphism $\mG(\eta_x)$. Similarly we define the natural transformation $\epsilon \mG : \mG \circ \mE \to \mH \circ \mE$ by specifying for each object $x$ in $\bB$ the morphism $\epsilon_{\mG(x)}$.

\begin{lemma}\label{lem: precomposition of a natural isomorphism with a functor}
If $\mE , \mF : \bB \to \bC$ and $\mG,\mH : \bC \to  \bD$ are functors and $\eta : \mE \to \mF$ and $\epsilon : \mG \to \mH$ are natural isomorphisms, then $\mG \eta$ and $\epsilon \mG$ are also natural isomorphisms.
\end{lemma}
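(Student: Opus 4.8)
The plan is to reduce everything to the standard fact that a natural transformation is a natural isomorphism precisely when each of its components is an isomorphism, together with the fact that any functor carries an inverse pair of morphisms to an inverse pair of morphisms. First I would recall that, since $\eta : \mE \to \mF$ is a natural isomorphism, there is an inverse natural transformation $\eta^{-1} : \mF \to \mE$ whose component at an object $x$ of $\bB$ is $(\eta_x)^{-1}$ (naturality of $\eta^{-1}$ is the usual diagram chase), and likewise $\epsilon^{-1} : \mH \to \mG$ with component $(\epsilon_y)^{-1}$ at each object $y$ of $\bC$.

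For $\mG\eta : \mG \circ \mE \to \mG \circ \mF$, I would propose $\mG\eta^{-1} : \mG \circ \mF \to \mG \circ \mE$ as its inverse. Evaluating at an object $x$ of $\bB$, the composite $(\mG\eta^{-1})_x \circ (\mG\eta)_x$ equals $\mG\big((\eta^{-1})_x\big) \circ \mG(\eta_x) = \mG\big((\eta^{-1})_x \circ \eta_x\big) = \mG(1_{\mE(x)}) = 1_{\mG(\mE(x))}$ by functoriality of $\mG$, and the reverse composite is handled in exactly the same way. Since this holds at every object, $\mG\eta^{-1}$ is a two-sided inverse to $\mG\eta$ in the functor category $[\bB,\bD]$, so $\mG\eta$ is a natural isomorphism.

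For $\epsilon\mG : \mG \circ \mE \to \mH \circ \mE$, the component at $x$ is by definition $\epsilon_{\mG(x)}$, which is an isomorphism in $\bD$ because every component of the natural isomorphism $\epsilon$ is. Hence $\epsilon^{-1}\mG$, whose component at $x$ is $(\epsilon_{\mG(x)})^{-1} = (\epsilon^{-1})_{\mG(x)}$, is a componentwise inverse to $\epsilon\mG$; as a componentwise inverse it is an honest inverse in $[\bB,\bD]$, so $\epsilon\mG$ is a natural isomorphism. Here one should check the small bookkeeping point that $\epsilon^{-1}\mG$ is indeed the whiskering of the natural transformation $\epsilon^{-1}$ by $\mG$ in the sense defined just above, which is immediate from the definitions.

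There is no genuine obstacle in this argument: the only substantive input is that a functor sends inverse pairs to inverse pairs, plus the componentwise characterization of invertibility of natural transformations. The one place to stay careful is to not conflate the two flavors of whiskering — applying $\mG$ to the components $\eta_x$ versus indexing the components $\epsilon_{(-)}$ by $\mG$ — and to confirm in each case that the proposed inverse is again a whiskering of the same shape.
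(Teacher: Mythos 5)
Your proposal is correct and uses the same essential ingredients as the paper's proof: the componentwise criterion for a natural transformation to be an isomorphism, together with the fact that functors preserve isomorphisms (inverse pairs); you merely make the inverse whiskerings $\mG\eta^{-1}$ and $\epsilon^{-1}\mG$ explicit where the paper simply cites these facts.
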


\begin{proof}
Recall that a natural transformation is an isomorphism if and only if all of its components are isomorphisms. Then the first result follows because functors map isomorphisms to isomorphisms and the second is trivial. 
\end{proof}


\subsection{Fully Faithful Functors}

A functor is called \emph{full} if it is surjective on hom-sets, \emph{faithful} if it is injective on hom-sets, and \emph{fully faithful} if it is both full and faithful. 

\begin{lemma}\label{lem: objects are isomorphic iff their images under a fully faithful functor are isomorphic}
If $\mF : \bC \to \bD$ is fully faithful then a morphism $f  : x \to y$ in $\bC$ is an isomorphism if and only if $\mF(f) : \mF(x) \to \mF(y)$ is an isomorphism in $\bD$. In particular we have that $x \cong y$ if and only if $\mF(x) \cong \mF(y)$. 
\end{lemma}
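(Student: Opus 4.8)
The plan is to prove the two directions of the biconditional separately, and then deduce the ``in particular'' clause. The forward direction is purely formal: if $f : x \to y$ is an isomorphism in $\bC$ with two-sided inverse $g : y \to x$, then applying $\mF$ and using that functors preserve composition and identities gives $\mF(f)\mF(g) = \mF(fg) = \mF(1_y) = 1_{\mF(y)}$ and similarly $\mF(g)\mF(f) = 1_{\mF(x)}$, so $\mF(f)$ is an isomorphism with inverse $\mF(g)$. This step uses only functoriality and neither fullness nor faithfulness.

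For the converse, suppose $\mF(f) : \mF(x) \to \mF(y)$ is an isomorphism in $\bD$ with two-sided inverse $h : \mF(y) \to \mF(x)$. Since $\mF$ is full, the hom-set map $\Hom_\bC(y,x) \to \Hom_\bD(\mF(y),\mF(x))$ is surjective, so we may choose $g : y \to x$ in $\bC$ with $\mF(g) = h$. Now $\mF(gf) = \mF(g)\mF(f) = h\,\mF(f) = 1_{\mF(x)} = \mF(1_x)$, and since $\mF$ is faithful, the hom-set map $\Hom_\bC(x,x) \to \Hom_\bD(\mF(x),\mF(x))$ is injective, so $gf = 1_x$; the same argument applied to $\mF(fg) = \mF(f)\mF(g) = \mF(f)\,h = 1_{\mF(y)} = \mF(1_y)$ gives $fg = 1_y$. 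Hence $g$ is a two-sided inverse of $f$ and $f$ is an isomorphism.

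Finally, for the statement $x \cong y \iff \mF(x) \cong \mF(y)$: if $x \cong y$, pick an isomorphism $f : x \to y$; by the forward direction $\mF(f)$ is an isomorphism, so $\mF(x) \cong \mF(y)$. Conversely, if $\mF(x) \cong \mF(y)$, pick an isomorphism $u : \mF(x) \to \mF(y)$; by fullness $u = \mF(f)$ for some $f : x \to y$, and by the converse direction just proved, $f$ is an isomorphism, so $x \cong y$.

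I do not expect a genuine obstacle here; the only point worth flagging is that both hypotheses are used essentially and cannot be weakened — fullness is what lets us lift the candidate inverse $h$ back to $\bC$, and faithfulness is what certifies that the lifted morphism really is inverse to $f$ (a functor that is merely essentially surjective, or merely faithful, does not reflect isomorphisms). I would also remark that this lemma is exactly what licenses later arguments where we check an isomorphism of functors or schemes by checking it after applying a fully faithful embedding such as the Yoneda embedding.
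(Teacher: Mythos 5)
Your proof is correct and is the standard argument: functoriality gives the forward direction, fullness lifts the candidate inverse, and faithfulness verifies the composite identities, with the ``in particular'' clause following by lifting an isomorphism via fullness. The paper states this lemma as background without proof, and your argument is exactly the expected one, so there is nothing further to reconcile.
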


\begin{lemma}\label{lem: fully faithful functors preserve and reflect commutative squares}
If $\mF : \bC \to \bD$ is fully faithful then a square in $\bC$ commutes if and only if its image in $\bD$ commutes as well. 
\end{lemma}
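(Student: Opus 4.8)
The plan is to unwind what it means for a square to commute and then reduce to the defining property of a faithful functor. Label the square in $\bC$ by objects $A, B, C, D$ and morphisms $f : A \to B$, $g : B \to D$, $f' : A \to C$, $g' : C \to D$; by definition this square commutes exactly when $g \circ f = g' \circ f'$ as elements of $\Hom_\bC(A, D)$. Applying $\mF$, which preserves composition, sends this to the square with objects $\mF(A), \mF(B), \mF(C), \mF(D)$ and morphisms $\mF(f), \mF(g), \mF(f'), \mF(g')$, and this image square commutes exactly when $\mF(g) \circ \mF(f) = \mF(g') \circ \mF(f')$, which by functoriality is the same as the equation $\mF(g \circ f) = \mF(g' \circ f')$ in $\Hom_\bD(\mF(A), \mF(D))$.

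For the forward implication I would simply observe that if $g \circ f = g' \circ f'$, then applying $\mF$ gives $\mF(g \circ f) = \mF(g' \circ f')$, so the image commutes; this uses only that $\mF$ is a functor. For the reverse implication, suppose the image square commutes, so that $\mF(g \circ f) = \mF(g' \circ f')$. Since $g \circ f$ and $g' \circ f'$ both lie in the single hom-set $\Hom_\bC(A, D)$ and $\mF$ is faithful, hence injective on that hom-set, we conclude $g \circ f = g' \circ f'$, i.e. the original square commutes.

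There is essentially no obstacle here; the only point needing care is that faithfulness is precisely injectivity on hom-sets, so to invoke it one must check that the two composites being compared share a common source and target, which they do. It is also worth remarking that fullness plays no role — faithfulness alone suffices — and that the same argument shows that $\mF$ both preserves and reflects commutativity of diagrams of arbitrary shape, not merely squares.
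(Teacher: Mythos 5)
Your proof is correct and is the standard argument: commutativity is an equation in a single hom-set, preserved by functoriality and reflected by faithfulness, so fullness is indeed not needed. The paper states this lemma without proof, and your argument (including the remark that it extends to diagrams of arbitrary shape) is exactly the expected one.
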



\subsection{The Pushforward of a Functor}

If $\mF : \bC \to \bD$ is a functor and $\bB$ is another category, then the \emph{pushforward} of $\mF$ is the functor $\mF_* : [\bB , \bC] \to [\bB , \bD]$ which sends $\mG \mapsto \mF \circ \mG$. 

\begin{theorem} \label{thm: pushforwards of fully faithful functors are also fully faithful}
If $\mF: \bC \to \bD$ is fully faithful, then for any category $\bB$ we have that $\mF_* : [\bB , \bC] \to [\bB , \bD]$ is also fully faithful. 
\end{theorem}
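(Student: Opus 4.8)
The plan is to verify the two defining properties of a fully faithful functor for $\mF_*$ directly from the corresponding properties of $\mF$, working componentwise on natural transformations. Fix functors $\mG, \mH : \bB \to \bC$; then a morphism $\mF_*(\mG) \to \mF_*(\mH)$ in $[\bB, \bD]$ is a natural transformation $\mF \circ \mG \to \mF \circ \mH$, and we must show that the map $\alpha \mapsto \mF \alpha$ (sending a natural transformation $\alpha : \mG \to \mH$ to the whiskered transformation with components $\mF(\alpha_x)$) is a bijection from $\Nat(\mG, \mH)$ to $\Nat(\mF \circ \mG, \mF \circ \mH)$.

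For \emph{faithfulness} of $\mF_*$: suppose $\alpha, \beta : \mG \to \mH$ are natural transformations with $\mF \alpha = \mF \beta$. This means $\mF(\alpha_x) = \mF(\beta_x)$ for every object $x$ of $\bB$, and since $\mF$ is faithful this forces $\alpha_x = \beta_x$ for all $x$, hence $\alpha = \beta$. For \emph{fullness} of $\mF_*$: given a natural transformation $\gamma : \mF \circ \mG \to \mF \circ \mH$, we need to produce $\alpha : \mG \to \mH$ with $\mF \alpha = \gamma$. For each object $x$ of $\bB$, the component $\gamma_x : \mF(\mG(x)) \to \mF(\mH(x))$ lies in the image of the (bijective, since $\mF$ is full and faithful) map $\Hom_\bC(\mG(x), \mH(x)) \to \Hom_\bD(\mF(\mG(x)), \mF(\mH(x)))$, so there is a unique $\alpha_x : \mG(x) \to \mH(x)$ in $\bC$ with $\mF(\alpha_x) = \gamma_x$. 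It remains to check that the family $(\alpha_x)$ is natural, i.e. that for every morphism $f : x \to y$ in $\bB$ the square with sides $\mG(f), \mH(f), \alpha_x, \alpha_y$ commutes in $\bC$. Its image under $\mF$ is exactly the naturality square for $\gamma$ applied to $f$ (using $\mF(\mG(f)) = (\mF \circ \mG)(f)$ etc.), which commutes; then by Lemma \ref{lem: fully faithful functors preserve and reflect commutative squares} the original square commutes in $\bC$. Thus $\alpha$ is a natural transformation, and by construction $\mF \alpha = \gamma$.

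The main obstacle — really the only non-formal point — is the naturality of the lifted family $(\alpha_x)$, and this is precisely where fully faithfulness is essential rather than just full-plus-faithful used separately: faithfulness alone gives uniqueness of each $\alpha_x$ and fullness alone gives existence, but it is the reflection of commutative squares (Lemma \ref{lem: fully faithful functors preserve and reflect commutative squares}) that lets us conclude the lifted components assemble into a genuine natural transformation. Everything else is unwinding definitions, so I would keep the write-up short and lean on Lemma \ref{lem: fully faithful functors preserve and reflect commutative squares} explicitly for that step.
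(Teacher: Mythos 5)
Your proposal is correct and follows essentially the same route as the paper: lift each component $\gamma_x$ uniquely using full faithfulness of $\mF$, then invoke Lemma \ref{lem: fully faithful functors preserve and reflect commutative squares} to get naturality of the lifted family. The only difference is that you spell out the faithfulness of $\mF_*$ as a separate (componentwise) check, which the paper leaves implicit in the uniqueness of the lifts.
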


\begin{proof}
Let $\mG , \mH : \bB \to \bC$ be functors and $\epsilon : \mF_* \mG \to \mF_* \mH$ a natural transformation. Then for each object $x$ in $\bB$ we have a morphism $\epsilon_x : \mF \mG(x) \to \mF \mH(x)$. Hence because $\mF$ is fully faithful, there exists a unique $\eta_x : \mG(x) \to \mH(x)$ such that $\mF(\eta_x) = \epsilon_x$. To see that $\eta : \mG \to \mH$ is natural, suppose that $f : x \to y$ is a morphism in $\bB$. Consider the following diagrams. 
\begin{center}
\begin{tikzcd}
 \mG(x) \arrow[r, "\mG(f)"] \arrow[d , "\eta_x" '] &  \mG(y) \arrow[d , "\eta_y"] &&&& \mF \mG(x) \arrow[r, "\mF \mG(f)"] \arrow[d , "\epsilon_x = \mF(\eta_x) " '] &  \mF \mG(y) \arrow[d , "\epsilon_y = \mF(\eta_y) "]
\\  \mH(x) \arrow[r, "\mH(f)" '] &  \mH(y)  &&&& \mF \mH(x) \arrow[r, "\mF \mH(f)" '] & \mF \mH(y) 
\end{tikzcd}
\end{center}
The right hand square in $\bD$ commutes, hence because $\mF$ is fully faithful, by Lemma \ref{lem: fully faithful functors preserve and reflect commutative squares} the left hand square in $\bC$ commutes as well. 
\end{proof}


\subsection{Products of Categories}

If $\bC$ and $\bD$ are categories, one can form their \emph{product} $\bC \times \bD$. The objects in this category are ordered pairs $(A , B)$ where $A$ is an object in $\bC$ and $B$ is an object in $\bD$, and a morphism from $(A , B)$ to $(X , Y)$ consists of a pair of morphisms $(f , g)$ where $f : A \to X$ is a morphism in $\bC$ and $g  : B \to Y$ is a morphism in $\bD$. One can take the product of two functors $\mF : \bA \to \bC$ and $\mG : \bB \to \bD$ to obtain a functor $\mF \times \mG : \bA \times \bB \to \bC \times \bD$ which is defined in the obvious way, and furthermore if $\eta : \mF \to \mF'$ and $\epsilon : \mG \to \mG'$ are natural transformations, one can take their product as well to obtain a natural transformation $\eta \times \epsilon : \mF \times \mG \to \mF' \times \mG'$. 

If $\bC$ is locally small, then there is a functor $\Hom_\bC(- , -) : \bC^\op \times \bC \to \bSet$ which sends an object $(A , B)$ to $\Hom_\bC(A , B)$ and morphism $(f^\op , g) : (A , B) \to (X , Y)$ to the function $\Hom_\bC(A , B) \to \Hom_\bC(X , Y)$ which sends $[\varphi : A \to B] \mapsto [g \circ \varphi \circ f : X \to Y]$. 


\subsection{The Yoneda Lemma} 

If $A$ is an object in a category $\bC$, we denote by $h^A: \bC \to \textbf{Set}$ the functor sending an object $B \mapsto \text{Hom}_\bC(A , B)$ and a morphism $\varphi: B \to C$ to the function $[f: B \to C] \mapsto [\varphi \circ f : A \to C]$. We therefore obtain a functor $h^- : \bC^\op \to [\bC, \bSet]$ by sending an object $A \mapsto h^A$ and a morphism $\varphi: A \to B$ to the natural transformation $h^B \to h^A$ defined on $C \in \bC$ by $[f : B \to C] \mapsto [f \circ \varphi : A \to C]$. This functor is named the \emph{(contravariant) Yoneda embedding}, so called because by the ubiquitous \emph{(weak) Yoneda lemma}, it is fully faithful, i.e. the map $\Hom_\bC(C , B) \to \Nat(h^B , h^C)$ induced by the Yoneda embedding is a bijection. One says that a functor $\mX : \bC \to \bSet$ is \emph{representable} if it is isomorphic to $h^A$ for some $A \in \bC$. 

The \emph{(strong) Yoneda lemma} says that for any functor $\mY : \bC \to \bSet$ and any object $A \in \bC$, the map $\Nat(h^A, \mY ) \to \mY(A)$ given by $\Phi \mapsto \Phi_A(1_A)$ is a bijection. Its inverse is obtained by sending $y \in \mY(A)$ to the natural transformation $\Phi^y : h^A \to \mY$ given by $\Phi^y_B( \varphi : A \to B ) = \mY(\varphi)(y)$. 


\subsection{Limits and Colimits}

Let $\bI, \bC$ be categories. A \textbf{diagram of shape $\bI$} in $\bC$ is a functor $\mD : \bI \to \bC$. If $X \in \bC$ is an object, then $\Delta_X : \bI \to \bC$ denotes the \textbf{constant functor} which sends all the objects of $\bI$ to $X$ and all the morphisms of $\bI$ to the identity $1_X$. A \textbf{cone} (resp. \textbf{cocone}) over $\mD$ is a pair $(X , \eta)$ where $X \in \bC$ is an object and $\eta : \Delta_X \to \mD$ (resp. $\eta : \mD \to \Delta_X$) is a natural transformation. A \textbf{morphism} of cones (resp. cocones) $(X , \eta) \to (Y , \rho)$ is a morphism $f : X \to Y$ such that $\rho \circ \Delta_f = \eta$ (resp. $\Delta_f \circ \eta = \rho$), where $\Delta_f : \Delta_X \to \Delta_Y$ is the natural transformation induced by $f$. A cone (resp. cocone) $(X , \eta)$ over $\mD$ is a \textbf{limit of $\mD$} (resp. \textbf{colimit of $\mD$}) if it is a terminal (resp. initial) object in the category of cones over $\mD$.

The following proposition states that limits and colimits in functor categories can be computed component-wise. In other words, suppose that $\bC$ and $\bD$ are categories. For every object $X \in \bC$ we have a functor $\eval_X : [\bC , \bD] \to \bD$ which sends an object $\mF$ to $\mF(X)$ and a morphism (i.e. a natural transformation) $\eta : \mF \to \mG$ to $\eta_X$. 

\begin{proposition}\label{prop: limits and colimits in functor categories}
Suppose that $\mD : \bI \to [\bC, \bD]$ is a diagram of functors and $(\mF , \rho)$ is a cone (resp. cocone) over $\mD$. Then $(\mF(X) , \eval_X  \rho)$ is a cone (resp. cocone) over $\eval_X \circ \mD: \bI \to \bD$, and furthermore $(\mF , \rho)$ is a limit (resp. colimit) of $\mD$ if $(\mF(X) , \eval_X \circ \rho)$ is a limit (resp. colimit) of $\eval_X \circ \mD$ for all objects $X \in \bC$. The converse holds if $\bD$ has all limits of shape $\bI$.
\end{proposition}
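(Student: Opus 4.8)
The plan is to prove the two directions separately, treating the ``if'' direction (component-wise limit implies limit in the functor category) as the substantive one, and the converse as a formal consequence of the fact that when $\bD$ has all limits of shape $\bI$, those limits can be assembled into a functor.

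First I would verify the easy preliminary claim: if $(\mF, \rho)$ is a cone over $\mD : \bI \to [\bC, \bD]$, then applying the functor $\eval_X : [\bC, \bD] \to \bD$ yields a cone $(\mF(X), \eval_X \rho)$ over $\eval_X \circ \mD$. This is immediate because functors preserve cones: $\eval_X$ carries the natural transformation $\rho : \Delta_\mF \to \mD$ to a natural transformation $\eval_X \circ \Delta_\mF \to \eval_X \circ \mD$, and one checks $\eval_X \circ \Delta_\mF = \Delta_{\mF(X)}$. The dual statement for cocones is identical. Next, for the main ``if'' direction, suppose $(\mF(X), \eval_X \rho)$ is a limit of $\eval_X \circ \mD$ for every $X \in \bC$; I want to show $(\mF, \rho)$ is terminal in the category of cones over $\mD$. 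So let $(\mG, \tau)$ be any other cone over $\mD$. For each $X$, $(\mG(X), \eval_X \tau)$ is a cone over $\eval_X \circ \mD$ by the preliminary claim, so by the universal property there is a unique morphism $u_X : \mG(X) \to \mF(X)$ in $\bD$ compatible with the cone legs. The key steps are then: (i) check that the $u_X$ assemble into a natural transformation $u : \mG \to \mF$ — this uses naturality of $\tau$ and $\rho$ in the variable of $\bC$ together with the uniqueness half of the universal property in $\bD$ applied to the square obtained from a morphism $f : X \to Y$ in $\bC$; (ii) check that $u$ is a morphism of cones $(\mG, \tau) \to (\mF, \rho)$, which is just the statement that $u_X$ is compatible with the legs for every $X$, hence holds component-wise; and (iii) check uniqueness of $u$ as a morphism of cones, which again follows component-wise from the uniqueness of each $u_X$.

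For the converse, assume $\bD$ has all limits of shape $\bI$ and that $(\mF, \rho)$ is a limit of $\mD$ in $[\bC, \bD]$. I would first use the hypothesis to build, for each $X \in \bC$, a chosen limit cone $(L(X), \lambda^X)$ over $\eval_X \circ \mD$, and observe that $X \mapsto L(X)$ extends canonically to a functor $L : \bC \to \bD$ (the action on morphisms is forced by the universal property, and functoriality follows from uniqueness), with the $\lambda^X$ assembling into a cone $\lambda : \Delta_L \to \mD$ in $[\bC,\bD]$ by another uniqueness argument. By the ``if'' direction already proved, $(L, \lambda)$ is a limit of $\mD$ in $[\bC, \bD]$. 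Since limits are unique up to unique isomorphism, there is an isomorphism $\mF \cong L$ in $[\bC, \bD]$ identifying $\rho$ with $\lambda$; evaluating at $X$ gives an isomorphism $\mF(X) \cong L(X)$ identifying $\eval_X \rho$ with $\lambda^X$, so $(\mF(X), \eval_X \rho)$ is a limit of $\eval_X \circ \mD$ as desired. The dual argument handles colimits when $\bD$ has all colimits of shape $\bI$.

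The main obstacle I expect is step (i) of the ``if'' direction: proving naturality of the induced $u : \mG \to \mF$. The subtlety is that naturality is not automatic from the pointwise universal property alone — one must exhibit, for each $f : X \to Y$, that both composites $\mF(f) \circ u_X$ and $u_Y \circ \mG(f)$ solve the same universal mapping problem into the limit cone $(\mF(Y), \eval_Y \rho)$, and then invoke uniqueness. This requires carefully chasing the naturality squares of $\rho$ and $\tau$ with respect to morphisms of $\bC$, keeping straight that $\rho$ and $\tau$ are natural transformations of functors $\bC \to \bD$ while simultaneously being ``legs'' indexed by $\bI$. Everything else is bookkeeping that reduces to the component-wise situation.
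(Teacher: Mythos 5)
Your proposal is correct: the pointwise construction of the mediating morphism, the naturality check via the uniqueness clause of the universal property at $\eval_Y\rho$, and the converse via assembling chosen pointwise limits into a functor $L$ and invoking uniqueness of limits are exactly the standard argument, and each step you flag (especially naturality of $u$) is handled the right way. Note that the paper itself states this proposition without proof, as background material, so there is no proof in the paper to compare against; your write-up would serve as a complete proof of the statement as given (reading ``limits'' as ``colimits'' in the dual half of the converse, as you do).
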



\subsection{Filtered Colimits}

A category $\bC$ is \emph{filtered} if every finite diagram in $\bC$ has a cocone. 

\begin{proposition}\label{prop: filtered colimits of sets}
Suppose that $\bI$ is a small filtered category, $\mD : \bI \to \bSet$ is a diagram, $X$ is a set, and $\eta : \mD \to \Delta(X)$ is a natural transformation. Then $(X , \eta)$ is a colimit of $\mD$ if and only if 
\begin{itemize}

\item[(FC1)] The functions $\eta_i$ are injective for all objects $i \in \bI$. 

\item[(FC2)] The functions $\eta_i$ are jointly surjective, i.e. $X = \bigcup_{i \in I} \im \eta_i$. 

\end{itemize}
\end{proposition}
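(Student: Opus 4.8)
The plan is to prove the two implications separately, using the classical explicit construction of a filtered colimit of sets as a bridge for the forward direction and letting the filteredness of $\bI$ carry the real weight in each case. Since $\bSet$ has all small colimits and any two colimit cocones over $\mD$ are related by a unique compatible isomorphism, to establish the forward implication it suffices to check (FC1) and (FC2) for one conveniently chosen colimit and then transport them to $(X,\eta)$.

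\emph{The implication (FC1)+(FC2) $\Rightarrow$ colimit.} Assume (FC1) and (FC2), and let $(Y,\rho)$ be an arbitrary cocone over $\mD$. I would define the comparison map $f\colon X\to Y$ by choosing, for each $x\in X$, an index $i$ and an element $a\in\mD(i)$ with $\eta_i(a)=x$ (possible by (FC2)) and setting $f(x):=\rho_i(a)$. The one substantive point is well-definedness: if also $\eta_j(b)=x$, pick by filteredness an object $k$ together with morphisms $u\colon i\to k$ and $v\colon j\to k$; naturality of $\eta$, together with $\Delta(X)$ sending every morphism to $\id_X$, gives $\eta_k(\mD(u)(a))=\eta_i(a)=x=\eta_j(b)=\eta_k(\mD(v)(b))$, so (FC1) forces $\mD(u)(a)=\mD(v)(b)$, and the cocone identities for $\rho$ then give $\rho_i(a)=\rho_k(\mD(u)(a))=\rho_k(\mD(v)(b))=\rho_j(b)$. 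The identity $f\circ\eta_i=\rho_i$ holds by construction, and uniqueness of $f$ follows immediately from the joint surjectivity (FC2).

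\emph{The implication colimit $\Rightarrow$ (FC1)+(FC2).} Here I would produce the usual colimit $C:=\bigl(\coprod_{i\in\bI}\mD(i)\bigr)/{\sim}$, where $a\in\mD(i)$ and $b\in\mD(j)$ are identified exactly when $\mD(u)(a)=\mD(v)(b)$ for some object $k$ and morphisms $u\colon i\to k$, $v\colon j\to k$; filteredness is precisely what makes $\sim$ transitive and makes the tautological maps $\kappa_i\colon\mD(i)\to C$ into an initial cocone. Property (FC2) for $(C,\kappa)$ holds by construction. For (FC1), if $\kappa_i(a)=\kappa_i(b)$ then $\mD(u)(a)=\mD(v)(b)$ for some pair $u,v\colon i\to k$, and applying a morphism $w\colon k\to k'$ that coequalizes $u$ and $v$ (filteredness again) reduces this to an equality $\mD(wu)(a)=\mD(wu)(b)$ along a single arrow out of $i$. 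Transporting along the unique compatible isomorphism $C\cong X$ then yields (FC1) and (FC2) for $(X,\eta)$.

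\emph{Expected main obstacle.} The delicate point, and the place where I would be most careful, is clause (FC1). In the forward implication the coequalizing step only produces $\mD(wu)(a)=\mD(wu)(b)$ along a single arrow, so concluding $a=b$ requires the transition maps $\mD(u)$ to be injective — automatic in the intended applications (colimits along chains of submodule inclusions), and which I would accordingly treat as part of the hypotheses. In the reverse implication the matching subtlety is simply the well-definedness of the comparison map; both there and in the construction of $C$, every appeal to filteredness is an appeal to one of its two defining clauses — existence of upper bounds for finite families of objects, and existence of coequalizers of parallel pairs of arrows.
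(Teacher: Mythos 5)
Your proof of the implication (FC1)$+$(FC2) $\Rightarrow$ colimit is correct and is essentially the paper's own argument: use (FC2) to define the comparison map, use filteredness of $\bI$ together with naturality of $\eta$ (the constant functor $\Delta(X)$ sending every arrow to $\id_X$) and (FC1) to check well-definedness, and use (FC2) again for uniqueness. No complaints there.

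The difference lies in the forward implication. The paper explicitly proves \emph{only} the reverse direction (``as that is all we will need''), and for good reason: the forward implication is false as stated. Take $\bI = \bThin(\{0 < 1\})$, $\mD(0)$ a two-element set, $\mD(1)$ a one-element set, with the unique map as transition; this is a filtered diagram, its colimit is the one-point set, and the leg $\eta_0$ is not injective, so (FC1) fails. Your own analysis detects exactly this: the coequalizing step only yields $\mD(wu)(a) = \mD(wu)(b)$ along a single arrow, and you cannot conclude $a = b$ without injectivity of the transition maps. But your proposed remedy --- ``treat injectivity of the transition maps as part of the hypotheses'' --- silently changes the proposition rather than proving it; with that extra hypothesis the forward direction does go through by your argument (the standard quotient-of-coproduct model plus transport along the unique comparison isomorphism), but it is then a different statement. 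So the honest assessment is: your proof of the direction the paper actually uses is complete and matches the paper; your proof of the other direction cannot be completed because that direction is not a theorem, and the correct move (which the paper makes) is simply to restrict attention to the ``if'' half, since in the intended applications (e.g.\ exhibiting $\Hom_{I,J}$ as a filtered colimit of the subfunctors $\Mat_X$, whose legs are inclusions) conditions (FC1) and (FC2) are verified directly.
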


\begin{proof}

We prove only the reverse direction, as that is all we will need. Let $(Y ,\mu)$ be another cocone over $\mD$. We define a function $f: X \to Y$ as follows. Given any $x \in X$, by (FC2) there exists $i \in \bI$ and $s \in \mD(i)$ such that $\eta_i(s) = x$. We define $f(x) = \mu_i(s)$, and we now check that this is well-defined. Suppose that there is $j \in \bI$ and $t \in \mD(j)$ such that $\eta_j(t) = x$. Because $\bI$ is filtered, the diagram $\bDis\{ i , j \} \hookrightarrow \bI$ has a cocone, i.e. there exists an object $k \in \bI$ and maps $\alpha : i \to k$, $\beta : j \to k$. Then we have that 
\begin{equation*}
\eta_k(\alpha(s)) = \eta_i(s) = x =  \eta_j(t) = \eta_k(\beta(t)).
\end{equation*}
However by (FC1), $\eta_k$ is injective, hence $\alpha(s) = \beta(t)$. Then we compute:
\begin{equation*}
\mu_i(s) = \mu_k(\alpha(s)) = \mu_k(\beta(t)) = \mu_j(t). 
\end{equation*}
So indeed $f$ is well-defined. It is clear from the definition of $f$ that $\Delta(f) \circ \eta = \mu$. Finally if $g : X \to Y$ also satisfies $\Delta(g) \circ \eta = \mu$, then for any $x \in X$ using (FC2) choose $i \in \bI$ and $s \in \mD(i)$ such that $\eta_i(s) = x$, and compute 
\begin{equation*}
g(x) = g(\eta_i(s)) = \mu_i(s) = f(x). 
\end{equation*}
\end{proof}


\subsection{Discrete Categories} 

A \emph{discrete category} is a category whose only morphisms are the identity maps. If $S$ is a set, there is an associated discrete category, denoted by $\bDis(S)$. The set of objects of this category is simply $S$, and for $x , y \in S$, $\Hom_{\bDis(S)}(x , y)$ is empty unless $x = y$ in which case this set has a single element labeled $1_x$. A limit (resp. colimit) of a diagram whose shape is discrete is called a \emph{product} (resp. \emph{coproduct}). 


\subsection{Equalizers} 

A diagram in a category $\bC$ of the form 
\begin{equation*}
X \overset{f}{\to} Y \underset{h}{\overset{g}{\rightrightarrows}} Z
\end{equation*}
is called an \emph{equalizer} if $g \circ f = h \circ f$ and $X$ together with the maps $f$ and $g \circ f = h \circ f$ is a limit of the diagram $Y \underset{h}{\overset{g}{\rightrightarrows}} Z$. One can easily check that equalizers of sets have the following particularly nice characterization which we will use a lot. 

\begin{proposition}\label{prop: characterization of equalizers in set}
A diagram $X \overset{f}{\to} Y \underset{h}{\overset{g}{\rightrightarrows}} Z$ in $\bSet$ is an equalizer if and only if the following two conditions are satisfied. 
\begin{itemize}

\item[(EQ1)] The function $f$ is injective.

\item[(EQ2)] If $y \in Y$ is such that $h(y) = g(y)$, then $y \in \im f$. 

\end{itemize}
\end{proposition}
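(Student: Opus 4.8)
This is a routine unwinding of the universal property, and the proof is best organized by testing against the terminal (one-element) set $1$, which is the standard device for extracting elements of a set from maps into it. Throughout, recall that a diagram $X \overset{f}{\to} Y \underset{h}{\overset{g}{\rightrightarrows}} Z$ being an equalizer includes, by definition, the fork condition $g \circ f = h \circ f$; I will treat this condition as a standing hypothesis in the ``if'' direction as well, since a quick example (e.g. $g \neq h$ with no point on which they agree and $f$ picking out no such point) shows that (EQ1) and (EQ2) alone do not force it.

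\textbf{($\Rightarrow$)} Assume the diagram is an equalizer. For (EQ1), suppose $f(x_1) = f(x_2)$ for $x_1, x_2 \in X$, and let $a , b : 1 \to X$ be the maps selecting $x_1$ and $x_2$. Since $g \circ f = h \circ f$, the map $f \circ a = f \circ b : 1 \to Y$ together with $1$ forms a cone over $Y \underset{h}{\overset{g}{\rightrightarrows}} Z$, and both $a$ and $b$ are morphisms of cones from this cone to $(X , f)$. By the terminality of $(X , f)$ in the category of such cones, $a = b$, so $x_1 = x_2$. For (EQ2), suppose $y \in Y$ satisfies $g(y) = h(y)$; then $y$, viewed as a map $1 \to Y$, together with $1$ forms a cone over $Y \underset{h}{\overset{g}{\rightrightarrows}} Z$, so there is a (unique) morphism of cones $c : 1 \to X$ with $f \circ c = y$, i.e. $y = f(c(*)) \in \im f$.

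\textbf{($\Leftarrow$)} Assume $g \circ f = h \circ f$ together with (EQ1) and (EQ2). Then $(X , f)$ is a cone over $Y \underset{h}{\overset{g}{\rightrightarrows}} Z$; we must show it is terminal among such cones. Let $(W , k)$ be any cone, so $k : W \to Y$ with $g \circ k = h \circ k$. For each $w \in W$ we have $g(k(w)) = h(k(w))$, so by (EQ2) there is an element of $X$ mapping to $k(w)$ under $f$, and by (EQ1) this element is unique; call it $\ell(w)$. This defines a function $\ell : W \to X$ with $f \circ \ell = k$, i.e. a morphism of cones $(W , k) \to (X , f)$. Uniqueness is immediate: if $\ell' : W \to X$ also satisfies $f \circ \ell' = k$, then $f(\ell'(w)) = k(w) = f(\ell(w))$ for all $w$, whence $\ell' = \ell$ by (EQ1). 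Thus $(X , f)$ is a limit of $Y \underset{h}{\overset{g}{\rightrightarrows}} Z$.

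\textbf{Main obstacle.} There is essentially no serious obstacle here; the only point requiring care is the bookkeeping around the fork condition $g \circ f = h \circ f$ — it is part of the definition of an equalizer and must be carried as a hypothesis in the converse, since (EQ1) and (EQ2) do not imply it. Everything else is a direct application of the universal property with the test object $1$.
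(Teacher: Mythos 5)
Your proof is correct; the paper leaves this proposition as a routine check, and your argument is exactly the intended one — testing against the one-point set is just the element-wise verification of the universal property in categorical clothing, and the converse construction of the unique mediating map from (EQ1) and (EQ2) is the standard one. Your caveat about the fork condition is also well taken: (EQ1) and (EQ2) alone do not force $g \circ f = h \circ f$ (your vacuous-(EQ2) example shows this), so the proposition's ``if and only if'' must be read with the fork condition as part of the data being tested, precisely as you treat it.
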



\subsection{Pullbacks}

A diagram in a category $\bC$ of the form 
\begin{center}
\begin{tikzcd}
 X \arrow[r, "f"] \arrow[d , "g" '] &  Y \arrow[d , "h"] \\
Z  \arrow[r, "k" '] &  W
\end{tikzcd}
\end{center}
is called a \emph{pullback} if the diagram commutes, and $X$ together with the maps $f$, $g$, and $h \circ f = k \circ g$ is a limit of the diagram $Z \overset{k}{\longrightarrow} W \overset{h}{\longleftarrow} Y$. 


\subsection{Adjoint Functors}

A \emph{hom-set adjunction} between a pair of categories $\bC$ and $\bD$ consists of functors $\mG : \bC \rightleftarrows \bD: \mF$ and a natural isomorphism $\Phi : \Hom_\bC(\mF - , -) \to \Hom_\bD(- , \mG -)$. Here $\Hom_\bC(\mF - , -) : \bD^\op \times \bC \to \bSet$ is the composition of functors $\Hom_\bC( - , -) \circ (\mF^\op \times \id_\bC)$ and similarly $\Hom_\bD(- , \mG -) = \Hom_\bD( - , -) \circ (\id_\bD^\op \times \mG)$. In this case we say that $\mF$ is \emph{left adjoint} to $\mG$, or that $\mG$ is \emph{right adjoint} to $\mF$, and write $\mF \dashv_\Phi \mG$ or simply $\mF \dashv \mG$. 

\begin{theorem}\label{thm: composition of adjoints are adjoints}
If $\mG : \bB \rightleftarrows \bC: \mF$ and $\mH: \bC \rightleftarrows \bD : \mE$ are functors with $\mF \dashv_\Phi \mG$ and $\mE \dashv_\Psi \mH$, then $\mF \circ \mE \dashv \mH \circ \mG$. 
\end{theorem}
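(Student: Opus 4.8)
The plan is to assemble the desired adjunction directly at the level of hom-sets, by composing two suitably whiskered copies of the given natural isomorphisms $\Phi$ and $\Psi$. First I would pin down the variances implicit in the statement: from $\mF \dashv_\Phi \mG$ we have $\mF : \bC \to \bB$ and $\mG : \bB \to \bC$, with $\Phi$ a natural isomorphism $\Hom_\bB(\mF-,-) \to \Hom_\bC(-,\mG-)$ of functors $\bC^\op \times \bB \to \bSet$; from $\mE \dashv_\Psi \mH$ we have $\mE : \bD \to \bC$ and $\mH : \bC \to \bD$, with $\Psi$ a natural isomorphism $\Hom_\bC(\mE-,-) \to \Hom_\bD(-,\mH-)$ of functors $\bD^\op \times \bC \to \bSet$. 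Then $\mF\circ\mE : \bD \to \bB$ and $\mH\circ\mG : \bB \to \bD$, so the statement typechecks, and what must be produced is a natural isomorphism $\Hom_\bB((\mF\mE)-,-) \to \Hom_\bD(-,(\mH\mG)-)$ of functors $\bD^\op \times \bB \to \bSet$.

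The key construction step: I would precompose $\Phi$ with the functor $\mE^\op \times \id_\bB : \bD^\op \times \bB \to \bC^\op \times \bB$. Using associativity of functor composition together with $(\mF\mE)^\op = \mF^\op\mE^\op$, one checks that the source of the resulting natural transformation is exactly $\Hom_\bB((\mF\mE)-,-)$ and its target is exactly $\Hom_\bC(\mE-,\mG-)$. Similarly I would precompose $\Psi$ with $\id_{\bD}^\op \times \mG : \bD^\op \times \bB \to \bD^\op \times \bC$, obtaining a natural transformation with source $\Hom_\bC(\mE-,\mG-)$ and target $\Hom_\bD(-,(\mH\mG)-)$. Composing the two yields a natural transformation $\Hom_\bB((\mF\mE)-,-) \to \Hom_\bD(-,(\mH\mG)-)$ whose component at $(X,A) \in \bD^\op \times \bB$ is $\Psi_{(X,\mG A)} \circ \Phi_{(\mE X,A)}$.

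To finish, each of the two whiskered transformations is again a natural isomorphism by Lemma \ref{lem: precomposition of a natural isomorphism with a functor} (precomposition of a natural isomorphism with a functor is a natural isomorphism), and a composite of natural isomorphisms is a natural isomorphism; this exhibits $\mF\circ\mE \dashv \mH\circ\mG$. Alternatively, and perhaps more transparently to write up, I could drop the whiskering language, define the candidate bijection on components as $\Psi_{(X,\mG A)} \circ \Phi_{(\mE X,A)}$ (a composite of bijections, hence a bijection), and verify naturality in $(X,A)$ by pasting the naturality squares of $\Phi$ and $\Psi$.

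I expect the main obstacle to be purely organizational rather than mathematical: keeping track of which of $\bB, \bC, \bD$ (and which opposite category) each hom-functor lives over, and checking that the whiskered functors are \emph{on the nose} the hom-functors named in the definition of adjunction rather than merely isomorphic to them. Nothing here is deep, but this bookkeeping is where an error would most likely creep in. A slicker proof via the unit/counit descriptions of the two adjunctions exists, but since only the hom-set formulation has been set up in the excerpt, I would stay with the whiskering argument above.
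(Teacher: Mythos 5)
Your proposal is correct and follows essentially the same route as the paper: whisker $\Phi$ with $\mE^\op \times \id_\bB$ and $\Psi$ with $\id_\bD^\op \times \mG$, invoke Lemma \ref{lem: precomposition of a natural isomorphism with a functor} to see both whiskered transformations are natural isomorphisms, and compose them to get $\Hom_\bB(\mF\circ\mE - , -) \to \Hom_\bD(-, \mH\circ\mG -)$. The componentwise formula $\Psi_{(X,\mG A)} \circ \Phi_{(\mE X, A)}$ you give is exactly what the paper's composite amounts to.
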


\begin{proof}
We have that $\Phi : \Hom_\bB( \mF - , -) \to \Hom_\bC( - , \mG -)$ and $\Psi : \Hom_\bC( \mE - , -) \to \Hom_\bD( - , \mH -)$ are natural isomorphisms. Therefore by Lemma \ref{lem: precomposition of a natural isomorphism with a functor}, we have that $\Phi (\mE^\op \times \id_\bB) : \Hom_\bB( \mF \circ \mE - , -) \to \Hom_\bC( \mE - , \mG -)$ and $\Psi (\id_\bD^\op \times \mG) : \Hom_\bC( \mE - , \mG -) \to \Hom_\bD( - , \mH\circ \mG  -)$ are also natural isomorphisms. Composing these yields the desired natural isomorphism:
\begin{equation*}
\Psi (\id_\bD^\op \times \mG) \circ \Phi (\mE^\op \times \id_\bB) : \Hom_\bB( \mF \circ \mE - , -) \to \Hom_\bD( - , \mH\circ \mG  -)
\end{equation*}
\end{proof}

The following theorem shows that right adjoints of a given functor are unique up to a unique isomorphism (a similar theorem holds for left adjoints). 

\begin{theorem}\label{thm: adjoints are unique}
If $\mG, \mH : \bC \rightleftarrows \bD : \mF$ are functors with $\mF \dashv_\Phi \mG$ and $\mF \dashv_\Psi \mH$, then there exists a unique natural isomorphism $\theta : \mG \to \mH$ such that $\Psi \circ \Phi^{-1} = \Hom_\bD ( -  , - ) (1_{\id_\bD^\op} \times \theta)$.  
\end{theorem}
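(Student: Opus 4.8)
The plan is to construct $\theta$ componentwise using the strong Yoneda lemma (or more precisely, the fact that $\Phi$ and $\Psi$ exhibit $\mG$ and $\mH$ as representing the same functor), then verify naturality and the compatibility condition, and finally prove uniqueness. First I would fix an object $D \in \bD$. The natural isomorphism $\Phi$ restricted to the slice where the second variable varies over $\bC$ gives, for each $D$, a natural isomorphism of functors $\Hom_\bC(\mF D , -) \to \Hom_\bD(D, \mG-)$, and similarly $\Psi$ gives $\Hom_\bC(\mF D, -) \to \Hom_\bD(D, \mH-)$; composing, $\Psi_D \circ \Phi_D^{-1} : \Hom_\bD(D, \mG-) \to \Hom_\bD(D, \mH-)$ is a natural isomorphism of functors $\bC \to \bSet$. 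Both sides are representable (by $\mG D$ and $\mH D$ respectively), so by the weak Yoneda lemma (full faithfulness of $h^-$) there is a unique isomorphism $\theta_D : \mG D \to \mH D$ in $\bD$ inducing this natural transformation, i.e. $\theta_D = (\Psi \circ \Phi^{-1})(1_{\mF D})$ under the appropriate identification, or equivalently $\theta_D$ is characterized by $\Hom_\bD(D, \theta_D) = \Psi_{(D,\mH D)} \circ \Phi^{-1}_{(D, \mH D)}$ evaluated suitably.

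Next I would check that $\theta = (\theta_D)_{D \in \bD}$ is natural in $D$, i.e. that for every morphism $\psi : D \to D'$ in $\bD$ the square relating $\mG(\psi)$ and $\mH(\psi)$ commutes. This follows from the naturality of $\Phi$ and $\Psi$ in their \emph{first} variable: the composite $\Psi \circ \Phi^{-1}$ is a natural isomorphism between the bifunctors $\Hom_\bD(-,\mG-)$ and $\Hom_\bD(-,\mH-)$ on $\bD^\op \times \bC$, and chasing $1_{\mF D'}$ (or the relevant identity element) around the naturality square for the morphism $(\psi^\op, 1) $ gives exactly the commutativity of the $\theta$-square after applying the Yoneda embedding; since $h^-$ reflects commutative squares (Lemma~\ref{lem: fully faithful functors preserve and reflect commutative squares}), the square in $\bD$ commutes. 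The stated identity $\Psi \circ \Phi^{-1} = \Hom_\bD(-,-)(1_{\id_\bD^\op} \times \theta)$ is then just the assertion that this componentwise construction reassembles into the whole-bifunctor identity, which holds because two natural transformations of bifunctors agreeing on all components are equal.

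For uniqueness: if $\theta'$ also satisfies $\Psi \circ \Phi^{-1} = \Hom_\bD(-,-)(1_{\id_\bD^\op} \times \theta')$, then for every $D$ the maps $\Hom_\bD(D, \theta_D)$ and $\Hom_\bD(D,\theta'_D)$ agree as the $D$-component (in the first variable) and $\mH D$-component (in the second) of the same natural transformation; evaluating at $1_{\mF D} \in \Hom_\bC(\mF D, \mF D)$ pushed through $\Phi$ — or more directly, using that $h^-$ is faithful so $\Hom_\bD(-,\theta_D) = \Hom_\bD(-,\theta'_D)$ forces $\theta_D = \theta'_D$ — gives $\theta = \theta'$.

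The main obstacle I expect is purely bookkeeping: keeping straight which variable of the bifunctors $\Hom_\bC(\mF-,-)$ and $\Hom_\bD(-,\mG-)$ is being held fixed at each stage, and correctly identifying the component of $\Psi \circ \Phi^{-1}$ at the object $(D, \mH D)$ applied to the image of $1_{\mF D}$ with the morphism $\theta_D$. There is no conceptual difficulty — it is entirely a consequence of Yoneda together with naturality of $\Phi$ and $\Psi$ — but the indices must be handled with care, and it is worth isolating the observation that a natural isomorphism between two representable functors $\bC \to \bSet$ comes from a unique isomorphism of representing objects, applied here with the representing objects varying functorially in $D$.
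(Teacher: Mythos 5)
Your overall strategy---build $\theta$ componentwise by viewing $\Psi \circ \Phi^{-1}$ as a natural isomorphism between representable hom-functors, invoke Yoneda to get the components, then use naturality and faithfulness for the square-checking and uniqueness---is exactly the content of the paper's proof; the paper merely packages the same argument uniformly, by currying $\Hom_\bD(-,\mG-)$ into $(h_-)_*\mG$ and citing Theorem \ref{thm: pushforwards of fully faithful functors are also fully faithful}. However, as written you apply Yoneda in the wrong variable, and that is a genuine break rather than bookkeeping. With the paper's conventions $\mG, \mH : \bC \to \bD$ and $\mF : \bD \to \bC$, fixing $D \in \bD$ and letting the second variable run over $\bC$ gives functors $\Hom_\bD(D, \mG-), \Hom_\bD(D, \mH-) : \bC \to \bSet$. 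These are \emph{not} represented by ``$\mG D$'' and ``$\mH D$''---those expressions do not even typecheck, since $\mG$ and $\mH$ are evaluated on objects of $\bC$---rather, via $\Phi_D$ and $\Psi_D$ both are represented by the single object $\mF D \in \bC$. Yoneda applied in this variable therefore only produces an automorphism of $\mF D$ and yields no morphism from a value of $\mG$ to a value of $\mH$; the asserted ``unique isomorphism $\theta_D : \mG D \to \mH D$'' does not exist in this setup, and the formula $(\Psi \circ \Phi^{-1})(1_{\mF D})$ is not well typed since $\Psi \circ \Phi^{-1}$ takes inputs in $\Hom_\bD(D, \mG C)$.

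The repair is to hold the $\bC$-variable fixed instead. For $C \in \bC$, restricting $\Psi \circ \Phi^{-1}$ to $\bD^\op \times \{C\}$ gives a natural isomorphism $\Hom_\bD(-, \mG C) \to \Hom_\bD(-, \mH C)$ of presheaves on $\bD$, i.e.\ between $h_{\mG C}$ and $h_{\mH C}$ in the image of the (presheaf) Yoneda embedding $h_- : \bD \to [\bD^\op, \bSet]$ that the paper uses (not the paper's $h^-$). Full faithfulness of $h_-$ then yields a unique isomorphism $\theta_C : \mG C \to \mH C$ inducing it; explicitly $\theta_C = (\Psi \circ \Phi^{-1})_{(\mG C, C)}(1_{\mG C})$, that is, $\Psi$ applied to the counit of $\mF \dashv_\Phi \mG$ at $C$. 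Naturality of $\theta$ is naturality in the \emph{second} ($\bC$) variable, not the first as you wrote: chase the naturality squares of $\Psi \circ \Phi^{-1}$ for morphisms $(1_D^{\op}, f)$ and use faithfulness of $h_-$ (or Lemma \ref{lem: fully faithful functors preserve and reflect commutative squares}); the displayed identity and the uniqueness argument then go through exactly as you describe. Once the variables are straightened out, your proof and the paper's coincide.
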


\begin{proof}
We have that $\Phi : \Hom_\bC( \mF - , -) \to \Hom_\bD( - , \mG -)$ and $\Psi : \Hom_\bC( \mF - , -) \to \Hom_\bD( - , \mH -)$ are natural isomorphisms, hence $\Psi \circ \Phi^{-1} : \Hom_\bD( - , \mG -) \to \Hom_\bD( - , \mH -)$ is an isomorphism in $[\bD^\op \times \bC , \bSet ]$. Let $E : [\bD^\op \times \bC , \bSet ] \to [\bC , [\bD^\op , \bSet]]$ be the obvious isomorphism. Then $E(\Psi \circ \Phi^{-1}) : E(\Hom_\bD( - , \mG -)) \to E(\Hom_\bD( - , \mH -))$ is also an isomorphism.  Consider the Yoneda embedding $h_- : \bD \to [\bD^\op , \bSet]$ and its pushforward $(h_-)_* : [\bC , \bD] \to [\bC , [\bD^\op , \bSet]]$. We note that $E(\Hom_\bD( - , \mG -)) = (h_-)_*\mG$ and similarly $E(\Hom_\bD( - , \mH -)) = (h_-)_*\mH$. Therefore $E(\Psi \circ \Phi^{-1}) : (h_-)_*\mG \to (h_-)_*\mH$ is an isomorphism. Since the Yoneda embedding is fully faithful, by Theorem \ref{thm: pushforwards of fully faithful functors are also fully faithful}, its pushforward is also fully faithful. Hence there exists a unique natural transformation $\theta : \mG \to \mH$ such that $(h_-)_*(\theta) = E(\Psi \circ \Phi^{-1})$. By Lemma \ref{lem: objects are isomorphic iff their images under a fully faithful functor are isomorphic}, $\theta$ is an isomorphism. The desired result follows since $E^{-1}( (h_-)_*(\theta)  )$ is equal to $\Hom_\bD(-  , -) (1_{\id_\bD^\op} \times \theta)$.
\end{proof}

The most important property of adjoint functors is stated in the following theorem. 

\begin{theorem}[Right Adjoints Preserve Limits (RAPL)]
If $\mG: \bC \rightleftarrows \bD : \mF$ are functors with $\mF \dashv \mG$ and if $A : \bI \to \bC$ is a diagram with limit $(X , \eta)$, then $(\mG X , \mG \eta)$ is a limit of the diagram $\mG \circ A : \bI \to \bD$. Dually, if $B : \bI \to \bD$ is a diagram with colimit $(Y , \epsilon)$ then $(\mF X , \mF \epsilon)$ is a colimit of the diagram $\mF \circ B : \bI \to \bC$.
\end{theorem}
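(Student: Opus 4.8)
The plan is to prove the limit statement by the usual representability argument and then deduce the colimit statement by passing to opposite categories. Recall that, unwinding the definition of ``limit'' (terminal object in the category of cones), a cone $(L , \lambda)$ over a diagram $\mD : \bI \to \bC$ is a limit precisely when for every object $c \in \bC$ the comparison function $\Hom_\bC(c , L) \to \Nat(\Delta_c , \mD)$, $g \mapsto \lambda \circ \Delta_g$, is a bijection. So to prove that $(\mG X , \mG \eta)$ is a limit of $\mG \circ A$ it suffices to show that for each $D \in \bD$ the function $\Hom_\bD(D , \mG X) \to \Nat(\Delta_D , \mG \circ A)$, $f \mapsto \mG \eta \circ \Delta_f$, is a bijection.

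First I would exhibit a bijection $\Theta_D : \Hom_\bD(D , \mG X) \to \Nat(\Delta_D , \mG \circ A)$ as a composite of three bijections. Write $\Phi : \Hom_\bC(\mF - , -) \to \Hom_\bD(- , \mG -)$ for the adjunction isomorphism. The first map is $\Phi_{D,X}^{-1} : \Hom_\bD(D , \mG X) \to \Hom_\bC(\mF D , X)$. The second is the comparison bijection for the limit $(X , \eta)$, namely $\Hom_\bC(\mF D , X) \to \Nat(\Delta_{\mF D} , A)$, $\tilde{g} \mapsto \eta \circ \Delta_{\tilde{g}}$. The third sends a cone $(\psi_i : \mF D \to A(i))_{i}$ over $A$ to the family $(\Phi_{D , A(i)}(\psi_i))_i$; naturality of $\Phi$ in its second variable along each diagram morphism $A(f)$, $f : i \to j$, shows this family again satisfies the compatibility condition over $\bI$ and hence is a genuine cone over $\mG \circ A$, and applying $\Phi^{-1}$ componentwise provides the inverse.

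Then I would identify $\Theta_D$ with the comparison function attached to $(\mG X , \mG \eta)$, which is the crux of the argument. Tracing $f : D \to \mG X$ through the three steps, the $i$-th component of $\Theta_D(f)$ is $\Phi_{D , A(i)}\big(\eta_i \circ \Phi_{D,X}^{-1}(f)\big)$; applying naturality of $\Phi$ in the second variable to the morphism $\eta_i : X \to A(i)$ (with the identity on $D$) rewrites this as $\mG(\eta_i) \circ \Phi_{D,X}\big(\Phi_{D,X}^{-1}(f)\big) = \mG(\eta_i) \circ f$, which is exactly the $i$-th component of $\mG \eta \circ \Delta_f$. Hence $f \mapsto \mG \eta \circ \Delta_f$ is a bijection for every $D$, so $(\mG X , \mG \eta)$ is a limit of $\mG \circ A$.

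Finally, the colimit statement is formally dual: since $\mF \dashv \mG$ gives $\mG^\op \dashv \mF^\op$, and a colimit of $B : \bI \to \bD$ is a limit of $B^\op : \bI^\op \to \bD^\op$, the already-proved case applied to the right adjoint $\mF^\op$ shows that $\mF^\op$ preserves that limit; translating back says that $(\mF Y , \mF \epsilon)$ (with $Y$ the colimit apex) is a colimit of $\mF \circ B$. The only real obstacle I anticipate is the bookkeeping in the middle two paragraphs — checking that the componentwise application of $\Phi$ genuinely carries cones to cones, and that the total composite $\Theta_D$ is precisely postcomposition with $\mG \eta$ — everything else being a direct appeal to the definitions and to the naturality of the adjunction isomorphism.
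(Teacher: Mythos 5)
Your argument is correct: transporting cones componentwise along the adjunction isomorphism $\Phi$, identifying the resulting composite bijection with $f \mapsto \mG\eta \circ \Delta_f$ via naturality of $\Phi$ in its second variable, and dualizing (using $\mG^\op \dashv \mF^\op$) for the colimit half are all sound, and the cone-to-cone and comparison-map checks you flag do go through exactly as you describe. The paper gives no proof of this theorem — it only cites Mac Lane V.5 — and your representability argument is essentially the standard proof found there; the one cosmetic point is that the statement's $(\mF X , \mF \epsilon)$ should read $(\mF Y , \mF \epsilon)$, which you implicitly corrected.
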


\begin{proof}
\cite{maclane2013} V.5. 
\end{proof}


\subsection{Set Conventions} 

Suppose $P , X$ are sets. A \emph{family} in $X$ \emph{indexed} by $P$ is simply a function $S : P \to X$. We often denote the image of $p \in P$ under $S$ by $S_p$ and then denote the family by $(S_p : p \in P )$. A \emph{subfamily} of $S$ is a family obtained by restricting $S$ to a subset $Q \subseteq P$.


\subsection{Posets} 

A \emph{preordered set} consists of a set $P$ and a relation $\leq$ on $P$ satisfying the following conditions: (P1) (\emph{Reflexivity}) For all $p \in P$, we have $p \leq p$, (P2) (\emph{Transitivity}) If $p \leq q$ and $q \leq r$, then $p \leq r$. A \emph{filtered set} is a preordered set satisfying the extra condition: (FIL1) Every pair of elements has an upper bound. A \emph{poset} is a preordered set satisfying the extra condition: (P3) (\emph{Antisymmetry}) If $p \leq q$ and $q \leq p$, then $p = q$. A poset $(P , \leq)$ is \emph{totally (or linearly) ordered} if it satisfies the following additional condition: (T1) Given $p , q \in P$ either $p \leq q$ or $q \leq p$. If $P$ and $Q$ are posets, a function $f: P \to Q$ is \emph{order-preserving} if for all $x , y \in P$ with $x \leq y$ we have $f(x) \leq f(y)$. It is an \emph{order-embedding} if for all $x , y \in P$, $x \leq y \iff f(x) \leq f(y)$. 

Let $(P , \leq)$ be a poset with $p < q \in P$, and suppose that $p \leq r \leq q$ implies $r = p$ or $r = q$. Then we say that $q$ is a \emph{successor} of $p$, and $p$ is a \emph{predecessor} of $q$. If $q$ has a predecessor, we say that $q$ \emph{is a successor}. If $p$ has a successor, we say that $p$ \emph{is a predecessor}. An element $p \in P$ is called an \emph{upper limit element} (resp. \emph{lower limit element}) if does not have a predecessor (resp. successor). We denote the set of successor elements (resp. predecessor elements) in $P$ by $P_s$ (resp. $P_p$). Note that if $(I , \leq)$ is a totally ordered set, a successor (resp. predecessor) of $i \in I$ is unique if it exists, although this may not be true for general posets. 

We say that a poset $P$ is \emph{well-founded} if every nonempty subset of $P$ has a minimal element. If $I$ is linear and well-founded we call it \emph{well-ordered}.


\subsection{Thin Category associated to a Poset} 

A category $\bC$ is \emph{thin} if for any objects $X$ and $Y$ in $\bC$, there is at most one morphism $X \to Y$. If $(P , \leq)$ is a preordered set, we can form its \emph{associated thin category} $\bThin(P)$ whose objects are exactly the elements of $P$, $\Hom_{\bThin(P)}(x , y)$ is equal to $\{ i_{x , y} \}$ if $x \leq y$ and $\emptyset$ otherwise, and composition is given by $i_{y , z} \circ i_{x, y} = i_{x , z}$ where the latter exists by transitivity of the preorder. For any $x \in P$, by reflexivity $i_{x , x}$ exists and then it is clear from the definition of composition that this morphism is the identity $x \to x$. Note that if $(P ,\leq)$ is a filtered set, then $\bThin(P)$ is a filtered category. 


\subsection{Rings and Algebras}

All rings in this paper are commutative and have a multiplicative identity, denoted by $1$ or $1_R \in R$ if we wish to be specific. A ring homomorphism $R \to S$ preserves addition, multiplication, and sends $1_R \mapsto 1_S$. The category whose objects are rings and morphisms are ring homomorphisms will be denoted by $\bCRing$. 

Throughout this paper, \textbf{$k$ will denote a ring unless otherwise stated}. A \emph{$k$-algebra} is a pair $(\phi_A , A)$ where $A$ is a ring and $\phi_A: k \to A$ is a ring homomorphism called the \emph{structure homomorphism} of the $k$-algebra. A \emph{$k$-algebra homomorphism} $(\phi_A , A) \to (\phi_B , B)$ is a ring homomorphism $\varphi: A \to B$ compatible with the structure homomorphisms. 

We denote the resulting category by $\bAlg_k$. By abuse of notation, we usually suppress the morphism $\phi_A$ and just say ``$A$ is a $k$-algebra.'' The set of $k$-algebra homomorphisms from $A$ to $B$ will be written $\Hom_k(A , B)$.


\subsection{Free Modules}

The poset of submodules of an $R$-module $M$, ordered by inclusion, will be denoted by $\Sub(M)$. Suppose that $P$ is a set and $M$ is a $R$-module. We say that a family $(N_p \mid p \in P)$ of submodules of $M$ is \emph{independent} if given finitely many pairwise distinct elements $p_1, \ldots, p_\ell \in P$ and elements $n_i \in N_{p_i}$ for $1 \leq i \leq \ell$ such that $n_1 + \ldots + n_\ell = 0$, it follows that $n_i = 0$ for all $i$. Furthermore, we say that the family \emph{spans} $M$ if $\sum_{p \in P} N_p = M$. Note that if $(N_p : p \in P)$ is an independent family of subspaces of $M$ and $Q$ is a subset of $P$, the the subfamily $(N_q : q \in Q)$ is also independent. We say that $M$ is the \emph{internal direct sum} of the family $(N_p : p \in P)$ if (DS1) it is independent and (DS2) it spans $M$. In this case we write $M = \bigoplus_{p \in P} N_p$. We say that a submodule $N \subseteq M$ is a \emph{direct summand} if there exists $N' \subseteq M$ such that $M = N \oplus N'$. The poset of summands of $M$ will be denoted by $\Sum(M)$. 

Let $M$ be an $R$-module and $I$ be a set. A family $(b_i : i \in I)$ of elements of $M$ is called a \emph{basis} of $M$ if $M$ is the internal direct sum of the family $(\Span(b_i) : i \in I)$ of submodules. In this case every element of $M$ can be written as a unique (finite!) linear combination of the elements $b_i$. If a module $M$ has a basis, it is called a \emph{free} $R$-module.

We define $R^I$ to be the set of functions $I \to R$, which is an $R$-module, and indeed a product of $I$ copies of $R$ in $\bMod_R$. A function $a : I \to R$ is often denoted in ``tuple'' notation, i.e. by $(a_i)_{i \in I}$. In general $R^I$ \textbf{is not} a free $R$-module when $I$ is infinite. We also define $R^{\oplus I}$ to be the submodule of $R^I$ consisting of the functions $I \to R$ with finite support. The module $R^{\oplus I}$ \textbf{is} free. The \emph{standard basis} for this module is $(e_i : i \in I )$ where we define $e_i : I \to R$ by sending $j \mapsto 0$ for $j \neq i$ and $i \mapsto 1$. 

The category theory perspective on this is as follows. The forgetful functor $\mU_{\bMod_R} : \bMod_R \to \bSet$ has a left adjoint $\bMod_R \leftarrow \bSet : R^{\oplus -}$ which sends a set $I$ to the free $R$-module $R^{\oplus I}$. The free $R$-modules are exactly those which are isomorphic to something in the image of the functor $R^{\oplus -}$. 


\subsection{Projective Modules}

If $M$ is a projective $R$-module and $\fp$ is a prime ideal, then $M_\fp$ is a projective $R_\fp$-module. But since $R_\fp$ is a local ring, Kaplansky's theorem \cite{kaplansky1958} implies that $M_\fp$ is actually a free $R_\fp$-module. We define the \emph{projective rank} of $M$ at $\fp$ to be the rank of $M_\fp$ as a free $R_\fp$-module.


\subsection{Finitely Presented Modules and Algebras}

An $R$-module $M$ is called \emph{finitely presented} if there is an exact sequence $R^m \to R^n \to M \to 0$. A $k$-algebra $R$ is \emph{finitely presented} if $R$ is isomorphic (as a $k$-algebra) to $k[x_1, \ldots, x_n] / (f_1, \ldots, f_m)$ for some $n$ and some polynomials $f_1, \ldots, f_m \in k[x_1, \ldots, x_n]$. 

\begin{lemma}\label{lem: finitely presented is preserved by pushouts}
Suppose that $R$ is a $k$-algebra and $\varphi : R \to S$ and $\psi : R \to T$ are $k$-algebra homomorphisms. Suppose further that $\psi$ makes $T$ into a finitely presented $R$-algebra. Then $\iota_S : S \to S \otimes_R T$ which sends $s \mapsto s \otimes 1$ makes $S \otimes_R T$ into a finitely presented $S$-algebra. 
\end{lemma}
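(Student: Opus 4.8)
The plan is to reduce everything to the case of polynomial rings, using the explicit presentation of $T$ together with the facts that $S\otimes_R -$ is right exact and that it commutes with adjoining polynomial variables.

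First I would use the hypothesis to fix an $R$-algebra isomorphism $T \cong R[x_1,\dots,x_n]/(f_1,\dots,f_m)$ for some $n,m$ and polynomials $f_i \in R[x_1,\dots,x_n]$. I would regard this as a right-exact sequence of $R[x_1,\dots,x_n]$-modules $R[x_1,\dots,x_n]^m \to R[x_1,\dots,x_n] \to T \to 0$ in which the first map sends the $i$-th basis vector to $f_i$. Since $S \otimes_R -$ is a left adjoint (to restriction of scalars along $\varphi$), it preserves cokernels, so applying it yields a right-exact sequence
\[
S \otimes_R R[x_1,\dots,x_n]^m \longrightarrow S \otimes_R R[x_1,\dots,x_n] \longrightarrow S \otimes_R T \longrightarrow 0 .
\]

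Next I would identify $S \otimes_R R[x_1,\dots,x_n]$ with $S[x_1,\dots,x_n]$ as $S$-algebras. One way is direct: the monomials $x^{\alpha}$ with $\alpha \in \nn^{n}$ form an $R$-basis of $R[x_1,\dots,x_n]$, so the $1 \otimes x^{\alpha}$ form an $S$-basis of the tensor product, and one checks that the multiplication and the structure map from $S$ agree with those of $S[x_1,\dots,x_n]$. A cleaner way is to note that $A[x_1,\dots,x_n]$ is the coproduct in $\bAlg_A$ of $n$ copies of the free $A$-algebra $A[x]$, and that base change along $R\to S$ is a left adjoint, hence preserves free objects and coproducts. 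Under this identification the image of $f_i$ becomes the polynomial $\bar f_i \in S[x_1,\dots,x_n]$ obtained by applying $\varphi$ to each coefficient of $f_i$, and the first map in the sequence sends the $i$-th basis vector to $\bar f_i$. Hence the sequence exhibits $S \otimes_R T$ as $S[x_1,\dots,x_n]/(\bar f_1,\dots,\bar f_m)$ — first as $S[x_1,\dots,x_n]$-modules, and therefore as $S$-algebras — which is exactly the assertion that $S \otimes_R T$ is a finitely presented $S$-algebra.

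The argument is essentially bookkeeping; the only point requiring genuine care — the ``main obstacle,'' such as it is — is checking that the $S$-algebra structure appearing at the end is the one given by $\iota_S$, i.e.\ that the composite $S \to S \otimes_R R[x_1,\dots,x_n] \to S \otimes_R T$ coincides with $\iota_S : s \mapsto s \otimes 1$ and is compatible with the coefficient-wise description of $\bar f_i$. For this reason I would phrase the lemma functorially: it is the statement that finitely presented algebra homomorphisms are stable under cobase change (pushout) in $\bAlg_k$, with the tensor product computing the pushout, and the verification above is precisely the standard proof of that stability.
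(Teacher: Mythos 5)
Your proof is correct and follows essentially the same route as the paper: the paper simply asserts the isomorphism $S \otimes_R T \cong S[x_1,\dots,x_n]/(\varphi(f_1),\dots,\varphi(f_m))$ obtained from the presentation $T \cong R[x_1,\dots,x_n]/(f_1,\dots,f_m)$, which is exactly the identification you justify via right-exactness of $S\otimes_R-$ and the base-change isomorphism $S\otimes_R R[x_1,\dots,x_n]\cong S[x_1,\dots,x_n]$. Your version just supplies the bookkeeping the paper leaves implicit.
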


\begin{proof}
We have that $T \cong R[x_1, \ldots, x_n] / (f_1, \ldots, f_m)$, and hence that 
\begin{equation*}
S \otimes_R T \cong S[x_1, \ldots, x_n] / (\varphi(f_1), \ldots, \varphi(f_m))
\end{equation*} 
which is indeed a finitely presented $S$-algebra. 
\end{proof}


\subsection{Change of Rings}

If $R$ and $S$ are rings and $\varphi: R \to S$ is a ring homomorphism, then there exists a functor $\varphi^* : \bMod_S \rightarrow \bMod_R$ called \emph{restriction of scalars} which sends an $S$-module $N$ to the $R$-module with the same underlying abelian group as $N$ but with scalar multiplication given by $r , n \mapsto \varphi(r) n$ and sends an $S$-module homomorphism to itself. On the other hand, there exists a functor $\varphi_! : \bMod_R \to \bMod_S$ called the \emph{extension of scalars} which sends an $R$-module $M$ to the $S$-module $M \otimes_R S$ (with scalar multiplication given by $s, m \otimes t \mapsto m \otimes st$) and sends an $R$-module homomorphism $f$ to $f \otimes_R \id_S$. The relationship between these two functors is that $\varphi_!$ is left adjoint to $\varphi^*$. Explicitly, the isomorphism $\Phi_{R , S} : \Hom_{R}(M , \varphi^* N) \overset{\sim}{\to}  \Hom_{S}(\varphi_! M , N)$ is given by 
\begin{equation*}
[f : M \to \varphi^* N ]   \mapsto [ m \otimes s \mapsto s \cdot f(m) ].
\end{equation*} 

\begin{definition}[See \cite{demazure-gabriel1980} Lemma 3.14]\label{def: adapted function}
A function $f: M \to N$ is \emph{adapted} to $\varphi$ if $f : M \to \varphi^* N$ is an $R$-module homomorphism and the $S$-module homomorphism $\Phi_{R , S}(f) : \varphi_! M \to N$ given by the adjunction $\varphi_! \vdash \varphi^*$ is an isomorphism.
\end{definition}


\subsection{Change of Rings Twice}

If we also have a ring homomorphism $\psi : S \to T$, then it is clear that $(\psi \circ \varphi)^* = \varphi^* \circ \psi^*$. Therefore by Theorem \ref{thm: composition of adjoints are adjoints} we have that $\psi_! \circ \varphi_!$ is left adjoint to $(\psi \circ \varphi)^*$. But $(\psi \circ \varphi)_!$ is also left adjoint to $(\psi \circ \varphi)^*$, so by Theorem \ref{thm: adjoints are unique} there exists a unique natural isomorphism  $\psi_! \circ \varphi_! \cong (\psi \circ \varphi)_!$ compatible with the adjunctions. In particular this gives a canonical isomorphism $(M \otimes_R S) \otimes_S T \cong M \otimes_R T$. On simple tensors it is given by $(m \otimes s) \otimes t \mapsto m \otimes st$.


\subsection{Change of Rings and Direct Sums} 

Suppose that $M$ is an $R$-module and $(N_p \mid p \in P)$ is a family of submodules. This specifies a diagram $N : \bDis(P) \to \bMod_R$ sending $p \mapsto N_p$, and a natural transformation $i : N \to \Delta M$ where $i_p : N_p \to M$ is the inclusion map. Then the cocone $(M, i)$ is a coproduct of $N$ if and only if $M = \bigoplus_{p \in P} N_p$. If this condition holds and $\varphi : R \to S$ is a ring homomorphism, then because the functor $\varphi_! : \bMod_R \to \bMod_S$ is a left adjoint, it preserves colimits, hence $(\varphi_! M , \varphi_! i)$ is a coproduct of $\varphi_! \circ N$. This implies that for every $p \in P$, $\varphi_! (i_p) : \varphi_! N_p  \to \varphi_! M$ is injective and
\begin{equation}\label{eq: extension of scalars preserves direct sums}
\varphi_! M = \bigoplus_{p \in P} \varphi_!(i_p)[ \varphi_! N_p].
\end{equation}

We obtain the following result as a consequence.

\begin{proposition}\label{prop: extension of scalars applied to direct summands are direct summands}
If $M$ is an $R$-module and $N \subseteq M$ is a direct summand and $\varphi : R \to S$ is a ring homomorphism, then $\varphi_!(i_N) : \varphi_! N \to \varphi_! M$ is an injective map and the image $\varphi_!(i_N)[\varphi_! N]$ is a direct summand of $\varphi_! M$. (Here $i_N : N \to M$ is the inclusion map.) 
\end{proposition}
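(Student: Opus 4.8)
The plan is to deduce this immediately from the discussion in the paragraph just preceding the statement, specialized to a two-element index set. First I would invoke the hypothesis that $N \subseteq M$ is a direct summand to choose a complement $N' \subseteq M$ with $M = N \oplus N'$. Taking $P = \{1,2\}$ with $N_1 = N$ and $N_2 = N'$, this says precisely that $M$ is the internal direct sum of the family $(N_p \mid p \in P)$, so the hypotheses leading to Equation~(\ref{eq: extension of scalars preserves direct sums}) are in force; the point to keep in mind is merely the bookkeeping that ``$N$ is a direct summand with chosen complement'' is the same data as ``$M$ is the internal direct sum of a two-element family.''

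Next I would apply that discussion verbatim: the family $(N_p \mid p \in P)$ determines a diagram $N : \bDis(P) \to \bMod_R$ and a cocone $(M, i)$ which is a coproduct because $M = \bigoplus_{p \in P} N_p$; since $\varphi_! : \bMod_R \to \bMod_S$ is a left adjoint (to $\varphi^*$) it preserves colimits, so $(\varphi_! M, \varphi_! i)$ is a coproduct of $\varphi_! \circ N$ in $\bMod_S$. As recorded there, this is exactly the assertion that each $\varphi_!(i_p) : \varphi_! N_p \to \varphi_! M$ is injective and that $\varphi_! M = \bigoplus_{p \in P} \varphi_!(i_p)[\varphi_! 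N_p]$.

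Finally I would read off the conclusion: taking $p = 1$ gives that $\varphi_!(i_N)$ is injective, and the displayed decomposition $\varphi_! M = \varphi_!(i_N)[\varphi_! N] \oplus \varphi_!(i_{N'})[\varphi_! N']$ exhibits $\varphi_!(i_N)[\varphi_! N]$ as a direct summand of $\varphi_! M$ with explicit complement $\varphi_!(i_{N'})[\varphi_! N']$. There is no substantive obstacle here; the statement is essentially a corollary packaging of Equation~(\ref{eq: extension of scalars preserves direct sums}), and the only care needed is to note that the coproduct/internal-direct-sum correspondence used there applies to finite index sets just as well as to infinite ones.
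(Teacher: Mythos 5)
Your proposal is correct and matches the paper's intent exactly: the proposition is stated there as an immediate consequence of the preceding discussion (that $\varphi_!$, being a left adjoint, preserves the coproduct decomposition, giving injectivity of each $\varphi_!(i_p)$ and the displayed direct sum), applied to the two-element family $\{N, N'\}$. No further comment is needed.
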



\subsection{Change of Rings and Localization} 

Suppose that $R$ is a ring, $S \subseteq R$ is a multiplicative subset, and $M$ is an $R$-module. Let $\loc_S: R \to S^{-1}R$ denote the map $r \mapsto \frac{r}{1}$. Then the \emph{localization of $M$ at $S$}, denoted $S^{-1} M$, is defined to be the $(S^{-1}R)$-module $(\loc_S)_! M$.


\subsection{Change of Rings and Free Modules}

Suppose that $\varphi : R \to S$ is a ring homomorphism. Notice that $\mU_{\bMod_R} \circ \varphi^* = \mU_{\bMod_S}$. Therefore by Theorem \ref{thm: composition of adjoints are adjoints}, both $S^{\oplus -}$ and $\varphi_! \circ R^{\oplus -}$ are left adjoint to $\mU_{\bMod_S}$, so by Theorem \ref{thm: adjoints are unique} we have the following result. 

\begin{proposition}\label{prop: isomorphism between direct sum tensor ring extension and free module}
There exists a unique natural isomorphism $\varphi_! \circ R^{\oplus -} \to S^{\oplus -}$ compatible with the adjunctions. For a given set $I$, the component of this natural isomorphism is equal to $\Phi_{R , S}(\varphi^I) : R^{\oplus I} \otimes_R S \to S^{\oplus I}$ which sends $(a_i)_{i \in I} \otimes s \mapsto (s\varphi(a_i))_{i \in I}$. It follows that $\varphi^I : R^{\oplus I} \to S^{\oplus I}$ is adapted to $\varphi$.
\end{proposition}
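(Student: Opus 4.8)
The plan is to obtain existence and uniqueness of the natural isomorphism as a formal consequence of the adjunction machinery already set up, and then to compute its components by unwinding what ``compatible with the adjunctions'' means. For the first sentence, recall (as noted just above the statement) that $\mU_{\bMod_R}\circ\varphi^* = \mU_{\bMod_S}$, and that we have adjunctions $R^{\oplus-}\dashv\mU_{\bMod_R}$ and $\varphi_!\dashv\varphi^*$. Hence Theorem~\ref{thm: composition of adjoints are adjoints} shows that $\varphi_!\circ R^{\oplus-}$ is left adjoint to $\mU_{\bMod_R}\circ\varphi^* = \mU_{\bMod_S}$; since $S^{\oplus-}$ is also left adjoint to $\mU_{\bMod_S}$, the left-adjoint version of Theorem~\ref{thm: adjoints are unique} produces a unique natural isomorphism $\theta:\varphi_!\circ R^{\oplus-}\to S^{\oplus-}$ compatible with the two adjunctions, and in particular every component $\theta_I:R^{\oplus I}\otimes_R S\to S^{\oplus I}$ is an isomorphism.

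Next I would compute $\theta_I$ by pushing the compatibility condition of Theorem~\ref{thm: adjoints are unique} through the explicit composite hom-set isomorphism written down in the proof of Theorem~\ref{thm: composition of adjoints are adjoints}. This identifies $\theta_I$ with $\Phi_{R,S}(f_I)$, where $f_I:R^{\oplus I}\to\varphi^*(S^{\oplus I})$ is the $R$-module homomorphism corresponding under the free-module universal property (the adjunction $R^{\oplus-}\dashv\mU_{\bMod_R}$) to the function $i\mapsto e_i$ which is the unit of $S^{\oplus-}\dashv\mU_{\bMod_S}$ at $I$; equivalently, $f_I$ is the unique $R$-linear map with $f_I(e_i)=e_i$ for every $i$. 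Now $\varphi^I$ is an $R$-module homomorphism $R^{\oplus I}\to\varphi^*(S^{\oplus I})$: since scalars act on $\varphi^*(S^{\oplus I})$ through $\varphi$, we have $\varphi^I\big(r\cdot(a_i)_{i\in I}\big) = (\varphi(ra_i))_{i\in I} = \varphi(r)\cdot(\varphi(a_i))_{i\in I}$; and $\varphi^I(e_i) = \varphi\circ e_i = e_i$ because $\varphi(0)=0$ and $\varphi(1)=1$. So $f_I=\varphi^I$ by uniqueness, whence $\theta_I=\Phi_{R,S}(\varphi^I)$, and the displayed formula is just the value of $\Phi_{R,S}(\varphi^I)$ read off from the formula for $\Phi_{R,S}$ in the Change of Rings subsection: $(a_i)_{i\in I}\otimes s\mapsto s\cdot\varphi^I\big((a_i)_{i\in I}\big) = (s\varphi(a_i))_{i\in I}$.

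Finally, the last sentence follows immediately from Definition~\ref{def: adapted function}: $\varphi^I$ is adapted to $\varphi$ because it is an $R$-module homomorphism $R^{\oplus I}\to\varphi^*(S^{\oplus I})$ (shown above) and $\Phi_{R,S}(\varphi^I)=\theta_I$ is an isomorphism (first paragraph). I expect the only genuinely delicate step to be the identification of $\theta_I$ in the second paragraph: one has to unwind precisely what ``compatible with the adjunctions'' means in Theorem~\ref{thm: adjoints are unique} and trace it through the composite adjunction of Theorem~\ref{thm: composition of adjoints are adjoints}, so as to see that $\theta_I$ is pinned down by sending each $e_i\otimes 1$ to $e_i$. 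The remaining checks --- the $R$-linearity of $\varphi^I$ into $\varphi^*(S^{\oplus I})$, the identities $\varphi(0)=0$, $\varphi(1)=1$, and the substitution into the formula for $\Phi_{R,S}$ --- are routine.
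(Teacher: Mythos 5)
Your proof is correct and follows essentially the same route as the paper, which obtains the natural isomorphism exactly as you do from Theorem \ref{thm: composition of adjoints are adjoints} (via $\mU_{\bMod_R}\circ\varphi^*=\mU_{\bMod_S}$) and the left-adjoint version of Theorem \ref{thm: adjoints are unique}. The paper merely asserts the explicit component formula and the adaptedness, so your unwinding of the compatibility condition to pin down $\theta_I$ by $e_i\otimes 1\mapsto e_i$ (hence $\theta_I=\Phi_{R,S}(\varphi^I)$) is a welcome verification of details the paper leaves implicit.
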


In particular, this gives the following.

\begin{corollary}\label{cor: extension of scalars preserves free rank}
If $\varphi : R \to S$ is a ring homomorphism and $F$ is a free $R$-module, then $\varphi_! F$ is a free $S$-module with rank equal to that of $F$. 
\end{corollary}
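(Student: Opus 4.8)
The plan is to deduce this directly from Proposition~\ref{prop: isomorphism between direct sum tensor ring extension and free module}, using that $\varphi_!$, being a functor, carries isomorphisms to isomorphisms. First I would use that $F$ is free to produce a set $I$ and an $R$-module isomorphism $g : R^{\oplus I} \overset{\sim}{\to} F$. Concretely, if $(b_i : i \in I)$ is a basis of $F$, then the map determined by $e_i \mapsto b_i$ is surjective because the $b_i$ span $F$ and injective because every element of $F$ is a \emph{unique} finite linear combination of the $b_i$; equivalently this is the remark recorded in the excerpt that the free $R$-modules are exactly those isomorphic to some $R^{\oplus I}$. By definition the rank of $F$ is the cardinality $|I|$.

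Next I would apply the extension-of-scalars functor $\varphi_! : \bMod_R \to \bMod_S$ to $g$, obtaining an isomorphism of $S$-modules $\varphi_!(g) : \varphi_!(R^{\oplus I}) \overset{\sim}{\to} \varphi_! F$. On the other hand, Proposition~\ref{prop: isomorphism between direct sum tensor ring extension and free module}, applied to the set $I$, provides an $S$-module isomorphism $\Phi_{R,S}(\varphi^I) : \varphi_!(R^{\oplus I}) = R^{\oplus I} \otimes_R S \overset{\sim}{\to} S^{\oplus I}$. Composing these yields $\varphi_! F \cong S^{\oplus I}$ as $S$-modules, and since $S^{\oplus I}$ is free with standard basis $(e_i : i \in I)$, it follows that $\varphi_! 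F$ is free, with a basis obtained by transporting $(e_i : i \in I)$ through the isomorphism.

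There is essentially no obstacle here: the argument is a two-step diagram chase once the preceding subsections are in place. The only point deserving a moment's care is the statement about \emph{rank}: the basis of $\varphi_! F$ that we exhibit is indexed by precisely the indexing set of the chosen basis of $F$, so the two bases have equal cardinality and the ranks agree (this uses only that the rank of a free module, i.e.\ the cardinality of a basis, is well defined, which is standard for commutative rings).
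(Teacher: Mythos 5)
Your argument is correct and is exactly the route the paper intends: the corollary is stated as an immediate consequence of Proposition \ref{prop: isomorphism between direct sum tensor ring extension and free module}, obtained by choosing an isomorphism $F \cong R^{\oplus I}$, applying $\varphi_!$ (which preserves isomorphisms), and composing with $\Phi_{R,S}(\varphi^I) : \varphi_!(R^{\oplus I}) \overset{\sim}{\to} S^{\oplus I}$. Your added remark on well-definedness of rank over commutative rings is a fine clarification but not a departure from the paper's reasoning.
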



\subsection{Change of Rings and Finitely Generated Modules} 

\begin{proposition}\label{prop: extension of scalars preserves finitely generated}
If $\varphi : R \to S$ is a ring homomorphism and $M$ is a finitely generated $R$-module then $\varphi_! M$ is a finitely generated $S$-module. 
\end{proposition}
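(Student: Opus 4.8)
The plan is to reduce to the free case, since free modules of finite rank behave well under $\varphi_!$ by Corollary \ref{cor: extension of scalars preserves free rank}. First I would unwind the hypothesis: $M$ being finitely generated means there is an integer $n$ and a surjection $\pi : R^{\oplus n} \twoheadrightarrow M$ (send the standard basis vectors to a finite generating set). Equivalently, $M$ together with $\pi$ is the cokernel of the inclusion $\ker \pi \hookrightarrow R^{\oplus n}$, so the surjectivity of $\pi$ is a colimit statement.

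Next, recall that $\varphi_! = - \otimes_R S : \bMod_R \to \bMod_S$ is a left adjoint to restriction of scalars $\varphi^*$, and hence preserves all colimits; in particular it preserves cokernels and so carries epimorphisms of modules to epimorphisms. Applying $\varphi_!$ to $\pi$ therefore yields a surjection $\varphi_!(\pi) : \varphi_! R^{\oplus n} \twoheadrightarrow \varphi_! M$. (Alternatively, and just as quickly, one checks directly that if $m_1, \dots, m_n$ generate $M$ then $m_1 \otimes 1, \dots, m_n \otimes 1$ generate $M \otimes_R S$, since bilinearity lets one rewrite any simple tensor $m \otimes s = \sum_i (m_i \otimes r_i s)$ once $m = \sum_i r_i m_i$.)

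Finally, by Corollary \ref{cor: extension of scalars preserves free rank}, $\varphi_! R^{\oplus n}$ is a free $S$-module of rank $n$, so there is an isomorphism $S^{\oplus n} \xrightarrow{\sim} \varphi_! R^{\oplus n}$; composing it with $\varphi_!(\pi)$ produces a surjection $S^{\oplus n} \twoheadrightarrow \varphi_! M$, which exhibits $\varphi_! M$ as a finitely generated $S$-module. There is no real obstacle in this argument; the only point requiring a word of justification is that extension of scalars preserves surjections, which is handled by the left-adjoint/colimit observation above (or by the explicit generator computation).
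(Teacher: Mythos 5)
Your proof is correct. The paper disposes of this in one line using exactly the observation you put in parentheses: if $\{x_i\}$ generates $M$ over $R$, then $\{x_i \otimes 1_S\}$ generates $\varphi_! M$ over $S$. Your main route --- choosing a surjection $R^{\oplus n} \twoheadrightarrow M$, using that $\varphi_!$ is a left adjoint (hence right exact, hence preserves surjections), and invoking Corollary \ref{cor: extension of scalars preserves free rank} to identify $\varphi_! R^{\oplus n}$ with $S^{\oplus n}$ --- is a mildly more categorical repackaging of the same fact: the scaffolding via a finite free presentation buys nothing extra here, but it is perfectly valid and fits the paper's general habit of arguing through the adjunction $\varphi_! \dashv \varphi^*$. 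Since you also record the direct generator computation, the two arguments are essentially interchangeable, and there is no gap.
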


\begin{proof}
This follows from the fact that if $\{ x_i \mid i \in I \} \subseteq M$ is an $R$-generating set of $M$, then $\{ x_i \otimes 1_S \mid i \in I \}$ is an $S$-generating set of $\varphi_!  M$.
\end{proof}


\subsection{Change of Rings and Projective Modules} 

\begin{theorem}\label{thm: extension of scalars and projectivity}
Let $\varphi : R \to S$ be a ring homomorphism and $M$ an $R$-module.

\begin{enumerate}

\item If $M$ is a projective $R$-module then $\varphi_! M$ is a projective $S$-module. 

\item Projectivity satisfies faithfully flat descent: If $\varphi_! M$ is a projective $S$-module and $\varphi$ is \textbf{faithfully flat} then $M$ is a projective $R$-module. 

\end{enumerate}

\end{theorem}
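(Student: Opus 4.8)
The plan is to handle the two parts by completely different means: part (1) is formal, while part (2) is the Raynaud--Gruson descent theorem, which I would reduce to the descent of three more primitive properties of modules.

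For part (1), I would use that a projective module is precisely a direct summand of a free one. Choosing a set $I$ for which $M$ is a direct summand of $R^{\oplus I}$, Proposition \ref{prop: extension of scalars applied to direct summands are direct summands} applied to the inclusion $i_M : M \to R^{\oplus I}$ shows that $\varphi_!(i_M) : \varphi_! M \to \varphi_!(R^{\oplus I})$ is injective with image a direct summand, and Corollary \ref{cor: extension of scalars preserves free rank} identifies $\varphi_!(R^{\oplus I})$ with the free $S$-module $S^{\oplus I}$; hence $\varphi_! M$ is isomorphic to a direct summand of a free $S$-module, and is therefore projective. Alternatively one could argue via exactness of Hom-functors: the adjunction $\varphi_! \dashv \varphi^*$ gives a natural isomorphism $\Hom_S(\varphi_! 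M, -) \cong \Hom_R(M, \varphi^*(-))$, and the right-hand side is the composite of the exact functor $\varphi^*$ with the exact functor $\Hom_R(M, -)$, hence exact, so $\varphi_! M$ is projective.

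For part (2) --- the genuinely deep direction --- I would invoke the Raynaud--Gruson characterization of projectivity: an $R$-module is projective if and only if it is flat, Mittag--Leffler, and a direct sum of countably generated submodules (see \cite{raynaud-gruson1971} and \cite{perry2010}). Granting this, it suffices to verify that each of the three conditions descends along the faithfully flat map $\varphi$, given that $\varphi_! M = M \otimes_R S$ is projective (hence flat and Mittag--Leffler) over $S$. For flatness: if $0 \to N' \to N$ is exact over $R$, then $0 \to (N' \otimes_R M) \otimes_R S \to (N \otimes_R M) \otimes_R S$ is exact by reassociating the tensor products and using flatness of $\varphi_! M$ over $S$, and faithful flatness of $S$ then forces $0 \to N' \otimes_R M \to N \otimes_R M$ to be exact, so $M$ is flat. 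For the Mittag--Leffler property: using the characterization via injectivity of the canonical maps $M \otimes_R \prod_\alpha N_\alpha \to \prod_\alpha (M \otimes_R N_\alpha)$, one deduces the property for $M$ from that for $\varphi_! M$ by base change and faithful flatness; this is the technical step, and I would cite it rather than reprove it. For the decomposition into countably generated submodules: here I would run Kaplansky's transfinite dévissage, building a continuous well-ordered filtration $0 = M_0 \subseteq M_1 \subseteq \cdots$ of $M$ whose successive quotients are countably generated, using the Kaplansky decomposition of the projective $S$-module $\varphi_! M$ (\cite{kaplansky1958}) to control the number of generators needed at each stage, and using flatness and the Mittag--Leffler property of $M$ to keep the quotients countably generated. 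Each successive quotient is then countably generated, flat, and Mittag--Leffler, hence projective by the countably generated case of Raynaud--Gruson, and a module with a continuous well-ordered filtration whose successive quotients are projective is itself projective; so $M$ is projective.

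The main obstacle is entirely in part (2): part (1) is essentially bookkeeping. Within part (2) the steps that are not formal are the descent of the Mittag--Leffler property and, interlaced with it, the dévissage producing the countably generated decomposition; complete and careful proofs of exactly these facts already appear in \cite{raynaud-gruson1971} and \cite{perry2010}, so I would cite them rather than reproduce the arguments, leaving the net contribution here to be the short proof of part (1) together with the reduction of part (2) to those references.
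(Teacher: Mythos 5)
Your proposal is correct and matches the paper's treatment: part (1) is proved by the same direct-summand-of-a-free-module argument (using that $\varphi_!$ preserves free modules and direct summands), and part (2) is, as in the paper, ultimately delegated to the Raynaud--Gruson and Perry references. Your expository sketch of how those references proceed (flat + Mittag--Leffler + countably generated dévissage) is accurate but adds nothing beyond the citation that the paper itself relies on.
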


\begin{proof}

\

\begin{enumerate}

\item If $M$ is a projective $R$-module, then there exists an $R$-module $N$ and a free $R$-module $F$ such that $F = M \oplus N$. Since $\varphi_!$ is a left adjoint it preserves direct sums and free modules, hence $\varphi_! F = \varphi_! M \oplus \varphi_! N$ and $\varphi_! F$ is a free $S$-module, therefore $\varphi_! M$ is projective as well. 

\item This was proved by Raynaud-Gruson \cite{raynaud-gruson1971} and Perry \cite{perry2010}. 

\end{enumerate}

\end{proof}

\begin{lemma}\label{lem: localization and homomorphisms}
Let $\varphi : R \to S$ be a ring homomorphism and $\fq \subseteq S$ a prime ideal. There exists a unique ring homomorphism $\psi : R_{\varphi^{-1}(\fq)} \to S_\fq$ making the following diagram commute.

\begin{center}
\begin{tikzcd}
 R \arrow[r, "\varphi"] \arrow[d , "\loc_{\varphi^{-1}(\fq)}" '] &  S \arrow[d , "\loc_\fq"] \\
 R_{\varphi^{-1}(\fq)} \arrow[r, "\psi" '] &   S_\fq
\end{tikzcd}
\end{center}

\end{lemma}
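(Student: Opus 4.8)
The plan is to use the universal property of localization to produce $\psi$. Recall that the localization $R_{\varphi^{-1}(\fq)}$ is characterized as the initial $R$-algebra in which every element of the multiplicative set $\fp := R \setminus \varphi^{-1}(\fq)$ becomes invertible; more precisely, for any ring homomorphism $g : R \to T$ sending every element of $\fp$ to a unit of $T$, there exists a unique ring homomorphism $\overline{g} : R_{\varphi^{-1}(\fq)} \to T$ with $\overline{g} \circ \loc_{\varphi^{-1}(\fq)} = g$. So the entire statement reduces to showing that the composite $g := \loc_\fq \circ \varphi : R \to S_\fq$ sends every element of $\fp$ to a unit of $S_\fq$; then $\psi := \overline{g}$ is the desired map and its uniqueness is immediate from the uniqueness clause in the universal property.

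First I would unwind the definitions: $\fp = R \setminus \varphi^{-1}(\fq)$, so an element $r \in \fp$ means exactly that $\varphi(r) \notin \fq$, i.e. $\varphi(r) \in S \setminus \fq$. Second, by the universal property of $\loc_\fq : S \to S_\fq$ (localization at the multiplicative set $S \setminus \fq$), every element of $S \setminus \fq$ maps to a unit in $S_\fq$. Hence $\loc_\fq(\varphi(r)) = g(r)$ is a unit in $S_\fq$ for every $r \in \fp$, which is precisely the hypothesis needed to invoke the universal property of $R_{\varphi^{-1}(\fq)}$ and obtain the unique $\psi$ with $\psi \circ \loc_{\varphi^{-1}(\fq)} = \loc_\fq \circ \varphi$, i.e. making the square commute. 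Finally, for uniqueness of $\psi$ as a map making the diagram commute: any such $\psi'$ satisfies $\psi' \circ \loc_{\varphi^{-1}(\fq)} = g$, so by the uniqueness in the universal property of localization $\psi' = \overline{g} = \psi$.

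There is no real obstacle here; the only point requiring a moment's care is the set-theoretic bookkeeping that $\varphi^{-1}(\fq)$ is indeed a prime ideal of $R$ (so that localizing at it makes sense) and that $\varphi$ carries $R \setminus \varphi^{-1}(\fq)$ into $S \setminus \fq$ — both are standard and follow directly from $\fq$ being prime together with $\varphi$ being a ring homomorphism. One could alternatively give the explicit formula $\psi\left(\frac{r}{s}\right) = \frac{\varphi(r)}{\varphi(s)}$ and check well-definedness and the ring axioms by hand, but the universal-property argument is cleaner and makes both existence and uniqueness transparent, so that is the route I would take.
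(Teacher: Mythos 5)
Your proof is correct and uses exactly the argument the paper gives: the universal property of localization applied to $\loc_\fq \circ \varphi$, justified by the observation that $\varphi(R \smallsetminus \varphi^{-1}(\fq)) \subseteq S \smallsetminus \fq$. No differences worth noting.
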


\begin{proof}
This follows from the universal property of localization, since $\varphi(R \smallsetminus \varphi^{-1}(\fq) ) \subseteq S \smallsetminus \fq$. 
\end{proof}

\begin{proposition}\label{prop: rank and localizations of projective modules}
Let $\varphi : R \to S$ be a ring homomorphism and $M$ an $R$-module. Suppose that both $M$ and $\varphi_! M$ are projective. (In particular by Theorem \ref{thm: extension of scalars and projectivity} this holds if $M$ is projective or if $\varphi_! M$ is projective and $\varphi$ is faithfully flat.) For all $\fq \in \Spec(S)$, the rank of the free $S_\fq$-module $(\varphi_! M)_\fq$ is equal to the rank of the free $R_{\varphi^{-1}(\fq)}$-module $M_{\varphi^{-1}(\fq)}$.
\end{proposition}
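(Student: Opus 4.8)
The plan is to reduce to the case where $M$ is projective over $R$, identify the localization $(\varphi_! M)_\fq$ with an extension of scalars of the free module $M_{\varphi^{-1}(\fq)}$, and then appeal to invariance of basis number over the nonzero local ring $S_\fq$.

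First I would observe that under either hypothesis $M$ is projective over $R$: this is assumed outright in one case, and in the other it follows from faithfully flat descent of projectivity, Theorem \ref{thm: extension of scalars and projectivity}(2). So I may assume $M$ is projective; then $\varphi_! M$ is projective by Theorem \ref{thm: extension of scalars and projectivity}(1). Writing $\fp := \varphi^{-1}(\fq)$, the rings $R_\fp$ and $S_\fq$ are local, so Kaplansky's theorem makes both $M_\fp$ and $(\varphi_! M)_\fq$ free, and the proposition becomes a comparison of their ranks.

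The key step is to produce an $S_\fq$-module isomorphism $(\varphi_! M)_\fq \cong \psi_!(M_\fp)$, where $\psi : R_\fp \to S_\fq$ is the unique ring homomorphism of Lemma \ref{lem: localization and homomorphisms}, characterized by $\loc_\fq \circ \varphi = \psi \circ \loc_\fp$. For this I would unwind the definition of localization as $(\loc)_!$ and invoke the canonical isomorphism expressing that extension of scalars is compatible with composition of ring maps (the ``change of rings twice'' natural isomorphism $(\beta \circ \alpha)_! \cong \beta_! \circ \alpha_!$), applied twice:
\begin{align*}
(\varphi_! M)_\fq &= (\loc_\fq)_!(\varphi_! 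M) \cong (\loc_\fq \circ \varphi)_! M \\
&= (\psi \circ \loc_\fp)_! M \cong \psi_!\big((\loc_\fp)_! M\big) = \psi_!(M_\fp).
\end{align*}
Then Corollary \ref{cor: extension of scalars preserves free rank} says $\psi_!(M_\fp)$ is a free $S_\fq$-module of the same rank as $M_\fp$; combined with the isomorphism above, $(\varphi_! M)_\fq$ is free of that rank. Since $S_\fq$ is a nonzero commutative ring it has invariant basis number, so the rank of $(\varphi_! M)_\fq$ is well-defined and equals the rank of $M_{\varphi^{-1}(\fq)}$, as desired.

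I expect the main obstacle to be purely bookkeeping in the displayed chain of isomorphisms: one must make sure the ``change of rings twice'' isomorphism is invoked for genuinely composable pairs of ring homomorphisms at each stage, and that the identification $S^{-1}(-) = (\loc_S)_!(-)$ is being used on the nose. No input is needed beyond Kaplansky's theorem, faithfully flat descent of projectivity, and Corollary \ref{cor: extension of scalars preserves free rank}.
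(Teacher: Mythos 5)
Your proposal is correct and follows essentially the same route as the paper: obtain $\psi : R_{\varphi^{-1}(\fq)} \to S_\fq$ from Lemma \ref{lem: localization and homomorphisms}, identify $(\varphi_! M)_\fq \cong \psi_!(M_{\varphi^{-1}(\fq)})$ via compatibility of extension of scalars with composition (the paper phrases this through Theorems \ref{thm: composition of adjoints are adjoints} and \ref{thm: adjoints are unique}, which is exactly the ``change of rings twice'' isomorphism you invoke), and conclude with Corollary \ref{cor: extension of scalars preserves free rank}. Your explicit two-step unwinding of the isomorphism and the remarks on Kaplansky's theorem and invariant basis number are just bookkeeping already implicit in the paper's argument.
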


\begin{proof}
If $\fq \subseteq S$ is a prime ideal, combining Lemma \ref{lem: localization and homomorphisms} and Theorems \ref{thm: composition of adjoints are adjoints} and \ref{thm: adjoints are unique} we obtain a ring homomorphism $\psi : R_{\varphi^{-1}(\fq)} \to S_\fq$ with the following property: 
\begin{equation*}
(\loc_\fq )_! \circ \varphi_! = \psi_! \circ (\loc_{\varphi^{-1}(\fq)})_!
\end{equation*}

Therefore $(\varphi_! M)_\fq \cong \psi_! ( M_{\varphi^{-1}(\fq)} )$. Since extension of scalars preserves free rank (Proposition \ref{cor: extension of scalars preserves free rank}), it follows that the free $S_\fq$-module $(\varphi_! M)_\fq$ has the same rank as the free $R_{\varphi^{-1}(\fq)}$-module $M_{\varphi^{-1}(\fq)}$.
\end{proof}


\subsection{(Faithfully) Flat Modules}

An $R$-module $N$ is \emph{flat} if the functor $- \otimes_R N : \bMod_R \to \bMod_R$ is exact, i.e. if whenever $K \overset{f}{\to} L \overset{g}{\to} M$ is an exact sequence of $R$-modules, $K \otimes_R N \overset{f \otimes 1_N}{\to} L \otimes_R N \overset{g \otimes 1_N}{\to} M \otimes_R N$ is exact. Since $- \otimes_R N$ is always right exact for any $N$, $N$ is flat if and only if whenever $\varphi : L \to M$ is an injective $R$-module homomorphism, $\varphi \otimes 1_N : L \otimes_R N \to M \otimes_R N$ is also injective.

The module $N$ is \emph{faithfully flat} if a sequence of $R$-modules $K \overset{f}{\to} L \overset{g}{\to} M$ is exact if and only if the sequence $K \otimes_R N \overset{f \otimes 1_N}{\to} L \otimes_R N \overset{g \otimes 1_N}{\to} M \otimes_R N$ is exact. 


\subsection{Faithfully Flat Ring Homomorphisms}

A ring homomorphism $\varphi : R \to S$ is \emph{(faithfully) flat} if $\varphi^* S$ is a (faithfully) flat $R$-module. Explicitly this means that $\varphi$ is (faithfully) flat if given a sequence of $R$-modules $K \overset{f}{\to} L \overset{g}{\to} M$, this sequence is exact (if and) only if the sequence of $R$-modules $K \otimes_R S  \overset{f \otimes \id_S}{\to} L \otimes_R S \overset{g \otimes \id_S}{\to} M \otimes_R S$ is exact. 

In the following proposition (the proof of which can be found at \cite[{00H9}]{stacks-project}), we collect some equivalent characterizations of faithfully flat ring homomorphisms  for future use.

\begin{proposition}\label{prop: equivalent characterizations of faithful flatness}
Let $\varphi : R \to S$ be a flat ring homomorphism. The following are equivalent. 
\begin{itemize}

\item $\varphi$ is faithfully flat.

\item Whenever $\varphi_! M = M \otimes_R S = 0$, this implies $M = 0$.

\item The induced map $\varphi^* : \Spec(S) \to \Spec(R)$ is surjective. 

\item $\varphi$ is injective and $M \to M \otimes_R S$ is injective for every $R$-module $M$. 

\end{itemize}
\end{proposition}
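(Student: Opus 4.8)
The plan is to funnel every implication through condition (2), that $M \otimes_R S = 0$ forces $M = 0$; concretely I will prove $(1)\Leftrightarrow(2)$, $(2)\Leftrightarrow(3)$, and $(2)\Leftrightarrow(4)$. The implication $(1)\Rightarrow(2)$ is immediate: if $M \otimes_R S = 0$ then tensoring the sequence $0 \to M \to 0$ with $S$ yields an exact sequence, so faithful flatness forces $0 \to M \to 0$ to be exact, i.e. $M = 0$. For $(2)\Rightarrow(1)$, suppose $K \xrightarrow{f} L \xrightarrow{g} M$ becomes exact after $-\otimes_R S$. Using flatness of $S$ to identify $\im(-)\otimes_R S$, $\ker(-)\otimes_R S$, and cokernels of $-\otimes_R S$ with the corresponding constructions for the tensored maps, one gets $\im(g\circ f)\otimes_R S = 0$ and $\bigl(\ker g / \im f\bigr)\otimes_R S = 0$; then (2) forces $g\circ f = 0$ and $\ker g = \im f$, so the original sequence is exact.

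For $(2)\Rightarrow(3)$, fix $\fp \in \Spec(R)$ and recall that primes $\fq$ of $S$ with $\varphi^{-1}(\fq) = \fp$ are in bijection with primes of the fiber ring $S \otimes_R \kappa(\fp)$, where $\kappa(\fp) = R_\fp / \fp R_\fp$. Since $\kappa(\fp) \neq 0$, condition (2) gives $\kappa(\fp)\otimes_R S \neq 0$, hence it has a prime ideal, hence $\fp \in \im(\varphi^*)$. For the converse $(3)\Rightarrow(2)$, let $M \neq 0$ and pick $0 \neq x \in M$; then $\Ann_R(x)$ is a proper ideal, so it lies in some $\fp \in \Spec(R)$, and by (3) there is $\fq \in \Spec(S)$ with $\varphi^{-1}(\fq) = \fp$. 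Flatness gives an injection $S/\Ann_R(x)S \cong (R/\Ann_R(x))\otimes_R S \hookrightarrow M \otimes_R S$, and since $\varphi(\Ann_R(x)) \subseteq \varphi(\fp) \subseteq \fq$ the ideal $\Ann_R(x)S$ is proper; hence $M \otimes_R S \neq 0$.

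For $(4)\Rightarrow(2)$: if $M \otimes_R S = 0$ then the injection $M \hookrightarrow M \otimes_R S$ of (4) shows $M = 0$. For $(2)\Rightarrow(4)$, first note that $\ker\varphi \otimes_R S \cong \ker(\varphi \otimes 1_S)$ by flatness, and under the canonical identification $R \otimes_R S \cong S$ the map $\varphi \otimes 1_S$ becomes the comultiplication $S \to S \otimes_R S$, $s \mapsto 1 \otimes s$, which is split injective via the multiplication map $S \otimes_R S \to S$; so $\ker\varphi \otimes_R S = 0$ and (2) gives $\ker\varphi = 0$. For an arbitrary module, set $K = \ker(M \to M \otimes_R S)$; flatness makes $K \otimes_R S \hookrightarrow M \otimes_R S$ injective, but each simple tensor $k \otimes s$ maps to $s\cdot(k\otimes 1) = 0$, so this injection is the zero map, giving $K \otimes_R S = 0$ and hence $K = 0$ by (2).

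The step I expect to require the most care is $(2)\Leftrightarrow(3)$: one must set up cleanly the correspondence between the fiber $(\varphi^*)^{-1}(\fp)$ and $\Spec(S \otimes_R \kappa(\fp))$, and in the reverse direction keep careful track of which ideals remain proper after extension along $\varphi$. The remaining implications are essentially bookkeeping with flatness — used to slide $-\otimes_R S$ past exactness, kernels, and images — together with the one genuinely structural fact invoked above, namely that the comultiplication $S \to S \otimes_R S$ is split by multiplication.
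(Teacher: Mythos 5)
Your proof is correct. Note that the paper itself gives no argument for this proposition: it simply cites the Stacks Project (tag 00H9), and your proof is in substance the standard one found there — routing everything through condition (2), using the fiber ring $S \otimes_R \kappa(\fp)$ for the equivalence with surjectivity of $\Spec(S) \to \Spec(R)$, the annihilator/cyclic-submodule trick for the converse, and the splitting of $s \mapsto 1 \otimes s$ by multiplication for injectivity. The only cosmetic remark is that your separate treatment of $\ker\varphi$ in the step $(2)\Rightarrow(4)$ is redundant, since applying your general argument to $M = R$ already identifies $\ker(R \to R \otimes_R S)$ with $\ker\varphi$.
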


\begin{lemma}\label{lem: flatness is preserved by pushouts}
If $\varphi : R \to S$ and $\psi : R \to T$ are ring homomorphisms and $\psi$ is (faithfully) flat then $S \to S \otimes_R T$ given by $s \mapsto s \otimes 1$ is also (faithfully) flat. 
\end{lemma}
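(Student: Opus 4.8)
The plan is to reduce the statement to a base-change computation at the level of tensor products. Suppose first that $\psi : R \to T$ is flat. To show that $\iota_S : S \to S \otimes_R T$ is flat, I would check the injectivity criterion for flatness: if $L \hookrightarrow M$ is an injective $S$-module homomorphism, I want $L \otimes_S (S \otimes_R T) \to M \otimes_S (S \otimes_R T)$ to be injective. First I would use the canonical isomorphism $L \otimes_S (S \otimes_R T) \cong L \otimes_R T$ (this is the ``change of rings twice'' isomorphism from the excerpt, applied with the structure maps $R \to S \to S\otimes_R T$, identifying $(\loc)_!$ compositions; concretely $\ell \otimes (s \otimes t) \mapsto \ell s \otimes t$), and similarly for $M$. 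Under these identifications the map $L \otimes_S (S \otimes_R T) \to M \otimes_S (S \otimes_R T)$ becomes the map $L \otimes_R T \to M \otimes_R T$ induced by restricting $L \hookrightarrow M$ to an $R$-module homomorphism via $\psi^* $. Since $L \to M$ is injective as a map of abelian groups, it is injective as a map of $R$-modules, and since $\psi$ is flat, $L \otimes_R T \to M \otimes_R T$ is injective. Hence $\iota_S$ is flat.

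For the faithfully flat case, I would add the extra verification using the second characterization in Proposition \ref{prop: equivalent characterizations of faithful flatness}: given an $(S \otimes_R T)$-module $N$ with $N \otimes_{S \otimes_R T} (S \otimes_R T) = N = 0$ — wait, that's trivial, so instead I would argue directly that $\iota_S$ is faithfully flat by checking that for every nonzero $S$-module $M$, we have $M \otimes_S (S \otimes_R T) \neq 0$. Again using $M \otimes_S (S \otimes_R T) \cong M \otimes_R T$, and noting that $M \neq 0$ as an $S$-module implies $M \neq 0$ as an $R$-module (via $\psi^*$), faithful flatness of $\psi$ gives $M \otimes_R T \neq 0$. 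Combined with the flatness already shown, this gives that $\iota_S$ is faithfully flat by the characterization ``flat, and $- \otimes$ kills only the zero module.''

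The main obstacle — really the only subtlety — is bookkeeping the natural isomorphism $M \otimes_S (S \otimes_R T) \cong M \otimes_R T$ carefully enough to see that it is compatible with an arbitrary $S$-module map $L \to M$, i.e. that the square relating $L \otimes_S(S\otimes_R T) \to M \otimes_S(S\otimes_R T)$ and $L\otimes_R T \to M \otimes_R T$ commutes. This is exactly the naturality of the ``change of rings twice'' isomorphism discussed in the excerpt (it is a natural isomorphism $\psi_! \circ \varphi_! \cong (\psi\varphi)_!$), so once that is invoked the diagram commutes automatically and no explicit element-chase is needed; I would simply cite that paragraph. Everything else is a direct application of the definitions of flat and faithfully flat modules together with Proposition \ref{prop: equivalent characterizations of faithful flatness}.
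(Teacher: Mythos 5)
Your proposal is correct and follows essentially the same route as the paper: both arguments reduce everything through the canonical isomorphism $M \otimes_S (S \otimes_R T) \cong M \otimes_R T$ and then invoke the (faithful) flatness of $\psi$, together with Proposition \ref{prop: equivalent characterizations of faithful flatness} for the faithfully flat case. The only cosmetic difference is that you verify flatness via the injectivity-of-monomorphisms criterion while the paper tensors a short exact sequence, which amounts to the same thing.
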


\begin{proof}
First we show that $S \to S \otimes_R T$ is flat if $\psi$ is. Suppose that $0 \to L \to M \to N \to 0$ is an exact sequence of $S$-modules. Then we form the sequence $0 \to L \otimes_S S \otimes_R T \to M \otimes_S S \otimes_R T  \to N \otimes_S S \otimes_R T \to 0$ of $S$-modules. But since $L \otimes_S S \cong L$ and similarly for $M$ and $N$, this sequence is isomorphic to $0 \to L \otimes_R T \to M \otimes_R T  \to N \otimes_R T \to 0$ which is exact because $\psi$ is flat by hypothesis. 

Now suppose that $\psi$ is faithfully flat, and let $N$ be an $S$-module which satisfies $N \otimes_S S \otimes_R T = 0$. Then as $N \otimes_S S \otimes_R T \cong N \otimes_R T$, we have that $N \otimes_R T = 0$, and as $\psi$ is faithfully flat, by Proposition \ref{prop: equivalent characterizations of faithful flatness} it follows that $N = 0$. So again by Proposition \ref{prop: equivalent characterizations of faithful flatness}, we have that $\psi$ is faithfully flat. 
\end{proof}

Recall that we can think of $M \otimes_R S$ as either an $R$-module or and $S$-module (which we usually denote by $\varphi_!$), but this does not affect whether this module is $0$. Therefore if $S$ is already flat, then it is faithfully flat if and only if whenever $\varphi_! M  = 0$ it follows that $M = 0$.

\begin{lemma}[\cite{demazure-gabriel1980} Lemma 3.14]\label{lem: DG Lemma 3.14}
Let $\varphi: R \to S$ be a faithfully flat ring homomorphism. Suppose $N , K , L$ are modules over $S$, $S \otimes_R S$ and $S \otimes_R S \otimes_R S$ respectively, and we have maps
\begin{equation*}
N \ \substack{\overset{d}{\longrightarrow} \\ \overset{e}{\longrightarrow}}  \ K \ \substack{\overset{a}{\longrightarrow} \\ \overset{b}{\longrightarrow} \\ \overset{c}{\longrightarrow}} \ L
\end{equation*}
which are adapted, respectively to the ring homomorphisms 
\begin{equation*}
S \ \substack{\overset{\delta}{\longrightarrow} \\ \overset{\epsilon}{\longrightarrow}}  \ S \otimes_R S \ \substack{\overset{\alpha}{\longrightarrow} \\ \overset{\beta}{\longrightarrow} \\ \overset{\gamma}{\longrightarrow}} \ S \otimes_R S \otimes_R S
\end{equation*}
where we define $\delta(s) = s \otimes 1$ and $\epsilon(s) = 1 \otimes s$, and $\alpha(s \otimes t) = s \otimes t \otimes 1$, $\beta(s \otimes t) = s \otimes 1 \otimes t$, and $\gamma(s \otimes t) = 1 \otimes s \otimes t$. 
Suppose further that $a \circ d = b \circ d$, $a \circ e = c \circ d$, and $b \circ e = c \circ e$. Then if we define $J = \ker(d - e)$, then the inclusion of $J$ into $N$ induces an isomorphism $J \otimes_R S \overset{\sim}{\to} N$.

\end{lemma}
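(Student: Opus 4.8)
The plan is to recognize this as a faithfully flat descent statement for the module $N$ along $\varphi : R \to S$, presented in the language of adapted maps from Definition~\ref{def: adapted function}. Since $d, e : N \to K$ are adapted to $\delta, \epsilon : S \to S \otimes_R S$, the adjunction $\varphi_! \dashv \varphi^*$ produces $S \otimes_R S$-module isomorphisms $\Phi(d) : N \otimes_S (S \otimes_R S) \overset{\sim}{\to} K$ (via $\delta$) and $\Phi(e) : N \otimes_S (S \otimes_R S) \overset{\sim}{\to} K$ (via $\epsilon$); here the two tensor products are along the two different maps $S \to S \otimes_R S$. Composing $\Phi(e)^{-1} \circ \Phi(d)$ gives an isomorphism $\theta : N \otimes_{S, \delta} (S \otimes_R S) \overset{\sim}{\to} N \otimes_{S, \epsilon} (S \otimes_R S)$, i.e. a descent datum on the $S$-module $N$. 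First I would verify that the three compatibility equations $a \circ d = b \circ d$, $a \circ e = c \circ d$, $b \circ e = c \circ e$, together with the fact that $a, b, c$ are adapted to $\alpha, \beta, \gamma$, translate precisely into the cocycle condition for $\theta$ over $S \otimes_R S \otimes_R S$; this is a diagram-chase using the canonical isomorphisms for iterated change of rings recorded in the subsection ``Change of Rings Twice.''

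Once $\theta$ is known to be a descent datum, I would invoke faithfully flat descent for modules (effectivity of descent data along $\varphi$ faithfully flat) to produce an $R$-module $M$ with $M \otimes_R S \cong N$ compatibly with $\theta$. The remaining task is to identify $M$ concretely as $J = \ker(d - e)$. For this I would argue that the descended module is computed as the equalizer of the two maps $N \rightrightarrows K$ coming from $d$ and $e$ after identifying $K$ with $N \otimes_{S,\delta}(S\otimes_R S)$: classically, for a faithfully flat $\varphi$, an $R$-module $M$ is recovered from its base change as $M = \{ n \in N : d(n) = e(n) \}$ under the canonical descent datum, and this is exactly $J$. Then the natural map $J \otimes_R S \to N$ (induced by the inclusion $J \hookrightarrow N$ and the $S$-action) agrees with the descent isomorphism, hence is itself an isomorphism.

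Alternatively — and this is probably the cleaner route to write — I would avoid citing general descent and instead mimic the proof of Proposition~\ref{prop: filtered colimits of sets}'s style directly: reduce to showing that $0 \to J \otimes_R S \to N \to \operatorname{coker} \to 0$ forces the cokernel to vanish. Concretely, $J = \ker(d-e)$ sits in $0 \to J \to N \xrightarrow{d-e} K$, and since $d, e$ are adapted, tensoring this sequence with $S$ along $\varphi$ and using that $\varphi_!$ is exact on the left when $\varphi$ is flat (here we need flatness of $\varphi$ to preserve the kernel) turns it into $0 \to J \otimes_R S \to N \otimes_R S \to K \otimes_R S$; comparing with the Amitsur-type complex $N \rightrightarrows K \substack{\to\\\to\\\to} L$, which is exact by faithfully flat descent (the standard fact that $R \to S \rightrightarrows S \otimes_R S$ is an equalizer, applied after base change), one identifies $J \otimes_R S$ with the equalizer, which is $N$ itself. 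The map realizing this identification is the canonical $J \otimes_R S \to N$, so it is an isomorphism.

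The main obstacle I anticipate is bookkeeping rather than conceptual: one must keep careful track of \emph{which} $S$-algebra structure on $S \otimes_R S$ (and on $S^{\otimes 3}$) each of $d, e, a, b, c$ is adapted to, and show the three hypothesized relations are exactly the cocycle/coherence conditions — the indices $\alpha, \beta, \gamma$ versus $\delta, \epsilon$ must be matched correctly, and a sign or transposition error there would break the argument. A secondary subtlety is that the statement only assumes $\varphi$ faithfully flat (not, say, $S$ finitely presented over $R$), so I must use a form of descent valid at that level of generality; fortunately the classical Amitsur-complex argument works verbatim for any faithfully flat $\varphi$, which is why I lean toward the third approach above.
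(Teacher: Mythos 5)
The paper never proves this lemma: it is imported verbatim from Demazure--Gabriel (the cited Lemma 3.14) and used as a black box in the proof of Theorem \ref{thm: grassmannian is an fppf sheaf}, so there is no in-paper argument to compare against. Your first route --- repackaging the adapted maps and the three relations as a descent datum satisfying the cocycle condition, invoking effectivity of faithfully flat descent, and recovering the descended module as $\ker(d-e)$ --- is in spirit the same move the paper makes, namely an appeal to the standard descent theorem; it is acceptable as a proof-by-citation, with the caveat that ``effectivity of descent for modules'' is a theorem whose usual proof \emph{is} this lemma, so it proves nothing new beyond correctly matching the bookkeeping (which is real but routine work).

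The route you say you would actually write up, however, has a genuine gap. You justify the key step by asserting that the Amitsur-type complex built from $N$, $K$, $L$ is ``exact by faithfully flat descent, applied after base change.'' The exactness result available (Theorem \ref{thm: amitsur complex is exact for faithfully flat homomorphisms}) applies only to complexes of the form $0 \to M \to M\otimes_R S \to M\otimes_R S\otimes_R S$ for an $R$-module $M$; here $N$ is merely an $S$-module, and the exactness of its Amitsur-type complex is \emph{equivalent} to the assertion that $N$ descends, i.e.\ to the lemma itself --- so as written the argument is circular. The standard way to close the gap is the section trick: base change the entire situation along $\varphi$ itself, observe that $\delta : S \to S\otimes_R S$ admits the retraction given by multiplication, prove the lemma directly in the split case by an elementary computation, and then use flatness of $\varphi$ (so that $\ker(d-e)\otimes_R S$ computes the kernel after base change) together with faithful flatness (to reflect the isomorphism) to descend the conclusion; the hypotheses $a\circ d = b\circ d$, $a\circ e = c\circ d$, $b\circ e = c\circ e$ are exactly what guarantees the base-changed data again satisfy the hypotheses of the lemma. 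That device, or an equivalent direct computation, is the missing idea in your sketch.
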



\subsection{The Amitsur Complex}

\begin{theorem}[Grothendieck, \cite{grothendieck1962}] \label{thm: amitsur complex is exact for faithfully flat homomorphisms}
If $\varphi : R \to S$ is faithfully flat, and $M$ is an $R$-module, then the \emph{Amitsur complex}
\begin{equation*}
0 \to M \overset{\id_M \otimes \varphi}{\longrightarrow} M \otimes_R S \overset{\id_M \otimes (\delta - \epsilon)}{\longrightarrow} M \otimes_R S \otimes_R S \overset{\id_M \otimes (\alpha - \beta + \gamma)}{\longrightarrow} M \otimes_R S \otimes_R S \otimes_R S \to \ldots
\end{equation*}
is an exact sequence of $R$-modules. In particular, 
\begin{equation*}
0 \to R \overset{\varphi}{\to} S \overset{\delta - \epsilon}{\longrightarrow} S \otimes_R S \overset{\alpha - \beta + \gamma}{\longrightarrow} S \otimes_R S \otimes_R S \to \ldots
\end{equation*}
is and exact sequence of $R$-modules. 
\end{theorem}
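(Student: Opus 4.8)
The plan is a standard ``faithfully flat base change plus contracting homotopy'' argument. Since $\varphi : R \to S$ is faithfully flat, a complex of $R$-modules is exact if and only if it stays exact after applying $- \otimes_R S$ (this is exactly what it means for $\varphi^* S$ to be a faithfully flat $R$-module), so it suffices to prove that the Amitsur complex of $M$ becomes exact after this base change. First I would record, using the canonical associativity isomorphisms from the ``change of rings twice'' section, that base-changing the Amitsur complex of $M$ along $\varphi$ produces a complex
\begin{equation*}
0 \to M \otimes_R S \to M \otimes_R S \otimes_R S \to M \otimes_R S \otimes_R S \otimes_R S \to \cdots
\end{equation*}
whose term in degree $n \ge -1$ is identified with $M \otimes_R S^{\otimes_R(n+2)}$ and which is again of Amitsur type, with all differentials $d^n \otimes \id_S$ built from the usual alternating sums of ``insert a $1$'' maps, arranged so that none of them ever modifies the final $S$-tensor factor, namely the copy of $S$ that $- \otimes_R S$ appended.

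Next I would exhibit an explicit contracting homotopy that uses that appended factor together with the multiplication map of $S$. Writing $\widetilde A^{\,n} := M \otimes_R S^{\otimes_R(n+2)}$ for $n \ge -1$, define $h^{\,n} : \widetilde A^{\,n} \to \widetilde A^{\,n-1}$ for $n \ge 0$ on simple tensors, and $h^{\,-1} = 0$, by
\begin{align*}
h^{\,n}(m \otimes b_1 \otimes \cdots \otimes b_{n+1} \otimes s) &= m \otimes b_1 \otimes \cdots \otimes b_n \otimes (b_{n+1}s), \\
h^{\,0}(m \otimes b_1 \otimes s) &= m \otimes (b_1 s).
\end{align*}
The point to check is the homotopy identity $h^{\,n+1}\circ(d^n \otimes \id_S) + (d^{n-1}\otimes \id_S)\circ h^{\,n} = \id_{\widetilde A^{\,n}}$ for every $n \ge -1$; this is a direct calculation on simple tensors in which, for each ``insert a $1$'' summand of $d^n$, either the inserted $1$ is immediately contracted by $h$ (returning the original tensor) or the summand telescopes against a matching summand produced by the other composite. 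Granting this identity, the base-changed complex is null-homotopic, hence exact, and therefore the original Amitsur complex is exact; the ``in particular'' statement is the case $M = R$.

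I expect the only genuine work to be bookkeeping: verifying that, after threading the term $M \otimes_R S^{\otimes_R(n+1)} \otimes_R S$ through the associativity isomorphisms, the base-changed differential $d^n \otimes \id_S$ really does have the face-map description above (fixing the appended factor), and then performing the sign-chase in the homotopy identity in arbitrary degree, which is transparent in degrees $-1$ and $0$. An alternative is to extract exactness at the first two spots from Lemma \ref{lem: DG Lemma 3.14} together with the cocycle relations and then bootstrap the higher terms, but the contracting homotopy handles the entire complex uniformly and needs no case analysis.
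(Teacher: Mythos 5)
Your proposal is correct. Note that the paper itself offers no proof of this theorem at all — it is stated with a citation to Grothendieck — so there is nothing internal to compare against; what you give is the classical self-contained argument (faithfully flat base change plus an explicit contracting homotopy built from the appended tensor factor and the multiplication of $S$), which is exactly the standard proof found in the descent literature. Two small points worth making explicit when you write it up. First, the reduction step is immediate from the paper's own definition of a faithfully flat module: a sequence of $R$-modules is exact if and only if it is exact after $-\otimes_R S$, so it suffices to contract the base-changed complex. Second, the sign bookkeeping you defer does come out right with the paper's specific convention ($d^0 = \id_M\otimes(\delta-\epsilon)$, $d^1 = \id_M\otimes(\alpha-\beta+\gamma)$, i.e.\ the ``insert $1$ at the end'' face carries sign $+1$): for instance in degree one, $h^2\bigl((d^1\otimes\id_S)(m\otimes b_1\otimes b_2\otimes s)\bigr) = m\otimes b_1\otimes b_2\otimes s - m\otimes b_1\otimes 1\otimes b_2s + m\otimes 1\otimes b_1\otimes b_2s$, while $(d^0\otimes\id_S)\bigl(h^1(m\otimes b_1\otimes b_2\otimes s)\bigr) = m\otimes b_1\otimes 1\otimes b_2s - m\otimes 1\otimes b_1\otimes b_2s$, and the sum is the identity; the general degree is the same telescoping (``extra degeneracy'') computation, since the face inserting $1$ in the last genuine slot is killed against the appended factor by $h$ and every other face of $d^n\otimes\id_S$ commutes past $h$ to match a face of $d^{n-1}\otimes\id_S$. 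With those two remarks supplied, the argument is complete, and the ``in particular'' statement is indeed just the case $M=R$.
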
 


\subsection{$k$-Functors}

A \emph{$k$-functor} is a functor $\mX: \bAlg_k \to \bSet$. A \emph{morphism of $k$-functors} $\mX \to \mY$ is just a natural transformation. The category of $k$-functors is simply the functor category $[\bAlg_k , \bSet]$. Here it must be said that as Demazure and Gabriel do in \cite{demazure-gabriel1970} in order for the foundations to be rigorous, we should fix two Grothendieck universes $\mathbb{U} \subseteq \mathbb{V}$ with $\nn \in \mathbb{U}$ and restrict all objects in any category we work with (such as $\bSet$) to be elements of $\mathbb{V}$, and restrict the elements of $\bAlg_k$ to only those $k$-algebras in $\mathbb{U}$ with $k$ also in $\mathbb{U}$ (Demazure and Gabriel call such rings \emph{models}). We will be loose with these foundations, but modifications can be made if desired by the reader.

A \emph{subfunctor} of $\mX$ is a $k$-functor $\mY$ such that for any $k$-algebra $A$, $\mY(A)$ is a subset of $\mX(A)$, and for any $k$-algebra morphism $\varphi: A \to B$, $\mY(\varphi) = \mX(\varphi)|_{\mY(A)}$. If $\mZ$ is a subfunctor of a $k$-functor $\mY$, and $f : \mX \to \mY$ is a morphism, then the \emph{inverse image} $f^{-1}\mZ$ is the subfunctor of $\mX$ defined by $R \mapsto f_R^{-1}(\mZ(R)) \subseteq \mX(R)$. We also define $\im f$ to be the subfunctor of $\mY$ given by $R \mapsto \im f_R \subseteq \mY(R)$. 

Via the Yoneda embedding, any $k$-algebra $R$ defines a $k$-functor $h^R: \bAlg_k \to \bSet$ which sends $A \mapsto \Hom_k(R , A)$. Sometimes, to emphasize that we are thinking of $R$ as a $k$-algebra, we write $h^R = h^R_{\bAlg_k}$. Then as before a $k$-functor $\mX$ is representable if there exists a $k$-algebra $R$ such that $\mX \cong h^R$. 

There is also a slightly more general notion of representability in the context of $k$-functors. We say that a $k$-functor $\mX$ is \emph{representable by schemes} if there exists a scheme $Y$ over $k$ such that $\mX$ is isomorphic to the $k$-functor $R \mapsto \Hom_{\bSch / k}( \Spec(R) , Y)$. If $Y$ happens to be an affine scheme, say isomorphic to $\Spec(A)$, then we have $\Hom_{\bSch / k}( \Spec(R) , \Spec(A)) \cong \Hom_k(A , R)$, so any representable $k$-functor is automatically representable by schemes. 

We now give a few classic examples of $k$-functors, and discuss their representability. 

\begin{example}[The Affine Line]
The \emph{affine line} is the functor $\aaa^1_k : \bAlg_k \to \bSet$ defined by sending $R \mapsto R$ and $[\varphi : R \to S] \mapsto \varphi$, i.e. it is the forgetful functor. It is represented by the polynomial ring $k[x]$ since we have a natural isomorphism $\Phi: h^{k[x]} \to \aaa^1_k$ given by $\Phi_R( \psi ) = \psi(x)$ for $\psi \in h^{k[x]}(R) = \Hom_k(k[x] , R)$. 
\end{example}

\begin{example}[Affine $n$-Space]
Extending the example above, \emph{affine $n$-space} is the functor $\aaa^n_k : \bAlg_k \to \bSet$ defined by sending $R \mapsto R^n$ and $[\varphi : R \to S] \mapsto [\varphi^n : (r_1, \ldots, r_n) \mapsto (\varphi(r_1), \ldots, \varphi(r_n)) ]$. This functor is represented by the polynomial ring $k[x_1, \ldots, x_n]$, since there is a natural isomorphism $\Phi: h^{k[x_1, \ldots, x_n]} \to \aaa^n_k$ given by $\Phi_R( \psi ) = ( \psi(x_1), \ldots, \psi(x_n) )$ for $\psi \in h^{k[x_1, \ldots, x_n]}(R) = \Hom_k(k[x_1, \ldots, x_n] , R)$. 
\end{example}

\begin{example}[Affine $I$-Space]
Let $I$ be an infinite set. For any $k$-algebra $R$, $R^{\oplus I} := \bigoplus_{i \in I} R$ and $R^I := \prod_{i \in I} R$ are not equal (the former is a strict submodule of the latter), so we have two possible generalizations of the previous examples: 
\

\begin{itemize}

\item There is a functor $\bAlg_k \to \bSet$ which sends $R \mapsto R^{I}$ and $[\varphi : R \to S ] \to  \varphi^I$. This functor is represented by the polynomial ring $k[x_i | i \in I ]$ for the same reason as above, and is studied e.g. in \cite{nagpal-snowden2021}.

\item We call the functor $\aaa^I_k : \bAlg_k \to \bSet$ which sends $R \mapsto R^{\oplus I}$ and $[\varphi : R \to S ] \to  \varphi^I$ \emph{affine $I$-space}. It turns out that this functor is not representable, though it is an ind-scheme.

\end{itemize}
\end{example}


\subsection{The Coordinate Ring of a $k$-Functor}

If $\mX$ is a $k$-functor, we define $\sO(\mX)$ to be the set $\Nat(\mX , \aaa^1)$\footnote{Note that here is one instance where it should be required that all $k$-algebras are contained in some universe $\mathbb{U}$, otherwise $\Nat(\mX , \aaa^1)$ will not in general be a set!}. Given $\eta , \rho \in \Nat(\mX , \aaa^1)$, we define the natural transformations $\eta + \rho$ and $\eta \rho$ by specifying for each $R \in \bAlg_k$, the functions $(\eta + \rho)_R, (\eta \rho)_R  : \mX(R) \to R$ to be $\eta_R + \rho_R$ and $\eta_R \rho_R$ respectively. One can check that this gives the structure of a commutative unital ring on $\sO(\mX)$. Furthermore, there is a canonical ring homomorphism $k \mapsto \sO(\mX)$ which sends $a \in k$ to the natural transformation $\alpha: \mX \to \aaa^1$ defined by letting $\alpha_R : \mX(R) \to R$ be the constant function $x \mapsto \phi_R(a)$ where $\phi_R : k \to R$ is the structure map. Hence $\sO(\mX)$ is a $k$-algebra, called the \emph{coordinate ring} of $\mX$. 

Furthermore, if $\rho : \mX \to \mY$ is a natural transformation, then pre-composition with $\rho$ gives a $k$-algebra homomorphism $\sO(\mY) \to \sO(\mX)$, and we obtain a functor $\sO : [\bAlg_k , \bSet] \to \bAlg_k^\op$. Recall that the Yoneda embedding is a functor $h^- : \bAlg_k^\op \to [\bAlg_k , \bSet]$. As is proven in \cite{demazure-gabriel1980} I.1.4.3, the relationship between these two functors is that the Yoneda embedding $h^-$ is right adjoint to the coordinate ring functor $\sO$. 

If $\mX$ is representable, i.e. $\mX \cong h^R$ for some $k$-algebra $R$, then the Yoneda lemma gives that
\begin{equation*}
\sO(\mX) \cong \sO(h^R) = \Nat(h^R , \aaa^1) \cong \aaa^1(R) = R.
\end{equation*}


\subsection{Pullbacks in the category of $k$-Functors}

Since limits and colimits in functor categories can be computed component-wise (Proposition \ref{prop: limits and colimits in functor categories}), in particular the pullback of a diagram $\mX \rightarrow \mY \leftarrow \mZ$ of $k$-functors is given by the functor which sends a $k$-algebra $A$ to the set $\mX(A) \times_{\mY(A)} \mZ(A)$. The category of $k$-functors has a terminal object, namely the functor $\aaa^0$ which sends a $k$-algebra $A$ to the set $\{ 0 \}$. It follows that the product of $\mX$ and $\mY$ is given by the pullback of the diagram $\mX \to \aaa^0 \leftarrow \mY$.  For any $k$-algebra $A$, we have that $h^k(A)$ has just a single element, namely the structure homomorphism $k \to A$, hence $\aaa^0 \cong h^k$, i.e. $\aaa^0$ is representable.

Suppose that $\mX \cong S$ , $\mY \cong h^R$, and $\mZ \cong h^T$ are representable $k$-functors and we have a diagram $\mX \rightarrow \mY \leftarrow \mZ$ of $k$-functors. Taking the coordinate ring yields a diagram of $k$-algebras isomorphic to $S \leftarrow R \rightarrow T$. The pushout of this diagram in the category of $k$-algebras is given by $S \otimes_R T$ together with the maps $S \to S \otimes_R T$ defined by $s \mapsto s \otimes 1$ and a similarly defined map $T \to S \otimes_R T$. Applying the Yoneda embedding to this pushout diagram of $k$-algebras yields a diagram of $k$-functors

\begin{center}
\begin{tikzcd}
 h^{S \otimes_R T}  \arrow[r] \arrow[d] &  h^T \arrow[d] \\
 h^S \arrow[r] & h^R
\end{tikzcd}
\end{center}

which is a pullback since the Yoneda embedding is a right adjoint and hence preserves limits. Therefore we have that the fiber product $\mX \times_{\mY} \mZ$ is isomorphic to $h^{S \otimes_R T}$ and is therefore representable. 


\subsection{Open and Closed Subfunctors}


We now use representable functors to define closed and open subfunctors, following \cite{jantzen2003}  I.1. Suppose that $R$ is a $k$-algebra and $\fr \subseteq R$ is an ideal. There are two important subfunctors $\mD(\fr)$ and $\mV(\fr)$ of $h^R$ defined respectively by $\mD(\fr)(A) = \{ \varphi : R \to A | A \varphi(\fr) = A \}$ and $\mV(\fr)(A) = \{ \varphi : R \to A | \varphi(\fr) = 0 \}$ for any $k$-algebra $A$. Note that for any ideal $\fr \subseteq R$, we have that $\mV(\fr) \cong h^{R / \fr}$, so this functor is representable. In general $\mD(\fr)$ may not be representable, however for any $f \in R$, we have that $\mD(f) \cong h^{R_f}$, so in this case it is. 

A subfunctor $\mY$ of a $k$-functor $\mX$ is called \emph{open} (resp. \emph{closed}) if for every $k$-algebra $R$ and every morphism $f: h^R_k \to \mX$, there exists an ideal $\fr \subseteq R$ such that $f^{-1}(\mY) = \mD(\fr)$ (resp. $f^{-1}(\mY) = \mV(\fr)$).


We now list some properties of closed subfunctors for future use, the proofs of which can be found in \cite{jantzen2003} I.1. 

\begin{proposition}\label{prop: properties of closed subfunctors}
\

\begin{enumerate}

\item If $\mX \cong h^R$ is representable then $\mY \subseteq \mX$ is open (resp. closed) if and only if $\mY = \mD(\fr)$ (resp. $\mY = \mV(\fr)$) for some ideal $\fr \subseteq R$. 

\item The subfunctor $\aaa^0 \subseteq \aaa^1$ which assigns to every $k$-algebra $R$ the subset $\{ 0 \} \subseteq R = \aaa^1(R)$ is closed. 

\item Let $\mX, \mY$ be $k$-functors, $\mZ \subseteq \mY$ a closed subfunctor, and $\eta : \mX \to \mY$ a natural transformation. The inverse image $\eta^{-1}(\mZ)$ is closed in $\mX$. 

\item The intersection of an arbitrary number of closed subfunctors is closed. 

\item If $\mX$ is a $k$-functor and $(\eta_i \mid i \in I)$ is a family of natural transformations $\mX \to \aaa^1$, then $V( \eta_i \mid i \in I ) := \bigcap_{i \in I} \eta_i^{-1}(\aaa^0)$ is closed in $\mX$. 

\end{enumerate}
\end{proposition}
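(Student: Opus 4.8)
The plan is to establish part (1) first, since it is the only part carrying genuine content, and then deduce parts (2)--(5) from it by formal manipulations of inverse-image subfunctors. For part (1), identifying $\mX$ with $h^R$: in the ``only if'' direction I would apply the defining condition of open (resp.\ closed) to the $k$-algebra $R$ itself together with the morphism $\id : h^R \to h^R$, which produces an ideal $\fr \subseteq R$ with $\mY = \id^{-1}(\mY) = \mD(\fr)$ (resp.\ $\mV(\fr)$). In the ``if'' direction I would first record how $\mD(\fr)$ and $\mV(\fr)$ pull back along a morphism between representable functors. Given a $k$-algebra $S$ and a morphism $f : h^S \to h^R$, the Yoneda lemma identifies $f$ with a unique $k$-algebra homomorphism $\varphi : R \to S$, so that $f_A(\psi) = \psi \circ \varphi$. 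Putting $\fa := S\varphi(\fr)$ for the ideal of $S$ generated by $\varphi(\fr)$, one checks component-wise that $\psi \circ \varphi \in \mD(\fr)(A)$ iff $A\psi(\varphi(\fr)) = A$ iff $A\psi(\fa) = A$, and that $\psi \circ \varphi \in \mV(\fr)(A)$ iff $\psi(\varphi(\fr)) = 0$ iff $\psi(\fa) = 0$; both equivalences hold because a ring homomorphism out of $S$ carries $\fa$ onto the ideal generated by the image of $\varphi(\fr)$. Hence $f^{-1}(\mD(\fr)) = \mD(\fa)$ and $f^{-1}(\mV(\fr)) = \mV(\fa)$, which is exactly the openness of $\mD(\fr)$ and the closedness of $\mV(\fr)$ inside $h^R$.

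Parts (3) and (4) are then purely formal. For (3): for any morphism $f : h^R \to \mX$ one has $f^{-1}(\eta^{-1}(\mZ)) = (\eta \circ f)^{-1}(\mZ)$, so closedness of $\mZ$ in $\mY$, applied to the morphism $\eta \circ f : h^R \to \mY$, yields an ideal $\fr \subseteq R$ with $(\eta \circ f)^{-1}(\mZ) = \mV(\fr)$, which is what is required. For (4): if $(\mZ_j \mid j \in J)$ are closed in $\mX$ and $f : h^R \to \mX$ is given, then each $f^{-1}(\mZ_j)$ equals $\mV(\fr_j)$ for some ideal $\fr_j \subseteq R$; since inverse image commutes with intersection and, evaluated on a $k$-algebra $A$, a homomorphism $R \to A$ kills every $\fr_j$ precisely when it kills the ideal $\sum_j \fr_j$ they generate, one obtains $f^{-1}\!\left( \bigcap_j \mZ_j \right) = \bigcap_j \mV(\fr_j) = \mV\!\left( \sum_j \fr_j \right)$.

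Part (2) now follows by combining (1) and (3): under the isomorphism $\Phi : h^{k[x]} \xrightarrow{\sim} \aaa^1$ of the Affine Line example, the subfunctor $\aaa^0 \subseteq \aaa^1$ is the image of $\mV((x)) \subseteq h^{k[x]}$, i.e.\ $\aaa^0 = (\Phi^{-1})^{-1}(\mV((x)))$; by (1) the subfunctor $\mV((x))$ is closed in the representable functor $h^{k[x]}$, hence by (3) its inverse image $\aaa^0$ is closed in $\aaa^1$. And part (5) is then immediate: by (2) the subfunctor $\aaa^0 \subseteq \aaa^1$ is closed, by (3) each $\eta_i^{-1}(\aaa^0)$ is closed in $\mX$, and by (4) the intersection $V(\eta_i \mid i \in I) = \bigcap_{i \in I} \eta_i^{-1}(\aaa^0)$ is closed. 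The one place where something must actually be computed is the pullback identity in part (1): translating a morphism $h^S \to h^R$ into the ring map $\varphi : R \to S$ via Yoneda and verifying that extending an ideal along $\varphi$ is compatible with the two conditions ``generates the unit ideal'' and ``maps to zero''; past that point everything is bookkeeping with inverse-image subfunctors.
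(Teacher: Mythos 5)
Your proposal is correct, and it follows the standard argument: the paper itself gives no proof of this proposition, deferring to \cite{jantzen2003} I.1, where the same strategy appears — the only substantive computation is that pulling back $\mD(\fr)$ (resp.\ $\mV(\fr)$) along a morphism $h^S \to h^R$, identified via Yoneda with $\varphi : R \to S$, gives $\mD(S\varphi(\fr))$ (resp.\ $\mV(S\varphi(\fr))$), after which (2)--(5) are formal consequences exactly as you derive them. Your verification of that computation and the reductions (including $\aaa^0 = (\Phi^{-1})^{-1}(\mV((x)))$ and $\bigcap_j \mV(\fr_j) = \mV(\sum_j \fr_j)$) are accurate, so there is nothing to correct.
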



Because of this proposition, it makes sense to define for a subfunctor $\mY$ of a $k$-functor $\mX$ the \emph{closure} $\overline{\mY}$ to be the intersection of all closed subfunctors of $\mX$ containing $\mY$. 


\subsection{$k$-Group Functors}

A \emph{$k$-group functor} is a functor $\mG : \bAlg_k \to \bGrp$ (the latter is the category of groups). If $\mG$ is a $k$-group functor, by post-composing with the forgetful functor $\bGrp \to \bSet$ one obtains a $k$-functor, called the \emph{underlying $k$-functor} which by abuse of notation we also denoted by $\mG$. A $k$-group functor is \emph{representable} if its underlying $k$-functor is representable.  

\begin{example}[The General Linear Group]
Let $n  \in \nn$. We define the \emph{general linear group functor} $\GL_n : \bAlg_k \to \bGrp$ by sending a $k$-algebra $R$ to the group $\GL_n(R)$ of $n \times n$ invertible matrices with entries in $k$, and a morphism $\varphi : R \to S$ to the group homomorphism $\GL_n(R) \to \GL_n(S)$ defined by $(r_{ij}) \mapsto (\varphi(r_{ij}) )$. Let $\det(x)$ denote the determinant polynomial in $k[x_{ij} \mid 1 \leq i,j \leq n]$. Then the general linear group functor is represented by the localized $k$-algebra $k[x_{ij} \mid 1 \leq i,j \leq n]_{\det(x)}$. 
\end{example}


\subsection{Zariski Sheaves}
A $k$-functor $\mX$ is called a \emph{Zariski sheaf} (or \emph{local}) if for any $k$-algebra $R$ and elements $f_1, \ldots, f_n \in R$ which generate the unit ideal, the following diagram is an equalizer in $\bSet$:

\begin{equation*}
\mX( R ) \to \prod_{i = 1}^n \mX(R_{f_i}) \rightrightarrows \prod_{i , j = 1}^n \mX(R_{f_i f_j}).
\end{equation*}

We shall use the following two important results about Zariski sheaves which can are I.1.8 (5) and I.1.12 (6) in \cite{jantzen2003} respectively. 

\begin{proposition}\label{prop: zariski sheaves satisfy fp1}
Any Zariski sheaf $\mX$ satisfies the following property
\begin{itemize}

\item[(FP1)] For any finite collection of $R$-algebras $R_1, \ldots, R_n$, the map 
\begin{equation*}
\mX(R_1 \times R_2 \times \ldots \times R_n) \to \mX(R_1) \times \mX(R_2) \times \ldots \times \mX(R_n)
\end{equation*}
induced by the projection maps $\pi_i : R_1 \times \ldots \times R_n \to R_i$ is a bijection. Note that this map is explicitly given by $x \mapsto ( \mX(\pi_1)(x) ,  \ldots \mX(\pi_n)(x) )$. 

\end{itemize}
\end{proposition}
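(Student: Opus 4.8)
The plan is to deduce (FP1) directly from the Zariski sheaf condition by choosing the covering family cleverly: for $A := R_1 \times \cdots \times R_n$ I would take the complete orthogonal system of idempotents $e_1, \dots, e_n \in A$, where $e_i$ is $1$ in the $i$-th slot and $0$ elsewhere. Since $e_1 + \cdots + e_n = 1$, these generate the unit ideal, so the hypothesis that $\mX$ is a Zariski sheaf says that
\begin{equation*}
\mX(A) \xrightarrow{\;\rho\;} \prod_{i=1}^{n} \mX(A_{e_i}) \;\rightrightarrows\; \prod_{i,j=1}^{n} \mX(A_{e_i e_j})
\end{equation*}
is an equalizer in $\bSet$, with $\rho$ having components $\mX(\loc_{e_i})$. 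I would then invoke two routine facts about localizing at an idempotent: (i) the projection $\pi_i : A \to R_i$ inverts $e_i$ and the induced map $A_{e_i} \to R_i$ is an isomorphism (injectivity uses $(1 - e_i)e_i = 0$), under which $\loc_{e_i}$ corresponds to $\pi_i$; and (ii) $e_i^2 = e_i$, so $A_{e_i^2} = A_{e_i}$ with identity transition map, while for $i \neq j$ one has $e_i e_j = 0$, so $A_{e_i e_j}$ is the zero ring. By (i), $\rho$ is identified with the map $x \mapsto (\mX(\pi_1)(x), \dots, \mX(\pi_n)(x))$ of the statement, so it remains to show $\rho$ is a bijection, for which I would use the characterization (EQ1)--(EQ2) of equalizers in $\bSet$ from Proposition \ref{prop: characterization of equalizers in set}.

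Before that I would record that $\mX(0)$ is a one-element set: the Zariski sheaf condition applied to $R = 0$ with the \emph{empty} covering family (the empty set generates $(0)$, which is the unit ideal of the zero ring) exhibits $\mX(0) \to \{*\} \rightrightarrows \{*\}$ as an equalizer, forcing $|\mX(0)| = 1$ by (EQ1)--(EQ2); this is also the $n = 0$ case of the proposition. Now, with $n \geq 1$ fixed, the crucial point is that the two parallel arrows $\phi, \psi : \prod_i \mX(A_{e_i}) \rightrightarrows \prod_{i,j} \mX(A_{e_i e_j})$ actually coincide. On the $(i,i)$-component both transition maps $A_{e_i} \to A_{e_i^2}$ are the identity by (ii), so $\phi$ and $\psi$ both pick out the $i$-th coordinate there; on the $(i,j)$-component with $i \neq j$, the target is $\mX(0)$, a singleton, so the two components agree automatically. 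Hence $\phi = \psi$.

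Given this, the conclusion is immediate: $(\mX(A), \rho)$ is an equalizer of the pair $\phi = \psi$, so (EQ1) gives that $\rho$ is injective, and (EQ2) says precisely that any $y \in \prod_i \mX(A_{e_i})$ with $\phi(y) = \psi(y)$ lies in the image of $\rho$ --- but since $\phi = \psi$ this holds for \emph{every} $y$, so $\rho$ is also surjective, hence bijective, and therefore so is the map in (FP1). I expect the only genuinely delicate points to be the two bookkeeping facts (i) and (ii) about localization at an idempotent together with the observation that $\mX(0)$ is a singleton; once those are in hand everything else is formal, and --- worth noting --- no induction on $n$ is required, since the full orthogonal idempotent system treats all $n$ factors at once (though one could alternatively bootstrap from the case $n = 2$ by writing $R_1 \times \cdots \times R_n = R_1 \times (R_2 \times \cdots \times R_n)$).
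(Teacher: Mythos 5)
Your proof is correct, and it is worth pointing out that the paper itself does not prove this proposition at all: it is quoted from Jantzen I.1.8(5). Your idempotent-localization argument supplies a complete self-contained proof and is essentially the standard one. All the identifications you rely on check out: $e_1+\cdots+e_n=1$ so the $e_i$ generate the unit ideal; $\pi_i$ inverts $e_i$ and the induced map $A_{e_i}\to R_i$ is an isomorphism (if $\pi_i(a)=0$ then $ae_i=0$, so $a/e_i^m=0$), which identifies $\mX(\loc_{e_i})$ with $\mX(\pi_i)$; for $i=j$ the two transition maps $A_{e_i}\to A_{e_ie_i}$ are literally the same map; and for $i\neq j$ the target is $\mX(0)$. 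The only genuinely delicate point is the one you isolated: that $\mX(0)$ is a singleton, which requires reading the Zariski condition as allowing the empty covering family of the zero ring (the paper adopts exactly this convention in the fppf setting, so it is the intended reading here too). This hypothesis is not a removable convenience: a functor such as the naive disjoint union $A\mapsto \Hom_k(B_1,A)\sqcup \Hom_k(B_2,A)$ of two representable functors satisfies the equalizer condition for every nonempty covering family, yet has $\mX(0)$ of size two and already fails (FP1) for $R_1=R_2=k$ (two elements mapping into a four-element set). So your step establishing $|\mX(0)|=1$ is essential and correctly justified, and with it the rest of the argument — $\phi=\psi$, hence (EQ1) gives injectivity and (EQ2) gives surjectivity of $\rho$ — is airtight; your observation that no induction on $n$ is needed is also accurate.
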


\begin{proposition}\label{prop: closed subfunctors of zariski sheaves are zariski sheaves}
Any closed subfunctor of a Zariski sheaf is also a Zariski sheaf. 
\end{proposition}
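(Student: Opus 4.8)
The plan is to verify conditions (EQ1) and (EQ2) of Proposition \ref{prop: characterization of equalizers in set} for the diagram
\[
\mY(R) \to \prod_{i=1}^n \mY(R_{f_i}) \rightrightarrows \prod_{i,j=1}^n \mY(R_{f_i f_j}),
\]
where $R$ is a $k$-algebra and $f_1, \ldots, f_n \in R$ generate the unit ideal. Condition (EQ1) is immediate: the map $\mY(R) \to \prod_i \mY(R_{f_i})$ is the restriction to $\mY(R) \subseteq \mX(R)$ of the corresponding map for $\mX$, which is injective since $\mX$ is a Zariski sheaf.

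For (EQ2), suppose $(y_i)_i \in \prod_i \mY(R_{f_i})$ has equal images under the two maps into $\prod_{i,j} \mY(R_{f_i f_j})$. Regarding each $y_i$ as an element of $\mX(R_{f_i})$, the tuple still has equal images in $\prod_{i,j} \mX(R_{f_i f_j})$, so because $\mX$ is a Zariski sheaf there is an $x \in \mX(R)$ with $\mX(\loc_{f_i})(x) = y_i$ for every $i$, where $\loc_{f_i} : R \to R_{f_i}$ is the localization map. The remaining task, and the crux of the argument, is to show $x \in \mY(R)$.

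To do this I would invoke closedness of $\mY$ through the Yoneda correspondence. Let $\Phi^x : h^R \to \mX$ be the natural transformation with $\Phi^x_A(\varphi) = \mX(\varphi)(x)$ for $\varphi : R \to A$, as supplied by the strong Yoneda lemma. Since $\mY$ is closed in $\mX$, there is an ideal $\fr \subseteq R$ with $(\Phi^x)^{-1}(\mY) = \mV(\fr)$. Computing both subfunctors of $h^R$ componentwise, this says precisely that for every $k$-algebra $A$ and every $\varphi : R \to A$ one has $\mX(\varphi)(x) \in \mY(A)$ if and only if $\varphi(\fr) = 0$; taking $A = R$ and $\varphi = \id_R$ shows that $x \in \mY(R)$ exactly when $\fr = 0$. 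Applying the equivalence instead with $\varphi = \loc_{f_i}$ and using $\mX(\loc_{f_i})(x) = y_i \in \mY(R_{f_i})$, we conclude $\loc_{f_i}(\fr) = 0$ for every $i$, i.e.\ $\fr$ lies in the kernel of $R \to \prod_i R_{f_i}$. That kernel is zero because the $f_i$ generate the unit ideal (this is exactly (EQ1) for the Zariski sheaf $\aaa^1$, or a one-line partition-of-unity computation), so $\fr = 0$ and therefore $x \in \mY(R)$. This gives (EQ2) and finishes the proof.

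The only point requiring care is the bookkeeping identifying $(\Phi^x)^{-1}(\mY)$ with $\mV(\fr)$ at the level of components; once the equivalence ``$\mX(\varphi)(x) \in \mY(A) \iff \varphi(\fr) = 0$'' is in hand, everything else is formal. I do not expect a genuine obstacle here: closedness of $\mY$ is a condition about pullbacks along maps out of representable functors, and the essential observation is simply that a Zariski-local lift $x$ of $(y_i)$ along the cover $\{R \to R_{f_i}\}$ automatically satisfies the defining closed condition, because that condition can be detected after localizing at the $f_i$.
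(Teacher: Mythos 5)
Your argument is correct, and every step checks out: (EQ1) restricts from $\mX$; for (EQ2) the glued element $x \in \mX(R)$ exists because $\mY$ is a subfunctor (so the compatibility in $\prod_{i,j}\mY(R_{f_if_j})$ is also compatibility in $\mX$), and the closedness of $\mY$ applied to $\Phi^x : h^R \to \mX$ gives an ideal $\fr$ with the equivalence ``$\mX(\varphi)(x) \in \mY(A) \iff \varphi(\fr)=0$''; since $\loc_{f_i}(\fr)=0$ for all $i$ and the kernel of $R \to \prod_i R_{f_i}$ vanishes when the $f_i$ generate the unit ideal, you get $\fr = 0$ and hence $x \in \mY(R)$ via $\varphi = \id_R$. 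Note that the paper itself gives no proof of this proposition — it is quoted from Jantzen (I.1.12/I.1.14) — so there is no in-paper argument to compare with directly; however, your proof is precisely the Zariski analogue of the paper's own proof of Corollary \ref{cor: closed subfunctors of sheaves are sheaves} for the fppf topology, where the same Yoneda-plus-closedness device produces an ideal that is then annihilated using injectivity of a faithfully flat map, whereas you use the joint injectivity of the localization maps $R \to \prod_i R_{f_i}$ (equivalently, (EQ1) for $\aaa^1$, or a partition-of-unity computation). The payoff of your version is that it makes the cited result self-contained within the paper's framework, using only Proposition \ref{prop: characterization of equalizers in set} and the definition of closed subfunctor.
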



\subsection{fppf-Sheaves}

An \emph{fppf covering family} of a $k$-algebra $R$ is a finite family $R_1 , R_2, \ldots, R_n$ of finitely presented $R$-algebras such that $R_1 \times \ldots \times R_n$ is a faithfully flat $R$-algebra. A $k$-functor $\mX$ is called an \emph{fppf sheaf} (or \emph{faisceau}) if for every $k$-algebra $R$ and every fppf covering family $R_1, \ldots, R_n$ of $R$, the following diagram is an equalizer in $\bSet$:
\begin{equation*}
\mX( R ) \to \prod_{i = 1}^n \mX(R_i) \rightrightarrows \prod_{i , j = 1}^n \mX(R_i \otimes_R R_j).
\end{equation*}
Note that the empty family is an fppf covering of the zero $k$-algebra, which implies that $\mX(0)$ is a singleton set. We denote the full subcategory of the category of $k$-functors consisting of fppf sheaves by $\bSh_{\fppf}$. 

The relationship between Zariski sheaves and fppf sheaves is described in the following proposition (see \cite{jantzen2003} I.5.3 (5)).

\begin{proposition}\label{prop: fppf sheaves are zariski sheaves}
Every fppf sheaf is a Zariski sheaf. 
\end{proposition}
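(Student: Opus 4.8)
The plan is to show that every Zariski covering family is in fact an fppf covering family, so that the defining equalizer condition for an fppf sheaf specializes directly to the one defining a Zariski sheaf. Let $\mX$ be an fppf sheaf, let $R$ be a $k$-algebra, and let $f_1, \ldots, f_n \in R$ generate the unit ideal.

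First I would check that the family $R_{f_1}, \ldots, R_{f_n}$ is an fppf covering of $R$. Each $R_{f_i}$ is a finitely presented $R$-algebra since $R_{f_i} \cong R[x]/(x f_i - 1)$. The product $S := R_{f_1} \times \cdots \times R_{f_n}$ is flat over $R$ because localization is flat and a finite product of flat modules is flat. For faithful flatness, by Proposition \ref{prop: equivalent characterizations of faithful flatness} it suffices to prove that $M \otimes_R S = 0$ implies $M = 0$ for every $R$-module $M$. Since tensoring commutes with finite products, $M \otimes_R S \cong \prod_{i=1}^n (M \otimes_R R_{f_i}) = \prod_{i=1}^n M_{f_i}$, so this vanishing forces each $M_{f_i} = 0$. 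Given any prime $\fp \subseteq R$, because $f_1, \ldots, f_n$ generate the unit ideal there is some $i$ with $f_i \notin \fp$; then $M_\fp$ is a further localization of $M_{f_i} = 0$, hence $M_\fp = 0$. As $M_\fp = 0$ for every prime $\fp$, we conclude $M = 0$. Thus $S$ is faithfully flat over $R$ and $(R_{f_i})_i$ is an fppf covering family.

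Next I would apply the fppf sheaf condition of $\mX$ to this covering family: the diagram $\mX(R) \to \prod_{i=1}^n \mX(R_{f_i}) \rightrightarrows \prod_{i,j=1}^n \mX(R_{f_i} \otimes_R R_{f_j})$ is an equalizer in $\bSet$. For each pair there is a canonical isomorphism $R_{f_i} \otimes_R R_{f_j} \cong R_{f_i f_j}$ (localizing $R$ successively at $f_i$ and $f_j$ is localization at the multiplicative set generated by $f_i$ and $f_j$), and under it the two structure maps $R_{f_i} \to R_{f_i} \otimes_R R_{f_j}$ and $R_{f_j} \to R_{f_i} \otimes_R R_{f_j}$ become the localization maps into $R_{f_i f_j}$. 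Applying $\mX$ and using functoriality, the above diagram is isomorphic to $\mX(R) \to \prod_{i=1}^n \mX(R_{f_i}) \rightrightarrows \prod_{i,j=1}^n \mX(R_{f_i f_j})$, which is therefore also an equalizer; this is exactly the Zariski sheaf condition. (When $R = 0$ the only Zariski covering is the empty one, and the fppf sheaf axiom already forces $\mX(0)$ to be a singleton, as required.)

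The only step with real content is the faithful flatness in the second paragraph — specifically, passing from the vanishing of $M \otimes_R S$ to the vanishing of all stalks $M_\fp$ using that the $\Spec(R_{f_i})$ cover $\Spec(R)$ because the $f_i$ generate the unit ideal. The identification of the two equalizer diagrams in the third paragraph is bookkeeping, though one must take a little care to match the two parallel arrows of the fppf diagram with those of the Zariski diagram after the isomorphism $R_{f_i} \otimes_R R_{f_j} \cong R_{f_i f_j}$.
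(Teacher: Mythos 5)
Your proof is correct. The paper does not give an argument at all here — it simply cites \cite{jantzen2003} I.5.3(5) — and what you have written is precisely the standard argument behind that citation: a Zariski covering $f_1,\ldots,f_n$ generating the unit ideal yields an fppf covering family (each $R_{f_i}\cong R[x]/(xf_i-1)$ is finitely presented, and faithful flatness of $R_{f_1}\times\cdots\times R_{f_n}$ follows from the criterion of Proposition \ref{prop: equivalent characterizations of faithful flatness} via the vanishing of all localizations $M_{\fp}$), after which the identification $R_{f_i}\otimes_R R_{f_j}\cong R_{f_if_j}$, compatible with the two structure maps and valid also for $i=j$, transports the fppf equalizer diagram to the Zariski one. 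Your treatment of the degenerate case $R=0$ via the empty covering is also consistent with the paper's conventions.
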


There is another characterization of fppf sheaves which will be useful. If $A, B$ are $k$-algebras and $\varphi : A \to B$ is a $k$-algebra homomorphism, we say that $B$ is an \emph{fppf-$A$-algebra} if it is a faithfully flat $A$-module and a finitely presented $A$-algebra.  

\begin{theorem}\label{thm: fppf sheaf characterization}
A $k$-functor $\mX$ is an fppf-sheaf if and only if the following two conditions are satisfied.
\begin{itemize}

\item[(FP1)] For any finite collection of $R$-algebras $R_1, \ldots, R_n$, the map 
\begin{equation*}
\mX(R_1 \times R_2 \times \ldots \times R_n) \to \mX(R_1) \times \mX(R_2) \times \ldots \times \mX(R_n)
\end{equation*}
induced by the projection maps $\pi_i : R_1 \times \ldots \times R_n \to R_i$ is a bijection (see Proposition \ref{prop: zariski sheaves satisfy fp1}). 

\item[(FP2)] For all $k$-algebras $R$ and all fppf-$R$-algebras $S$, the diagram
\begin{equation*}
\mX( R ) \to \mX(S) \rightrightarrows \mX(S \otimes_R S).
\end{equation*}

 is an equalizer in $\bSet$, where the two arrows on the right are induced by the ring homomorphisms $\delta , \epsilon : S \to S \otimes_R S$ defined by $\delta(s) =  s \otimes 1$ and $\epsilon(s) =  1 \otimes s$. 

\end{itemize}
\end{theorem}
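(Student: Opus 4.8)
The plan is to prove the two implications by comparing the general fppf-covering equalizer with the special case of a \emph{single} covering algebra, using that a finite product of algebras is covered by its factors in an extreme way: the pairwise tensor products off the diagonal vanish.

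$(\Rightarrow)$ Suppose $\mX$ is an fppf sheaf. Then (FP2) is just the sheaf condition for the one-element covering family $\{S\}$ of $R$, which is legitimate precisely because $S$ is an fppf-$R$-algebra. For (FP1), given $k$-algebras $R_1,\dots,R_n$, put $A := R_1 \times \dots \times R_n$ and regard each $R_i$ as an $A$-algebra via the projection $\pi_i$. I would check that $\{R_1,\dots,R_n\}$ is an fppf covering family of $A$: with $e_i \in A$ the $i$-th standard idempotent, $R_i \cong A/(1-e_i)A$ is a finitely presented $A$-algebra, and $R_1 \times \dots \times R_n = A$ is (trivially) faithfully flat over $A$. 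The sheaf condition then gives an equalizer
\[
\mX(A) \longrightarrow \prod_i \mX(R_i) \rightrightarrows \prod_{i,j} \mX(R_i \otimes_A R_j),
\]
whose left arrow is $x \mapsto (\mX(\pi_i)(x))_i$. For $i \neq j$ one has $R_i \otimes_A R_j = 0$ (since $e_i$ acts as $1$ on $R_i$ and as $0$ on $R_j$), so $\mX(R_i \otimes_A R_j)$ is a singleton (recall $\mX(0)$ is a singleton, the empty family being an fppf covering of the zero ring), while $R_i \otimes_A R_i \cong R_i$ with both canonical maps equal to the identity. Hence the two parallel arrows above coincide, and by Proposition~\ref{prop: characterization of equalizers in set} the left arrow is then both injective (EQ1) and surjective (EQ2), i.e.\ bijective. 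That is exactly (FP1).

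$(\Leftarrow)$ Suppose (FP1) and (FP2) hold, and let $R_1,\dots,R_n$ be an fppf covering family of a $k$-algebra $R$, with $S := R_1 \times \dots \times R_n$. First I would note that $S$ is an fppf-$R$-algebra: it is faithfully flat by hypothesis, and it is finitely presented over $R$ because a finite product of finitely presented algebras is finitely presented (for $A \times B$ one takes the generators and relations of $A$ and of $B$ together with one new idempotent generator $e$ and the relations $e^2 = e$, $(1-e)\cdot(\text{generators of }A) = 0$, $e\cdot(\text{generators of }B) = 0$, $e\cdot(\text{relations of }A) = 0$, $(1-e)\cdot(\text{relations of }B) = 0$; then induct on $n$). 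Applying (FP2) to $S$ gives an equalizer $\mX(R) \to \mX(S) \rightrightarrows \mX(S \otimes_R S)$. Because tensoring commutes with finite products, $S \otimes_R S \cong \prod_{i,j} R_i \otimes_R R_j$ as $R$-algebras, so (FP1) identifies $\mX(S) \cong \prod_i \mX(R_i)$ and $\mX(S \otimes_R S) \cong \prod_{i,j}\mX(R_i \otimes_R R_j)$. Under these identifications the maps $\mX(\delta), \mX(\epsilon)$ become the two maps appearing in the fppf-sheaf diagram for the family $\{R_i\}$; this reduces to the identity $(\pi_i \otimes \pi_j) \circ \delta = \lambda_{ij} \circ \pi_i$ (where $\lambda_{ij} : R_i \to R_i \otimes_R R_j$ sends $r \mapsto r \otimes 1$) together with its analogue for $\epsilon$. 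Thus the equalizer produced by (FP2) is precisely the one required of an fppf sheaf for this covering family, and the empty-family case is the $n = 0$ instance of (FP1). Therefore $\mX$ is an fppf sheaf.

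The step I expect to require the most care is matching the two parallel arrows \emph{exactly} under the product decompositions — showing they collapse to a single map in the $(\Rightarrow)$ direction, and that they are the usual restriction maps in the $(\Leftarrow)$ direction — since this is the one place where an explicit chase through the projections $\pi_i$ and inclusions $\lambda_{ij}$ cannot be avoided. The only ingredient not already developed in the text is the elementary fact that finite products of finitely presented algebras are finitely presented.
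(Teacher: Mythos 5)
Your proposal is correct. The paper does not actually prove this theorem---it defers to \cite{jantzen2003} I.5.3(4)---and your argument is essentially the standard one given there: for (FP1) you cover $A = R_1 \times \dots \times R_n$ by its factors (each $R_i \cong A/(1-e_i)$ finitely presented, the product faithfully flat over $A$) and use that $R_i \otimes_A R_j = 0$ for $i \neq j$ together with $\mX(0)$ a singleton to collapse the two parallel arrows, while for the converse you apply (FP2) to $S = \prod_i R_i$ and use (FP1) plus distributivity of $\otimes_R$ over finite products to identify the resulting equalizer with the covering-family diagram. The one ingredient you import from outside the text---that a finite product of finitely presented $R$-algebras is finitely presented, needed to know $S$ is an fppf-$R$-algebra---is standard, and your generators-and-relations sketch of it is fine.
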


\begin{proof}
See \cite{jantzen2003} I.5.3(4).
\end{proof}

\begin{theorem} \label{thm: representable by schemes implies fppf sheaf}
If a $k$-functor $\mX$ is representable by schemes then it is an fppf sheaf. 
\end{theorem}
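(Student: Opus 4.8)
The plan is to invoke Theorem~\ref{thm: fppf sheaf characterization}, which reduces the claim to checking conditions (FP1) and (FP2) for $\mX$. Write $Y$ for the $k$-scheme representing $\mX$, so $\mX(R) = \Hom_{\bSch / k}(\Spec R, Y)$ with transition maps given by precomposition with the induced morphisms of spectra. Condition (FP1) is essentially formal: for $R$-algebras $R_1, \dots, R_n$ the scheme $\Spec(R_1 \times \cdots \times R_n)$ is the coproduct $\coprod_{i=1}^n \Spec R_i$ in $\bSch$ (as a topological space it is the disjoint union, and the structure sheaves patch together), so the universal property of coproducts supplies a natural bijection $\Hom_{\bSch / k}(\Spec(R_1 \times \cdots \times R_n), Y) \cong \prod_{i=1}^n \Hom_{\bSch / k}(\Spec R_i, Y)$, which is precisely the map of (FP1); taking $n = 0$ also recovers that $\mX(0)$ is a singleton.

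For (FP2), fix a $k$-algebra $R$ and an fppf-$R$-algebra $S$ with structure map $\varphi \colon R \to S$; by Proposition~\ref{prop: equivalent characterizations of faithful flatness}, $\varphi$ is injective. I would first dispatch the affine case $Y = \Spec A$, in which $\mX \cong h^A$ and (FP2) asks that $\Hom_k(A,R) \to \Hom_k(A,S) \rightrightarrows \Hom_k(A, S \otimes_R S)$ be an equalizer in $\bSet$. By Proposition~\ref{prop: characterization of equalizers in set} this amounts to (EQ1), injectivity of postcomposition with $\varphi$, which holds since $\varphi$ is injective, and (EQ2), which follows from the exactness of the initial segment $0 \to R \xrightarrow{\varphi} S \xrightarrow{\delta - \epsilon} S \otimes_R S$ of the Amitsur complex (Theorem~\ref{thm: amitsur complex is exact for faithfully flat homomorphisms}): any $k$-algebra map $f \colon A \to S$ with $\delta \circ f = \epsilon \circ f$ lands in $\ker(\delta - \epsilon) = \varphi(R)$ and hence factors uniquely through $\varphi$.

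For the general case of (FP2) I would reduce to the affine case by descent along $p \colon \Spec S \to \Spec R$. Given $g \in \mX(S)$ whose two images in $\mX(S \otimes_R S)$ coincide, first descend $g$ to a continuous map $|\Spec R| \to |Y|$, using that $p$ is submersive (being faithfully flat and quasi-compact) and that $|\Spec(S \otimes_R S)| \to |\Spec S| \times_{|\Spec R|} |\Spec S|$ is surjective, so that $g$ is constant on the fibers of $p$. Choosing an affine open cover $Y = \bigcup_\alpha U_\alpha$, the opens $g^{-1}(U_\alpha) \subseteq \Spec S$ carry descent data and hence descend to opens $V_\alpha \subseteq \Spec R$ with $p^{-1}(V_\alpha) = g^{-1}(U_\alpha)$; covering each $V_\alpha$ by principal opens $\Spec R_h$ and noting that base change along $R \to R_h$ preserves the fppf hypothesis and commutes with forming $S \otimes_R S$, the affine case descends the restriction of $g$ over $\Spec R_h$ to a morphism $\Spec R_h \to U_\alpha \hookrightarrow Y$, and uniqueness in the affine case forces these to agree on overlaps, yielding the desired $f \in \mX(R)$ with $f \circ p = g$; uniqueness of $f$ follows from (EQ1) applied locally. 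The main obstacle is exactly this general step: one must descend the open cover $\{g^{-1}(U_\alpha)\}$ along the faithfully flat map and glue the resulting local morphisms, which rests on the submersivity of faithfully flat quasi-compact morphisms and on fppf descent of open immersions (both standard, e.g.\ \cite{stacks-project}); the affine case itself is immediate from the Amitsur complex.
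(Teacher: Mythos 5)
Your argument is essentially correct, but it takes a genuinely different route from the paper: the paper does not prove this theorem at all, it simply cites \cite{jantzen2003} I.5.3(7), whereas you reconstruct the standard descent proof from the ingredients already available in the text. Your reduction via Theorem \ref{thm: fppf sheaf characterization} is exactly right: (FP1) follows from $\Spec(R_1\times\cdots\times R_n)$ being the coproduct $\coprod_i \Spec R_i$ in $\bSch$, and the affine case of (FP2) is immediate from Proposition \ref{prop: characterization of equalizers in set} together with the exactness of $0\to R\to S\to S\otimes_R S$ (Theorem \ref{thm: amitsur complex is exact for faithfully flat homomorphisms}), with injectivity of $\varphi$ supplied by Proposition \ref{prop: equivalent characterizations of faithful flatness}. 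The global step — descending the continuous map using that a faithfully flat quasi-compact morphism is submersive and that $|\Spec(S\otimes_R S)|$ surjects onto the topological fiber product, then descending the open cover and applying the affine case over principal opens — is the classical Grothendieck argument and is sound; note that the fppf hypothesis is only used through quasi-compactness and faithful flatness, so your proof actually gives the stronger fpqc statement. The one place you compress too much is the gluing: on an overlap $\Spec R_h\cap\Spec R_{h'}$ the two locally descended morphisms may a priori land in different affine charts $U_\alpha$, $U_\beta$, so ``uniqueness in the affine case'' does not apply verbatim; you should first observe that both induce the already-descended continuous map $\bar g$, refine the cover so that on each piece both morphisms factor through a common affine open of $U_\alpha\cap U_\beta$, and only then invoke injectivity from the affine case (equivalently, prove directly that two scheme morphisms agreeing after precomposition with a faithfully flat affine cover agree). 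This is standard and fillable, so the approach buys a self-contained proof where the paper offers only a reference, at the cost of these routine but necessary gluing details.
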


\begin{proof}
See \cite{jantzen2003} I.5.3(7).
\end{proof}


\subsection{Quotients and Sheafification}

Suppose that $\bC$ is a full subcategory of the category of $k$-functors that is closed under finite products, and suppose that $\mG$ is a $k$-group functor which, by composing with the forgetful functor $\bGrp \to \bSet$ is in $\bC$, and $\alpha : \mG \times \mX \to \mX$ is an action of $\mG$ on a $k$-functor $\mX$ also in $\bC$. We define also a natural transformation $\beta : \mG \times \mX \to \mX$ by $\beta_R(g , x) = x$. Then a morphism $f: \mX \to \mY$ is constant on $\mG$-orbits if and only if $f \circ  \alpha = f \circ \beta$. 

It can happen that the naive choice for a quotient of $\mX$ by the action of $\mG$, i.e. the $k$-functor $R \mapsto \mX(R) / \mG(R)$, is not an element of $\bC$. However we can still define a $k$-functor $\mY \in \bC$ to be the \emph{quotient of $\mX$ by $\mG$ in} $\bC$ if $\mY$ is a colimit of the diagram $\mG \times \mX \underset{\beta}{\overset{\alpha}{\rightrightarrows}} \mX$ in $\bC$. (Note that because $\bC$ is closed under finite products, $\mG \times \mX$ is also in $\bC$.) This tells us that if the quotient exists, it is unique (up to a unique isomorphism), but the problem is that it may not always exist. 

The functor $\mY : R \mapsto \mX(R) / \mG(R)$, together with the natural transformation $\pi : \mX \to \mY$ mapping $x \in \mX(R)$ to its $\mG(R)$-orbit in $\mX(R) / \mG(R)$, is the quotient of $\mX$ by $\mG$ in the entire category of $k$-functors. Suppose that the inclusion $i : \bC \to [\bAlg_k, \bSet]$ admits a left adjoint $\mS$. Then because left adjoints preserve colimits, $( \mS(\mY) , \mS(\pi) )$ will be the colimit of $\mS(\mG \times \mX )\underset{\mS(\beta)}{\overset{\mS(\alpha)}{\rightrightarrows}} \mS(\mX)$. However because $\bC$ is a full subcategory, the inclusion $i : \bC \to [\bAlg_k, \bSet]$ is fully faithful, hence by a result originally due to Gabriel-Zisman \cite{gabriel-zisman2012} the counit $\epsilon: \mS \circ i \to 1_\bC$ of the adjunction $\mS \vdash i$ is an isomorphism. So we have a diagram in $\bC$
\begin{center}
\begin{tikzcd}
 \mS(\mG \times \mX ) \arrow[r, shift left, "\mS(\alpha)"] \arrow[r, shift right, "\mS(\beta)" ']  \arrow[d , "\epsilon_{\mG \times \mX}" '] &  \mS(\mX) \arrow[r , "\mS(\pi)"] \arrow[d , "\epsilon_\mX"]  & \mS(\mY)\\
\mG \times \mX \arrow[r, shift left, "\alpha" ] \arrow[r , shift right, "\beta" ']&  \mX & 
\end{tikzcd}
\end{center}
where the vertical maps are isomorphisms, and hence $\mS(\mY) , \mS(\pi) \circ \epsilon_X^{-1}$ is the quotient of $\mX$ by $\mG$ in $\bC$. Such a left adjoint \textbf{does} exist (see \cite{demazure-gabriel1970} III \S 1, 1.8-1.12) when $\bC$ is the category of fppf $k$-sheaves (which is indeed closed under finite products because limits commute with limits, see \cite{maclane2013} IX.2), it is called the \emph{fppf sheafification functor} and is denoted by $\sh : [\bAlg_k , \bSet] \to \bSh_{\text{fppf}}$. This implies that if an fppf $k$-group $G$ acts on an fppf $k$-sheaf $\mX$, the quotient of $\mX$ of $\mX$ by $\mG$ in $\bSh_{fppf}$ always exists. We denote it by $\mX / \mG$. 


The following theorem gives an easy description of the fppf sheafification of certain nice subfunctors of fppf sheaves (see \cite{jantzen2003} I.5.4 (4)). 
 
\begin{theorem}\label{thm: easy fppf sheafification}
Suppose that $\mY$ is a fppf-$k$-sheaf and that $\mX \subseteq \mY$ is a subfunctor which satisfies (FP1). Then $\sh \mX$ is the subfunctor of $\mY$ given by
\begin{equation*}
\sh \mX(R) = \{ y \in \mY(R) \mid \exists \text{ an fppf ring homomorphism } \varphi: R \to S \text{ with } \mY(\varphi)(y) \in \mX(S) \}.
\end{equation*}
\end{theorem}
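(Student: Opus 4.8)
The plan is to show that the displayed formula defines a subfunctor $\mX'$ of $\mY$ containing $\mX$, that $\mX'$ is itself an fppf sheaf, and that the inclusion $\mX \hookrightarrow \mX'$ is universal among morphisms from $\mX$ into fppf sheaves; these facts together identify $\mX'$ with $\sh\mX$ and realize the latter as a subfunctor of $\mY$. The containment $\mX' \subseteq \mY$ is built into the formula, and $\mX \subseteq \mX'$ is witnessed by $\id_R$, which is fppf. To see that $\mX'$ is a subfunctor I would take $y \in \mX'(R)$ witnessed by an fppf map $\varphi : R \to S$ with $\mY(\varphi)(y) \in \mX(S)$, and an arbitrary $k$-algebra map $\rho : R \to R'$, and base-change $\varphi$ along $\rho$ to $S' := S \otimes_R R'$, with pushout maps $\sigma : S \to S'$ and $\varphi' : R' \to S'$: by Lemmas \ref{lem: finitely presented is preserved by pushouts} and \ref{lem: flatness is preserved by pushouts} the map $\varphi'$ is again fppf, and commutativity of the pushout square gives $\mY(\varphi')(\mY(\rho)(y)) = \mY(\sigma)(\mY(\varphi)(y)) \in \mX(S')$ since $\mX$ is a subfunctor; hence $\mY(\rho)(y) \in \mX'(R')$.

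Next I would check that $\mX'$ is an fppf sheaf via the criterion of Theorem \ref{thm: fppf sheaf characterization}. Condition (FP1) for $\mX'$ follows from (FP1) for $\mY$ and for $\mX$: under the bijection $\mY\big(\prod_i R_i\big) \cong \prod_i \mY(R_i)$ one must identify $\mX'\big(\prod_i R_i\big)$ with $\prod_i \mX'(R_i)$, where the forward inclusion decomposes a witnessing fppf $\big(\prod_i R_i\big)$-algebra $S$ as $\prod_i S e_i$ along the images of the idempotents $e_i$ (each $R_i \to S e_i$ being fppf, by componentwise faithful flatness and base-change of finite presentation), and the reverse inclusion assembles witnesses $R_i \to S_i$ into the fppf map $\prod_i R_i \to \prod_i S_i$ and then applies (FP1) for $\mX$. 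For (FP2), fix an fppf-$R$-algebra $S$; injectivity of $\mX'(R) \to \mX'(S)$ is inherited from that of $\mY(R) \to \mY(S)$. For the surjectivity clause of Proposition \ref{prop: characterization of equalizers in set}, given $y \in \mX'(S)$ with equal images under $\delta,\epsilon$ in $\mX'(S \otimes_R S)$, I would first use that $\mY$ is a sheaf to descend $y$ to some $x \in \mY(R)$, and then, choosing fppf $\psi : S \to T$ with $\mY(\psi)(y) \in \mX(T)$, note that the composite $\vartheta : R \to S \to T$ is fppf (faithful flatness and finite presentation are stable under composition) and that $\mY(\vartheta)(x) = \mY(\psi)(y) \in \mX(T)$, so $x \in \mX'(R)$ and it restricts to $y$.

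Finally, for the universal property, let $\mZ$ be an fppf sheaf and $f : \mX \to \mZ$ a morphism. I would define an extension $\bar f : \mX' \to \mZ$ by local descent: for $y \in \mX'(R)$ choose fppf $\varphi : R \to S$ with $y_S := \mY(\varphi)(y) \in \mX(S)$; since the ring maps $\delta \circ \varphi$ and $\epsilon \circ \varphi$ from $R$ to $S \otimes_R S$ coincide (as $\varphi(r) \otimes 1 = 1 \otimes \varphi(r)$), naturality of $f$ forces the two images of $f_S(y_S)$ in $\mZ(S \otimes_R S)$ to agree, so (FP2) for $\mZ$ yields a unique $w \in \mZ(R)$ with $\mZ(\varphi)(w) = f_S(y_S)$, and I set $\bar f_R(y) := w$. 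Independence of the choice of $\varphi$ is checked by comparing two choices over $S \otimes_R S'$; naturality of $\bar f$ by base-changing the chosen witness; the identity $\bar f|_{\mX} = f$ by taking $\varphi = \id$; and uniqueness because any extension of $f$ agrees with $\bar f$ after applying the injective map $\mZ(\varphi)$. Hence $\mX'$, together with the inclusion $\mX \hookrightarrow \mX'$, satisfies the defining universal property of the fppf sheafification, so $\mX' \cong \sh\mX$ compatibly with the maps out of $\mX$.

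I do not expect a single deep step; the difficulty is entirely in the bookkeeping of keeping two fppf algebras and their tensor product straight while descending and base-changing. The most delicate point is the well-definedness of $\bar f_R$, where the span $S \leftarrow R \to S'$ together with the maps $S, S' \to S \otimes_R S'$ and the equalizer property of $\mZ$ must be combined carefully; and the one genuinely external (though standard) input, used several times above, is that "faithfully flat and finitely presented" is stable under composition of ring homomorphisms, so that composites of fppf ring maps are again fppf.
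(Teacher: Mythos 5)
The paper does not prove this theorem at all: it is quoted from Jantzen I.5.4(4), where it is obtained as a byproduct of the general construction of the fppf sheafification functor. Your argument is a correct, self-contained substitute, and it is organized the natural way: you check that the displayed formula defines a subfunctor $\mX'$ of $\mY$ containing $\mX$ (via base change of the witnessing fppf map, using the paper's Lemmas \ref{lem: finitely presented is preserved by pushouts} and \ref{lem: flatness is preserved by pushouts}), that $\mX'$ satisfies (FP1) and (FP2) of Theorem \ref{thm: fppf sheaf characterization} (descending along $\mY$ and re-witnessing via the composite $R \to S \to T$), and that $\mX \hookrightarrow \mX'$ has the universal property of sheafification, the key points being $\delta \circ \varphi = \epsilon \circ \varphi$ so that (EQ2) for $\mZ$ produces the descended value, and injectivity of $\mZ$ along fppf maps for well-definedness, naturality and uniqueness. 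Two small remarks: your "forward inclusion" in the (FP1) check, via idempotent decomposition of the witnessing algebra, is already implied by the subfunctor property of $\mX'$ you established first (the projections $\pi_i$ are $k$-algebra maps), so that step can be omitted; and you correctly flag the external inputs your proof needs which the paper never records, namely that fppf ring maps are stable under composition and that a finite product of fppf $R_i$-algebras is fppf over $\prod_i R_i$ — both standard, but worth stating if this argument were to be included, since the paper's background section only covers stability under pushout. With those facts granted, the proof is complete, and the identification of $\sh\mX$ with $\mX'$ as a subfunctor of $\mY$ follows because the canonical map $\sh\mX \to \mY$ extending the inclusion must then factor through your isomorphism $\sh\mX \cong \mX'$.
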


One consequence of Theorem \ref{thm: easy fppf sheafification} is the following.

\begin{corollary}\label{cor: fppf subfunctor that contains a subfunctor contains its sheafification}
Suppose that $\mY$ is an fppf $k$-sheaf, and $\mX, \mZ \subseteq \mY$ are subfunctors such that $\mX$ satisfies (FP1) and $\mZ$ is an fppf-sheaf. If $\mX \subseteq \mZ$ then $\sh (\mX) \subseteq \mZ$.  
\end{corollary}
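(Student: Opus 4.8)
The plan is to combine the explicit description of $\sh(\mX)$ coming from Theorem~\ref{thm: easy fppf sheafification} with the sheaf condition for $\mZ$ in the form of Theorem~\ref{thm: fppf sheaf characterization}(FP2). Fix a $k$-algebra $R$ and an element $y \in \sh(\mX)(R)$; the goal is to show $y \in \mZ(R)$. Since $\mY$ is an fppf sheaf and $\mX \subseteq \mY$ satisfies (FP1), Theorem~\ref{thm: easy fppf sheafification} applies and yields an fppf ring homomorphism $\varphi : R \to S$ (so that $S$ is an fppf-$R$-algebra) with $\mY(\varphi)(y) \in \mX(S)$. Setting $z := \mY(\varphi)(y)$, the hypothesis $\mX \subseteq \mZ$ gives $z \in \mX(S) \subseteq \mZ(S)$.

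Next I would invoke that $\mZ$ is an fppf sheaf. By Theorem~\ref{thm: fppf sheaf characterization}(FP2), the diagram
\begin{equation*}
\mZ(R) \to \mZ(S) \rightrightarrows \mZ(S \otimes_R S)
\end{equation*}
is an equalizer in $\bSet$, where the two right-hand arrows are $\mZ(\delta)$ and $\mZ(\epsilon)$ for $\delta(s) = s \otimes 1$ and $\epsilon(s) = 1 \otimes s$. To conclude that $z$ lies in the image of $\mZ(R) \to \mZ(S)$, by Proposition~\ref{prop: characterization of equalizers in set}(EQ2) it is enough to verify $\mZ(\delta)(z) = \mZ(\epsilon)(z)$. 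Because $\mZ$ is a subfunctor of $\mY$, these maps are the restrictions of $\mY(\delta)$ and $\mY(\epsilon)$, so the condition becomes $\mY(\delta \circ \varphi)(y) = \mY(\epsilon \circ \varphi)(y)$. But $\delta \circ \varphi = \epsilon \circ \varphi$ as ring homomorphisms $R \to S \otimes_R S$, since $\varphi(r) \otimes 1 = r \cdot (1 \otimes 1) = 1 \otimes \varphi(r)$ in $S \otimes_R S$ for every $r \in R$; hence the two sides are literally equal, and the descent condition is satisfied.

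Therefore $z = \mZ(\varphi)(z')$ for some $z' \in \mZ(R)$. Using $\mZ \subseteq \mY$ again we get $\mY(\varphi)(z') = \mZ(\varphi)(z') = z = \mY(\varphi)(y)$, and since $\mY$ is an fppf sheaf the map $\mY(R) \to \mY(S)$ attached to the cover $S$ is injective by Proposition~\ref{prop: characterization of equalizers in set}(EQ1). Hence $z' = y$, so $y \in \mZ(R)$, which finishes the argument. I do not expect a genuine obstacle here: the content is the observation that the descent datum carried by $z$ is automatically effective because $z$ already descends to $y \in \mY(R)$, and the only point requiring care is the systematic use of the fact that the transition maps for $\mX$ and $\mZ$ are restrictions of those for $\mY$, which is precisely the definition of a subfunctor.
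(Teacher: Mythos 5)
Your proposal is correct and follows essentially the same route as the paper: apply Theorem \ref{thm: easy fppf sheafification} to land in $\mX(S) \subseteq \mZ(S)$, check the equalizer condition for $\mZ$ (the paper cites the sheaf property of $\mY$ for this, while you note more directly that $\delta \circ \varphi = \epsilon \circ \varphi$, which is equally valid), descend to an element of $\mZ(R)$, and identify it with $y$ via injectivity of $\mY(\varphi)$. No gaps.
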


\begin{proof}
Given $y \in \sh\mX(R)$, by Theorem \ref{thm: easy fppf sheafification} there exists an fppf ring map $\varphi : R \to S$ such that $\mY(\varphi)(y) \in \mX(S) \subseteq \mZ(S)$. But because $\mY$ is an fppf-sheaf, we have that $\mY(\delta) \mY(\varphi)(y) = \mY(\epsilon) \mY(\varphi)(y)$ which implies that $\mZ(\delta) \mY(\varphi)(y) = \mZ(\epsilon) \mY(\varphi)(y)$. So since $\mZ$ is an fppf-sheaf, there exists $z \in \mZ(R)$ such that $\mZ(\varphi)(z) = \mY(\varphi)(y)$. However since $\mY$ is an fppf-sheaf, $\mY(\varphi)$ is injective, and hence $z = y$, so $y \in \mZ(R)$. 
\end{proof}

Another important consequence of Theorem \ref{thm: easy fppf sheafification} is the fact that closed subfunctors of fppf-sheaves are themselves fppf-sheaves. 

\begin{corollary}\label{cor: closed subfunctors of sheaves are sheaves}
Let $\mY$ be an fppf-$k$-sheaf and $\mX \subseteq \mY$ a closed subfunctor. Then $\mX$ is also an fppf-sheaf. 
\end{corollary}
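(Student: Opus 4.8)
The plan is to check that $\mX$ satisfies conditions (FP1) and (FP2) of Theorem~\ref{thm: fppf sheaf characterization}; in fact it is cleanest to deduce from Theorem~\ref{thm: easy fppf sheafification} that $\mX$ coincides with its own fppf sheafification. First I would dispose of (FP1): since $\mY$ is an fppf sheaf it is a Zariski sheaf (Proposition~\ref{prop: fppf sheaves are zariski sheaves}), and a closed subfunctor of a Zariski sheaf is again a Zariski sheaf (Proposition~\ref{prop: closed subfunctors of zariski sheaves are zariski sheaves}) --- note that the property of $\mX$ being a \emph{closed} subfunctor of $\mY$ makes no reference to the sheaf structure on $\mY$, so this is legitimate --- hence $\mX$ is a Zariski sheaf and in particular satisfies (FP1) by Proposition~\ref{prop: zariski sheaves satisfy fp1}.

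The heart of the argument is the following claim: if $\varphi : R \to S$ is a faithfully flat ring homomorphism and $y \in \mY(R)$ satisfies $\mY(\varphi)(y) \in \mX(S)$, then already $y \in \mX(R)$. To prove it, I would represent $y$ by the morphism $\Phi^y : h^R_k \to \mY$ from the strong Yoneda lemma, which satisfies $\Phi^y_A(\psi) = \mY(\psi)(y)$ for every $k$-algebra $A$ and every $\psi : R \to A$; in particular $\Phi^y_R(1_R) = y$. Because $\mX$ is closed in $\mY$, there is an ideal $\fr \subseteq R$ with $(\Phi^y)^{-1}(\mX) = \mV(\fr)$. The hypothesis states that $\Phi^y_S(\varphi) = \mY(\varphi)(y) \in \mX(S)$, i.e. $\varphi \in \mV(\fr)(S)$, which means $\varphi(\fr) = 0$. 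But a faithfully flat ring homomorphism is injective (Proposition~\ref{prop: equivalent characterizations of faithful flatness}), so $\fr = 0$, and therefore $\mV(\fr) = \mV(0) = h^R_k$. In particular $1_R \in \mV(\fr)(R) = (\Phi^y_R)^{-1}(\mX(R))$, so $y = \Phi^y_R(1_R) \in \mX(R)$, which proves the claim.

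With the claim in hand the proof finishes quickly. Theorem~\ref{thm: easy fppf sheafification} applies (we have shown $\mX$ satisfies (FP1), and $\mX \subseteq \mY$ with $\mY$ an fppf sheaf) and describes $\sh\mX(R)$ as the set of $y \in \mY(R)$ admitting an fppf ring homomorphism $\varphi : R \to S$ with $\mY(\varphi)(y) \in \mX(S)$; since an fppf ring homomorphism is in particular faithfully flat, the claim shows this set equals $\mX(R)$, so $\sh\mX = \mX$, and $\sh\mX$ is an fppf sheaf by construction. (Alternatively one can finish through Theorem~\ref{thm: fppf sheaf characterization}: (FP1) is done, the claim supplies condition (EQ2) for (FP2), and condition (EQ1) --- injectivity of $\mX(R) \to \mX(S)$ --- is inherited from injectivity of $\mY(R) \to \mY(S)$, which holds because $\mY$ itself satisfies (FP2).) The only delicate point is the implication $\varphi(\fr) = 0 \Rightarrow \fr = 0$, and that is exactly where faithful flatness, via injectivity of $\varphi$, is indispensable; a generic ring homomorphism would not do.
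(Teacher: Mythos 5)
Your proposal is correct and follows essentially the same route as the paper: establish (FP1) via the Zariski-sheaf propositions, then use the Yoneda description of $y$, closedness of $\mX$ to produce the ideal $\fr$, and injectivity of the faithfully flat map $\varphi$ to force $\fr = 0$, concluding $\sh\mX = \mX$ by Theorem \ref{thm: easy fppf sheafification}. No gaps to report.
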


\begin{proof}
By Proposition \ref{prop: fppf sheaves are zariski sheaves}, $\mY$ is a Zariski sheaf, and so by Proposition \ref{prop: closed subfunctors of zariski sheaves are zariski sheaves}, $\mX$ is too. Then by Proposition \ref{prop: zariski sheaves satisfy fp1}, $\mX$ satisfies (FP1), hence Theorem \ref{thm: easy fppf sheafification} gives a nice description of $\sh \mX$ as a subfunctor of $\mY$. We will show that $\mX = \sh \mX$ thereby proving that $\mX$ is an fppf-sheaf. Let $R$ be a $k$-algebra and let $y \in \mY(R)$ be such that there exists an fppf ring map $\varphi : R \to S$ such that $\mY(\varphi)(y) \in \mX(S)$. Then by the Yoneda lemma, $y$ corresponds to a natural transformation $\eta : h^R \to \mY$ which for any $k$-algebra $A$ sends $\psi : R \to A$ to $\mY(\psi)(y) \in \mY(A)$. However by the definition of a closed subfunctor, there exists an ideal $\fa \subseteq R$ such that $\eta^{-1}(\mX) = V(\fa)$. But note that $\varphi \in \eta^{-1}(\mX)(S) = \{ \psi : R \to S \mid \mY(\psi)(y) \in \mX(S) \}$. Hence we must have $\varphi \in V(\fa)(S) = \{ \psi : R \to S \mid \psi(\fa) = 0 \}$. So $\varphi(\fa) = 0$. But $\varphi$ is faithfully flat, and hence injective by Proposition \ref{prop: equivalent characterizations of faithful flatness}. So it must be that $\fa = 0$, and hence that $V(\fa) = h^R$. In particular this means that the identity map $\id_R : R \to R$ is in $V(\fa)(R) = \eta^{-1}(\mX)(R)$, so $\mY(\id_R)(y) \in \mX(R)$. But functors take identity maps to identity maps, hence $\mY(\id_R) = \id_{\mY(R)}$, so we have that $y \in \mX(R)$. Since $R$ was arbitrary, this implies that $\sh \mX \subseteq \mX$ as desired. 
\end{proof}

Another application of Theorem \ref{thm: easy fppf sheafification} follows. If $f : \mX \to \mY$ is a morphism of fppf $k$-sheaves, the image subfunctor $\im f \subseteq \mY$ is not in general an fppf sheaf. However it does satisfy (FP1), so Theorem \ref{thm: easy fppf sheafification} yields a nice description of its sheafification, $\sh( \im f)$, which is called the \emph{fppf image} of $f$. Explicitly, we have
\begin{equation}\label{eq: fppf-image description}
\sh( \im f)(R) = \{ y \in \mY(R) \mid \exists \varphi: R \to S \text{ fppf with } \mY(\varphi)(y) = f_S(x) \text{ for some } x \in \mX(S) \}.
\end{equation}

Now suppose that a $k$-group functor $\mG$ acts on a $k$-functor $\mX$. Given an element $x \in \mX(k)$ and any $k$-algebra $R$ with structure homomorphism $\phi_R : k \to R$, we define $x^R = \mX(\phi_R)(x)$. In particular we have $x^k = x$. The element $x$ defines a subfunctor of $\mX$ by sending a $k$-algebra $R$ to $\{ x^R \}$, and we define the \emph{stabilizer} of $x$ under the $\mG$-action to be the subgroup functor 
\begin{equation*}
\Stab_\mG(x)(R) =\{ g \in \mG(R) \mid g x^R = x^R  \}.
\end{equation*}

If both $\mG$ and $\mX$ are fppf sheaves, then $\Stab_\mG(x)$ is too, and we have a right action of the fppf $k$-group functor $\Stab_\mG(x)$ on the fppf sheaf $\mG$ by left multiplication. Define a natural transformation $\pi^x : \mG \to \mX$ by letting $\pi^x_R(g) = g x^R$ for $R$ a $k$-algebra and $g \in \mG(R)$. The \emph{fppf-orbit} of $x$, denoted by $\Orb_\mG(x)$, is defined to be the fppf image of $\pi^x$ inside $\mX$. Explicitly, we have 
\begin{equation}\label{eq: fppf orbit}
\Orb_\mG(x)(R) = \{ y \in \mX(R) \mid \exists \text{ an fppf ring map } \varphi: R \to S \text{ and } g \in \mG(S) \text{ with } \mX(\varphi)(y) = g x^S \}.
\end{equation}

Then the analog of the orbit stabilizer theorem for functorial algebraic geometry is true (see \cite{jantzen2003} I.5.6 (2)).

\begin{theorem}[Orbit-Stabilizer Theorem]
We have an isomorphism $\mG / \Stab_\mG(x) \overset{\sim}{\to} \Orb_\mG(x)$. 
\end{theorem}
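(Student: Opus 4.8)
The plan is to realize both $\mG/\Stab_\mG(x)$ and $\Orb_\mG(x)$ as the fppf sheafification of the \emph{same} subfunctor of $\mX$, namely the image $\im \pi^x$, and then conclude by functoriality of sheafification together with the uniqueness of the quotient in $\bSh_{\fppf}$. No genuinely hard input is needed beyond the ``Quotients and Sheafification'' machinery already set up above; the work is in aligning the relevant universal properties.

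First I would note that $\pi^x$ is constant on the right cosets of $\Stab_\mG(x)$ inside $\mG$: for a $k$-algebra $R$, $g \in \mG(R)$, and $h \in \Stab_\mG(x)(R)$ we have $\pi^x_R(gh) = (gh) x^R = g(h x^R) = g x^R = \pi^x_R(g)$. Hence $\pi^x$ factors as $\pi^x = \iota \circ q$, where $q \colon \mG \to \mG^{\mathrm{q}}$ is the projection onto the naive quotient functor $\mG^{\mathrm{q}}\colon R \mapsto \mG(R)/\Stab_\mG(x)(R)$ and $\iota_R$ sends the coset $g\,\Stab_\mG(x)(R)$ to $g x^R$. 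The levelwise orbit--stabilizer argument shows $\iota$ is pointwise injective, hence a monomorphism of $k$-functors: if $g_1 x^R = g_2 x^R$ then $g_2^{-1} g_1$ fixes $x^R$, i.e.\ $g_1$ and $g_2$ lie in the same coset. Since $\iota$ is tautologically surjective onto $\im \pi^x$ (as $q_R$ is surjective), it is a natural isomorphism $\mG^{\mathrm{q}} \xrightarrow{\ \sim\ } \im \pi^x \subseteq \mX$, naturality in $R$ being immediate from naturality of the $\mG$-action and of $x^{(-)}$.

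Now I would combine the two descriptions coming from the quotient/sheafification formalism. On the one hand, by the construction of quotients in $\bSh_{\fppf}$ recalled above, $\mG/\Stab_\mG(x)$ is exactly the fppf sheafification $\sh(\mG^{\mathrm{q}})$ of the naive quotient. On the other hand, $\Orb_\mG(x)$ is by definition the fppf image of $\pi^x$, that is $\sh(\im \pi^x)$; since $\pi^x$ is a morphism of fppf sheaves, $\im \pi^x$ satisfies (FP1) and Theorem~\ref{thm: easy fppf sheafification} recovers the explicit formula \eqref{eq: fppf orbit}. Applying the sheafification functor to the isomorphism $\iota$ then gives
\[
\mG/\Stab_\mG(x) \;=\; \sh(\mG^{\mathrm{q}}) \;\xrightarrow{\;\sh(\iota)\;}\; \sh(\im \pi^x) \;=\; \Orb_\mG(x),
\]
an isomorphism. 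Finally, chasing the composites $\mG \to \mG^{\mathrm{q}} \to \sh(\mG^{\mathrm{q}})$ and $\mG \to \im \pi^x \to \sh(\im \pi^x)$ — projection resp.\ corestriction, followed by the sheafification unit — shows $\sh(\iota)$ is compatible with the canonical maps from $\mG$, hence is precisely the morphism through which $\pi^x$ factors; this is the promised isomorphism. The only point requiring care is keeping the three avatars of the object straight — the coequalizer defining $\mG/\Stab_\mG(x)$, the sheafification of the naive quotient, and the sheafification of $\im \pi^x$ — and checking that $\iota$ is a bona fide natural transformation, not merely a levelwise bijection; both are routine.
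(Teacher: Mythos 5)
Your argument is correct. Note, though, that the paper does not prove this statement at all: it is quoted with a citation to Jantzen I.5.6(2), so there is no internal proof to compare against. Your route is the standard one and it fits the paper's own machinery exactly: the constancy of $\pi^x$ on the orbits $g\,\Stab_\mG(x)(R)$ of the right-multiplication action gives a factorization through the naive quotient, the levelwise orbit--stabilizer bijection identifies that naive quotient with the subfunctor $\im \pi^x \subseteq \mX$, and then everything follows by applying $\sh$, since the paper's ``Quotients and Sheafification'' subsection shows that the quotient in $\bSh_{\fppf}$ is the sheafification of the naive quotient, while $\Orb_\mG(x)$ is by definition $\sh(\im \pi^x)$; the claim that $\im \pi^x$ satisfies (FP1) is likewise asserted in that subsection, so you may use it. What your write-up buys is a self-contained proof (modulo the existence and exactness properties of the sheafification functor) in place of an external reference, which is arguably preferable given that the ambient situation here ($\GL_{I_s}$ an ind-scheme rather than an algebraic group scheme) is not literally the one treated in the cited source. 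Two cosmetic remarks: the sets $g\,\Stab_\mG(x)(R)$ you quotient by are left cosets (orbits of the right action), not right cosets, so the wording should be adjusted; and the final compatibility check with the canonical maps from $\mG$ is not needed for the statement as given, which only asserts an isomorphism.
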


Suppose now that an fppf $k$-group functor $\mG$ acts on an fppf $k$-functor $\mX$ and that $\mY \subseteq \mX$ is a $\mG$-stable subsheaf, meaning that for any $k$-algebra $R$, $\mY(R)$ is $\mG(R)$-stable. Then there is a natural map $\mY / \mG \to \mX / \mG$ which is a monomorphism if the action of $\mG$ on $\mX$ is free (see \cite{jantzen2003} I.5.5). If $\pi : \mX \to \mX / \mG$ is the quotient map, then the image of the monomorphism $\mY / \mG \to \mX / \mG$ is equal to the fppf-image $\sh(\pi(\mY))$. 

In particular the action of $\Stab_\mG(x)$ on $\mG$ is free, hence if $\mZ \subseteq \mG$ is $\Stab_\mG(x)$-stable then $\mZ / \Stab_\mG(x) \to \mG / \Stab_\mG(x)$ is a monomorphism, and the former can be identified with the subfunctor of $\Orb_\mG(x) \subseteq \mX$ given by 
\begin{equation}\label{eq: image of a subfunctor under the quotient map}
R \mapsto \{ y \in \mX(R) \mid \exists \varphi: R \to S \text{ fppf with } \mX(\varphi)(y) = zx \text{ for some } z \in \mZ(R) \}. 
\end{equation}


\subsection{Ind-Schemes}

A \emph{(weak) $k$-ind-scheme} is a $k$-functor $\mX$ which is a filtered colimit of $k$-functors which are representable by $k$-schemes. An ind-scheme $\mX$ is called \emph{ind-affine} if it is a filtered colimit of $k$-functors which are representable (i.e. representable by affine schemes). 

\begin{proposition}\label{prop: ind-schemes are fppf sheaves}
Every ind-scheme is an fppf sheaf. 
\end{proposition}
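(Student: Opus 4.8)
The plan is to verify the two conditions (FP1) and (FP2) of Theorem~\ref{thm: fppf sheaf characterization} directly. Write $\mathcal{X} = \mathrm{colim}_{i \in \mathbf{I}} \mathcal{X}_i$ where $\mathbf{I}$ is a small filtered category and each $\mathcal{X}_i$ is representable by a $k$-scheme, hence an fppf sheaf by Theorem~\ref{thm: representable by schemes implies fppf sheaf}. By Proposition~\ref{prop: limits and colimits in functor categories}, this colimit is computed componentwise, so for every $k$-algebra $A$ we have $\mathcal{X}(A) = \mathrm{colim}_i \mathcal{X}_i(A)$ in $\bSet$. The one external ingredient I would invoke is the classical fact (see \cite{maclane2013}) that in $\bSet$ filtered colimits commute with finite limits; in particular they commute with finite products and with equalizers.

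For (FP1): given finitely many $R$-algebras $R_1, \ldots, R_n$, I must show $\mathcal{X}(R_1 \times \cdots \times R_n) \to \prod_{j=1}^n \mathcal{X}(R_j)$ is a bijection. Componentwise, the source is $\mathrm{colim}_i \mathcal{X}_i(R_1 \times \cdots \times R_n)$ and the target is $\prod_{j} \mathrm{colim}_i \mathcal{X}_i(R_j) \cong \mathrm{colim}_i \prod_j \mathcal{X}_i(R_j)$, the last isomorphism because a finite product commutes with the filtered colimit. Under these identifications the map in question is the filtered colimit of the maps $\mathcal{X}_i(R_1 \times \cdots \times R_n) \to \prod_j \mathcal{X}_i(R_j)$ induced by the (natural) projections, each of which is a bijection since $\mathcal{X}_i$ satisfies (FP1). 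A filtered colimit of bijections is a bijection, so (FP1) holds for $\mathcal{X}$. (Taking $n = 0$ also recovers that $\mathcal{X}(0)$ is a singleton.)

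For (FP2): fix a $k$-algebra $R$ and an fppf-$R$-algebra $S$. I must show $\mathcal{X}(R) \to \mathcal{X}(S) \rightrightarrows \mathcal{X}(S \otimes_R S)$ is an equalizer. The two parallel arrows are induced by the ring maps $\delta, \epsilon : S \to S \otimes_R S$, and by naturality of the colimit cocone they are the filtered colimits of the corresponding parallel pairs $\mathcal{X}_i(S) \rightrightarrows \mathcal{X}_i(S \otimes_R S)$. Since filtered colimits commute with equalizers in $\bSet$, the equalizer of $\mathrm{colim}_i \mathcal{X}_i(S) \rightrightarrows \mathrm{colim}_i \mathcal{X}_i(S \otimes_R S)$ is $\mathrm{colim}_i \, \mathrm{eq}\big(\mathcal{X}_i(S) \rightrightarrows \mathcal{X}_i(S \otimes_R S)\big)$; but each $\mathcal{X}_i$ satisfies (FP2), so this inner equalizer is $\mathcal{X}_i(R)$, and the colimit is $\mathrm{colim}_i \mathcal{X}_i(R) = \mathcal{X}(R)$. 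Tracing the canonical maps shows this identification is precisely the one induced by $R \to S$, so the diagram is an equalizer and (FP2) holds. By Theorem~\ref{thm: fppf sheaf characterization}, $\mathcal{X}$ is an fppf sheaf.

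I do not expect a serious obstacle: the argument is formal once one has the commutation of filtered colimits with finite limits in $\bSet$. The only points requiring care are the naturality used to present the parallel pair in (FP2) as a filtered colimit of parallel pairs (so that the commutation theorem applies to the correct diagram), and — if one prefers to avoid quoting that theorem as a black box — re-deriving the equalizer computation from the explicit model of filtered colimits of sets in Proposition~\ref{prop: filtered colimits of sets}, which is routine. Set-theoretic universe issues can be handled exactly as elsewhere in the paper and do not affect the argument.
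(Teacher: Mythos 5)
Your proposal is correct and follows essentially the same route as the paper, whose entire proof is the observation that filtered colimits commute with finite limits in $\bSet$; you have simply unpacked that one-line argument by checking (FP1) and (FP2) of Theorem \ref{thm: fppf sheaf characterization} componentwise over the filtered colimit of representable-by-schemes functors.
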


\begin{proof}
This follows because filtered colimits commute with finite limits in $\bSet$ (see \cite[{002W}]{stacks-project}). 
\end{proof}


\section{The Grassmannian} \label{sec: the grassmannian}

\subsection{Definitions}


Let $k$ be a ring and $I$ be a set. We define a $k$-functor $\Gr_I: \bAlg_k \to \bSet$ by sending a $k$-algebra $R$ to the set of $R$-submodules of $R^{\oplus I}$ which are direct summands (equivalently submodules $N$ where $R^{\oplus I} / N$ is projective), and by sending a $k$-algebra homomorphism $\varphi: R \to S$ to the function which sends $N \in \Gr_I(R)$ to the image of $\varphi_! ( i_N ) [ \varphi_! N ] \subseteq \varphi_!(R^{\oplus I})$ under the canonical isomorphism $\varphi_!(R^{\oplus I}) = R^{\oplus I} \otimes_R S \overset{\sim}{\to} S^{\oplus I}$ from Proposition \ref{prop: isomorphism between direct sum tensor ring extension and free module}. Here $i_N : N \to R^{\oplus I}$ denotes the inclusion map, and we note that $\varphi_!(i_N)$ is injective by Proposition \ref{prop: extension of scalars applied to direct summands are direct summands} so that $\Gr_I(\varphi)(N)$ is isomorphic to $\varphi_! N = N \otimes_R S$.


\subsection{Representability and the Case of Finite $I$}


If $I$ is finite, $\Gr_I$ is isomorphic to the \emph{full Grassmannian} functor from \cite{karpenko2000} Section 9. There it is proved that this functor is represented by a scheme, hence it is an fppf-sheaf. However we will be interested in the case when $I$ is infinite, and the proof of representability does not extend so we must prove the sheaf property using a more direct approach. 


\subsection{The Grassmannian is an fppf-Sheaf}


We begin with a few lemmas about the functor $\Gr_I$ and then move on to prove it is an fppf sheaf. 


\begin{lemma}\label{lem: gr preserves direct sums}
If $\varphi : R \to S$ is a $k$-algebra homomorphism, then $\Gr_I(\varphi)$ preserves direct sums, i.e. if $N \in \Gr_I(R)$ and $( M_j \mid j \in J )$ is a family of elements of $\Gr_I(R)$ such that $N = \bigoplus_{j \in J} M_j$, then $\Gr_I(\varphi)(N) = \bigoplus_{j \in J} \Gr_I(\varphi)(M_j)$. 
\end{lemma}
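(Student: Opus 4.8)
The plan is to reduce the statement about $\Gr_I(\varphi)$ preserving direct sums to the already-recorded fact (equation \eqref{eq: extension of scalars preserves direct sums}) that extension of scalars $\varphi_!$ preserves internal direct sums of submodules, and then to track everything through the canonical isomorphism $\varphi_!(R^{\oplus I}) \overset{\sim}{\to} S^{\oplus I}$. First I would unwind the definition: for $N \in \Gr_I(R)$, $\Gr_I(\varphi)(N)$ is by definition the image of $\varphi_!(i_N)[\varphi_! N] \subseteq \varphi_!(R^{\oplus I})$ under the canonical isomorphism $c : \varphi_!(R^{\oplus I}) \to S^{\oplus I}$ of Proposition \ref{prop: isomorphism between direct sum tensor ring extension and free module}. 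So it suffices to show that if $N = \bigoplus_{j \in J} M_j$ inside $R^{\oplus I}$, then inside $\varphi_!(R^{\oplus I})$ we have $\varphi_!(i_N)[\varphi_! N] = \bigoplus_{j \in J} \varphi_!(i_{M_j})[\varphi_! M_j]$, since applying the isomorphism $c$ then transports this direct sum decomposition to the desired one in $S^{\oplus I}$.

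The key step is the following: $N = \bigoplus_{j \in J} M_j$ means exactly that the cocone $(N, (i_{M_j \hookrightarrow N})_j)$ is a coproduct of the diagram $\bDis(J) \to \bMod_R$, $j \mapsto M_j$. Since $\varphi_!$ is a left adjoint (it is left adjoint to restriction of scalars $\varphi^*$), it preserves colimits, so $(\varphi_! N, (\varphi_!(i_{M_j \hookrightarrow N}))_j)$ is a coproduct of $j \mapsto \varphi_! M_j$; as spelled out in the discussion around \eqref{eq: extension of scalars preserves direct sums}, this says each $\varphi_!(i_{M_j \hookrightarrow N})$ is injective and $\varphi_! N = \bigoplus_{j \in J} \varphi_!(i_{M_j \hookrightarrow N})[\varphi_! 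M_j]$ as an internal direct sum of submodules of $\varphi_! N$. Now post-compose with the injection $\varphi_!(i_N) : \varphi_! N \to \varphi_!(R^{\oplus I})$ (injective by Proposition \ref{prop: extension of scalars applied to direct summands are direct summands}); an injective module homomorphism carries an internal direct sum decomposition of its domain to one of its image, and $\varphi_!(i_N) \circ \varphi_!(i_{M_j \hookrightarrow N}) = \varphi_!(i_{M_j})$ by functoriality of $\varphi_!$ (both equal $\varphi_!$ applied to the inclusion $M_j \hookrightarrow R^{\oplus I}$). Hence $\varphi_!(i_N)[\varphi_! N] = \bigoplus_{j \in J} \varphi_!(i_{M_j})[\varphi_! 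M_j]$ inside $\varphi_!(R^{\oplus I})$.

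Finally, apply the canonical isomorphism $c : \varphi_!(R^{\oplus I}) \overset{\sim}{\to} S^{\oplus I}$. An isomorphism of modules takes an internal direct sum decomposition to an internal direct sum decomposition, so $c\bigl[\varphi_!(i_N)[\varphi_! N]\bigr] = \bigoplus_{j \in J} c\bigl[\varphi_!(i_{M_j})[\varphi_! M_j]\bigr]$, which is precisely the assertion $\Gr_I(\varphi)(N) = \bigoplus_{j \in J} \Gr_I(\varphi)(M_j)$. One should also note in passing that each $\Gr_I(\varphi)(M_j)$ is indeed a direct summand of $S^{\oplus I}$, so that these are genuinely elements of $\Gr_I(S)$, but this is exactly the content of Proposition \ref{prop: extension of scalars applied to direct summands are direct summands} applied to $M_j \subseteq R^{\oplus I}$ and is presumably already known at this point. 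I do not expect a serious obstacle here; the only mild subtlety is bookkeeping with the various inclusion maps $i_{M_j \hookrightarrow N}$ versus $i_{M_j} = i_{M_j \hookrightarrow R^{\oplus I}}$ and making sure the functoriality identity $\varphi_!(i_N) \circ \varphi_!(i_{M_j \hookrightarrow N}) = \varphi_!(i_{M_j})$ is used correctly, together with checking that "left adjoint preserves the coproduct cocone" really does unpack to the internal-direct-sum statement in the form needed.
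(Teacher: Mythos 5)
Your proposal is correct and follows essentially the same route as the paper's proof: invoke the fact that $\varphi_!$, being a left adjoint, preserves internal direct sums (Equation \ref{eq: extension of scalars preserves direct sums}), push the decomposition forward along the injective map $\varphi_!(i_N)$ using functoriality to identify $\varphi_!(i_N) \circ \varphi_!(i_{M_j \hookrightarrow N})$ with $\varphi_!(i_{M_j})$, and then transport everything through the canonical isomorphism $\varphi_!(R^{\oplus I}) \overset{\sim}{\to} S^{\oplus I}$. No gaps; the bookkeeping you flag is exactly what the paper's displayed computation carries out.
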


\begin{proof}
Since $\varphi_! : \bMod_R \to \bMod_S$ preserves direct sums (Equation \ref{eq: extension of scalars preserves direct sums}), we have that 
\begin{equation*}
\varphi_! N = \bigoplus_{j \in J} \varphi_!(i_{M_j , N}) [\varphi_! M_j ],
\end{equation*}
where $i_{M_j , N} : M_j \to N$ is the inclusion map. Applying the injective map $\varphi_!(i_{N , R^{\oplus I}}) : \varphi_! N \to \varphi_!(R^{\oplus I})$ yields 
\begin{align*}
\varphi_!(i_{N , R^{\oplus I}})[  \varphi_! N ] = \bigoplus_{j \in J} \varphi_!(i_{N , R^{\oplus I}}) \circ \varphi_!(i_{M_j , N}) [\varphi_! M_j ] =  \bigoplus_{j \in J} \varphi_!(i_{N , R^{\oplus I}} \circ i_{M_j , N}) [\varphi_! M_j ] = \bigoplus_{j \in J} \varphi_!(i_{M_j , R^{\oplus I}}) [\varphi_! M_j ].
\end{align*}

Finally, applying the isomorphism $\varphi_!(R^{\oplus I}) \to S^{\oplus I}$, we obtain that $\Gr_I(\varphi)(N) = \bigoplus_{j \in J} \Gr_I(\varphi)(M_j)$ as desired.
\end{proof}


\begin{lemma} \label{lem: equivalent definitions of gr}
If $N \in \Gr_I(R)$ and $\varphi: R \to S$ is a $k$-algebra homomorphism, then $\Gr_I(\varphi)(N)$ is the $S$-submodule of $S^{\oplus I}$ generated by the image of $N$ under the $R$-module homomorphism $\varphi^{I} : R^{\oplus I} \to S^{\oplus I}$. 
\end{lemma}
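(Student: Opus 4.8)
The plan is to unwind the definition of $\Gr_I(\varphi)$ and simply track the submodule $N$ through the relevant tensor product. Recall that by definition $\Gr_I(\varphi)(N)$ is the image of $\varphi_!(i_N)[\varphi_! N]$ under the canonical isomorphism $c : R^{\oplus I} \otimes_R S \overset{\sim}{\to} S^{\oplus I}$ of Proposition \ref{prop: isomorphism between direct sum tensor ring extension and free module}. Since $\varphi_! N = N \otimes_R S$ and $\varphi_!(i_N) = i_N \otimes_R \id_S$, the submodule $\varphi_!(i_N)[\varphi_! N] \subseteq R^{\oplus I} \otimes_R S$ is exactly the image of the $S$-linear map $i_N \otimes \id_S$. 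As every element of $N \otimes_R S$ is an $S$-linear combination of pure tensors $n \otimes 1$ with $n \in N$, the image of $i_N \otimes \id_S$ is the $S$-submodule of $R^{\oplus I} \otimes_R S$ generated by $\{ n \otimes 1 \mid n \in N \}$ (with $N$ regarded as a submodule of $R^{\oplus I}$ via $i_N$).

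Next I would feed in the explicit description of $c$. By Proposition \ref{prop: isomorphism between direct sum tensor ring extension and free module}, $c$ sends $(a_i)_{i \in I} \otimes s$ to $(s\varphi(a_i))_{i \in I}$; in particular $c\big((a_i)_i \otimes 1\big) = (\varphi(a_i))_i = \varphi^I\big((a_i)_i\big)$. Hence $c$ carries the generating set $\{ n \otimes 1 \mid n \in N \}$ onto $\{ \varphi^I(n) \mid n \in N \} = \varphi^I[N]$. Since $c$ is an isomorphism of $S$-modules, it maps the $S$-submodule generated by $\{ n \otimes 1 \mid n \in N \}$ onto the $S$-submodule generated by $\varphi^I[N]$, and that submodule is $\Gr_I(\varphi)(N)$ by definition. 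This is precisely the asserted description.

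I do not anticipate a genuine obstacle here; the only step needing a little care is the identification of $\varphi_!(i_N)[\varphi_! N]$ with the $S$-span of $\{ n \otimes 1 \mid n \in N \}$, which follows from the general fact that the image of a module homomorphism is the submodule generated by the images of a generating set, applied to the fact that $N \otimes_R S$ is generated as an $S$-module by the pure tensors $n \otimes 1$. Note that injectivity of $\varphi_!(i_N)$ (Proposition \ref{prop: extension of scalars applied to direct summands are direct summands}) is not needed for this argument --- only the description of its image --- so the lemma as stated is really just a reformulation of the definition once the formula for $c$ is inserted.
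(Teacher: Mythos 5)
Your argument is correct and follows essentially the same route as the paper: unwind the definition of $\Gr_I(\varphi)(N)$, describe $\varphi_!(i_N)[\varphi_!N]$ via the $S$-generators $n \otimes 1$ (the paper writes its elements as sums $\sum (n_i)_{i\in I}\otimes s$, which is the same observation), and push this through the explicit formula for $\Phi_{R,S}(\varphi^I)$ to land on the $S$-span of $\varphi^I[N]$. Your closing remark that injectivity of $\varphi_!(i_N)$ is not needed is also accurate.
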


\begin{proof}
By definition $\Gr_I(\varphi)(N)$ is equal to the image of 
\begin{equation*}
\varphi_!(i_N) [\varphi_! N] = \{ \sum (n_i)_{i \in I} \otimes s \mid (n_i)_{i \in I} \in N , s \in S \} \subseteq R^{\oplus I} \otimes S
\end{equation*} 
under the isomorphism $\Phi_{R , S}(\varphi^I): R^{\oplus I} \otimes S \to S^{\oplus I}$ which sends $(r_i)_{i \in I} \otimes s \mapsto s (\varphi(r_i) )_{i \in I}$. Therefore $\Gr_I(\varphi)(N) = \{ \sum s (\varphi(n_i))_{i \in I} \mid (n_i)_{i \in I} \in N , s \in S \}$ which is clearly equal to the $S$-submodule of $S^{\oplus I}$ generated by the image of $N$ under $\varphi^{I}$. 
\end{proof}


\begin{lemma} \label{lem: surjective homs make image a submodule}
If $\varphi : R \to S$ is a \textbf{surjective} $k$-algebra homomorphism and $N \in \Gr_I(R)$, then $\Gr_I(\varphi)(N) = \varphi^I ( N)$. 
\end{lemma}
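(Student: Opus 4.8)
The plan is to reduce immediately to Lemma~\ref{lem: equivalent definitions of gr}, which identifies $\Gr_I(\varphi)(N)$ with the $S$-submodule of $S^{\oplus I}$ generated by the image $\varphi^I(N)$. Thus the statement $\Gr_I(\varphi)(N) = \varphi^I(N)$ is equivalent to the assertion that the \emph{set} $\varphi^I(N)$ is already an $S$-submodule of $S^{\oplus I}$, and I would prove it in that form. The inclusion $\varphi^I(N) \subseteq \Gr_I(\varphi)(N)$ holds for any ring map since $1 \in S$, and closure of $\varphi^I(N)$ under addition is clear because $\varphi^I$ is additive and $N$ is a submodule; so the only content is closure under multiplication by an arbitrary $s \in S$.

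This is exactly where surjectivity is used. Given $x \in \varphi^I(N)$, write $x = \varphi^I(n)$ with $n \in N$, and given $s \in S$, use surjectivity of $\varphi$ to choose $r \in R$ with $\varphi(r) = s$. Since $\varphi^I$ sends $(a_i)_{i \in I} \mapsto (\varphi(a_i))_{i \in I}$, it is $R$-linear when $S^{\oplus I}$ is regarded as an $R$-module via $\varphi$ (this is part of Proposition~\ref{prop: isomorphism between direct sum tensor ring extension and free module}, which records that $\varphi^I$ is adapted to $\varphi$, though it is also a one-line direct check), so $s\,x = \varphi(r)\,\varphi^I(n) = \varphi^I(rn)$. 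Because $N$ is a submodule of $R^{\oplus I}$ we have $rn \in N$, hence $s\,x = \varphi^I(rn) \in \varphi^I(N)$, as needed. Combined with the reduction above, this gives $\Gr_I(\varphi)(N) = \varphi^I(N)$.

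I do not expect any genuine obstacle here: the argument is short, and its only delicate point is keeping straight the two module structures on $S^{\oplus I}$ (over $R$ via $\varphi$, and over $S$), together with the observation that surjectivity of $\varphi$ is precisely what collapses the $S$-span of $\varphi^I(N)$ back down to $\varphi^I(N)$ itself. It is also worth remarking that the proof makes no use of $N$ being a direct summand, only that it is a submodule; the direct-summand hypothesis is needed merely so that, a priori, $\Gr_I(\varphi)(N)$ is a bona fide element of $\Gr_I(S)$.
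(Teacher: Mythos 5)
Your proof is correct and follows the same route as the paper: reduce via Lemma \ref{lem: equivalent definitions of gr} to showing $\varphi^I(N)$ is already an $S$-submodule, then use surjectivity to write $s\,\varphi^I(n)=\varphi^I(rn)$. No differences worth noting.
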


\begin{proof}
The inclusion ($\supseteq$) is immediate. For the reverse inclusion ($\subseteq$), by Lemma \ref{lem: equivalent definitions of gr} it is enough to show that $\varphi^I ( N)$ is already an $S$-submodule of $S^{\oplus I}$. Given $(n_i)_{i \in I} \in N$ and $s \in S$, since $\varphi$ is surjective there exists $r \in R$ such that $\varphi(r) = s$. Then the following computation gives the desired result. 
\begin{equation*}
s \varphi^I( (n_i)_{i \in I} ) = \varphi(r) (\varphi(n_i))_{i \in I} =  (\varphi(r) \varphi(n_i))_{i \in I} =  (\varphi(r n_i))_{i \in I} =  \varphi^I ((r n_i)_{i \in I} )
\end{equation*}
\end{proof}


\begin{lemma} \label{lem: power of phi is adapted}
Let $\varphi : R \to S$ be a ring homomorphism and let $N \in \Gr_I(R)$. The map $\varphi^I : N \to \Gr_I(\varphi)(N)$ is adapted to $\varphi$ (see Definition \ref{def: adapted function}). 
\end{lemma}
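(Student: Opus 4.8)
The plan is to unwind the definition of ``adapted'' (Definition \ref{def: adapted function}) and reduce both of its requirements to the already-established facts about the ambient free module $R^{\oplus I}$. Concretely, adaptedness of $\varphi^I : N \to \Gr_I(\varphi)(N)$ asks for two things: (i) that $\varphi^I$, restricted to $N$, is an $R$-module homomorphism $N \to \varphi^*\bigl(\Gr_I(\varphi)(N)\bigr)$, and (ii) that the associated $S$-module map $\Phi_{R,S}(\varphi^I|_N) : \varphi_! N \to \Gr_I(\varphi)(N)$ is an isomorphism. For (i), I would invoke Proposition \ref{prop: isomorphism between direct sum tensor ring extension and free module}, which gives that $\varphi^I : R^{\oplus I} \to S^{\oplus I}$ is adapted to $\varphi$ and in particular is an $R$-module homomorphism into $\varphi^* S^{\oplus I}$; restricting to the submodule $N$ and noting that $\varphi^I(N) \subseteq \Gr_I(\varphi)(N)$ (immediate from Lemma \ref{lem: equivalent definitions of gr}, or directly from the definition of $\Gr_I(\varphi)$) then yields (i).

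For (ii) the key observation is a commuting square. Write $i_N : N \hookrightarrow R^{\oplus I}$ and $j : \Gr_I(\varphi)(N) \hookrightarrow S^{\oplus I}$ for the inclusions, and recall that the canonical isomorphism $\varphi_!(R^{\oplus I}) \overset{\sim}{\to} S^{\oplus I}$ used to define $\Gr_I(\varphi)$ is precisely $\Phi_{R,S}(\varphi^I)$. Using the explicit formula $\Phi_{R,S}(h)(m \otimes s) = s \cdot h(m)$, one checks directly that
\begin{equation*}
j \circ \Phi_{R,S}(\varphi^I|_N) = \Phi_{R,S}(\varphi^I) \circ \varphi_!(i_N) : \varphi_! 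N \to S^{\oplus I}.
\end{equation*}
Now $\varphi_!(i_N)$ is injective by Proposition \ref{prop: extension of scalars applied to direct summands are direct summands} and $\Phi_{R,S}(\varphi^I)$ is an isomorphism by Proposition \ref{prop: isomorphism between direct sum tensor ring extension and free module}, so the right-hand side is injective; since $j$ is injective this forces $\Phi_{R,S}(\varphi^I|_N)$ to be injective as well. Moreover the image of the right-hand side is $\Phi_{R,S}(\varphi^I)\bigl[\varphi_!(i_N)[\varphi_! N]\bigr]$, which is by definition exactly $\Gr_I(\varphi)(N) = j\bigl[\Gr_I(\varphi)(N)\bigr]$; as $j$ is injective, $\Phi_{R,S}(\varphi^I|_N)$ is therefore surjective onto $\Gr_I(\varphi)(N)$. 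Hence it is an isomorphism of $S$-modules, establishing (ii).

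I do not expect any genuine obstacle: the argument is entirely a matter of keeping track of domains, codomains, and the identification of $\Gr_I(\varphi)(N)$ with an image under $\Phi_{R,S}(\varphi^I)$. The only point deserving a moment's care is checking that the displayed square commutes with the correct codomain restrictions; this is transparent from the explicit description of $\Phi_{R,S}$, but can alternatively be phrased as naturality of the adjunction isomorphism $\Phi_{R,S}$ in both of its arguments.
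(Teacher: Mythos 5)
Your proposal is correct and follows essentially the same route as the paper: both reduce the claim to the fact that the adjunction map $\varphi_! N \to \Gr_I(\varphi)(N)$ agrees (after the inclusion into $S^{\oplus I}$) with the composite $\Phi_{R,S}(\varphi^I) \circ \varphi_!(i_N)$, which is injective with image exactly $\Gr_I(\varphi)(N)$ by definition. Your version merely spells out the commuting square and the injectivity/surjectivity bookkeeping that the paper leaves implicit.
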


\begin{proof}
It is clear that $\varphi^I : N \to \varphi^* \Gr_I(\varphi)(N)$ is an $R$-module homomorphism. Furthermore, we have previously shown that the composition of the maps 
\begin{equation*}
\varphi_! N \overset{\varphi_!(i_N)}{\longrightarrow} \varphi_! R^{\oplus I} \overset{\Phi_{R , S}(\varphi^I)}{\longrightarrow} S^{\oplus I}
\end{equation*}
is an isomorphism onto its image, which is by definition $\Gr_I(\varphi)(N)$. But notice that this isomorphism is equal to the map $\varphi_! N \to \Gr_I(\varphi)(N)$ induced by the adjunction $\varphi_! \vdash \varphi^*$.
\end{proof}


\begin{theorem} \label{thm: grassmannian is an fppf sheaf}
$\Gr_I$ is an fppf sheaf.
\end{theorem}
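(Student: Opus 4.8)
The plan is to verify the two conditions (FP1) and (FP2) from Theorem \ref{thm: fppf sheaf characterization}. Condition (FP1) should be relatively painless: given $R$-algebras $R_1, \ldots, R_n$ with product $R' = R_1 \times \cdots \times R_n$, I would note that $(R')^{\oplus I} \cong \prod_{i=1}^n R_i^{\oplus I}$ compatibly, that a submodule of a finite product of modules is a direct summand if and only if each of its coordinate projections is a direct summand, and that the extension-of-scalars maps along the projections $\pi_i : R' \to R_i$ realize exactly these coordinate projections (using Lemma \ref{lem: surjective homs make image a submodule}, since the $\pi_i$ are surjective). This gives that $\Gr_I(R') \to \prod_i \Gr_I(R_i)$ is a bijection, with inverse sending a tuple $(N_i)$ to $\bigoplus_i N_i$ (as submodules of $\prod_i R_i^{\oplus I} = (R')^{\oplus I}$), which is a direct summand because each $N_i$ is.

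The substance of the proof is condition (FP2): for a $k$-algebra $R$ and an fppf-$R$-algebra $S$, with $\delta, \epsilon : S \to S \otimes_R S$ the two canonical maps, I must show that
\begin{equation*}
\Gr_I(R) \to \Gr_I(S) \rightrightarrows \Gr_I(S \otimes_R S)
\end{equation*}
is an equalizer in $\bSet$. By Proposition \ref{prop: characterization of equalizers in set} this amounts to two things. First, injectivity of $\Gr_I(\varphi) : \Gr_I(R) \to \Gr_I(S)$ where $\varphi : R \to S$ is the structure map: if $N, N' \in \Gr_I(R)$ have the same extension to $S$, then viewing $N, N'$ inside $R^{\oplus I}$ and using that $\varphi$ is faithfully flat (so $N \hookrightarrow R^{\oplus I}$ stays injective after $\otimes_R S$ and $R^{\oplus I} \to R^{\oplus I} \otimes_R S = S^{\oplus I}$ is injective by Proposition \ref{prop: equivalent characterizations of faithful flatness}), the equality of the images in $S^{\oplus I}$ forces $N = N'$ — concretely, $N = (N \otimes_R S) \cap R^{\oplus I}$ by faithfully flat descent of submodule inclusions. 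Second, and this is the main obstacle, I must show that if $M \in \Gr_I(S)$ satisfies $\Gr_I(\delta)(M) = \Gr_I(\epsilon)(M)$ inside $(S\otimes_R S)^{\oplus I}$, then $M$ descends, i.e. $M = \Gr_I(\varphi)(N)$ for some $N \in \Gr_I(R)$.

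For the descent step I would set $N := \ker\big( R^{\oplus I} \to S^{\oplus I} / M \big)$, equivalently the preimage of $M$ under $\varphi^I : R^{\oplus I} \to S^{\oplus I}$ intersected appropriately — more precisely, following the pattern of Lemma \ref{lem: DG Lemma 3.14}, I'd consider the map $d - e : S^{\oplus I}/M \to (S\otimes_R S)^{\oplus I}/(\text{its extension})$ built from $\delta, \epsilon$ and the cocycle condition $\Gr_I(\delta)(M) = \Gr_I(\epsilon)(M)$, and let $N$ be the kernel of the induced map $R^{\oplus I} \to \ker(d-e)$ composed with the Amitsur-type inclusion. The cocycle hypothesis is exactly what makes the relevant diagram satisfy the compatibilities in Lemma \ref{lem: DG Lemma 3.14}, and Theorem \ref{thm: amitsur complex is exact for faithfully flat homomorphisms} applied to $R^{\oplus I}$ (or directly to $S^{\oplus I}/M$ once one knows it is a well-behaved $R$-module) gives that $N \otimes_R S \xrightarrow{\sim} M$. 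The remaining point is to check that this $N$ actually lies in $\Gr_I(R)$, i.e. that it is a direct summand of $R^{\oplus I}$: this follows from faithfully flat descent of projectivity (Theorem \ref{thm: extension of scalars and projectivity}(2)) applied to the quotient $R^{\oplus I}/N$, since $(R^{\oplus I}/N) \otimes_R S \cong S^{\oplus I}/M$ is projective because $M \in \Gr_I(S)$. Once $N \in \Gr_I(R)$ and $N \otimes_R S \cong M$ compatibly with the inclusions into $R^{\oplus I}$ and $S^{\oplus I}$, Lemma \ref{lem: equivalent definitions of gr} identifies $\Gr_I(\varphi)(N)$ with the $S$-submodule generated by the image of $N$, which is exactly $M$. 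I expect the bookkeeping in this descent argument — keeping track of which maps are adapted (Lemma \ref{lem: power of phi is adapted}) and that all the canonical isomorphisms $(-)^{\oplus I} \otimes (-) \cong (-)^{\oplus I}$ are compatible across the three rings — to be the genuinely delicate part, while the faithfully flat descent inputs are off-the-shelf.
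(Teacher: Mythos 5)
Your proposal is correct and follows essentially the same route as the paper: verify (FP1) via the idempotent decomposition over a product ring, and (FP2) by taking the preimage of $M$ under $\varphi^I$, using Lemma \ref{lem: DG Lemma 3.14} together with the Amitsur complex to identify its extension with $M$, and invoking faithfully flat descent of projectivity (Theorem \ref{thm: extension of scalars and projectivity}(2)) to see the descended module is a summand. The paper's proof just carries out the bookkeeping you flag (adaptedness of $\delta^I,\epsilon^I,\alpha^I,\beta^I,\gamma^I$, the compatibility equations, and the five-lemma diagram giving $\varphi_!(R^{\oplus I}/M)\cong S^{\oplus I}/N$) explicitly.
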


\begin{proof}
We use Theorem \ref{thm: fppf sheaf characterization} and we begin with (FP1). Given $k$-algebras $R_1, \ldots,  R_n$, we need to show that the map $\Gr_I( R_1 \times \ldots \times R_n ) \to \Gr_I( R_1) \times \ldots \times \Gr_I(R_n )$ is a bijection. 

To prove injectivity, suppose that $M , N \in \Gr_I( R_1 \times \ldots \times R_n )$ are such that $\Gr_I( \pi_j )(M) = \Gr_I( \pi_j )(N)$ for $1 \leq j \leq n$. Since the projection maps $\pi_j$ are surjective, it follows from Lemma \ref{lem: surjective homs make image a submodule} that $\Gr_I( \pi_j )(M) = \pi_j^I (M)$ and similarly for $N$. Therefore given $(m_i)_{i \in I} \in M$, because $\pi_j^I (M) = \pi_j^I (N)$ for each $1 \leq j \leq n$, there exists $(n^j_i)_{i \in I} \in N$ such that $(\pi_j(m_i))_{i \in I}  = (\pi_j(n^j_i))_{i \in I}$. Therefore we can write 
\begin{equation*}
(m_i)_{i\in I} = \sum_{j = 1}^n e_j (n^j_i)_{i \in I} \in N
\end{equation*}
where $e_j$ is the idempotent $(0 , \ldots, 1 , \ldots, 0) \in R_1 \times \ldots \times R_n$ with a $1$ in the $j$th entry and $0$'s everywhere else. Hence $M \subseteq N$. A similar computation shows the other containment, so $M = N$. 

For surjectivty, suppose we are given $M_j \in \Gr_I(R_j)$, with direct sum complements $M_j \oplus M_j' = R_j^{\oplus I}$. We note that as $(R_1 \times \ldots \times R_n)$-modules we have $R_1^{\oplus I} \times \ldots \times R_n^{\oplus I} = (M_1 \times \ldots \times M_n) \oplus (M_1' \times \ldots \times M_n')$. Furthermore, under the $(R_1 \times \ldots \times R_n)$-module isomorphism
\begin{align*}
R_1^{\oplus I} \times \ldots \times R_n^{\oplus I} &\overset{\sim}{\to} (R_1 \times \ldots \times R_n)^{\oplus I}
\\ ( (x_i^1)_{i \in I} , \ldots , (x_i^n)_{i \in I} ) &\mapsto ( (x^1_i , \ldots , x^n_i ) )_{i \in I}
\end{align*}
$M_1 \times \ldots \times M_n$ corresponds to $L = \bigcap_{j = 1}^n (\pi^I_j)^{-1}(M_j)$ and $M_1' \times \ldots \times M_n'$ corresponds to $L' = \bigcap_{j = 1}^n (\pi^I_j)^{-1}(M_j')$. Then we have that $L \oplus L' = (R_1 \times \ldots \times R_n)^{\oplus I}$, so that $L \in \Gr_I(R_1 \times \ldots \times R_n)$. Furthermore, given $(m_i)_{i \in I} \in M_j$, we note that $( m_i e_j )_{i \in I} \in (R_1 \times \ldots \times R_n)^{\oplus I}$ is in fact in $L$, since $\pi_j^I ( ( m_i e_j )_{i \in I} ) =  (m_i)_{i \in I} \in M_j$ and $\pi_\ell^I ( ( m_i e_j )_{i \in I} ) = 0 \in M_\ell$ for all $\ell \neq j$. But this computation also shows that $\Gr_I(\pi_j)(L) = \pi_j^I( L ) = M_j$, as desired.

Now we show (FP2). Suppose that $\varphi: R \to S$ is a $k$-algebra homomorphism which makes $S$ into an fppf-$R$-algebra. We use Proposition \ref{prop: characterization of equalizers in set} to show that the sequence $\Gr_I(R) \to \Gr_I(S) \rightrightarrows \Gr_I(S \otimes_R S)$ is an equalizer diagram in $\bSet$. 

First we prove (EQ1). Given $M , N \in \Gr_I(R)$ such that $\Gr_I(\varphi)(M) = \Gr_I(\varphi)(N)$, we note that this implies $\im \varphi_!(i_M) = \im \varphi_!(i_N)$ in $\varphi_! (R^{\oplus I})$, where $i_M : M \to R^{\oplus I}$ and $i_N : N  \to R^{\oplus I}$ denote the inclusion maps. Consider the following commutative diagram whose rows are exact by Theorem \ref{thm: amitsur complex is exact for faithfully flat homomorphisms}. 
\begin{center}
\begin{tikzcd}
0 \arrow[r] & M \arrow[r , "\id_M \otimes \varphi"] \arrow[d , "i_M"] & M \otimes_R S   \arrow[d , "\varphi_!(i_M)"] \arrow[ r r , "\id_M \otimes (\delta - \epsilon)"] & & M \otimes_R S \otimes_R S \arrow[r] \arrow[d , "(\delta \circ \varphi)_! (i_M)"]  & \ldots
\\ 0 \arrow[r] & R^{\oplus I}  \arrow[r , "\id_{R^{\oplus I}} \otimes \varphi" '] & R^{\oplus I} \otimes_R S  \arrow[ r r , "\id_{R^{\oplus I}} \otimes (\delta - \epsilon)" '] & & R^{\oplus I} \otimes_R S \otimes_R S \arrow[r] & \ldots
\end{tikzcd}
\end{center}
Let $\rho = \id_{R^{\oplus I}} \otimes (\delta - \epsilon)$. Since $M$ is a summand of $R^{\oplus I}$, the columns of this diagram are all injective, which, by a diagram chase, implies that $\ker \rho \cap \im \varphi_!(i_M) = \{ m \otimes 1 \mid m \in M \}$. Now given $n \in N$, we have that $n \otimes 1 \in \ker \rho \cap \im \varphi_!(i_N)$. But since $\im \varphi_!(i_M) = \im \varphi_!(i_N)$, it follows that $n \otimes 1 \in \ker \rho \cap \im \varphi_!(i_M)$. Hence $n \otimes 1 = m \otimes 1$ for some $m \in M$, but by injectivity of the map $R^{\oplus I} \to R^{\oplus I} \otimes_R S$, it follows that $n = m$ and hence that $n \in M$. Therefore $N \subseteq M$, and a similar argument shows the reverse inclusion. 

Next we prove (EQ2). In the notation of Lemma \ref{lem: DG Lemma 3.14}, suppose that $N \in \Gr_I(S)$ is such that $\Gr_I(\delta)(N) = \Gr_I(\epsilon)(N)$. We define $K$ to be this latter $(S \otimes_R S)$-module. Note that the equations
\begin{equation}\label{eq: ring homomorphism composition equality}
\alpha \circ \delta = \beta \circ \delta, \ \ \ \gamma \circ \delta = \alpha \circ \epsilon, \ \ \ \beta \circ \epsilon = \gamma \circ \epsilon
\end{equation}

imply that
\begin{align*}
\Gr_I(\alpha)(K)&= \Gr_I(\alpha)\Gr_I(\delta)(N) = \Gr_I(\alpha \circ \delta)(N) = \Gr_I(\beta \circ \delta )(N) = \Gr_I(\beta) \Gr_I(\delta)(N) = \Gr_I(\beta)(K)
\\ \Gr_I(\beta)(K)  &= \Gr_I(\beta)\Gr_I(\epsilon)(N) = \Gr_I(\beta \circ \epsilon)(N) = \Gr_I( \gamma \circ \epsilon )(N) = \Gr_I(\gamma) \Gr_I(\epsilon)(N) = \Gr_I(\gamma)(K).
\end{align*}

Hence we define $L = \Gr_I(\alpha)(K) = \Gr_I(\beta)(K) = \Gr_I(\gamma)(K)$. By Lemma \ref{lem: power of phi is adapted} the maps 
\begin{equation*}
N \ \substack{\overset{\delta^I}{\longrightarrow} \\ \overset{\epsilon^I}{\longrightarrow}}  \ K \ \substack{\overset{\alpha^I}{\longrightarrow} \\ \overset{\beta^I}{\longrightarrow} \\ \overset{\gamma^I}{\longrightarrow}} \ L
\end{equation*}
are adapted, respectively, to the ring homomorphisms $\delta, \epsilon, \alpha, \beta, \gamma$. Furthermore, it follows directly from Equation \ref{eq: ring homomorphism composition equality} that $\alpha^I \circ \delta^I = \beta^I \circ \delta^I$, $\gamma^I \circ \delta^I = \alpha^I \circ \epsilon^I$, and $\beta^I \circ \epsilon^I = \gamma^I \circ \epsilon^I$. By the lemma we have that the $R$-submodule $J = \ker(\delta^I - \epsilon^I)$ of $N$ is such that the inclusion map $J \to N$ induces an isomorphism $J \otimes_R S \overset{\sim}{\to} N$.
 
Let $M = (\varphi^I)^{-1}(J) \subseteq R^{\oplus I}$. First of all, because $\varphi: R\to S$ is faithfully, in particular it is injective, hence $\varphi^I$ is injective as well.  Furthermore, given an element of $(n_i)_{i \in I}$ of $J$, we have that $(n_i \otimes 1)_{i \in I} = (1 \otimes n_i)_{i \in I}$ in $(S \otimes_R S)^{\oplus I}$, which implies that for each $i \in I$, $n_i \otimes 1 - 1 \otimes n_i = 0$ in $S \otimes_R S$. Because the Amitsur complex is exact for faithfully flat homomorphisms (Theorem \ref{thm: amitsur complex is exact for faithfully flat homomorphisms}), it follows that for each $i \in I$, there exists $m_i \in R$ such that $\varphi(m_i) = n_i$. Then $\varphi^I( (m_i)_{i \in I} ) = (n_i)_{i\in I}$. It follows that $\varphi^I: M \to J$ is an isomorphism of $R$-modules. All together, we have that $\varphi^I : M \to N$ induces an isomorphism $f: \varphi_! M  \overset{\sim}{\to} N$.

Now on the one hand, $M$ is a submodule of $R^{\oplus I}$ so we get an exact sequence of $R$-modules $0 \to M \overset{i_M}{\to} R^{\oplus I} \to R^{\oplus I} / M \to 0$. Since $S$ is a flat $R$-module, applying $\varphi_!$ yields an exact sequence of $S$-modules. On the other hand, $N$ is a submodule of $S^{\oplus I}$, so we get an exact sequence of $S$-modules $0 \to N \overset{i_N}{\to} S^{\oplus I} \to S^{\oplus I} / N \to 0$. Consider the following diagram of $S$-modules. 
\begin{center}
\begin{tikzcd}
0 \arrow[r] & \varphi_! M \arrow[r, "\varphi_! (i_{M})" ] \arrow[d, "f" ] & \varphi_! R^{\oplus I} \arrow[r, "\varphi_! (\pi_M) "]  \arrow[d, "\Phi_{R, S}(\varphi^I)"] & \varphi_! ( R^{\oplus I} / M ) \arrow[r] \arrow[d, "h"]  & 0
\\ 0 \arrow[r] & N  \arrow[r, "i_N"]  & S^{\oplus I}  \arrow[r, "\pi_N"]  &  S^{\oplus I}  / N \arrow[r]  & 0
\end{tikzcd}
\end{center}
Recall that $\Phi_{R, S}(\varphi^I)$ is the isomorphism $\varphi_! R^{\oplus I}  \overset{\sim}{\to} S^{\oplus I}$ induced by $\varphi^I$, and $h$ will be defined shortly. First of all, we show that this diagram commutes. Given $m \otimes s \in \varphi_! M$, mapping to the right yields $m \otimes s \in \varphi_! R^{\oplus I}$, and mapping that down yields $s \cdot \varphi^I(m) \in S^{\oplus I}$, while mapping down first yields $s \cdot \varphi^I(m) \in N$, and including gives $s \cdot \varphi^I(m) \in S^{\oplus I}$. Since $f$ and $\Phi_{R, S}(\varphi^I)$ are both isomorphisms, basic homological algebra gives the existence of the map $h$, which is then also an isomorphism. 

Now because $N$ is a direct summand of the free module $S^{\oplus I}$, it follows that $S^{\oplus I} / N$ is a projective $S$-module, hence $\varphi_! ( R^{\oplus I} / M )$ is also a projective $S$-module. But by a result of Raynaud-Gruson \cite{raynaud-gruson1971} and Perry \cite{perry2010}, projectivity satisfies faithfully flat descent, hence $R^{\oplus I} / M$ is a projective $R$-module, so $M$ is a direct summand of $R^{\oplus I}$, i.e. $M \in \Gr_I(R)$. Finally we note that $\Gr_I(\varphi)(M)$ is indeed equal to $N$ because we proved that the map $f: \varphi_! M \to N$ is an isomorphism. 

\end{proof}


\section{Submodule Filtrations} \label{sec: submodule filtrations}


The goal of this section is to introduce the very general notions of a poset filtration and an (almost) gradation which are necessary for defining flags and which will also be used as technical tools during our proof of the Bruhat decomposition.


\subsection{Poset Filtrations}

\begin{definition}
A \emph{poset filtration} of an $R$-module $M$ consists of a partially ordered set $(P , \leq )$, and a function $F: P \to \Sub(M)$ satisfying the following.

\begin{enumerate}

\item[(F1)] There exists $p \in P$ such that $F_p = \{ 0 \}$. 

\item[(F2)] There exists $q \in P$ such that $F_q = M$. 

\item[(F3)] $F$ is an order-preserving function (i.e. $p \leq q \implies F_p \subseteq F_q$).  

\end{enumerate}
\end{definition}

A poset filtration $(F , P)$ is called \emph{linear} if $(P , \leq)$ is a totally ordered set. In this case we typically use ``$I$'' or ``$J$'' instead of ``$P$''. Furthermore, we define a poset filtration $(F , P)$ to be an \emph{embedding} if the map $F: P \to \Sub(M)$ is an order embedding (i.e. for any $p , q \in P$, $p \leq q \iff F_p \subseteq F_q$). Note that this condition implies that $F$ is injective, and indeed an order-isomorphism into its image. If $(F , I)$ is linear, then it is an embedding if and only if $F$ is injective. Finally we say $(F , P)$ is \emph{well-founded} if $(P , \leq)$ is a well-founded poset. Well-founded filtrations which are linear are called \emph{well-ordered}.

\begin{example}
Let $R^{\oplus \nn} := \bigoplus_{n \in \nn} R$. This is often known as \emph{finitely supported sequence space}. Furthermore, let $I = \omega + 1$ have the usual total order. We define a filtration $E : I \to \Sub(R^{\oplus \nn})$ as follows. 
\begin{align*}
 E_{\omega} &= R^{\oplus \nn}
\\ E_k &= \{ (a_1, \ldots, a_k, 0 , \ldots ) \}
\\ E_{0} &= 0
\end{align*}
Then $(E , I)$ is a linear, embedded, well-ordered filtration. 
\end{example}

\begin{example}
Define $J = \{ -\infty \} \cup \zz_{\leq 0}$ with the usual linear order. Define $F : J \to \Sub(R^{\oplus \nn})$ as follows.  
\begin{align*}
F_0 & = R^{\oplus \nn}
\\ F_{-k} &= \{ (\underbrace{0 , \ldots , 0}_{k} , a_{k + 1}, \ldots ) \} 
\\ F_{-\infty} &= 0
\end{align*}
Then $(F , J)$ is a linear, embedded filtration of $R^{\oplus \nn}$, but note that it is not well-ordered.
\end{example}

\begin{example}
Let $R^{\nn} := \prod_{n \in \nn} R$. This is often known as ``sequence space'', and strictly contains $R^{\oplus \nn}$. As before we let $J = \{ -\infty \} \cup \zz_{\leq 0}$, and define $G : J \to \Sub(R^{\nn})$ by
\begin{align*}
G_0 & = R^{\nn}
\\ G_{-k} &= \{ (\underbrace{0 , \ldots , 0}_{k} , a_{k + 1}, \ldots ) \} 
\\ G_{-\infty} &= 0
\end{align*}
Note that $(G, J)$ is basically the same as $(F, J)$, the only difference is the underlying module has been enlarged. 
\end{example}


\subsection{Gradations and Almost Gradations}

\begin{definition}
A \emph{gradation} of a poset filtration $(F , P)$ of an $R$-module $M$ is a function $C: P \to \Sub(M)$ satisfying: 
\begin{itemize}

\item[(G1)] For all $p \in P$ we have $F_p = \bigoplus_{q \leq p} C_q$. 

\end{itemize}
\end{definition}

One can weaken (G1) so that the submodules $C_p$ only complement successive submodules of the filtration, resulting in the following definition.  

\begin{definition}
An \emph{almost gradation} of a poset filtration $(F , P)$ of an $R$-module $M$ is a function $C: P \to \Sub(M)$ satisfying: 
\begin{itemize}

\item[(AG1)] For all $p \in P$, $F_p = F_{< p} \oplus C_p$,

\end{itemize}
where we define $F_{<p} = \sum_{q < p} F_q$. 
\end{definition}

As the names suggest and the following lemma and proposition show, every gradation of a poset filtration $(F , P)$ is also an almost gradation of $(F , P)$. 

\begin{lemma}\label{lem:less than direct sum gradation}
Let $C: P \to \Sub(M)$ be a gradation of a poset filtration $(F , P)$ of an $R$-module $M$. Then for all $p \in P$, $F_{<p} = \bigoplus_{q < p} C_q$. 
\end{lemma}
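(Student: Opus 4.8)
The plan is to reduce the claimed identity $F_{<p} = \bigoplus_{q < p} C_q$ to two separate assertions: first, an equality of \emph{submodules} $F_{<p} = \sum_{q < p} C_q$ (forgetting the direct-sum structure), and second, independence of the family $(C_q : q < p)$, which upgrades that sum to an internal direct sum. Recall that by definition $F_{<p} = \sum_{q < p} F_q$.

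For the submodule equality I would argue by mutual inclusion. Given $q < p$, condition (G1) gives $F_q = \bigoplus_{r \leq q} C_r$; since every $r$ with $r \leq q$ also satisfies $r < p$ (this uses transitivity of $\leq$ to get $r \leq p$, and antisymmetry to rule out $r = p$, which would force $p \leq q < p$), we get $F_q \subseteq \sum_{r < p} C_r$, and summing over all $q < p$ yields $F_{<p} \subseteq \sum_{r < p} C_r$. Conversely, for each $r < p$ the submodule $C_r$ is one of the summands appearing in $F_r = \bigoplus_{s \leq r} C_s$, so $C_r \subseteq F_r \subseteq \sum_{q < p} F_q = F_{<p}$; summing over $r < p$ gives $\sum_{r < p} C_r \subseteq F_{<p}$.

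For independence, I would apply (G1) to $p$ itself: $F_p = \bigoplus_{q \leq p} C_q$ says precisely that the family $(C_q : q \leq p)$ is independent. Since $\{q \in P : q < p\}$ is a subset of $\{q \in P : q \leq p\}$, the subfamily $(C_q : q < p)$ is independent as well, by the general fact (noted in the discussion of free modules) that any subfamily of an independent family is independent. Combining this with the submodule equality from the previous step, we conclude $F_{<p} = \sum_{q < p} C_q = \bigoplus_{q < p} C_q$, as desired.

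I do not expect a real obstacle here; the only point needing a little care is the set-theoretic identity $\bigcup_{q < p}\{r \in P : r \leq q\} = \{r \in P : r < p\}$, whose nontrivial inclusion relies on the poset axioms as indicated above, together with remembering to cite the ``subfamily of an independent family is independent'' remark rather than reproving it.
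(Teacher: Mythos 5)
Your proof is correct and follows essentially the same route as the paper: independence of $(C_q : q < p)$ comes from applying (G1) at $p$ and restricting to a subfamily, while the spanning equality $F_{<p} = \sum_{q<p} C_q$ comes from rewriting $\sum_{q<p} F_q = \sum_{q<p}\sum_{r\leq q} C_r$ via the same set-theoretic observation you make explicit. The only difference is that you spell out the mutual inclusion and the poset bookkeeping that the paper's one-line computation leaves implicit.
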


\begin{proof}
Let $p \in P$. By definition we have that $F_p = \bigoplus_{q \leq p} C_q$. Therefore the family of submodules $(C_q : q \leq p)$ is independent. Hence the subfamily $(C_q : q < p)$ is independent as well, so (DS1) is satisfied. To prove (DS2), note that we have $F_{< p} = \sum_{q < p} F_q = \sum_{q < p} \sum_{r \leq q} C_r = \sum_{r < p} C_r$. 
\end{proof}

\begin{proposition}\label{prop: gradation implies almost gradation}
Every gradation of a poset filtration $(F , P)$ is an almost gradation of $(F , P)$. 
\end{proposition}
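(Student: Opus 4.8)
The plan is to deduce (AG1) immediately from Lemma \ref{lem:less than direct sum gradation}. Fix $p \in P$. By the definition of a gradation, $F_p = \bigoplus_{q \leq p} C_q$, and by Lemma \ref{lem:less than direct sum gradation} we have $F_{<p} = \bigoplus_{q < p} C_q$. First I would observe that $C_p \subseteq F_p$ (it is one of the summands of $F_p$) and that $F_{<p} \subseteq F_p$ (since $F$ is order-preserving, each $F_q$ with $q < p$ lies in $F_p$, hence so does their sum). It therefore suffices to verify conditions (DS1) and (DS2) for the two-element family $(F_{<p} , C_p)$ of submodules of $F_p$.

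For (DS2), note that $F_{<p} + C_p = \left( \sum_{q < p} C_q \right) + C_p = \sum_{q \leq p} C_q = F_p$, the last equality being part of the statement $F_p = \bigoplus_{q \leq p} C_q$. For (DS1), independence of a two-element family of submodules is equivalent to their intersection being trivial, so I would check $F_{<p} \cap C_p = \{ 0 \}$: given $x$ in this intersection, write $x = \sum_i n_{q_i}$ with the $q_i < p$ pairwise distinct and $n_{q_i} \in C_{q_i}$, which is possible because $F_{<p} = \bigoplus_{q < p} C_q$; then $x - \sum_i n_{q_i} = 0$ exhibits a vanishing finite sum of elements drawn from the pairwise distinct submodules $C_p, C_{q_1}, C_{q_2}, \ldots$, all indexed by elements $\leq p$, so independence of the family $(C_q : q \leq p)$ forces every term to be zero, in particular $x = 0$.

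I do not expect a genuine obstacle here: essentially all the content sits in Lemma \ref{lem:less than direct sum gradation}, and the only mild point requiring care is the passage from independence of the fine family $(C_q : q \leq p)$ to independence of the coarser two-block decomposition $F_{<p} \oplus C_p$, which is purely a matter of unwinding the definition of an independent family given in the subsection on free modules. With (DS1) and (DS2) established, $F_p = F_{<p} \oplus C_p$, which is exactly (AG1).
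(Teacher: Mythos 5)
Your proof is correct and follows essentially the same route as the paper: both invoke Lemma \ref{lem:less than direct sum gradation} to identify $F_{<p} = \bigoplus_{q<p} C_q$ and then split the decomposition $F_p = \bigoplus_{q\leq p} C_q$ at the index $p$. The only difference is that you spell out explicitly the (DS1)/(DS2) verification for the coarser two-block family, which the paper leaves implicit when passing from the fine direct sum to $F_p = F_{<p}\oplus C_p$.
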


\begin{proof}
Let $C: P \to \Sub(M)$ be a gradation of $(F , P)$. Let $p \in P$. definition we have that $F_p = \bigoplus_{q \leq p} C_q$. Since $\{ q \leq p \} = \{ q < p \} \sqcup \{ p \}$, it follows that $F_p = \left( \sum_{q < p} C_q \right) \oplus C_p = F_{< p} \oplus C_p$, where the second equality follows from Lemma \ref{lem:less than direct sum gradation}. Hence (AG1) is satisfied. 
\end{proof}


\subsection{Independent Almost Gradations}

Recall the definition of an independent function $C: P \to \Sub(M)$ from the background section. Of course even in the case when $R$ is a field, almost gradations need not be independent in general. However, linearity gives one simple condition on the poset $P$ to guarantee that any almost gradation of any filtration indexed by $P$ will be independent.

\begin{theorem}\label{thm:ag of a linear filt is ind}
Every almost gradation $C: I \to \Sub(M)$ of a linear filtration $(F , I)$ of $M$ is independent. 
\end{theorem}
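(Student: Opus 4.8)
The plan is to prove independence by induction on the number $\ell$ of submodules in play, at each stage stripping off the piece with the largest index and invoking the almost gradation axiom (AG1) to kill it.

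First I would set up the induction. Since $(I,\le)$ is totally ordered, given pairwise distinct indices $p_1,\dots,p_\ell\in I$ I may relabel so that $p_1<p_2<\cdots<p_\ell$; suppose $c_i\in C_{p_i}$ with $c_1+\cdots+c_\ell=0$, and we must show every $c_i=0$. The case $\ell\le 1$ is trivial. For the inductive step, note first that (AG1) gives $F_{p}=F_{<p}\oplus C_p$, so in particular $C_p\subseteq F_p$ for every $p\in I$; combined with the fact that $F$ is order-preserving (F3), this yields $c_i\in F_{p_i}\subseteq F_{p_{\ell-1}}$ for each $i\le \ell-1$, hence $c_1+\cdots+c_{\ell-1}\in F_{p_{\ell-1}}$. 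Because $p_{\ell-1}<p_\ell$, the submodule $F_{p_{\ell-1}}$ is one of the terms of the sum $F_{<p_\ell}=\sum_{q<p_\ell}F_q$, so $F_{p_{\ell-1}}\subseteq F_{<p_\ell}$ and therefore $c_\ell=-(c_1+\cdots+c_{\ell-1})\in F_{<p_\ell}$. On the other hand $c_\ell\in C_{p_\ell}$, and (AG1) asserts that the sum $F_{<p_\ell}+C_{p_\ell}$ is direct, so $F_{<p_\ell}\cap C_{p_\ell}=0$ and thus $c_\ell=0$. Then $c_1+\cdots+c_{\ell-1}=0$ and the inductive hypothesis gives $c_1=\cdots=c_{\ell-1}=0$.

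The one place where linearity of $I$ is genuinely used is the chain of inclusions $F_{p_i}\subseteq F_{p_{\ell-1}}\subseteq F_{<p_\ell}$: for a general poset one cannot collect all the earlier contributions inside a single $F_q$ with $q<p_\ell$, and the statement genuinely fails in that generality. Beyond this, the argument is purely formal, using only (F3) and the internal direct sum decomposition in (AG1); I anticipate no real obstacle, only the routine bookkeeping of reindexing, which is harmless since independence is a condition on arbitrary finite subfamilies and reordering a finite subfamily does not change whether it is independent.
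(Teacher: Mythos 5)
Your proof is correct and is essentially the paper's argument: order the finitely many indices using linearity of $I$, observe that all terms except the one at the largest index lie in $F_{<p_\ell}$, use the directness of $F_{p_\ell}=F_{<p_\ell}\oplus C_{p_\ell}$ from (AG1) to conclude $c_\ell=0$, and finish by induction. The extra detour through $F_{p_{\ell-1}}$ is harmless and the rest matches the paper's proof step for step.
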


\begin{proof}
Suppose that $J \subseteq I$ is a finite set and $c_j \in C_j$ are elements such that $\sum_{j \in J} c_j = 0$. Because $I$ is linearly ordered, we let $J = \{ j_1, \ldots, j_n \}$ with $j_1 < j_2 < \ldots < j_n$. Then we have that $c_{j_1}, \ldots, c_{j_{n - 1}} \in F_{< j_n}$, and by definition, $F_{j_n} = C_{j_n} \oplus F_{< j_n}$. Hence we obtain that $c_{j_n} = 0$ and also $c_{j_1} + \ldots + c_{j_{n - 1}} = 0$. The result follows by induction on $n$.
\end{proof}

In fact we'll now show that the theorem above is also true for the intersection of two linear filtrations, though not for three!

\subsection{Intersections of Poset Filtrations}

Let $(P , \leq_P)$ and $(Q , \leq_Q)$ be posets. We define the \emph{product order} on $P \times Q$, denoted here by $\leq_\times$, by letting $(p , q) \leq_\times (x , y)$ if and only of $p \leq_P x$ and $q \leq_Q y$. From now on we may drop the subscripts on the orders. The \emph{intersection} of two poset filtrations $(E , P)$ and $(F , Q)$ of a $k$-module $M$ consists of the poset $(P \times Q, \leq_\times)$ and the function $E \cap F: P \times Q \to \Sub(M)$ defined by $[E \cap F]_{(p , q)} = E_p \cap F_q$. If $(E , P)$ and $(F , Q)$ are poset filtrations of $M$ then so is their intersection. 

First we prove some technical lemmas. 

\begin{lemma}\label{lem: nested sums and unions}
If $(F , I)$ is a linear filtration of an $R$-module $M$, and $J \subseteq I$ is a nonempty subset, then $\sum_{j \in J} F_j = \bigcup_{j \in J} F_j$. 
\end{lemma}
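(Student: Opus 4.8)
The plan is to establish the two inclusions separately; essentially all of the content lies in the forward inclusion $\sum_{j \in J} F_j \subseteq \bigcup_{j \in J} F_j$, and it comes entirely from linearity of $I$ together with the order-preservation axiom (F3). First I would dispatch the easy inclusion $\bigcup_{j \in J} F_j \subseteq \sum_{j \in J} F_j$, which is immediate since each $F_j$ with $j \in J$ is one of the modules being summed on the right-hand side.

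For the reverse inclusion I would take an arbitrary $m \in \sum_{j \in J} F_j$. By the definition of the sum of a family of submodules, $m = m_{j_1} + \cdots + m_{j_n}$ for some integer $n \geq 0$, some indices $j_1, \ldots, j_n \in J$, and some $m_{j_\ell} \in F_{j_\ell}$. If $n = 0$ then $m = 0$, and since $J \neq \emptyset$ we may choose any $j \in J$ and observe that $0 \in F_j \subseteq \bigcup_{j \in J} F_j$. If $n \geq 1$, then because $(I , \leq)$ is totally ordered the finite subset $\{ j_1, \ldots, j_n \} \subseteq I$ has a greatest element, say $j_k$; then $j_\ell \leq j_k$ for every $\ell$, so (F3) gives $F_{j_\ell} \subseteq F_{j_k}$ and hence $m_{j_\ell} \in F_{j_k}$ for every $\ell$. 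Summing these, $m \in F_{j_k} \subseteq \bigcup_{j \in J} F_j$, which finishes the argument.

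There is no real obstacle here: the hypothesis that $J$ is nonempty is needed only to accommodate the empty sum (i.e. the element $0$), and linearity of $I$ is used only to guarantee that a finite subset of $I$ has a maximum — note that the statement genuinely fails for a general poset $P$ in place of the linearly ordered $I$, since two incomparable submodules $F_a, F_b$ can satisfy $F_a + F_b \neq F_a \cup F_b$.
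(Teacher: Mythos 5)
Your proof is correct and follows essentially the same route as the paper: express an element of the sum as a finite sum, use linearity of $I$ to select a maximal index among those occurring, and apply (F3) to absorb all terms into the corresponding $F_j$. Your explicit handling of the empty sum (the $n=0$ case) is a minor extra care the paper subsumes in the phrase ``since $J$ is nonempty,'' but the argument is the same.
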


\begin{proof}
To prove ($\subseteq$), note that since $J$ is nonempty, every element of $\sum_{j \in J} F_j$ is of the form $v_{j_1} + \ldots + v_{j_n}$ with $v_{j_m} \in F_{j_m}$. Because $I$ is totally ordered, we may assume without loss of generality that $j_1 < \ldots < j_n$. Then $F_{j_m} \subseteq F_{j_n}$ for all $1 \leq m \leq n$, hence $v_{j_1} + \ldots + v_{j_n} \in F_{j_n} \subseteq \bigcup_{j \in J} F_j$. Since $F_k \subseteq \sum_{j \in J} F_j$ for all $k \in J$, the containment ($\supseteq$) follows trivially. 
\end{proof}

\begin{lemma}
If $(E , I)$ and $(F , J)$ are linear filtrations of an $R$-module $M$, then for each $(i , j) \in I \times J$, we have that 
\begin{equation*}
[E \cap F]_{< (i , j)} = E_i \cap F_{< j} + E_{< i} \cap F_j. 
\end{equation*}
\end{lemma}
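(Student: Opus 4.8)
The plan is to unwind the definitions and split the index set of the sum defining $[E\cap F]_{<(i,j)}$ according to the shape of the product order. Recall that by definition $[E\cap F]_{(p,q)}=E_p\cap F_q$ and $[E\cap F]_{<(i,j)}=\sum_{(p,q)<(i,j)}(E_p\cap F_q)$, where $(p,q)<(i,j)$ in the product order means $p\le i$, $q\le j$, and $(p,q)\ne(i,j)$. First I would observe that the index set $\{(p,q):(p,q)<(i,j)\}$ is precisely the union of $\{(p,q):p<i,\ q\le j\}$ and $\{(p,q):p\le i,\ q<j\}$, so that
\begin{equation*}
[E\cap F]_{<(i,j)}\;=\;\sum_{\substack{p<i\\ q\le j}}(E_p\cap F_q)\;+\;\sum_{\substack{p\le i\\ q<j}}(E_p\cap F_q).
\end{equation*}

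Next I would simplify each of the two sums separately. For the first, since $(F,J)$ is a filtration we have $F_q\subseteq F_j$ for every $q\le j$ and $F_j$ itself occurs among the terms, so $\sum_{q\le j}(E_p\cap F_q)=E_p\cap F_j$; hence the first sum equals $\sum_{p<i}(E_p\cap F_j)$. The inclusion $\sum_{p<i}(E_p\cap F_j)\subseteq E_{<i}\cap F_j$ is immediate because each $E_p\subseteq E_{<i}$, and for the reverse inclusion I would invoke Lemma~\ref{lem: nested sums and unions}: when $\{p\in I:p<i\}$ is nonempty it gives $E_{<i}=\sum_{p<i}E_p=\bigcup_{p<i}E_p$, so any element of $E_{<i}\cap F_j$ already lies in a single $E_p$ with $p<i$, hence in $E_p\cap F_j\subseteq\sum_{p<i}(E_p\cap F_j)$ (and when no such $p$ exists, both sides are $\{0\}$). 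Thus the first sum is $E_{<i}\cap F_j$. Symmetrically, using that $(E,I)$ is a filtration so $E_p\subseteq E_i$ for $p\le i$, the second sum equals $\sum_{q<j}(E_i\cap F_q)$, and applying Lemma~\ref{lem: nested sums and unions} to $(F,J)$ this equals $E_i\cap F_{<j}$. Adding the two pieces gives $[E\cap F]_{<(i,j)}=E_i\cap F_{<j}+E_{<i}\cap F_j$, which is the asserted identity.

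I do not expect a serious obstacle here; the only points requiring care are the bookkeeping of the product order (verifying that $\{(p,q)<(i,j)\}$ really is the union of those two overlapping pieces) and the nonemptiness hypothesis in Lemma~\ref{lem: nested sums and unions}, so the edge cases in which $i$ or $j$ is a minimal element must be mentioned, although they are trivial. It is worth noting that linearity of both $I$ and $J$ is used essentially, and only through Lemma~\ref{lem: nested sums and unions}: it is exactly what allows the intersection with $F_j$ (respectively $E_i$) to be distributed over the directed sum $E_{<i}$ (respectively $F_{<j}$).
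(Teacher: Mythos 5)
Your proof is correct and takes essentially the same route as the paper: the same decomposition of the index set $\{(p,q) < (i,j)\}$ into the two overlapping regions dictated by the product order, and the same appeal to Lemma \ref{lem: nested sums and unions} (with the same edge case when $i$ or $j$ is minimal) to handle the nontrivial inclusion. The only difference is organizational — you evaluate the two sub-sums exactly, while the paper argues by mutual inclusion — which is not a substantive departure.
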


\begin{proof}
By definition, we have that $[E \cap F]_{< (i , j)} = \sum_{(k , \ell) < (i , j)} E_k \cap F_\ell$. We now show mutual inclusion of the desired equality. 

For the containment ($\subseteq$), suppose that $(k , \ell ) < (i , j)$, and note that in the product order this holds if and only if $k < i$ and $\ell \leq j$, or $k \leq i$ and $\ell < j$. Without loss of generality, suppose that $k < i$ and $\ell \leq j$. Then we have that $E_k \subseteq E_{<i}$, and $F_\ell \subseteq F_j$. Therefore $E_k \cap F_\ell \subseteq E_{< i} \cap F_j$.

To show the containment ($\supseteq$), we prove only that $E_i \cap F_{< j} \subseteq [E \cap F]_{< (i , j)}$, the other case being similar. First of all, if $j$ is the smallest element of $J$, then $F_{<j} = 0$ and so the containment holds trivially. If not, given $v \in E_i \cap F_{< j}$, in particular we have that $v \in F_{< j} = \sum_{\ell < j} F_\ell$. Since $(J , \leq)$ is totally ordered and $\{ \ell \in J \mid \ell < j \}$ is nonempty by hypothesis, by Lemma \ref{lem: nested sums and unions} we have that $\sum_{\ell < j} F_\ell = \bigcup_{\ell < j} F_\ell$. Therefore there exists some $\ell' < j$ such that $v \in F_{\ell '}$. Note that this implies $(i , \ell') < (i , j)$, hence $v \in E_i \cap F_{\ell '} \subseteq \sum_{(k , \ell) < (i , j)} E_i \cap F_j = [E \cap F]_{< (i , j)}$. 
\end{proof}

Now we prove the theorem. 

\begin{theorem}\label{thm:ag of the intersection of two linear filts is ind}
Let $(E , I)$ and $(F , J)$ be linear filtrations of an $R$-module $M$. Every almost gradation of the intersection $(E \cap F , I \times J)$ is independent.  
\end{theorem}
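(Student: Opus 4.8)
The plan is to induct on the number $n$ of terms appearing in a dependence relation, using the description of $[E\cap F]_{<(i,j)}$ from the lemma immediately preceding this theorem. So let $C\colon I\times J\to\Sub(M)$ be an almost gradation of $(E\cap F, I\times J)$, let $(i_1,j_1),\dots,(i_n,j_n)\in I\times J$ be pairwise distinct, let $c_m\in C_{(i_m,j_m)}$, and suppose $\sum_{m=1}^n c_m=0$; I want to show each $c_m=0$. Note that (AG1) together with $[E\cap F]_{(i,j)}=E_i\cap F_j$ gives $C_{(i,j)}\subseteq E_i\cap F_j$ for every $(i,j)$, which I will use freely.

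Since $I$ is linearly ordered and there are finitely many indices, I would set $i^*=\max_m i_m$ and $S=\{m: i_m=i^*\}$. Because the pairs are distinct, the values $\{j_m: m\in S\}$ are pairwise distinct, so there is a unique $r\in S$ with $j_r=\max_{m\in S}j_m=:j^*$. The point of this choice is that every other summand lands below $(i^*,j^*)$ in a controlled way: if $m\notin S$ then $i_m<i^*$, so $c_m\in E_{i_m}\subseteq E_{<i^*}$; and if $m\in S\setminus\{r\}$ then $i_m=i^*$ and $j_m<j^*$, so $c_m\in E_{i^*}\cap F_{j_m}\subseteq E_{i^*}\cap F_{<j^*}$.

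Next I would deduce that $c_r\in[E\cap F]_{<(i^*,j^*)}$. From $c_r=-\sum_{m\ne r}c_m$ and the previous paragraph we get $c_r\in E_{<i^*}+E_{i^*}\cap F_{<j^*}$, so write $c_r=a+b$ with $a\in E_{<i^*}$ and $b\in E_{i^*}\cap F_{<j^*}$. The key upgrade is that $c_r\in C_{(i^*,j^*)}\subseteq F_{j^*}$ and $b\in F_{<j^*}\subseteq F_{j^*}$, whence $a=c_r-b\in F_{j^*}$, so in fact $a\in E_{<i^*}\cap F_{j^*}$. Therefore $c_r=a+b\in E_{<i^*}\cap F_{j^*}+E_{i^*}\cap F_{<j^*}$, which by the preceding lemma equals $[E\cap F]_{<(i^*,j^*)}$. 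Now (AG1) says $E_{i^*}\cap F_{j^*}=[E\cap F]_{<(i^*,j^*)}\oplus C_{(i^*,j^*)}$, and $c_r$ lies in both summands, so $c_r=0$. Then $\sum_{m\ne r}c_m=0$ is a shorter relation, and the inductive hypothesis (with trivial base case $n\le 1$) gives $c_m=0$ for all $m\ne r$ as well.

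The main obstacle is the choice of the "right" maximal index and the repair step in the third paragraph: the product order on $I\times J$ is not total, so there need not be a single index dominating all the others, and one cannot simply imitate the proof of Theorem~\ref{thm:ag of a linear filt is ind}. What makes two linear filtrations work is precisely that the preceding lemma expresses $[E\cap F]_{<(i,j)}$ as a sum of just two mixed pieces, so that knowing $c_r\in F_{j^*}$ lets one convert the $E_{<i^*}$-component of $c_r$ into an $E_{<i^*}\cap F_{j^*}$-component; with three linear filtrations the analogous decomposition of $[E\cap F\cap G]_{<(i,j,k)}$ has genuinely more terms and this conversion breaks down, which is the source of the counterexample alluded to after the statement.
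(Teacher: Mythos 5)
Your argument is correct, and it is organized differently from the paper's. You run a single induction on the number of terms in a dependence relation, select the lexicographically maximal pair $(i^*,j^*)$ among the occurring indices, and then show $c_r\in[E\cap F]_{<(i^*,j^*)}$ by invoking the identity $[E\cap F]_{<(i,j)}=E_i\cap F_{<j}+E_{<i}\cap F_j$ from the unlabeled lemma just before the theorem (note that the containment you actually need is the nontrivial direction "$\supseteq$", whose proof rests on Lemma \ref{lem: nested sums and unions}), together with your "repair" step that upgrades the $E_{<i^*}$-component of $c_r$ to an $E_{<i^*}\cap F_{j^*}$-component using $c_r,b\in F_{j^*}$. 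The paper instead (implicitly) completes the finitely many pairs to a full grid $\{i_1<\cdots<i_m\}\times\{j_1<\cdots<j_n\}$ and runs a nested induction: outer induction on $m$, and within the last row a downward induction on the column index $p$; its key step is that the partial sum $\sum_{k\le m-1,\ell}c_{k,\ell}$ equals $-\sum_{\ell\le p}c_{m,\ell}$ and therefore lies in $E_{i_{m-1}}\cap F_{j_p}\subseteq[E\cap F]_{<(i_m,j_p)}$, after which (AG1) kills $c_{m,p}$; notably, that proof never uses the displayed description of $[E\cap F]_{<(i,j)}$. Both proofs hinge on the same phenomenon — an element lying simultaneously in an $E$-level and an $F$-level lands in the intersection filtration at a strictly smaller index, where (AG1) applies — but your version is shorter, dispenses with the grid bookkeeping, and makes visible exactly where two linear filtrations are needed: the two-term decomposition of $[E\cap F]_{<(i,j)}$ is what allows the repair step, and your closing remark correctly identifies that this is what breaks for three filtrations, in line with the paper's comment and in contrast with the single-filtration argument of Theorem \ref{thm:ag of a linear filt is ind}. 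The paper's route, in exchange, is self-contained at that point in the text, relying only on (AG1) and monotonicity rather than on the preceding lemma.
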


\begin{proof}
Let $C : I \times J \to \Sub(V)$ be an almost gradation of $(E \cap F , I \times J)$. We prove by induction on $m$ and $n$ that if $i_1 < \ldots < i_m \in I$ and $j_1 < \ldots < j_n \in J$ and for $1 \leq k \leq m$, $1 \leq \ell \leq n$ we have $c_{k , \ell} \in C_{(i_k , j_\ell )}$ such that $\sum_{\substack{1 \leq k \leq m \\ 1 \leq \ell \leq n}} c_{k , \ell} = 0$, then $c_{k , \ell} = 0$ for all $1 \leq k \leq m$ and $1 \leq \ell \leq n$.

If $m = n = 1$, then by hypothesis we have that $c_{1 , 1} = 0$. Now suppose that the result holds for $m = 1$ and for $n - 1$. Given $c_{1 , 1} + \ldots + c_{1 , n} = 0$, this implies that $c_{1 , 1} + \ldots + c_{1 , n - 1} = - c_{1 , n}$. 
Note that $(i_1, j_1), \ldots, (i_1 , j_{n - 1} ) < (i_1, j_n)$, and hence for $1 \leq \ell \leq n - 1$, 
\begin{equation*}
c_{1 , \ell} \in C_{(i_1 , j_\ell )} \subseteq [E \cap F]_{(i_1 , j_\ell )} \subseteq [E \cap F]_{ < ( i_1, j_n ) }.
\end{equation*}
Therefore $c_{1 , 1} + \ldots + c_{1 , n - 1} \in [E \cap F]_{ < ( i_1, j_n ) }$, and since $c_{1 , n} \in C_{(i_1, j_n)}$, and by definition we have that $[E \cap F]_{ ( i_1, j_n ) } = [E \cap F]_{ < ( i_1, j_n ) } \oplus C_{(i_1, j_n)}$, it follows that $c_{1 , n} = 0$ and that $c_{1 , 1} + \ldots + c_{1 , n - 1} = 0$. By our induction hypothesis, we then have that $c_{1 , \ell} = 0$ for all $1 \leq \ell \leq n - 1$ as well. Hence we have shown that the result holds for $m = 1$ and for arbitrary $n$. 

Now suppose the result holds for $m - 1$ and arbitrary $n$. Given $c_{k , \ell}$ such that $\sum_{\substack{1 \leq k \leq m \\ 1 \leq \ell \leq n}} c_{k , \ell} = 0$, we'll prove that $c_{m , p} = 0$ for $1 \leq p \leq n$ by induction on $r = n - p$. If $r = 0$, then $p = n$. First of all, we have that $\sum_{\substack{1 \leq k \leq m \\ 1 \leq \ell \leq n}} c_{k , \ell} = 0$, which we rewrite as
\begin{equation*}
\sum_{\substack{1 \leq k \leq m - 1 \\ 1 \leq \ell \leq n}} c_{k , \ell} + \sum_{1 \leq \ell \leq n - 1} c_{m , \ell} = - c_{m , n}.
\end{equation*}
However, we note that the left hand side of this equation is contained in $[E \cap F]_{ < ( i_m, j_n ) }$. Since $c_{m , n} \in C_{(i_m , j_n )}$ and by definition we have that $[E \cap F]_{( i_m, j_n ) } =  [E \cap F]_{ < ( i_m, j_n ) } \oplus C_{(i_m , j_n )}$, it follows that $c_{m , n} = 0$. 

Now suppose that $c_{m , n - q} = 0$ for $0 \leq q \leq r - 1$. We'll show that $c_{m , n - r} = c_{m , p} = 0$ as well. By hypothesis, we have that $\sum_{\substack{1 \leq k \leq m - 1 \\ 1 \leq \ell \leq n}} c_{k , \ell} + \sum_{1 \leq \ell \leq p} c_{m , \ell} = 0$ which we rewrite as 
\begin{equation*}
\sum_{\substack{1 \leq k \leq m - 1 \\ 1 \leq \ell \leq n}} c_{k , \ell} = - \sum_{1 \leq \ell \leq p} c_{m , \ell}. 
\end{equation*}
We note that the left hand side of this equation is contained in $E_{i_{m - 1}}$, and the right hand side is contained in $F_{j_p}$. As the two sides are equal, both sums are contained in $E_{i_{m - 1}} \cap F_{j_p}$, so in particular, the left hand side is contained in $[E \cap F]_{< (i_{m} , j_p)}$. Furthermore, leaving off the $c_{m , p}$ term, we also have that the sum $\sum_{1 \leq \ell \leq p - 1} c_{m , \ell}$ is contained in $[E \cap F]_{< (i_{m} , j_p)}$. But by hypothesis, we have the equation 
\begin{equation*}
\sum_{\substack{1 \leq k \leq m - 1 \\ 1 \leq \ell \leq n}} c_{k , \ell} + \sum_{1 \leq \ell \leq p - 1} c_{m , \ell} = - c_{m , p}.
\end{equation*}
We have just shown that the left hand side is contained in $[E \cap F]_{< (i_{m} , j_p)}$, while the RHS is contained in $C_{(i_m , j_p)}$. Since by definition we have that $[E \cap F]_{( i_m, j_p ) } =  [E \cap F]_{ < ( i_m, j_p ) } \oplus C_{(i_m , j_p )}$, it follows that $c_{m , p} = 0$. 
\end{proof}


\subsection{Spanning Almost Gradations}

Let $(F , P)$ be a poset filtration of an $R$-module $M$ and $C: P \to \Sub(M)$ a function. If for all $p \in P$ we have that $F_p = \sum_{q \leq p} C_q$, then we say $C$ \emph{spans} $(F , P)$. Clearly an almost gradation of $(F , P)$ is a gradation if and only if it is independent and spans $(F , P)$.

If $M$ is a finite-dimensional vector space, then obviously all almost gradations of $F$ span. However, perhaps surprisingly, this is not true in infinite-dimensions (even for vector spaces). In the latter case we have three possibilities, even in the linear case: (1) No almost gradations of $F$ are gradations, (2) Some almost gradations of $F$ are gradations and some are not, and (3) all almost gradations of $F$ are gradations. We now give examples of (1) and (2), and for simplicity we work over a field $\ff$. Note that an equivalent example of (1) can be found in \cite{bautista-liu-paquette2011}. 

\begin{example}[A Linear Filtration that does not have a Gradation]
Define $V = \ff^\nn = \{ (a_1, a_2, \ldots ) : a_i \in \ff \}$, and for $i \in \zz_{\leq 0}$ define $F_i = \{ (\underbrace{0 , \ldots, 0}_{-i} , a_{-i + 1}, \ldots ) \}$, and $F_{-\infty} = \{ 0 \}$, and $I = \zz_{\leq 0 } \cup \{ -\infty \}$ with the usual order. Then $(F , I)$ is a maximal embedded linear filtration of $V$. For any almost gradation $C: I \to \Sub(V)$, we have that $\dim(C_i) = 1$ for all $i \in \zz_{\leq 0}$, and $\dim(C_{-\infty}) = 0$. Therefore $\sum_{i \in I} C_i$ has countable dimension, whereas $\ff^\nn$ has uncountable dimension. Hence $\sum_{i \leq 0} C_i \subsetneq F_0$, so $C$ cannot be a gradation. 
\end{example}

\begin{example}[An Almost Gradation that is not a Gradation]
Define $V = \ff^{\oplus \nn} = \{ (a_1, a_2, \ldots ) \in \ff^\nn : a_i = 0 \text{ for all but finitely many } i \}$ and similarly to the example above for $i \in \zz_{\leq 0}$ define $F_i = \{ (\underbrace{0 , \ldots, 0}_{-i} , a_{-i + 1}, \ldots ) \}$, and $F_{-\infty} = \{ 0 \}$. Define $I = \zz_{\leq 0 } \cup \{ -\infty \}$ with the usual order. Then $(F , I)$ is a maximal embedded linear filtration of $V$. For each $k \in \zz_{>0}$, define $e_{k}$ to be the $k$th standard basis vector, and let $f_k = e_k - e_{k + 1}$. 

Firstly, for $i \in \zz_{\leq 0}$, define $C_{i} = \Span(f_{-i + 1})$ and $C_{-\infty} = \{ 0 \}$. For each $i \in \zz_{\leq 0}$, we have that 
\begin{align*}
F_i = F_{i - 1} \oplus C_i,
\end{align*}
hence $C: I \to \Sub(V)$ is an almost gradation of $(F , I)$. However for all $i \in I$, $C_i$ is contained in the kernel of the linear functional $\varphi: V \to \ff$ defined by $\varphi(a_1, a_2, \ldots ) = a_1 + a_2 + \ldots$, which has codimension $1$ inside $V$. Therefore $F_0 \neq \sum_{i \leq 0} S_i$, and hence the almost gradation does not span $V$ and is therefore not a gradation. 

However, secondly, if for $i \in \zz_{\leq 0}$, we define $C'_{i} = \Span(e_{-i + 1})$ and $C'_{-\infty} = \{ 0 \}$, then $C': I \to \Sub(V)$ is a gradation. 
\end{example} 

Hence filtrations satisfying (1) and (2) exist. We now give a characterization of those filtrations satisfying (3) in terms of the well-founded property of the poset $P$. 

\begin{theorem}\label{thm:ag of a well-founded poset filt spnas}
If $C: P \to \Sub(V)$ is an almost gradation of a well-founded poset filtration $(F, P)$ of an $R$-module $M$, then $C$ spans $(F , P)$.  
\end{theorem}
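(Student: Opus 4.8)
The plan is to argue by well-founded induction on $P$, which is legitimate precisely because $(P,\leq)$ is a well-founded poset. For $p \in P$ let $\Phi(p)$ be the assertion $F_p = \sum_{q \leq p} C_q$; proving $\Phi(p)$ for all $p$ is exactly the statement that $C$ spans $(F,P)$. Recall that in a well-founded induction one need only show that $\Phi(p)$ holds whenever $\Phi(q)$ holds for all $q < p$, and that this subsumes the case of minimal $p$: there the hypothesis is vacuous and, consistently, $F_{<p} = \sum_{q<p} F_q$ is an empty sum, hence $\{0\}$, so $F_p = \{0\} \oplus C_p = C_p = \sum_{q \leq p} C_q$ by (AG1).

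So fix $p$ and suppose $F_q = \sum_{r \leq q} C_r$ for all $q < p$. The one computation to make is the identification of $F_{<p}$: by definition $F_{<p} = \sum_{q < p} F_q$, and substituting the inductive hypothesis yields $F_{<p} = \sum_{q < p} \sum_{r \leq q} C_r$. The right-hand side is the submodule generated by all $C_r$ such that $r \leq q$ for some $q < p$; since $r \leq q < p$ forces $r < p$, and conversely every $r < p$ occurs (take $q = r$), this collapses to $F_{<p} = \sum_{r < p} C_r$. Now apply (AG1): $F_p = F_{<p} \oplus C_p$, so in particular $F_p = F_{<p} + C_p = \sum_{r < p} C_r + C_p = \sum_{r \leq p} C_r$, which is $\Phi(p)$. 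This closes the induction.

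There is really no serious obstacle here: the argument uses only the additive decomposition $F_p = F_{<p} + C_p$ coming from (AG1) (independence of $C$ is never invoked, consistent with the fact that spanning is a statement about sums and not direct sums), and the entire force of the hypothesis is that well-foundedness of $P$ licenses the induction. If one wants to isolate the ``key point'', it is the observation that the index set $\{q < p\}$ over which $F_{<p}$ is defined is exactly the set over which the inductive hypothesis is available, so the two mesh with no extra work after the elementary reindexing of the double sum above. (If desired, one can spell out the two inclusions for $F_{<p} = \sum_{r<p} C_r$ separately: $\supseteq$ holds since $C_r \subseteq F_r \subseteq F_{<p}$ for $r < p$, and $\subseteq$ holds since each $F_q$ with $q < p$ equals $\sum_{r \leq q} C_r \subseteq \sum_{r < p} C_r$ by the inductive hypothesis.)
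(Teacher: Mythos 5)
Your proof is correct and follows essentially the same route as the paper: well-founded (transfinite) induction on $P$, using (AG1) to write $F_p = F_{<p} + C_p$ and collapsing the double sum $\sum_{q<p}\sum_{r\leq q} C_r$ to $\sum_{r<p} C_r$ via the inductive hypothesis. The only difference is that you spell out the minimal-element case and the reindexing in more detail, which the paper leaves implicit.
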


\begin{proof}
Let $T = \{ p \in P : F_p = \sum_{q \leq p} C_q \}$, and suppose for some $n \in P$ we have that $m \in T$ for all $m < n$. By definition of an almost gradation, we have $F_n = F_{<n} \oplus C_n$. Then we compute 
\begin{equation*}
F_n = F_{< n} + C_n = \left[ \sum_{p < n} F_p \right] + C_n = \left[ \sum_{p < n} \sum_{q \leq p} C_q \right] + C_n = \left[ \sum_{q < n} C_q \right] + C_n = \sum_{q \leq n} C_q. 
\end{equation*}
Therefore $n \in T$ as well. By transfinite induction we have that $T = P$. 
\end{proof}

\begin{corollary}\label{cor:every almost gradation of a well-ordered or product of well-ordered filtration is a gradation}
Every almost gradation of a linear well-ordered filtration $(F, I)$ is in fact a gradation. Similarly, if $(E , I)$ and $(F , J)$ are linear well-ordered filtrations of $V$, then every almost gradation of their intersection $(E \cap F , I \times J)$ is a gradation. 
\end{corollary}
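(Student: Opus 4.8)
The plan is to deduce this immediately from the three theorems just established, using the remark made before the statement that an almost gradation of a poset filtration is a gradation precisely when it is both independent and spans.

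For the first assertion, I would argue as follows. A linear well-ordered filtration $(F,I)$ is by definition both linear and well-founded. Hence, given an arbitrary almost gradation $C : I \to \Sub(M)$, Theorem \ref{thm:ag of a linear filt is ind} shows that $C$ is independent, while Theorem \ref{thm:ag of a well-founded poset filt spnas} shows that $C$ spans $(F,I)$. Being both independent and spanning, $C$ is a gradation.

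For the second assertion, the only additional input needed is that the product poset $(I \times J, \leq_\times)$ is well-founded when $I$ and $J$ are well-ordered; I would prove this directly. Let $S \subseteq I \times J$ be nonempty. Since $I$ is well-ordered, the nonempty set $\{\, i \in I : (i,j) \in S \text{ for some } j \in J \,\}$ has a least element $i_0$; since $J$ is well-ordered, the nonempty set $\{\, j \in J : (i_0, j) \in S \,\}$ has a least element $j_0$. Then $(i_0, j_0)$ is minimal in $S$: if $(i,j) \in S$ satisfies $(i,j) \leq_\times (i_0, j_0)$, then $i \leq i_0$ forces $i = i_0$ by minimality of $i_0$, and then $j \leq j_0$ forces $j = j_0$. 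Thus $(I \times J, \leq_\times)$ is well-founded, so $(E \cap F, I \times J)$ is a well-founded poset filtration. Now for any almost gradation $C : I \times J \to \Sub(V)$, Theorem \ref{thm:ag of the intersection of two linear filts is ind} gives that $C$ is independent and Theorem \ref{thm:ag of a well-founded poset filt spnas} gives that $C$ spans $(E \cap F, I \times J)$, so $C$ is a gradation.

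The argument is entirely an assembly of previously proved facts; the only step requiring a short new argument is the well-foundedness of the product order, and even that is standard, so I expect no real obstacle. If one preferred, the product-order claim could be factored out as its own lemma, but it is short enough to include inline.
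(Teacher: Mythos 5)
Your proof is correct and follows essentially the same route as the paper: independence from Theorem \ref{thm:ag of a linear filt is ind} (resp.\ Theorem \ref{thm:ag of the intersection of two linear filts is ind}), spanning from Theorem \ref{thm:ag of a well-founded poset filt spnas}, and well-foundedness of the product order, which the paper isolates as Lemma \ref{lem: product of well-founded posets is well-founded} but which your inline argument reproduces. No gaps.
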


\begin{proof}
If $C$ is an almost gradation of $(F , I)$, then because $(F , I)$ is linear, by Theorem \ref{thm:ag of a linear filt is ind} $C$ is independent, and because $(F , I)$ is well-ordered, by Theorem \ref{thm:ag of a well-founded poset filt spnas}, $C$ spans $(F , I)$. Therefore $C$ is a gradation of $(F , I)$. 

If $C$ is an almost gradation of $(E \cap F , I \times J)$, then by Theorem \ref{thm:ag of the intersection of two linear filts is ind}, $C$ is independent, and because $I$ and $J$ are well-ordered, by Lemma \ref{lem: product of well-founded posets is well-founded}, $I \times J$ is well-founded, hence $(E \cap F , I \times J)$ is a well-ordered poset filtration. By Theorem \ref{thm:ag of a well-founded poset filt spnas}, $C$ spans $(E \cap F , I \times J)$, and hence $C$ is a gradation of $(E \cap F , I \times J)$. 
\end{proof}

In the case when $I$ and $J$ are finite subsets of $\nn$ (and hence automatically well-ordered), a proof of Corollary \ref{cor:every almost gradation of a well-ordered or product of well-ordered filtration is a gradation} can be found in \cite{ringel2016}.

\begin{lemma}\label{lem: product of well-founded posets is well-founded}
Let $(P, \leq_P)$ and $(Q, \leq_Q)$ be well-founded posets. Then $(P \times Q, \leq_{\times})$ is a well-founded poset. 
\end{lemma}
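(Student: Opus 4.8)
The plan is to prove well-foundedness directly from its definition: every nonempty subset of $P \times Q$ admits a minimal element for $\leq_{\times}$. That $(P \times Q, \leq_{\times})$ satisfies reflexivity, transitivity, and antisymmetry is immediate from the corresponding axioms for $P$ and $Q$ applied coordinatewise, and I would dispatch it in a sentence. The core idea is a two-step minimization: minimize first in the $P$-coordinate, then minimize in the $Q$-coordinate among those pairs achieving the chosen $P$-minimum.

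In detail, let $S \subseteq P \times Q$ be nonempty. First I would consider the projection $\pi_P(S) := \{ p \in P \mid (p,q) \in S \text{ for some } q \in Q\}$, a nonempty subset of $P$; since $P$ is well-founded it has a minimal element $p_0$. Then I would form $S_{p_0} := \{ q \in Q \mid (p_0, q) \in S\}$, which is nonempty because $p_0 \in \pi_P(S)$; since $Q$ is well-founded it has a minimal element $q_0$. The claim is that $(p_0, q_0)$ is minimal in $S$.

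To check the claim, suppose $(p,q) \in S$ with $(p,q) \leq_{\times} (p_0, q_0)$, so $p \leq_P p_0$ and $q \leq_Q q_0$. Since $p \in \pi_P(S)$ and $p_0$ is minimal there, $p \leq_P p_0$ forces $p = p_0$; hence $q \in S_{p_0}$, and minimality of $q_0$ in $S_{p_0}$ together with $q \leq_Q q_0$ forces $q = q_0$. Thus $(p,q) = (p_0, q_0)$, which says precisely that nothing in $S$ lies strictly below $(p_0, q_0)$, i.e. $(p_0,q_0)$ is a minimal element of $S$.

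I do not expect a genuine obstacle here; the one subtlety is that ``minimal'' must be read in the order-theoretic sense (no strictly smaller element), not as ``least'', so each of the two reductions $p = p_0$ and $q = q_0$ must be justified by minimality within the relevant subset rather than by any ambient comparison. One could instead argue via the equivalent ``no infinite strictly descending chain'' formulation --- extract from such a chain weakly decreasing sequences in each coordinate and stabilize them one after the other --- but that invokes dependent choice and sits less comfortably with the subset-based definition of well-foundedness used throughout the paper, so I would present the direct argument above.
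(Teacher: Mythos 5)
Your proof is correct and follows essentially the same route as the paper's: minimize the first coordinate over the projection $\pi_P(S)$, then minimize the second coordinate over the fibre of that minimum, and verify minimality of the resulting pair exactly as you do. No gaps.
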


\begin{proof}
Let $S$ be a nonempty subset of $P \times Q$. We denote by $\pi_P : P \times Q \to P$ and $\pi_Q: P \times Q \to Q$ the projections onto the first and second coordinates respectively. Then $\pi_P(S)$ is a nonempty subset of $P$, so it has a minimal element, call it $p_0$. Then $\pi_Q( \pi_P^{-1}(p_0) \cap S )$ is a nonempty subset of $Q$, and therefore has a minimal element as well, call it $q_0$. Suppose that $(p , q) \in S$ were such that $(p , q) \leq_{P \times Q} (p_0 , q_0)$. Then we have that $p \leq p_0$ and that $p \in \pi_P(S)$, hence by minimality of $p_0$, it follows that $p = p_0$. Then $q \in \pi_Q( \pi_P^{-1}(p_0) \cap S )$, and we also have that $q \leq q_0$. Hence by minimality of $q_0$, we have $q = q_0$. Therefore $(p , q) = (p_0 , q_0)$, so $(p_0 , q_0)$ is a minimal element of $S$. 
\end{proof}


\subsection{Embedded Filtrations}

A poset filtration $(F , P)$ of an $R$-module $M$ is called \emph{embedded} if $F : P \to \Sub(M)$ is an order embedding. Note that a linear filtration $(F , I)$ is embedded if and only if $F$ is injective. 

\begin{proposition}\label{prop: embedded linear filtrations have least and greatest elements}
If $(F, I)$ is an embedded linear filtration of an $R$-module $M$, then $I$ has a least element $i_0$ and a greatest element $i_\infty$ and we have $F_{i_0} = 0$, $F_{i_\infty} = M$. 
\end{proposition}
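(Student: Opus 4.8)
The plan is to deduce both claims directly from the defining axioms (F1), (F2) of a poset filtration together with the order-embedding hypothesis; in fact linearity of $I$ will not even be needed.

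First I would produce the least element. By (F1) there exists some $i_0 \in I$ with $F_{i_0} = \{0\}$. Then for every $j \in I$ we trivially have $F_{i_0} = \{0\} \subseteq F_j$, so the ``$\Leftarrow$'' direction of the order-embedding condition (for all $p,q$, $F_p \subseteq F_q \implies p \leq q$) gives $i_0 \leq j$. Hence $i_0$ is a least element of $I$, and $F_{i_0} = \{0\} = 0$ by construction. Dually, by (F2) there is some $i_\infty \in I$ with $F_{i_\infty} = M$; for every $j \in I$ we have $F_j \subseteq M = F_{i_\infty}$, so again the order-embedding condition yields $j \leq i_\infty$, so $i_\infty$ is a greatest element with $F_{i_\infty} = M$.

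There is essentially no obstacle here: the only subtlety worth flagging is that one uses the ``hard'' implication of an order embedding (that containment of images forces comparability of the indices), rather than mere injectivity or monotonicity of $F$. I would also remark in passing that the argument never invokes the total order on $I$, so the statement in fact holds for every embedded poset filtration; I would still phrase the conclusion for linear filtrations as in the proposition since that is the case used later, and note that in the linear case ``embedded'' is equivalent to ``$F$ injective'' as already observed just before the statement.
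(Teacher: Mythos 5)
Your proof is correct, but it takes a slightly different route from the paper's. The paper starts from (F1), supposes $i < i_0$, uses monotonicity (F3) to get $F_i = 0 = F_{i_0}$, invokes injectivity of $F$ to force $i = i_0$, and then uses the totality of the order on $I$ to promote minimality of $i_0$ to its being the least element (and dually for $i_\infty$). You instead use the full strength of the order-embedding hypothesis: since $F_{i_0} = 0 \subseteq F_j$ for every $j$, the implication $F_p \subseteq F_q \Rightarrow p \leq q$ gives $i_0 \leq j$ directly, with no appeal to linearity at all. What your argument buys is generality and brevity: the conclusion holds verbatim for any embedded poset filtration, not just linear ones. What the paper's argument buys is that it only uses injectivity of $F$ (together with (F3) and totality of $I$), which is the natural formulation in the linear case where, as noted just before the statement, ``embedded'' is equivalent to ``$F$ injective''; your remark correctly flags that you rely on the harder implication of the embedding rather than mere injectivity, and in the linear setting the two hypotheses coincide, so there is no gap either way.
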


\begin{proof}
By (F1) there exists $i_0 \in I$ such that $F_{i_0} = 0$. We claim that $i_0$ is the least element of $I$. If not, there exists $i \in I$ such that $i < i_0$, and by (F3) we must have $F_i = 0$ as well. However embedded filtrations are injective, hence $i = i_0$. This implies that $i_0$ is a minimal element, but since $I$ is totally ordered, it is in fact the least element. A similar argument shows that there exists a greatest element $i_\infty$ in $I$ with $F_{i_\infty} = M$. 
\end{proof}


\section{Flags}\label{sec: flags}


The goal of this section is to develop a theory of full flags in an infinite dimensional vector space and more generally in an infinite rank free module. Let $R$ be a ring, let $M$ be a \textbf{free} $R$-module, and let $I$ be a well-ordered set.

\begin{definition}
An \emph{$I$-flag} in $M$ is a well-ordered linear filtration $F : I \to \Sub(M)$ satisfying the following conditions. 

\begin{enumerate}

\item[(FL1)] For each $i \in I$, $F_i$ is a direct summand of $M$, i.e. the image of $F$ lies in $\Sum(M)$. 

\item[(FL2)] For each $i \in I$, we have $F_{i + 1} / F_i$ is finitely generated projective of rank $1$. 

\item[(FL3)] For each upper limit element $i \in I$ we have $F_{<i} = F_i$.

\end{enumerate}
\end{definition}

Note that (FL1) implies that for each $i \in I$, both $F_i$ and $M / F_i$ are projective. The following lemma shows that this implies the quotient $F_{i +1} / F_i$ is also automatically projective, hence the condition (FL2) simply imposes that this module is finitely generated and has projective rank $1$. 

\begin{lemma}\label{lem: nested complemented submodules are locally complemented}
Let $N \subseteq L$ be submodules of a free $R$-module $M$, with the property that $M / N$ and $M / L$ are projective. Then $L / N$ is projective as well, and in fact $N$ is complemented inside $L$, i.e. there exists a (necessarily projective) $R$-submodule $C \subseteq M$ such that $N \oplus C = L$. 
\end{lemma}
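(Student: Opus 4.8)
The plan is to invoke, twice, the elementary fact that a short exact sequence of $R$-modules whose right-hand term is projective splits. First I would record what the hypotheses actually give: since $M/N$ is projective, the sequence $0 \to N \to M \to M/N \to 0$ splits, so $N$ is a direct summand of $M$; and since $M/L$ is projective, $M = L \oplus L'$ for some submodule $L'$, with $L' \cong M/L$ projective. (Freeness of $M$ is not really used here beyond its role in the ambient setup; the argument only needs that $M/N$ and $M/L$ are projective.)

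The main step is to produce the short exact sequence
\[ 0 \longrightarrow L/N \longrightarrow M/N \longrightarrow M/L \longrightarrow 0, \]
which is just the third isomorphism theorem applied to the chain $N \subseteq L \subseteq M$: the first map is induced by the inclusion $L \hookrightarrow M$, the second by the projection $M \twoheadrightarrow M/L$, and exactness is the identification $(M/N)/(L/N) \cong M/L$. Since $M/L$ is projective, this sequence splits, so $L/N$ is isomorphic to a direct summand of $M/N$; and since $M/N$ is projective and a direct summand of a projective module is projective, $L/N$ is projective. This proves the first assertion. (Concretely, one can also see this from $M = L \oplus L'$ with $N \subseteq L$, which gives $M/N \cong (L/N) \oplus L'$.)

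Finally, now that $L/N$ is known to be projective, the sequence $0 \to N \to L \to L/N \to 0$ splits; choosing a splitting $s : L/N \to L$ and setting $C := s(L/N)$ gives a submodule $C \subseteq L \subseteq M$ with $N \oplus C = L$, and $C \cong L/N$ is projective, which is the ``in fact'' clause. I do not expect a genuine obstacle in this lemma: it is essentially routine homological algebra, and the only points needing a little care are writing down the correct maps in the displayed sequence and citing the standard facts that a sequence with projective cokernel splits and that a direct summand of a projective module is projective.
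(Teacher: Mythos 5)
Your proof is correct and follows essentially the same route as the paper: both deduce that $L/N$ is a direct summand of the projective module $M/N$ (the paper concretely via a complement $K$ of $L$ in $M$, you via splitting $0 \to L/N \to M/N \to M/L \to 0$, and you even note the concrete variant yourself), and then use projectivity of $L/N$ to split $0 \to N \to L \to L/N \to 0$ and obtain the complement $C$.
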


\begin{proof}
Because $M / L$ is projective, $L$ is complemented inside $M$, i.e. there exists a submodule $K \subseteq M$ such that $L \oplus K = M$. Since $N \subseteq L$, we have inclusions $L / N \to M / N$ and $K \to M / N$, and it follows that $L / N \oplus K = M / N$. But $M / N$ is projective and direct summands of projective modules are projective, hence $L / N$ is projective. This implies that $N$ is complemented inside $L$, say by $C$. Finally we have that $C \cong L / N$, and we have shown that the latter is projective, so $C$ is projective as well.  
\end{proof}

We now collect a few properties of $I$-flags that we will require later. 

\begin{corollary}\label{cor: I-flags have gradations}
Let $R$ be a ring, $M$ a free $R$-module. Every $I$-flag of $M$ has a gradation. 
\end{corollary}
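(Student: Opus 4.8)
The plan is to build an almost gradation of the $I$-flag $(F,I)$ and then quote Corollary~\ref{cor:every almost gradation of a well-ordered or product of well-ordered filtration is a gradation}, which says that any almost gradation of a linear well-ordered filtration is automatically a gradation. Since $I$ is totally ordered and well-founded, it is nonempty with a least element, and every $i \in I$ falls into exactly one of two cases: either $i$ has no predecessor (it is an upper limit element, e.g. the least element $i_0$), or $i = j+1$ is a successor of a unique $j \in I$. I would define $C_i$ by cases along this dichotomy.

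First I would note that for a successor $i = j+1$ we have $F_{<i} = F_j$: in a totally ordered set $\{\,k \in I : k < j+1\,\} = \{\,k \in I : k \le j\,\}$, and since $F$ is order-preserving this gives $F_{<i} = \sum_{k \le j} F_k = F_j$. Now by (FL1) both $F_j$ and $F_i = F_{j+1}$ are direct summands of the free module $M$, so $M/F_j$ and $M/F_i$ are projective; as $F_j \subseteq F_i$, Lemma~\ref{lem: nested complemented submodules are locally complemented} yields a submodule $C_i \subseteq M$ with $F_j \oplus C_i = F_i$, i.e. $F_{<i} \oplus C_i = F_i$. If instead $i$ is an upper limit element, then (FL3) gives $F_{<i} = F_i$, so setting $C_i = \{0\}$ also gives $F_{<i} \oplus C_i = F_i$ (at $i_0$ this is consistent, since (FL3) there forces $F_{i_0} = F_{<i_0} = \{0\}$). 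In both cases condition (AG1) holds, so $C : I \to \Sub(M)$ is an almost gradation of $(F,I)$.

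Finally, because $I$ is totally ordered the filtration $(F,I)$ is linear, and it is well-ordered by hypothesis, so Corollary~\ref{cor:every almost gradation of a well-ordered or product of well-ordered filtration is a gradation} applies and shows that $C$ is in fact a gradation of $(F,I)$. There is no real obstacle here: the only point to be careful about is that the successor/upper-limit case analysis exhausts $I$ and is handled compatibly at $i_0$; the rest is a direct invocation of Lemma~\ref{lem: nested complemented submodules are locally complemented} and the cited corollary.
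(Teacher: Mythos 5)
Your proof is correct and follows essentially the same route as the paper: set $C_i = 0$ at upper limit elements (where (FL3) gives $F_{<i} = F_i$) and take $C_i$ to be a complement of $F_{<i} = F_j$ in $F_i$ at successors $i = j+1$ (using the projectivity supplied by Lemma \ref{lem: nested complemented submodules are locally complemented}), then invoke Corollary \ref{cor:every almost gradation of a well-ordered or product of well-ordered filtration is a gradation} to upgrade the resulting almost gradation to a gradation. No gaps.
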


\begin{proof}
If $F$ is an $I$-flag of $M$, then we define $C : I \to \Sub(M)$ as follows. If $i$ is an upper limit element then we define $C_i = 0$, and since in this case $F_{<i} F_i$, indeed we have $F_i = F_{<i} \oplus C_i$. If $i$ is a successor, say $i = j + 1$, $F_i / F_j$ is projective hence there exists a submodule $C_i \subseteq F_i$ such that $F_i = F_j \oplus C_i$. Since $j$ is the predecessor of $i$, we have that $F_j= F_{<i}$, so $F_i = F_{<i} \oplus C_i$. Then we have shown that $C$ is an almost gradation of $F$. By Corollary \ref{cor:every almost gradation of a well-ordered or product of well-ordered filtration is a gradation}, it follows that $C$ is a gradation of $F$. 
\end{proof}

\begin{lemma}\label{lem: flags are embedded}
$I$-flags are embedded.
\end{lemma}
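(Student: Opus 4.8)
The plan is to reduce to injectivity. Recall from Section~\ref{sec: submodule filtrations} that a \emph{linear} filtration $(F,I)$ is embedded precisely when the function $F\colon I\to\Sub(M)$ is injective; and since $F$ is order-preserving by (F3), injectivity automatically upgrades to the order-embedding condition on the totally ordered set $I$ (if $F_i\subseteq F_j$ but $j<i$, then $F_j\subseteq F_i$ by (F3), whence $F_i=F_j$ and then $i=j$ by injectivity, a contradiction). So the entire task is to show that $i<j$ in $I$ forces $F_i\neq F_j$, and for this it is enough to exhibit one index strictly above $i$ and $\le j$ at which the inclusion is known to be strict.

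First I would locate the immediate successor of $i$. Since $j>i$, the element $i$ is not the greatest element of $I$, so $\{x\in I : x>i\}$ is nonempty; because $I$ is well-ordered it has a least element, which is exactly the immediate successor $i+1$ appearing in (FL2), and one has $i<i+1\le j$ (the last inequality because $j$ lies in $\{x>i\}$). Applying (F3) to both inequalities gives the chain $F_i\subseteq F_{i+1}\subseteq F_j$.

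Next I would invoke (FL2): the module $F_{i+1}/F_i$ is finitely generated projective of rank $1$, hence nonzero --- localizing at any prime $\fp\subseteq R$ produces a free $R_\fp$-module of rank $1$, which is nonzero since $R_\fp\neq 0$. (This is the one place where one tacitly uses $R\neq 0$; when $R=0$ the module $M$ is zero and the only thing to check is the trivial case $|I|\le 1$.) Therefore $F_i\subsetneq F_{i+1}$, and combined with the chain above, $F_i\subsetneq F_j$, so in particular $F_i\neq F_j$. This proves $F$ is injective, hence that $(F,I)$ is embedded.

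The proof is essentially immediate once the reduction is in place; the only point needing care is that the index $i+1$ really belongs to $I$ and satisfies $i+1\le j$, and that is precisely where well-ordering of $I$ enters. No properties of $I$-flags beyond (F3) and (FL2) are used.
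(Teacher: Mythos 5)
Your proof is correct and follows essentially the same route as the paper: reduce the order-embedding condition to injectivity of $F$ (using that $I$ is totally ordered), use well-ordering to produce the immediate successor of $i$, and conclude $F_i \subsetneq F_{i+1} \subseteq F_j$ because the rank-$1$ projective quotient $F_{i+1}/F_i$ is nonzero. The extra remarks about localization and the zero-ring edge case are fine but not needed beyond what the paper's argument already supplies.
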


\begin{proof}
Suppose that $F : I \to \Sub(M)$ is an $I$-flag in $M$. Since $I$ is totally ordered, to show $F$ is an order embedding, it is enough to show that it is injective. Given $i \neq j \in I$, without loss of generality we can assume that $i < j$. Then because $I$ is well-ordered, and $i$ is not the maximal element, $i$ has an immediate successor $k$. Since $F$ is order-preserving, we have that $F_i \subseteq F_k \subseteq F_j$ and $F_k / F_i$ has projective rank $1$ so in particular it is non-zero. Therefore $F_i$ is properly contained in $F_k$, hence in $F_j$ as desired. 
\end{proof}

Together with Proposition \ref{prop: embedded linear filtrations have least and greatest elements}, this lemma shows the following.

\begin{corollary}\label{cor: top and bottom elements of a flag}
If $F$ is an $I$-flag of $M$, then $I$ has a least element $i_0$ and a greatest element $i_\infty$ and we have $F_{i_0} = 0$, $F_{i_\infty} = M$.
\end{corollary}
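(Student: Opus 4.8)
The plan is to observe that this corollary is an immediate consequence of the two results it is sandwiched between, requiring essentially no additional work. Recall that by definition an $I$-flag is a \emph{well-ordered} linear filtration $F : I \to \Sub(M)$; in particular the indexing set $I$ is well-ordered, hence totally ordered, so $(F, I)$ is a linear filtration in the sense of Section \ref{sec: submodule filtrations}. By Lemma \ref{lem: flags are embedded}, the function $F$ is an order embedding, and therefore $(F, I)$ is in fact an \emph{embedded} linear filtration of $M$.

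Having established this, I would simply invoke Proposition \ref{prop: embedded linear filtrations have least and greatest elements}, which states precisely that an embedded linear filtration $(F, I)$ of an $R$-module $M$ has a least element $i_0$ and a greatest element $i_\infty$ with $F_{i_0} = 0$ and $F_{i_\infty} = M$. This yields the claim verbatim.

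There is no real obstacle here: all of the content has already been packaged into the two cited results. The genuine input of Lemma \ref{lem: flags are embedded} is the rank condition (FL2), which forces $F_i \subsetneq F_k$ whenever $k$ is the immediate successor of $i$ (since $F_k / F_i$ is projective of rank $1$, hence nonzero), giving injectivity and thus embeddedness; and the genuine input of Proposition \ref{prop: embedded linear filtrations have least and greatest elements} is axioms (F1)--(F3) together with the injectivity of an embedded filtration, which promote the module-theoretic bottom $\{0\}$ and top $M$ guaranteed by (F1)--(F2) into an honest least and greatest \emph{index}. The only point worth a moment's care is noting that the freeness hypothesis on $M$ in the definition of an $I$-flag plays no role in this particular argument (linearity and embeddedness alone suffice), but since we are working inside the category of $I$-flags this is automatic and need not be belabored.
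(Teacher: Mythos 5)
Your proposal is correct and follows exactly the paper's route: the corollary is stated there as an immediate consequence of Lemma \ref{lem: flags are embedded} combined with Proposition \ref{prop: embedded linear filtrations have least and greatest elements}, which is precisely your argument. Nothing further is needed.
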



\section{The Full Flag Functor}\label{sec: full flag functor}

\subsection{Definition}


Let $k$ be a ring and $I$ a well-ordered set. We define a functor $\Fl_I : \bAlg_k \to \bSet$, by sending a $k$-algebra $R$ to the set of $I$-flags in $R^{\oplus I_s}$, and a $k$-module homomorphism $\varphi: R \to S$ to the function which takes an $I$-flag $F : I \to \Sum(R^{\oplus I_s})$ to the composition $\Gr_{I_s}(\varphi) \circ F : I \to \Sum(S^{\oplus I_s})$. We now confirm that $\Gr_{I_s}(\varphi) \circ F$ is indeed an $I$-flag of $S^{\oplus I_s}$.


\begin{lemma}
If $F : I \to \Sum(R^{\oplus I_s})$ is a filtraion in $R^{\oplus I_s}$ and $\varphi : R \to S$ is a $k$-algebra homomorphism, then the composition $\Gr_{I_s}(\varphi) \circ F : I \to \Sum(S^{\oplus I_s})$ is a filtration of $S^{\oplus I_s}$. 
\end{lemma}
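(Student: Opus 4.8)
The plan is to check the three filtration axioms (F1), (F2), (F3) for the function $G := \Gr_{I_s}(\varphi) \circ F : I \to \Sub(S^{\oplus I_s})$, using throughout the description of $\Gr_{I_s}(\varphi)$ from Lemma~\ref{lem: equivalent definitions of gr}: for $N \in \Gr_{I_s}(R)$, the submodule $\Gr_{I_s}(\varphi)(N)$ is the $S$-submodule of $S^{\oplus I_s}$ generated by $\varphi^{I_s}(N)$, where $\varphi^{I_s} : R^{\oplus I_s} \to S^{\oplus I_s}$ is the $R$-module homomorphism $(a_j)_{j \in I_s} \mapsto (\varphi(a_j))_{j \in I_s}$. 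First I would note that $G$ is well-defined with the asserted codomain: since $F$ is a filtration valued in $\Sum(R^{\oplus I_s}) = \Gr_{I_s}(R)$ and $\Gr_{I_s}(\varphi)$ is by construction a function $\Gr_{I_s}(R) \to \Gr_{I_s}(S) = \Sum(S^{\oplus I_s})$ (Proposition~\ref{prop: extension of scalars applied to direct summands are direct summands}), the composite $G$ maps $I$ into $\Sum(S^{\oplus I_s})$; and since the indexing poset $I$ is unchanged, $G$ will automatically be a linear (indeed well-ordered) filtration once (F1)--(F3) are verified.

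For (F3), suppose $i \leq j$ in $I$. As $F$ is order-preserving, $F_i \subseteq F_j$, hence $\varphi^{I_s}(F_i) \subseteq \varphi^{I_s}(F_j)$, so the $S$-submodule generated by the former is contained in the $S$-submodule generated by the latter; that is, $G_i \subseteq G_j$. For (F1), axiom (F1) for $F$ provides $p \in I$ with $F_p = 0$; then $\varphi^{I_s}(F_p) = 0$, so $G_p$ is the $S$-submodule of $S^{\oplus I_s}$ generated by $\{0\}$, which is $0$. For (F2), axiom (F2) for $F$ provides $q \in I$ with $F_q = R^{\oplus I_s}$; for each $i \in I_s$ the standard basis vector $e_i \in R^{\oplus I_s}$ satisfies $\varphi^{I_s}(e_i) = e_i \in S^{\oplus I_s}$ since $\varphi(1_R) = 1_S$, so $G_q = \Gr_{I_s}(\varphi)(R^{\oplus I_s})$ contains every $e_i$, and these generate $S^{\oplus I_s}$ over $S$, whence $G_q = S^{\oplus I_s}$.

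There is no genuine obstacle here; the only point needing a little care is to work with the generated-submodule description of $\Gr_{I_s}(\varphi)$ from Lemma~\ref{lem: equivalent definitions of gr} rather than its defining tensor-product formula, since with that description monotonicity and the values of $\Gr_{I_s}(\varphi)$ on $0$ and on $R^{\oplus I_s}$ are immediate. (When $F$ is in fact an $I$-flag one could instead invoke Corollary~\ref{cor: top and bottom elements of a flag} to locate the elements $p$ and $q$ explicitly as the least and greatest elements of $I$, but the filtration axioms alone already suffice.)
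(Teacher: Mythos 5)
Your proposal is correct and follows essentially the same route as the paper: a direct verification of (F1)--(F3) for the composite, with (F1) and (F2) located by the corresponding axioms for $F$ and (F3) coming from monotonicity of $F$. The only (cosmetic) difference is that you invoke the generated-submodule description of $\Gr_{I_s}(\varphi)$ from Lemma \ref{lem: equivalent definitions of gr}, which makes (F3) immediate, whereas the paper argues via $\varphi_!$ applied to the inclusion maps and a small commutative diagram; both are valid and rest on the same underlying facts.
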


\begin{proof}
We first show (F1). By hypothesis there exists $i \in I$ such that $F_i = 0$, and we have that $\Gr_{I_s}(\varphi) F_i \cong F_i \otimes_R S = \{ 0 \} \otimes_R S = 0$. Similarly, to show (F2), we note that by hypothesis there exists $j \in I$ such that $F_j = R^{\oplus I}$, and we have that $\Gr_{I_s}(\varphi) (R^{\oplus I_s}) = S^{\oplus I_s}$. For (F3), if $i \leq j$, then $F_i \subseteq F_j$, so we consider the following commutative diagram in $\bMod_R$ where all of the arrows are inclusion maps, along with its image under the functor $\varphi_!$ which is a commutative diagram in $\bMod_S$.

\begin{center}
\begin{tikzcd}
& F_j \arrow[dd, "i_{F_j}" ] &&& & \varphi_! F_j \arrow[dd, "\varphi_!(i_{F_j})" ] 
\\ F_i \arrow[ru ]   \arrow[rd, "i_{F_i}" ']    &  &&& \varphi_! F_i \arrow[ru ]   \arrow[rd, "\varphi_!( i_{F_i} )" ']     &  
\\  &  R^{\oplus I_s} & &&&  \varphi_! R^{\oplus I_s}
\end{tikzcd}
\end{center}

It follows that $\im \varphi_!(i_{F_i}) \subseteq \im \varphi_!(i_{F_j})$, and hence that $\Gr_{I_s}(\varphi)( F_i ) \subseteq \Gr_{I_s}(\varphi) (F_j )$.
\end{proof}


\begin{lemma}\label{lem: gr preserves gradations}
If $F : I \to \Sum(R^{\oplus I_s})$ is a filtration in $R^{\oplus I_s}$, $C: I \to \Sum(R^{\oplus I_s})$ is a gradation of $F$, and $\varphi : R \to S$ is a $k$-algebra homomorphism, then $\Gr_{I_s}(\varphi) \circ C$ is a gradation of the filtration $\Gr_{I_s}(\varphi) \circ F$ of $S^{\oplus I_s}$. 
\end{lemma}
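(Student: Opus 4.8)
The plan is to check the gradation axiom (G1) for $\Gr_{I_s}(\varphi) \circ C$ one index at a time, reducing everything to the already-established fact that $\Gr_{I_s}(\varphi)$ preserves internal direct sums (Lemma \ref{lem: gr preserves direct sums}).

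First I would fix $p \in I$. Since $C$ is a gradation of $F$, axiom (G1) gives $F_p = \bigoplus_{q \le p} C_q$ as an internal direct sum inside $R^{\oplus I_s}$. Because $C$ takes values in $\Sum(R^{\oplus I_s}) = \Gr_{I_s}(R)$ and $F_p \in \Sum(R^{\oplus I_s}) = \Gr_{I_s}(R)$, the hypotheses of Lemma \ref{lem: gr preserves direct sums} are satisfied with $N = F_p$ and the family $(C_q \mid q \le p)$. Applying that lemma yields
\begin{equation*}
\Gr_{I_s}(\varphi)(F_p) = \bigoplus_{q \le p} \Gr_{I_s}(\varphi)(C_q).
\end{equation*}
Unwinding the definition of composition, the left side is $(\Gr_{I_s}(\varphi) \circ F)_p$ and each summand on the right is $(\Gr_{I_s}(\varphi) \circ C)_q$, so this equation is precisely (G1) for $\Gr_{I_s}(\varphi) \circ C$ relative to the filtration $\Gr_{I_s}(\varphi) \circ F$ of $S^{\oplus I_s}$. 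Since $p$ was arbitrary, $\Gr_{I_s}(\varphi) \circ C$ is a gradation of $\Gr_{I_s}(\varphi) \circ F$, as desired.

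I do not expect any real obstacle here: all the content lives in Lemma \ref{lem: gr preserves direct sums}, whose proof already records that $\Gr_{I_s}(\varphi)$ carries an internal direct sum decomposition of a summand of $R^{\oplus I_s}$ to an internal direct sum decomposition of its image. The only point requiring a moment's care is confirming that each piece $C_q$ really is a direct summand of $R^{\oplus I_s}$, so that Lemma \ref{lem: gr preserves direct sums} genuinely applies — but this is built into the hypothesis that $C$ has codomain $\Sum(R^{\oplus I_s})$.
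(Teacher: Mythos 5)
Your proof is correct and follows essentially the same route as the paper: fix an index, invoke (G1) for $C$, and apply Lemma \ref{lem: gr preserves direct sums} to the decomposition $F_p = \bigoplus_{q \le p} C_q$ to obtain (G1) for $\Gr_{I_s}(\varphi) \circ C$. Your extra remark that each $C_q$ lies in $\Sum(R^{\oplus I_s})$, so the lemma's hypotheses are met, is a fine (if implicit in the paper) observation.
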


\begin{proof}
Because $C$ is a gradation of $F$, given any $i \in I$, we have that $F_i = \oplus_{j \leq i} C_j$. By Lemma \ref{lem: gr preserves direct sums} it follows that $\Gr_{I_s}(\varphi)(F_i) = \oplus_{j \leq i} \Gr_{I_s}(\varphi)(C_j)$, as desired. 
\end{proof}


\begin{proposition}
If $F : I \to \Sum(R^{\oplus I_s})$ is an $I$-flag in $R^{\oplus I_s}$ and $\varphi : R \to S$ is a $k$-algebra homomorphism, then the composition $\Gr_{I_s}(\varphi) \circ F : I \to \Sum(S^{\oplus I_s})$ is an $I$-flag in $S^{\oplus I_s}$. 
\end{proposition}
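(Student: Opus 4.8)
The plan is to check the three defining conditions (FL1), (FL2), (FL3) for the filtration $G := \Gr_{I_s}(\varphi)\circ F$ of $S^{\oplus I_s}$; the preceding lemma already shows $G$ is a filtration, and since the indexing poset $I$ is unchanged, $G$ is automatically a well-ordered linear filtration, so only these three remain. Condition (FL1) I would dispatch at once: $F_i$ is a direct summand of $R^{\oplus I_s}$ by hypothesis, so Proposition \ref{prop: extension of scalars applied to direct summands are direct summands} makes $\varphi_!(i_{F_i})[\varphi_! F_i]$ a direct summand of $\varphi_!(R^{\oplus I_s})$, and transporting along the canonical isomorphism $\varphi_!(R^{\oplus I_s})\cong S^{\oplus I_s}$ shows $G_i$ is a direct summand of $S^{\oplus I_s}$.

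The engine for (FL2) and (FL3) will be a compatible gradation. By Corollary \ref{cor: I-flags have gradations} the $I$-flag $F$ admits a gradation $C$, and by Lemma \ref{lem: gr preserves gradations} the composite $D := \Gr_{I_s}(\varphi)\circ C$ is a gradation of $G$. For (FL2), the gradation identity $G_{i+1}=\bigoplus_{q\le i+1}D_q = G_i\oplus D_{i+1}$ gives $G_{i+1}/G_i\cong D_{i+1}=\Gr_{I_s}(\varphi)(C_{i+1})\cong C_{i+1}\otimes_R S$, while $F_{i+1}=F_i\oplus C_{i+1}$ identifies $C_{i+1}$ with $F_{i+1}/F_i$, which is finitely generated projective of rank $1$ over $R$. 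It then remains to observe that base change along $\varphi$ preserves finite generation (Proposition \ref{prop: extension of scalars preserves finitely generated}), projectivity (Theorem \ref{thm: extension of scalars and projectivity}(1)), and constancy of the rank at the value $1$ (Proposition \ref{prop: rank and localizations of projective modules}, which compares the rank of $(\varphi_! C_{i+1})_\fq$ with that of $(C_{i+1})_{\varphi^{-1}(\fq)}$ for each $\fq\in\Spec S$). This last point is the only step requiring real care, and I expect it to be the main obstacle.

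For (FL3), let $i\in I$ be an upper limit element, so that $F_{<i}=F_i$. By Lemma \ref{lem:less than direct sum gradation} this reads $F_i=\bigoplus_{j<i}C_j$, and applying Lemma \ref{lem: gr preserves direct sums} yields $G_i=\bigoplus_{j<i}D_j$. Since each $D_j\subseteq\bigoplus_{q\le j}D_q=\Gr_{I_s}(\varphi)(F_j)=G_j$ (again by Lemma \ref{lem: gr preserves direct sums}), I conclude $G_i=\sum_{j<i}D_j\subseteq\sum_{j<i}G_j=G_{<i}$; the reverse inclusion $G_{<i}\subseteq G_i$ is just order-preservation of $G$, already established in the preceding lemma. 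Hence $G_{<i}=G_i$, which is (FL3), and $G$ is an $I$-flag as claimed.
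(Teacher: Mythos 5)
Your proposal is correct and follows essentially the same route as the paper: (FL1) via preservation of direct summands, (FL2) via the base-change results for finitely generated projective rank-one modules, and (FL3) via a gradation of $F$ transported through $\Gr_{I_s}(\varphi)$. The only cosmetic difference is that for (FL2) the paper takes a complement $N$ with $F_{i+1}=F_i\oplus N$ directly from projectivity of the quotient, whereas you extract it from the gradation $C$; the two are interchangeable.
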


\begin{proof}
(FL1) is clearly satisfied because by Lemma \ref{lem: gr preserves direct sums}, $\Gr_{I_s}(\varphi)$ takes direct summands to direct summands. We now prove (FL2). If $i \in I$ is a successor with predecessor $j$, then because $F_i / F_j$ is projective, $F_j$ is a direct summand of $F_i$, say $F_i = F_j \oplus N$. By Lemma \ref{lem: gr preserves direct sums} it follows that $\Gr_{I_s}(\varphi)(F_i) = \Gr_{I_s}(\varphi)(F_j) \oplus \Gr_{I_s}(\varphi)(N)$. Since $N$ is finitely generated projective of rank $1$, it follows from Propositions \ref{prop: extension of scalars preserves finitely generated} and \ref{prop: rank and localizations of projective modules} that $\varphi_! (N) \cong \Gr_{I_s}(\varphi)(N) \cong \Gr_{I_s}(\varphi)(F_i) / \Gr_{I_s}(\varphi)(F_j)$ are all also finitely generated projective of rank $1$. 

For (FL3), let $C : I \to \Sub(R^{\oplus I_s})$ be a gradation of $F$. Then by Lemma \ref{lem: gr preserves gradations}, $\Gr_{I_s}(\varphi) \circ C$ is a gradation of $\Gr_{I_s}(\varphi) \circ F$. For any $i \in I$, we have that $F_{<i} = \bigoplus_{j < i} C_j$ and also $(\Gr_{I_s}(\varphi) \circ F)_{<i} = \bigoplus_{j < i} \Gr_{I_s}(\varphi)(C_j)$. Now if $i$ is an upper limit element, then $F_i = F_{<i}$, hence we compute as follows. 
\begin{equation*}
(\Gr_{I_s}(\varphi) \circ F)_i \overset{(1)}{=} \bigoplus_{j < i} \Gr_{I_s}(\varphi)(C_j) = (\Gr_{I_s}(\varphi) \circ F)_{<i}
\end{equation*}
Here (1) follows from Lemma \ref{lem: gr preserves direct sums}. 

\end{proof}


\subsection{The Finite Flag Functor}
Now let us discuss the case when $I$ is a finite set. Since all well-ordered finite sets are order-isomorphic to $\{0 , \ldots ,n\}$ with the usual order, we assume that $I$ is equal to this set. Then $I_s = \{ 1,\ldots, n\}$ and $\Fl_I$ is isomorphic to the usual full flag functor $\Fl_n$ which sends a $k$-algebra $R$ to the set of sequences of submodules $0 \subseteq M_1 \subseteq M_2 \subseteq \ldots \subseteq M_{n - 1} \subseteq R^{\oplus n}$ where each $M_i$ is a summand of $R^{\oplus n}$ and is finitely generated projective of rank $i$ (see \cite{demazure-gabriel1980} I.2.6.5 and \cite{karpenko2000}). In this case $\Fl_n$ is represented by a scheme, and hence is automatically an fppf-sheaf. However in the case when $I$ is infinite, the proof of representability (again) does not extend, and therefore (again) we spend the remainder of this section proving the sheaf condition directly.


\subsection{The Full Flag Functor is an fppf-Sheaf}
We begin with some technical lemmas. 

\begin{lemma}\label{lem: change of rings and products}
Let $R_1,\ldots, R_n$ be rings and let $\pi_i : R_1 \times \ldots \times R_n \to R_i$ denote the projection map. For each $i$ let $M_i$ be an $R_i$ module and note that the projection $p_i : M_1 \times \ldots \times M_n \to \pi_i^* M_i$ is an $(R_1 \times \ldots \times R_n)$-module homomorphism. Then $p_i$ is adapted to $\pi_i$. 
\end{lemma}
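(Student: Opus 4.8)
The plan is to unwind Definition \ref{def: adapted function} for $\varphi = \pi_i : R \to R_i$ where $R := R_1 \times \cdots \times R_n$, with $M := M_1 \times \cdots \times M_n$ regarded as an $R$-module by componentwise operations and $N := M_i$ as an $R_i$-module. The $R$-linearity of $p_i : M \to \pi_i^* M_i$ is already granted, so the only content is to show that the induced $R_i$-module homomorphism $\Phi_{R, R_i}(p_i) : M \otimes_R R_i \to M_i$ is an isomorphism; by the explicit formula for $\Phi$ it sends $(m_1, \ldots, m_n) \otimes s \mapsto s \cdot p_i(m_1, \ldots, m_n) = s m_i$.

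The key device is the complete system of orthogonal idempotents $e_1, \ldots, e_n \in R$, where $e_j$ has a $1$ in slot $j$ and $0$ elsewhere, so $e_1 + \cdots + e_n = 1$, $e_j e_k = \delta_{jk} e_j$, and $\pi_i(e_j) = \delta_{ij}$. First I would write down a candidate inverse $g : M_i \to M \otimes_R R_i$, $g(m) = (0, \ldots, m, \ldots, 0) \otimes 1$ with $m$ in slot $i$, and check it is $R_i$-linear: since the element $r = (0, \ldots, s, \ldots, 0) \in R$ (with $s$ in slot $i$) satisfies $\pi_i(r) = s$ and $r \cdot (0, \ldots, m, \ldots, 0) = (0, \ldots, sm, \ldots, 0)$, we get $s \cdot g(m) = (0, \ldots, m, \ldots, 0) \otimes s = (0, \ldots, sm, \ldots, 0) \otimes 1 = g(sm)$. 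That $\Phi_{R, R_i}(p_i) \circ g = \mathrm{id}_{M_i}$ is immediate, and for the other composite one expands $(m_1, \ldots, m_n) = \sum_j e_j \cdot (m_1, \ldots, m_n)$ inside the tensor product: the terms with $j \neq i$ vanish because $(0, \ldots, m_j, \ldots, 0) \otimes s = e_j(0, \ldots, m_j, \ldots, 0) \otimes s = (0, \ldots, m_j, \ldots, 0) \otimes \pi_i(e_j) s = 0$, and the $j = i$ term is $(0, \ldots, m_i, \ldots, 0) \otimes s = (0, \ldots, s m_i, \ldots, 0) \otimes 1 = g(s m_i)$. Hence $g$ and $\Phi_{R, R_i}(p_i)$ are mutually inverse, so $p_i$ is adapted to $\pi_i$.

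The only point demanding care --- though not a genuine obstacle --- is bookkeeping the ring over which each tensor is formed and tracking how the idempotents push scalars across $\otimes_R$; conceptually this is just the observation that $\pi_i$ is surjective with kernel $R(1 - e_i)$, so $M \otimes_R R_i \cong M / (1 - e_i) M \cong M_i$. An alternative I could use instead is to decompose $M = \bigoplus_j e_j M$, note that $- \otimes_R R_i$ preserves this direct sum (by the discussion preceding Proposition \ref{prop: extension of scalars applied to direct summands are direct summands}), observe $e_j M \otimes_R R_i = 0$ for $j \neq i$ while $e_i M \cong \pi_i^* M_i$ and the counit $\pi_i^* M_i \otimes_R R_i \to M_i$ is an isomorphism because $\pi_i$ is surjective; but I expect the direct computation above to be the shortest route.
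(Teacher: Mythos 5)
Your proof is correct and follows essentially the same route as the paper: both construct the candidate inverse $g(m) = (0,\ldots,m,\ldots,0)\otimes 1$, check its $R_i$-linearity, and verify it is inverse to the induced map $\Phi_{R,R_i}(p_i)$ (the paper phrases this as surjectivity of the induced map plus $g$ being a left inverse, while you check both composites directly via the idempotents $e_j$). Your idempotent computation is precisely the ``easy computation'' the paper leaves implicit, so there is nothing to add.
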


\begin{proof}
Given $m \in M_i$, we have that $(\pi_i)_! ( (0 , \ldots, m , \ldots , 0) \otimes 1) = m$, so $(\pi_i)_!$ is surjective. On the other hand, consider the map $g : M_i \to (\pi_i)_! (M_1 \times \ldots \times M_n)$ which sends $m \mapsto (0 , \ldots, m , \ldots , 0) \otimes 1$. An easy computation shows that this map is $R_i$-linear.
Furthermore another easy computation shows that 
$g \circ (\pi_i)_! = 1_{(\pi_i)_!(M_1 \times \ldots \times M_n)}$. Hence $(\pi_i)_!$ has a left inverse and is therefore injective. 
\end{proof}


\begin{lemma}\label{lem: nested union as a colimit}
Let $R$ be a ring, $M$ an $R$-module, $I$ a totally ordered set, $\mE : \bThin(I) \to \bMod_R$ a functor, and $\eta  : \mE \to \Delta(M)$ a natural transformation. Suppose that for every $j$ in $I$, $\eta_j$ is injective. Then $(\sum_{i \in I}\eta_i(\mE(i)) , \nu)$ is a colimit of $\mE$, where $\nu: \mE \to \Delta( \sum_{i \in I}\eta_i(\mE(i)) )$ is defined by letting $\nu_i$ be the factorization of $\eta_i$ through $\sum_{i \in I}\eta_i(\mE(i))$. 
\end{lemma}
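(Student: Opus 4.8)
The plan is to verify the universal property of the colimit directly, since the excerpt's set-level result (Proposition \ref{prop: filtered colimits of sets}) is stated in $\bSet$ and we have not established that the forgetful functor $\bMod_R \to \bSet$ creates filtered colimits. First I would record the basic structural facts. Since $\bThin(I)$ is thin and $\eta$ is natural, for $i \le j$ in $I$ we get $\eta_j \circ \mE(i_{i,j}) = \eta_i$. Writing $\iota : L \hookrightarrow M$ for the inclusion of $L := \sum_{i \in I} \eta_i(\mE(i))$, the definition of $\nu$ means $\iota \circ \nu_k = \eta_k$ for all $k$; since $\iota$ is injective this forces $\nu_j \circ \mE(i_{i,j}) = \nu_i$, so $(L, \nu)$ really is a cocone over $\mE$. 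I would also note two consequences of the hypotheses: each $\nu_i$ is injective because each $\eta_i$ is, and for $i \le j$ the identity $\eta_i = \eta_j \circ \mE(i_{i,j})$ gives $\eta_i(\mE(i)) \subseteq \eta_j(\mE(j))$, so the family $(\eta_i(\mE(i)) : i \in I)$ is a chain of submodules of $M$; hence by the argument of Lemma \ref{lem: nested sums and unions}, $L = \bigcup_{i \in I} \nu_i(\mE(i))$, so the $\nu_i$ are jointly surjective onto $L$.

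Next I would check initiality among cocones. Let $(N, \mu)$ be another cocone over $\mE$. For uniqueness: any morphism of cocones $f : L \to N$ must send $x = \nu_i(s)$ to $\mu_i(s)$, and since every element of $L$ has this form, $f$ is determined. For existence, I would define $f(x) = \mu_i(s)$ for any representation $x = \nu_i(s)$ and verify well-definedness: if $\nu_i(s) = \nu_j(t)$, total ordering lets me assume $i \le j$, and then $\nu_j(\mE(i_{i,j})(s)) = \nu_i(s) = \nu_j(t)$, so injectivity of $\nu_j$ yields $\mE(i_{i,j})(s) = t$, whence $\mu_i(s) = \mu_j(\mE(i_{i,j})(s)) = \mu_j(t)$. $R$-linearity is then straightforward: given $x = \nu_i(s)$ and $y = \nu_j(t)$ with $i \le j$, I would rewrite $x = \nu_j(\mE(i_{i,j})(s))$ so both come from $\mE(j)$ and use linearity of $\nu_j$ and $\mu_j$. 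Finally $f \circ \nu_i = \mu_i$ holds by construction.

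The argument is essentially routine, so there is no serious obstacle; the point I would be careful to flag is that well-definedness of $f$ is exactly where the hypothesis that each $\eta_j$ (equivalently each $\nu_j$) is injective is used, and that the total order on $I$ is what lets me compare any two representatives $\nu_i(s)$ and $\nu_j(t)$ inside a single $\mE(\max(i,j))$. This lemma is the module-theoretic analogue of Proposition \ref{prop: filtered colimits of sets}.
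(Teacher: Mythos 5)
Your proof is correct and follows essentially the same route as the paper's: identify the sum as a nested union via the total order, define the map to an arbitrary cocone on representatives, and use injectivity of the $\eta_j$ (equivalently $\nu_j$) together with comparability of indices to get well-definedness and uniqueness. You are in fact slightly more careful than the paper, which does not explicitly check $R$-linearity of the induced map or the cocone compatibility of $\nu$; these are routine and your verification of them is a welcome addition.
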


\begin{proof}
First of all, $(\sum_{i \in I}\eta_i(\mE(i)) , \nu)$ is a cocone over $\mE$ because $(M , \eta)$ is. Now given $j , k \in I$, since $I$ is totally ordered without loss of generality we may assume that $j \leq k$. Then there exists a morphism $i_{j , k} : j \to k$ in $I$, and by hypothesis $\mE(i_{j , k})$ is injective. Since $\eta_j$ and $\eta_k$ are injective as well, we have that $\eta_j( \mE(j) ) \subseteq \eta_k(\mE(k))$, hence the sum $\sum_{i \in I}\eta_i(\mE(i))$ is nested, so it is in fact equal to $\bigcup_{i \in I} \eta_i(\mE(i))$. Now suppose that $N$ is an $R$-module and $\mu : \mE \to N$ is a cocone over $\mu$. We define a morphism $f : \bigcup_{i \in I}\eta_i(\mE(i)) \to N$ as follows. If $x \in \bigcup_{i \in I}\eta_i(\mE(i))$, then there exists some $j \in I$ such that $x \in \eta_j(\mE(j))$, and hence there exists $y \in \mE(j)$ with $\eta_j(y) = x$. We define $f(x) = \mu_j(y)$. 

This is well-defined because if $x \in \eta_k(\mE(k))$ with $\eta_k(z) = x$ as well, then because $I$ is totally ordered, without loss of generality we have that $j \leq k$, and $(M , \eta)$ is a cocone over $\mE$, we have that 
\begin{equation*}
\eta_k(z) = \eta_j(y) = \eta_k(i_{j , k}(y)). 
\end{equation*}
But because $\eta_k$ is injective, it follows that $z = i_{j , k}(y)$, and hence because $( N , \mu)$ is also a cocone over $\mE$, we have that $\mu_k (z) = \mu_j (y)$. 

For any $j \in I$ and $y \in \mE(j)$, we have by definition that $f (\eta_j(y) ) = \mu_j(y)$, so that $\Delta(f) \circ \eta = \mu$. Finally $f:  \bigcup_{i \in I}\eta_i(\mE(i)) \to N$ is unique since if $g$ were also such a map, and $x \in \bigcup_{i \in I}\eta_i(\mE(i))$, then $x \in \eta_j(\mE(j))$ for some $j \in I$ and so there exists $y \in \mE(j)$ with $\eta_j(y) = x$. But then $g(x) = g(\eta_j(y)) = \mu_j(y) = f(\eta_j(y)) = f(x)$. 
\end{proof}


\begin{theorem}
The functor $\Fl_I$ is an fppf sheaf. 
\end{theorem}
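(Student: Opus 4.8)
The plan is to verify the two conditions (FP1) and (FP2) from Theorem~\ref{thm: fppf sheaf characterization}, leaning heavily on the fact (Theorem~\ref{thm: grassmannian is an fppf sheaf}) that each $\Gr_{I_s}$ is already an fppf sheaf, together with Corollary~\ref{cor: I-flags have gradations} which lets us replace a flag $F$ by a choice of gradation $C$. The key observation is that an $I$-flag $F$ is the same data as the family of pieces $(F_i)_{i \in I}$ of $\Gr_{I_s}(R)$, i.e.\ an element of $\prod_{i \in I} \Gr_{I_s}(R)$ satisfying the order-theoretic and rank constraints (FL1)--(FL3). So $\Fl_I$ sits inside the product $k$-functor $\prod_{i \in I}\Gr_{I_s}$, which is an fppf sheaf since a product of fppf sheaves is one (equalizers and products commute with products in $\bSet$). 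It therefore suffices to show that for an fppf cover the flag conditions descend.

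For (FP1): given $k$-algebras $R_1,\dots,R_n$, we already know $\Gr_{I_s}(R_1\times\cdots\times R_n)\to\prod_j\Gr_{I_s}(R_j)$ is a bijection, hence so is the induced map on $\prod_{i\in I}\Gr_{I_s}$. I would then check that under this bijection, a family $(N_i)_{i\in I}$ of summands of $(R_1\times\cdots\times R_n)^{\oplus I_s}$ is an $I$-flag iff each of its $n$ projections is an $I$-flag. The inclusion conditions and condition (FL3) ($F_{<i}=F_i$ at upper limits) are preserved and reflected componentwise because $\Gr_{I_s}$ preserves direct sums (Lemma~\ref{lem: gr preserves direct sums}) and, for the surjective projections $\pi_j$, acts by literal image (Lemma~\ref{lem: surjective homs make image a submodule}), so sums, intersections, and nested unions (using Lemma~\ref{lem: nested union as a colimit}) match up with the product decomposition. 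For (FL2), the rank-$1$ finitely generated projective condition on $F_{i+1}/F_i$ holds for a module over a finite product iff it holds componentwise, by Proposition~\ref{prop: rank and localizations of projective modules} and the fact that $\Spec(R_1\times\cdots\times R_n)$ is the disjoint union of the $\Spec(R_j)$.

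For (FP2): let $\varphi:R\to S$ be an fppf-$R$-algebra and suppose $F\in\Fl_I(S)$ satisfies $\Gr_{I_s}(\delta)\circ F=\Gr_{I_s}(\epsilon)\circ F$ as flags of $(S\otimes_R S)^{\oplus I_s}$, i.e.\ componentwise. Since $\Gr_{I_s}$ is an fppf sheaf, for each $i\in I$ there is a unique $G_i\in\Gr_{I_s}(R)$ with $\Gr_{I_s}(\varphi)(G_i)=F_i$; injectivity of $\Gr_{I_s}(\varphi)$ on summands (which came out of the (EQ1) argument in the proof of Theorem~\ref{thm: grassmannian is an fppf sheaf}, via faithful flatness) gives that $i\mapsto G_i$ is order-preserving because $\Gr_{I_s}(\varphi)$ is, so $G:I\to\Sum(R^{\oplus I_s})$ is a filtration satisfying (FL1) and (F1)--(F3). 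It remains to descend (FL2) and (FL3). For (FL2): pick complements so $F_{i+1}=F_i\oplus P_i$ with $P_i$ of rank $1$ f.g.\ projective over $S$; the hard part is to produce complements already over $R$. I would do this by choosing a gradation $C$ of $G$ (Corollary~\ref{cor: I-flags have gradations} applied to the not-yet-known-to-be-a-flag $G$ needs care — better: build the complement $C_i$ directly as the $R$-descent of $P_i$), using the exactness of the Amitsur complex: $G_{i+1}/G_i$ is a summand of $R^{\oplus I_s}/G_i$, and $\varphi_!(G_{i+1}/G_i)\cong F_{i+1}/F_i=P_i$ is rank-$1$ f.g.\ projective, so by faithfully flat descent of projectivity (Theorem~\ref{thm: extension of scalars and projectivity}(2)), finite generation descending backwards, and Proposition~\ref{prop: rank and localizations of projective modules} for the rank, $G_{i+1}/G_i$ is itself rank-$1$ f.g.\ projective over $R$; then Lemma~\ref{lem: nested complemented submodules are locally complemented} gives the complement. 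For (FL3): if $i$ is an upper limit element then $F_i=F_{<i}$; I would show $\Gr_{I_s}(\varphi)(G_{<i})=(\Gr_{I_s}(\varphi)\circ G)_{<i}=F_{<i}=F_i=\Gr_{I_s}(\varphi)(G_i)$, using that $\Gr_{I_s}(\varphi)$ commutes with the nested unions $F_{<i}$ (Lemma~\ref{lem: nested union as a colimit}, since $\varphi_!$ is a left adjoint, hence preserves the relevant filtered colimit, and Lemma~\ref{lem: nested sums and unions} identifies $F_{<i}$ with that union), and then invoke injectivity of $\Gr_{I_s}(\varphi)$ on summands to conclude $G_{<i}=G_i$. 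Finally $\Gr_{I_s}(\varphi)\circ G = F$ by construction, so $G$ is the required preimage, and it is unique by the (EQ1) part of $\Gr_{I_s}$ applied coordinatewise.

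The main obstacle I anticipate is the descent of condition (FL2): it is not enough that the successive quotients stay projective (that follows from Lemma~\ref{lem: nested complemented submodules are locally complemented}), one must also descend the ``finitely generated'' and ``rank $1$'' properties along the faithfully flat $\varphi$, which requires combining Proposition~\ref{prop: rank and localizations of projective modules} (rank is preserved under base change and matches on fibers, using that $\Spec\varphi$ is surjective) with the fact that a projective module whose base change along a faithfully flat map is finitely generated is itself finitely generated — the latter is a standard faithfully flat descent statement that I would either cite from the Stacks Project or reprove quickly using that a surjection onto $\varphi_!M$ from a finite free module descends. Everything else (the inclusion relations, the limit-element condition, uniqueness) is bookkeeping with the lemmas about $\Gr_{I_s}$ and nested unions already in hand.
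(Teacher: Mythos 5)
Your proposal takes essentially the same route as the paper's proof: verify (FP1) and (FP2) of Theorem \ref{thm: fppf sheaf characterization} componentwise, leaning on Theorem \ref{thm: grassmannian is an fppf sheaf} to produce and compare the pieces $F_i$, then descend the flag axioms, with the finite-generation/rank-$1$ descent for (FL2) and the nested-union/colimit argument (Lemma \ref{lem: nested union as a colimit}) for (FL3) matching the paper's argument. One small repair: since $G_{<i}$ is not yet known to be a direct summand, the final step of (FL3) should conclude by noting that the inclusion $G_{<i}\subseteq G_i$ becomes an isomorphism after the faithfully flat base change and hence is one (as the paper does), rather than by invoking ``injectivity of $\Gr_{I_s}(\varphi)$ on summands.''
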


\begin{proof}
We again use Theorem \ref{thm: fppf sheaf characterization} and we begin with (FP1). Given $k$-algebras $R_1, \ldots,  R_n$, we need to show that the map $\Fl_I( R_1 \times \ldots \times R_n ) \to \Fl_I( R_1) \times \ldots \times \Fl_I(R_n )$ is a bijection. To show injectivity, suppose that $F , F' \in \Fl_I( R_1 \times \ldots \times R_n )$ are such that $\Fl_I( \pi_j )(F) = \Fl_I( \pi_j )(F')$ for $1 \leq j \leq n$. By definition this means that $\Gr_{I_s}(\pi_j) \circ F = \Gr_{I_s}(\pi_j) \circ F'$, hence for all $i \in I$ we have that $\Gr_{I_s}(\pi_j)(F_i) = \Gr_{I_s}(\pi_j)(F'_i)$. However by our proof that $\Gr_{I_s}$ is an fppf sheaf (Theorem \ref{thm: grassmannian is an fppf sheaf}), it follows that $F_i = F'_i$, hence $F = F'$. To show surjectivity, suppose we are given $I$-flags $F^j \in \Fl_I(R_j)$ for $1 \leq j \leq n$, and define $F : I \to \Sub( R_1^{\oplus I_s} \times \ldots \times R_n^{\oplus I_s})$ by $F_i = F_i^1 \times \ldots \times F_i^n$. We claim that $F$ is an $I$-flag.

(F1) is satisfied because by Corollary \ref{cor: top and bottom elements of a flag}, there exists a least element $i_0$ of $I$ and $F^j_{i_0} = 0$ for all $j$. Therefore $F_{i_0} = 0$ as well. Similarly, (F2) is satisfied because by Corollary \ref{cor: top and bottom elements of a flag}, there exists a greatest element $i_\infty$ of $I$ and $F^j_{i_\infty} = R_j^{\oplus I_s}$ for all $j$. Therefore $F_{i_\infty} = R_1^{\oplus I_s} \times \ldots \times R_n^{\oplus I_s}$. (F3) is satisfied since if $i < i'$ then for each $1 \leq j \leq n$ we have $F^j_i \subseteq F^j_{i'}$, and hence 
\begin{equation*}
F_i = F_i^1 \times \ldots \times F_i^n \subseteq F_{i'}^1 \times \ldots \times F_{i'}^n = F_{i'}. 
\end{equation*}

Therefore $F$ is a filtration. (FL1) is satisfied because as in the proof of Theorem \ref{thm: grassmannian is an fppf sheaf}, since each $F_i^j$ is a direct summand of $R_j^{\oplus I_s}$, it follows that $F_i^1 \times \ldots \times F_i^n$ is a direct summand of $R_1^{\oplus I_s} \times \ldots \times R_n^{\oplus I_s}$. To show that (FL2) holds, suppose that $i \in I$ is a successor with predecessor $k$. We compute:  
\begin{equation*}
F_i / F_k = (F_i^1 \times \ldots \times F_i^n) / (F_k^1 \times \ldots \times F_k^n) \cong F_i^1 / F_k^1 \times \ldots \times F_i^n  / F_k^n
\end{equation*}

Given a prime ideal of $R_1 \times \ldots \times R_n$, it is of the form $\pi_j^{-1}(\fp_j) = R_1 \times \ldots \times \fp_j \times \ldots \times R_n$ for some $1 \leq j \leq n$ and some prime ideal $\fp_j \subseteq R_i$. Since $F_i / F_k$ is a projective $(R_1 \times \ldots \times R_n)$-module, so is $F_i^1 / F_k^1 \times \ldots \times F_i^n  / F_k^n$, and the rank of the free $(R_1 \times \ldots \times R_n)_{\pi_j^{-1}(\fp_j)}$-module $(F_i / F_k)_{\pi_j^{-1}(\fp_j)}$ is equal to that of $(F_i^1 / F_k^1 \times \ldots \times F_i^n  / F_k^n)_{\pi_j^{-1}(\fp_j)}$. By Proposition \ref{prop: rank and localizations of projective modules} applied to the homomorphism $\pi_j : R_1 \times \ldots \times R_n \to R_j$, we have that the rank of the free $(R_1 \times \ldots \times R_n)_{\pi_j^{-1}(\fp_j)}$-module $(F_i^1 / F_k^1 \times \ldots \times F_i^n  / F_k^n)_{\pi_j^{-1}(\fp_j)}$ is equal to the rank of the free $(R_j)_{\fp_j}$-module $((\pi_j)_! ( F_i^1 / F_k^1 \times \ldots \times F_i^n  / F_k^n ) )_{\fp_j}$. However, Lemma \ref{lem: change of rings and products} implies that this is isomorphic to $(F_i^j / F_k^j )_{\fp_j}$, which by hypothesis is equal to $1$. 

Finally we prove (FL3). Given any $i \in I$ and any $\ell < i$ we have that $F_\ell^1 \times \ldots \times F_\ell^n \subseteq  \left(\sum_{k < i} F_k^1 \right) \times \ldots \times  \left( \sum_{k < i} F_k^n \right)$ which implies that $\sum_{k < i} F_k^1 \times \ldots \times F_k^n \subseteq \left(\sum_{k < i} F_k^1 \right) \times \ldots \times  \left( \sum_{k < i} F_k^n \right)$. Conversely, we note that $0 \times \ldots \times \left(\sum_{k < i} F_k^j \right) \times \ldots \times 0 = \sum_{k < i} 0 \times \ldots \times F_k^j \times \ldots \times 0 \subseteq \sum_{k < i} F_k^1 \times \ldots \times F_k^n$, which implies that $\sum_{k < i} F_k^1 \times \ldots \times F_k^n \supseteq \left(\sum_{k < i} F_k^1 \right) \times \ldots \times  \left( \sum_{k < i} F_k^n \right)$. 

If $i \in I$ is an upper limit element, then for any $1 \leq j \leq n$ we have $F^j_{< i} = F^j_i$, so we can compute 
\begin{equation*}
F_{<i} = \sum_{k < i} F_k^1 \times \ldots \times F_k^n = \left(\sum_{k < i} F_k^1 \right) \times \ldots \times  \left( \sum_{k < i} F_k^n \right) = F_i^1 \times \ldots \times F_i^n = F_i. 
\end{equation*}

By the proof of Theorem \ref{thm: grassmannian is an fppf sheaf}, if we compose $F$ with the isomorphism $R_1^{\oplus I_s} \times \ldots \times R_n^{\oplus I_s} \overset{\sim}{\to} (R_1 \times \ldots \times R_n)^{\oplus I_s}$, we obtain an $I$-flag $E$ in $(R_1 \times \ldots \times R_n)^{\oplus I_s}$ such that for each $i$, we have $\Gr_{I_s}(\pi_j)(E_i) = F^j_i$, and hence $\Fl_I(\pi_j)(E) = F^j$. 

Now we show (FP2). Suppose that $\varphi: R \to S$ is a $k$-algebra homomorphism which makes $S$ into an fppf-$R$-algebra. We use Proposition \ref{prop: characterization of equalizers in set} to show that the sequence $\Fl_I(R) \to \Fl_I(S) \rightrightarrows \Fl_I(S \otimes_R S)$ is an equalizer diagram in $\bSet$. To prove (EQ1), suppose we are given $F , F' \in \Fl_I(R)$ such that $\Fl_I(\varphi)(F) = \Fl_I(\varphi)(F')$. By definition we have that $\Gr_{I_s}(\varphi) \circ F = \Gr_{I_s}(\varphi) \circ F'$, so for each $i \in I$ we have $\Gr_{I_s}(\varphi) (F_i ) = \Gr_{I_s}(\varphi) ( F'_i )$. By our proof of Theorem \ref{thm: grassmannian is an fppf sheaf}, it follows that $F_i = F'_i$. Since this is true for all $i \in I$, we have that $F = F'$. For (EQ2), suppose that $F \in \Fl_I(S)$ is such that $\Fl_I(\delta)(F) = \Fl_I(\epsilon)(F)$. For each $i \in I$, we let 
\begin{equation*}
E_i = (\varphi^{I_s})^{-1}( \ker( \delta^{I_s} - \epsilon^{I_s} ) \cap F_i )
\end{equation*}
By the proof of Theorem \ref{thm: grassmannian is an fppf sheaf}, we know that $E_i \in \Sum(R^{\oplus I_s})$ and that $\Gr_{I_s}(\varphi)(E_i)  = F_i$. Hence, if $E : I \to \Sum(R^{\oplus I_s})$ is an $I$-flag, then $\Fl_I(E) = F$. 

We now show that $E$ is an $I$-flag. To prove (F1), note that because $F$ is a filtration, there exists $i \in I$ such that $F_i = 0$. Then $E_i = (\varphi^{I_s})^{-1}(0)$. But because $\varphi$ is faithfully flat, by Proposition \ref{prop: equivalent characterizations of faithful flatness}, $\varphi$ is injective, and hence $\varphi^{I_s}$ is injective too. Therefore $E_i = 0$ as desired. For (F2), because $F$ is a filtration, there exists $i \in I$ such that $F_i = S^{\oplus I_s}$. Then $E_i = (\varphi^{I_s})^{-1}( \ker( \delta^{I_s} - \epsilon^{I_s} ))$, but this latter module is actually all of $R^{\oplus I_s}$. For (F3), if $i \leq j$, then we have that $F_i  \subseteq F_j$ hence $\ker( \delta^I - \epsilon^I ) \cap F_i \subseteq \ker( \delta^I - \epsilon^I ) \cap F_j$, which implies that $E_i = (\varphi^{I_s})^{-1}( \ker( \delta^{I_s} - \epsilon^{I_s} ) \cap F_i ) \subseteq (\varphi^{I_s})^{-1}( \ker( \delta^{I_s} - \epsilon^{I_s} ) \cap F_j ) = E_j$. 

That (FL1) holds was shown in the proof of Theorem \ref{thm: grassmannian is an fppf sheaf}. For (FL2), suppose that $i \in I$ is a successor with predecessor $j$. Then $E_j \subseteq E_i$ and both submodules are summands, hence by Lemma \ref{lem: nested complemented submodules are locally complemented}, there exists a summand $C \subseteq R^{\oplus I_s}$ such that $E_j \oplus C= E_i$. By Lemma \ref{lem: gr preserves direct sums} we have that $\Gr_{I_s}(\varphi)(E_i) = \Gr_{I_s}(\varphi)(E_j) \oplus \Gr_{I_s}(\varphi)(C)$, which implies $F_i = F_j \oplus \Gr_{I_s}(\varphi)(C)$. Hence $\varphi_! C \cong \Gr_{I_s}(\varphi)(C) \cong F_i / F_j$ which by hypothesis has constant projective rank $1$. Now because $\varphi$ is faithfully flat, the induced map $\varphi^{-1}(-) : \Spec(S) \to \Spec(R)$ is surjective, hence given any prime ideal $\fp \subseteq R$, there exists a prime ideal $\fq \subseteq S$ such that $\varphi^{-1}(\fq) = \fp$. Since $C$ is projective, by Proposition \ref{prop: rank and localizations of projective modules} we have that the rank of the free $R_\fp$-module $C_\fp$ is equal to the rank of the free $S_\fq$-module $(\varphi_! C)_\fq$ which is $1$. Since $\fp$ was an arbitrary prime ideal of $R$, it follows that $E_i / E_j \cong C$ has constant projective rank $1$. Finally for (FL3), suppose that $i \in I$ is an upper limit element and for any $j < i$, consider the following diagram in $\bMod_R$, with all maps denoting inclusions, as well as its image under $\varphi_!$. 

\begin{center}
\begin{tikzcd}
 E_i \arrow[rd, "b" ] &                                                           &&&&    \varphi_! E_i \arrow[rrrrd, bend left=30, "\varphi_! b" ] &    
\\ E_{<i}  \arrow[r, "a"] \arrow[u, "c" ]   &  R^{\oplus I_s}  &&&&  \varphi_! E_{<i}  \arrow[rrrr, bend left=30, "\varphi_! a"] \arrow[u, "\varphi_! c" ] \arrow[rr, "\overline{\varphi_! a}" ]  & & \sum_{j < i} \varphi_! (\eta_j) (\varphi_! E_j) \arrow[rr, "d"] &&  \varphi_! R^{\oplus I_s} 
\\  E_j \arrow[u, "\overline{\eta}_j" ] \arrow[ru, "\eta_j" '] &                      &&&& \varphi_! E_j \arrow[u, "\varphi_! \overline{\eta}_j" ] \arrow[rrrru, bend right=30, "\varphi_! \eta_j" '] \arrow[rru , "\overline{\varphi_! \eta_j}" , swap] &
\end{tikzcd}
\end{center}

Note that the set $I_{<i} = \{ j \in I \mid j < i \}$ is totally ordered. Define a functor $\mE : \bThin(I_{<i}) \to \bMod_R$ which sends $j \mapsto E_j$ and $i_{j ,j'}$ to the inclusion map $E_j \to E_{j'}$. Then the inclusion maps $\eta_j$ and $\overline{\eta}_j$ in the diagram above define natural transformations $\eta : \mE \to \Delta(R^{\oplus I_s})$, and $\overline{\eta} : \mE \to \Delta(E_{<i})$. By Lemma \ref{lem: nested union as a colimit}, $(E_{<i} , \overline{\eta})$ is a colimit of $\mE$. Since $\varphi_!$ is a left-adjoint functor, it preserves colimits, hence $(\varphi_! (E_{<i}) , \varphi_! \overline{\eta})$ is a colimit of $\varphi_! \circ \mE$. 

Given $j < i$, we have that $E_j$ is a summand of $R^{\oplus I_s}$, so by Proposition \ref{prop: extension of scalars applied to direct summands are direct summands}, $\varphi_! \eta_j$ is injective. Again by Lemma \ref{lem: nested union as a colimit}, we have that $(\sum_{j < i} \varphi_! (\eta_j) (\varphi_! E_j) , \overline{\varphi_! \eta})$ is also a colimit of $\varphi_! \circ \mE$. Therefore there exists a unique $S$-module homomorphism $f: \varphi_! (E_{<i}) \to \sum_{j < i} \varphi_! (\eta_j) (\varphi_! E_j)$ such that $\Delta(f) \circ \varphi_! \overline{\eta} = \overline{\varphi_! \eta}$, and in fact $f$ is an isomorphism. Notice that if $a : E_{<i} \to R^{\oplus I_s}$ denotes the inclusion map, then for each $j < i$ we have that $a \circ \overline{\eta}_j = \eta_j$, and therefore $\varphi_! a \circ \varphi_! \overline{\eta}_j = \varphi_! \eta_j$. Hence the image of $\varphi_! a$ is contained in $\sum_{j < i} \varphi_! (\eta_j) (\varphi_! E_j)$, so by restricting the codomain, we obtain $\overline{\varphi_! a} : \varphi_! (E_{<i}) \to \sum_{j < i} \varphi_! (\eta_j) (\varphi_! E_j)$ and $\Delta(\overline{\varphi_! a}) \circ \varphi_! \overline{\eta} = \overline{\varphi_! \eta}$. So $f = \overline{\varphi_! a}$, which implies that $\overline{\varphi_! a}$ is an isomorphism. This implies that $\varphi_! a$ is injective. 

Under the isomorphism $\varphi_! R^{\oplus I_s} \to S^{\oplus I_s}$, $\varphi_! E_i$ maps to $F_i$, and $\sum_{j < i} \varphi_! (\eta_j) (\varphi_! E_j)$ maps to $F_{<i}$, but by hypothesis, these are equal. Hence $\varphi_! a (\varphi_! E_{<i}) = \varphi_! b (\varphi_! E_i)$ it must be that $\varphi_! c$ is an isomorphism. However change of bas by a faithfully flat map reflects isomorphisms, so $c$ is an isomorphism as well, implying that $E_{<i}= E_i$ as desired.

\end{proof}


\section{The $\GL$ Action on the Flag Functor}


\subsection{Matrices}

Let $I$ and $J$ be sets and $R$ be a ring. An $I \times J$ \emph{matrix with entries in $R$} is a function $x : I \times J \to R$. We denote the value of such a function at $(i , j)$ by $x_{i , j}$, and sometimes we denote the matrix $x$ by $[x_{i, j}]$. A matrix $x$ is said to be \textbf{column finite} if for each $j \in J$, the function $x_{- , j} : I \to R$ has finite support.  We denote the set of column finite $I \times J$ matrices by $\Hom_{I, J}(R)$, and in particular the set of $I \times I$ matrices by $\End_{I}(R)$. The latter is a monoid with law of composition given by usual matrix multiplication: 
\begin{equation*}
(xy)_{i , j} = \sum_{n \in I} x_{i n} y_{n j}.
\end{equation*}
We note that because $y$ is column finite, for a fixed $i$ and $j$, only finitely many of the terms $x_{i n} y_{n j}$ are nonzero, hence this sum actually does make sense. 

We get a functor $\Hom_{I , J} : \bCRing \to \bSet$ which sends a ring $R \mapsto \Hom_{I , J}(R)$ and a ring homomorphism $\varphi : R \to S$ to the function sending $[x_{i, j}] \mapsto [\varphi(x_{i, j})]$. Note that indeed $[\varphi(x_{i, j})]$ has finitely supported columns, since $\varphi(0) = 0$. Notice $\Hom_{I , J}$ is a subfunctor of the functor $\Mat_{I , J}$ which maps $R \in \bCRing$ to the set of \emph{all} $I \times J$ matrices. The latter functor is representable, with $\Mat_{I, J} \cong h^{k[x_{i, j} \mid i \in I , j \in J ]}$, but the former is not. However, we now show that $\Hom_{I , J}$ is a filtered colimit of representable functors. 

\begin{proposition} \label{prop: end is an ind-affine ind-scheme}
$\Hom_{I , J}$ is an ind-affine ind-scheme.
\end{proposition}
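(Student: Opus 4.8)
The plan is to realize $\Hom_{I,J}$ as a filtered colimit of representable subfunctors, indexed by ``bounds on the column supports''. Let $P$ be the set of all functions $S$ assigning to each $j \in J$ a \emph{finite} subset $S_j \subseteq I$, partially ordered by $S \leq S'$ iff $S_j \subseteq S'_j$ for every $j \in J$. The pointwise union $S \vee S'$ is again such a function, and $P$ is nonempty (take every $S_j = \emptyset$), so $P$ is a filtered set and hence $\bThin(P)$ is a small filtered category. For $S \in P$ set $E_S = \{(i,j) \mid j \in J,\ i \in S_j\}$ and define $\mH_S$ to be the subfunctor of $\Hom_{I,J}$ given by $\mH_S(R) = \{ x \in \Hom_{I,J}(R) \mid x_{i,j} = 0 \text{ whenever } i \notin S_j \}$; this is a subfunctor because applying a ring homomorphism to a matrix with certain zero entries keeps those entries zero, and clearly $S \leq S'$ forces $\mH_S \subseteq \mH_{S'}$.

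First I would check that each $\mH_S$ is representable. A matrix in $\mH_S(R)$ is determined by its entries on $E_S$, with no finiteness constraint, so $\mH_S(R) \cong \prod_{j \in J} R^{S_j}$, and the assignment $\psi \mapsto [\psi(x_{i,j})]$ (entries outside $E_S$ set to $0$) is a natural isomorphism $h^{k[x_{i,j} \mid (i,j) \in E_S]} \to \mH_S$, exactly as in the ``affine $I$-space'' example. Thus $\mH_S$ is representable, hence representable by the affine scheme $\Spec$ of that polynomial ring (there may be infinitely many variables when $J$ is infinite, but this causes no trouble). The inclusions $\mH_S \hookrightarrow \Hom_{I,J}$ for $S \in P$ then assemble into a cocone over the diagram $\bThin(P) \to [\bAlg_k , \bSet]$, $S \mapsto \mH_S$.

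The remaining point is that this cocone is a colimit. By Proposition \ref{prop: limits and colimits in functor categories} it suffices to check this componentwise: for each $k$-algebra $R$ the set $\Hom_{I,J}(R)$, together with the inclusions, should be a colimit in $\bSet$ of the diagram $S \mapsto \mH_S(R)$ over the small filtered category $\bThin(P)$. For this I would invoke Proposition \ref{prop: filtered colimits of sets}: condition (FC1) holds because each $\mH_S(R) \hookrightarrow \Hom_{I,J}(R)$ is a subset inclusion, and (FC2) holds because an arbitrary column-finite matrix $x$ over $R$ lies in $\mH_S(R)$ for the $S$ defined by $S_j = \Supp(x_{-,j})$, which is finite precisely because $x$ is column finite. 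This yields $\Hom_{I,J} \cong \operatorname{colim}_{S \in P} \mH_S$, a filtered colimit of functors representable by affine schemes, which is exactly the assertion.

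I do not expect a genuine obstacle here; the only steps requiring a little care are the bookkeeping that $\bThin(P)$ is a legitimate small filtered category (which comes down to $P$ being a set, directed via pointwise unions, and nonempty, cf. the remark that $\bThin$ of a filtered set is filtered) and the identification $\mH_S(R) \cong \prod_{j \in J} R^{S_j}$ so that the polynomial ring over $k$ on the (possibly infinite) variable set $E_S$ really does represent $\mH_S$.
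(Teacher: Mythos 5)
Your proof is correct and follows essentially the same route as the paper: the same filtered poset of functions $j \mapsto$ (finite subset of $I$), the same representable subfunctors cut out by vanishing of entries outside the prescribed column supports, and the same componentwise verification of the filtered colimit via (FC1) and (FC2). No gaps; the representability of each piece by a polynomial ring on the (possibly infinite) variable set and the column-finiteness argument for joint surjectivity match the paper's argument.
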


\begin{proof}
Let $X: J \to \mathcal{P}_\text{fin}(I)$ be a function that assigns to each $j \in J$ a finite subset $X_j \subseteq I$.  Let $C_{I , J}$ denote the set of all such functions. We write $X \subseteq Y$ if $X_j \subseteq Y_j$ for all $j \in J$. Then $(C_{I , J} , \subseteq)$ is a poset and in fact a filtered set because the function $X \cup Y$ is an upper bound for $X$ and $Y$ in $C_{I , J} $. Define a functor $\mD : \bThin(C_{I , J} ) \to [\bCRing , \bSet]$ by sending $X$ to the subfunctor $\Mat_{X}$ of $\Mat_{I, J}$ which assigns to a ring $R$, the set 
\begin{equation*}
\{ x \in \Mat_{I , J}(R) \mid x_{i, j} = 0 \text{ if } i \notin X_j \}, 
\end{equation*}
and by sending the arrow $i_{X , Y} : X \to Y$ (which exists if and only if $X \subseteq Y$) to the inclusion map $\Mat_{X} \hookrightarrow \Mat_{Y}$. Since $X_j$ is finite for all $j \in J$, it follows that $\Mat_{X} \subseteq \Hom_{I , J}$. Furthermore we have that $\Mat_{X} \cong h^{k[x_{i,j} \mid i \in X_j ]}$, i.e. this functor is representable. 

For each function $X \in C_{I , J}$, denote the inclusion map $\Mat_{X} \hookrightarrow \Hom_{I , J}$ by $\eta_{X}$. Then $(\Hom_{I , J} , \eta)$ is a cocone over the diagram $\mD$ and by Proposition \ref{prop: limits and colimits in functor categories} it is a colimit if and only if for every ring $R$, the cocone $(\Hom_{I , J}(R) , (\eta_-)_R)$ over $\Mat_-(R)$ is a colimit in $\bSet$. We use Proposition \ref{prop: filtered colimits of sets} to show that this is in fact the case. First of all, (FC1) is satisfied trivially because for each $X \in C_{I , J}$, $(\eta_X)_R : \Mat_X(R) \to \Hom_{I , J}(R)$ is the inclusion map. Finally (FC2) is satisfied because given any matrix $x \in \Hom_{I , J}(R)$, since $x$ is column finite, the function $X : J \to \mathcal{P}_\text{fin}(I)$ defined by letting $X_j = \{ i \in I \mid x_{i,j} \neq 0 \}$ is in $C_{I , J}$, and $x \in \Mat_X(R)$. 
\end{proof}


\subsection{The General Linear Functor}

We define the \emph{general linear functor} associated to $I$, denoted $\GL_I : \bAlg_k \to \bSet$, by letting $\GL_I(R)$ be the subset of $\End_I(R)$ consisting of the invertible matrices, i.e. column-finite matrices $x$ such that there exists $y \in \End_I(R)$ with $xy = yx = \delta$, where $\delta : I \times I \to R$ is the Kronecker delta function (i.e. the identity matrix). Note that if $\varphi: R \to S$ is a ring homomorphism and $[x_{i,j}]$ is invertible, then $[\varphi(x_{i,j})]$ is also invertible since if $[y_{i,j}] = [x_{i,j}]^{-1}$, then one can easily check that $[\varphi(y_{i,j})] = [\varphi(x_{i,j})]^{-1}$. Hence defining $\GL_I(\varphi) = \End_I(\varphi)$ gives that $\GL_I$ is a subfunctor of $\End_I$. Finally since $\GL_I(R)$ is a group for all rings $R$ and $\GL_I(\varphi)$ is a group homomorphism for all ring homomorphisms $\varphi$, $\GL_I$ is actually a $k$-group functor, i.e. takes values in $\bGrp$.


We define a subfunctor of the $k$-functor $\End_I \times \End_I$ by setting
\begin{equation*}
\mG_I(R) = \{ (x , y)  \in \End_I(R) \times \End_I(R)  \mid xy = yx = \delta \}. 
\end{equation*}


\begin{proposition}\label{prop: closed subfunctor version of GL is an ind-affine ind-scheme}
$\mG_I$ is an ind-affine ind-scheme.
\end{proposition}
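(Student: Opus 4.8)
The plan is to argue exactly as in the proof of Proposition \ref{prop: end is an ind-affine ind-scheme}, writing $\mG_I$ as a filtered colimit of \emph{affine} representable subfunctors. For each pair $(X,Y) \in C_{I,I} \times C_{I,I}$ (recall that $C_{I,I}$ is the filtered poset of functions $I \to \mathcal{P}_{\text{fin}}(I)$, so $C_{I,I} \times C_{I,I}$ is filtered as well), define the subfunctor $\mG_{X,Y} = \mG_I \cap (\Mat_X \times \Mat_Y)$ of $\End_I \times \End_I$, i.e. $\mG_{X,Y}(R) = \{ (x,y) \in \Mat_X(R) \times \Mat_Y(R) \mid xy = yx = \delta \}$. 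Whenever $X \subseteq X'$ and $Y \subseteq Y'$ we have $\Mat_X \subseteq \Mat_{X'}$ and $\Mat_Y \subseteq \Mat_{Y'}$, hence $\mG_{X,Y} \subseteq \mG_{X',Y'}$, so together with these inclusions the assignment $(X,Y) \mapsto \mG_{X,Y}$ assembles into a functor $\mD : \bThin(C_{I,I} \times C_{I,I}) \to [\bAlg_k, \bSet]$.

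Next I would show that each $\mG_{X,Y}$ is representable by an affine scheme. Since products of representable $k$-functors are representable, $\Mat_X \times \Mat_Y \cong h^{k[x_{ij} \mid i \in X_j] \otimes_k k[y_{ij} \mid i \in Y_j]}$. For fixed $i,j \in I$, the entry $(xy)_{ij} = \sum_{n} x_{in} y_{nj}$ restricts on $\Mat_X \times \Mat_Y$ to the sum over $n \in Y_j$ of the terms $x_{in} y_{nj}$ (where $x_{in}$ is a coordinate function only when $i \in X_n$, and is $0$ otherwise), which is a polynomial in finitely many coordinate functions because $Y_j$ is finite; hence $(xy)_{ij} - \delta_{ij}$ defines a morphism $\Mat_X \times \Mat_Y \to \aaa^1$, and likewise for $(yx)_{ij} - \delta_{ij}$. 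By Proposition \ref{prop: properties of closed subfunctors}(5) the common vanishing locus of this family of morphisms — which is precisely $\mG_{X,Y}$ — is a closed subfunctor of $\Mat_X \times \Mat_Y$, and so by Proposition \ref{prop: properties of closed subfunctors}(1) it is representable, namely by the quotient of the polynomial ring above by the ideal generated by the corresponding polynomials.

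Finally I would verify that $(\mG_I, \eta)$, where $\eta_{X,Y} : \mG_{X,Y} \hookrightarrow \mG_I$ is the inclusion, is a colimit of $\mD$. By Proposition \ref{prop: limits and colimits in functor categories} this may be checked componentwise, and by Proposition \ref{prop: filtered colimits of sets} it then reduces to conditions (FC1) and (FC2) for every $k$-algebra $R$: (FC1) is immediate since each $(\eta_{X,Y})_R$ is an inclusion of sets, and (FC2) holds because any $(x,y) \in \mG_I(R)$ consists of column-finite matrices, so $X_j := \{ i \mid x_{ij} \ne 0 \}$ and $Y_j := \{ i \mid y_{ij} \ne 0 \}$ define elements of $C_{I,I}$ with $(x,y) \in \mG_{X,Y}(R)$. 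This exhibits $\mG_I$ as a filtered colimit of affine representable $k$-functors, i.e. as an ind-affine ind-scheme. The only step requiring any care — and it is not really an obstacle — is the observation that each relation $(xy)_{ij} = \delta_{ij}$ becomes, after restriction to $\Mat_X \times \Mat_Y$, a polynomial in only finitely many variables; this uses the column-finiteness built into $\End_I$, which is also exactly what makes the indexing category filtered in the last step.
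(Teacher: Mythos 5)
Your proposal is correct and follows essentially the same route as the paper: intersect $\mG_I$ with $\Mat_X \times \Mat_Y$, observe that the relations $xy = yx = \delta$ restrict to finitely many polynomial equations in finitely many variables so each $\mG_{X,Y}$ is a closed subfunctor of a representable functor and hence affinely representable, and then exhibit $\mG_I$ as the filtered colimit via column-finiteness exactly as in Proposition \ref{prop: end is an ind-affine ind-scheme}. No gaps to report.
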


\begin{proof}
Using the language of the proof of Proposition \ref{prop: end is an ind-affine ind-scheme}, given functions $X, Y: I \to \mathcal{P}_\text{fin}(I)$ in $C_{I, I}$, we define $\mG_{X , Y} = (\Mat_X \times \Mat_Y) \cap \mG_I$. In other words, $\mG_{X , Y}$ is the subfunctor of $\mG_I$ defined by 
\begin{equation*}
\mG_{X , Y}(R) = \{ (x ,  y) \in \mG_{I}(R) \mid x_{i, j} = 0 \text{ if } i \notin X_j \text{ and } y_{i, j} = 0 \text{ if } i \notin Y_j \}.
\end{equation*}
Then $\mG_{X , Y}$ is a closed subfunctor of the representable functor $\Mat_X \times \Mat_Y \cong h^{k[x_{i , j} , y_{n , m} \mid i \in X_j , n \in Y_m] }$ given by the vanishing of the polynomials
\begin{align*}
&\sum_{\{ \ell \in Y(b) | a \in X(\ell)\} } x_{a \ell} y_{\ell b} & & \text{and} & & \sum_{\{ \ell \in X(b) | a \in Y(\ell)\} } y_{a \ell} x_{\ell b} & & \text{ for all } a , b \in I \text{ with } a \neq b 
\\ &1 - \sum_{\{ \ell \in Y(c) | c \in X(\ell)\}} x_{c \ell} y_{\ell c} & & \text{and} & & 1 - \sum_{\{ \ell \in Y(c) | c \in X(\ell)\}} y_{c \ell} x_{\ell c} & & \text{ for all } c \in I,
\end{align*} 
and therefore $\mG_{X , Y}$ is also representable, indeed $\mG_{X , Y} \cong h^{k[x_{i , j} , y_{n , m} \mid i \in X_j , n \in Y_m]  / \mathfrak{a} }$ where $\mathfrak{a}$ is the ideal of $k[x_{i , j} , y_{n , m} \mid i \in X_j , n \in Y_m]$ generated by the polynomials above. A similar argument to that in the proof of Proposition \ref{prop: end is an ind-affine ind-scheme} shows that $\mG_I$ is a filtered colimit of the representable functors $\mG_{X , Y}$, as desired. 
\end{proof}


\begin{proposition}\label{prop: parts of closed subfunctor version of GL are closed in closed subfunctor version of GL}
For any set $I$ and any pair of functions $X , Y: I \to \mP_\fin(I)$, we have that $\mG_{X, Y}$ is a closed subfunctor of $\mG_I$. 
\end{proposition}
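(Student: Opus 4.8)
The plan is to realize $\mG_{X , Y}$ as the common zero locus inside $\mG_I$ of a family of coordinate functions valued in $\aaa^1$, and then to invoke Proposition \ref{prop: properties of closed subfunctors}(5). Recall that $\mG_I$ is a subfunctor of $\End_I \times \End_I$, which is itself a subfunctor of $\Mat_{I , I} \times \Mat_{I , I} \cong h^{k[x_{a , b} , y_{a , b} \mid a , b \in I]}$, so in particular $\mG_{X , Y} = (\Mat_X \times \Mat_Y) \cap \mG_I$ really is a subfunctor of $\mG_I$. First I would set up the relevant natural transformations: for each pair $(a , b) \in I \times I$, the polynomial $x_{a , b}$ (respectively $y_{a , b}$) determines, via the Yoneda lemma, a natural transformation $\Mat_{I , I} \times \Mat_{I , I} \to \aaa^1$ which on a $k$-algebra $R$ sends a pair of matrices to the $(a , b)$-entry of the first (respectively second) one; restricting along the inclusion $\mG_I \hookrightarrow \Mat_{I , I} \times \Mat_{I , I}$ yields natural transformations $\xi_{a , b} , \zeta_{a , b} : \mG_I \to \aaa^1$ with $(\xi_{a , b})_R(x , y) = x_{a , b}$ and $(\zeta_{a , b})_R(x , y) = y_{a , b}$. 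Naturality is automatic, since for a $k$-algebra homomorphism $\varphi : R \to S$ both $\mG_I(\varphi)$ and $\aaa^1(\varphi)$ act by applying $\varphi$ entrywise.

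Next I would verify, directly from the definitions, the identity of subfunctors of $\mG_I$
\begin{equation*}
\mG_{X , Y} = V\big( \{ \xi_{a , b} \mid a, b \in I,\; a \notin X_b \} \cup \{ \zeta_{a , b} \mid a, b \in I,\; a \notin Y_b \} \big),
\end{equation*}
where the right-hand side is the subfunctor $\bigcap_{a \notin X_b} \xi_{a , b}^{-1}(\aaa^0) \cap \bigcap_{a \notin Y_b} \zeta_{a , b}^{-1}(\aaa^0)$ appearing in Proposition \ref{prop: properties of closed subfunctors}(5). This holds because, for any $k$-algebra $R$, a pair $(x , y) \in \mG_I(R)$ lies in $\mG_{X , Y}(R)$ precisely when $x_{a , b} = 0$ for every $a \notin X_b$ and $y_{a , b} = 0$ for every $a \notin Y_b$, which is exactly the condition that $(\xi_{a , b})_R(x , y) = 0$ and $(\zeta_{a , b})_R(x , y) = 0$ for all those indices. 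Proposition \ref{prop: properties of closed subfunctors}(5) then says that the vanishing locus of any family of natural transformations to $\aaa^1$ is a closed subfunctor, and applying this to the family above gives that $\mG_{X , Y}$ is closed in $\mG_I$.

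The argument has no real obstacle; its entire content is the bookkeeping in the second paragraph, namely identifying the defining conditions of $\mG_{X , Y}$ with the vanishing of explicit coordinate natural transformations. If one prefers, the same conclusion follows by writing $\mG_{X , Y}$ as the intersection of the two closed subfunctors $V(\xi_{a , b} \mid a \notin X_b)$ and $V(\zeta_{a , b} \mid a \notin Y_b)$ and then applying Proposition \ref{prop: properties of closed subfunctors}(4).
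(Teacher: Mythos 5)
Your proof is correct and follows essentially the same route as the paper: define the coordinate natural transformations $\mG_I \to \aaa^1$ picking out matrix entries, identify $\mG_{X,Y}$ with the vanishing locus $V(\,\cdot\,)$ of those indexed by $a \notin X_b$ and $a \notin Y_b$, and invoke Proposition \ref{prop: properties of closed subfunctors}. The extra Yoneda bookkeeping you include is fine but not needed beyond what the paper already does.
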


\begin{proof}
Given any $m, n \in I$, there are natural transformations $x_{m, n} : \mG_I \to \aaa^1$ and $y_{m, n} : \mG_I \to \aaa^1$ which send a pair of matrices $(g , h) \in \mG_I(R)$ to $g_{m , n} \in \aaa^1(R)$ and $h_{m , n} \in \aaa^1(R)$ respectively, for any $k$-algebra $R$. Then clearly $\mG_{X , Y} = V(x_{i , j} , y_{m, n} \mid i \notin X_j , m \notin Y_n )$, which by Proposition \ref{prop: properties of closed subfunctors} is a closed subfunctor of $\mG_I$. 
\end{proof}


\begin{proposition}\label{prop: closed subfunctor version of GL is isomorphic to GL}
$\mG_I \cong \GL_I$. 
\end{proposition}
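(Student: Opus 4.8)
The plan is to exhibit the first-coordinate projection as the desired isomorphism. Define $\pi : \mG_I \to \GL_I$ by setting $\pi_R(x, y) = x$ for each $k$-algebra $R$ and each pair $(x, y) \in \mG_I(R)$. First I would check that this is well-defined and natural: if $(x,y) \in \mG_I(R)$ then by definition $xy = yx = \delta$, so $x$ is an invertible element of the monoid $\End_I(R)$, i.e. $x \in \GL_I(R)$; and for any $k$-algebra homomorphism $\varphi : R \to S$ the naturality square for $\pi$ commutes because $\mG_I(\varphi)$ and $\GL_I(\varphi)$ are both induced by applying $\varphi$ entrywise to matrices, and applying $\varphi$ entrywise commutes with projection onto the first coordinate.

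Next I would show that each component $\pi_R$ is a bijection; since a morphism of $k$-functors is an isomorphism precisely when all of its components are bijections, this finishes the proof. Surjectivity is essentially the definition of $\GL_I(R)$: an invertible $x \in \GL_I(R)$ comes equipped with some $y \in \End_I(R)$ satisfying $xy = yx = \delta$, so $(x,y) \in \mG_I(R)$ and $\pi_R(x,y) = x$. For injectivity, suppose $(x, y)$ and $(x, y')$ both lie in $\mG_I(R)$; then the standard computation
\[
y = y\delta = y(xy') = (yx)y' = \delta y' = y'
\]
shows that $y = y'$, hence $(x,y) = (x,y')$.

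The only step requiring any attention is the associativity of multiplication used in the injectivity computation: it is valid precisely because $\End_I(R)$ consists of \emph{column-finite} matrices, so every entry of a product is a finite sum and the usual rearrangements are legitimate — this is exactly why $\End_I(R)$ was set up to be a monoid under matrix multiplication. With that in hand there is no real obstacle, and $\pi$ is the required natural isomorphism $\mG_I \cong \GL_I$.
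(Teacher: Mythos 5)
Your proposal is correct and is essentially the same argument as the paper's: the paper exhibits the mutually inverse natural transformations $\eta_R(x,y)=x$ and $\rho_R(x)=(x,x^{-1})$, and your bijectivity check (surjectivity from the definition of $\GL_I(R)$, injectivity from uniqueness of two-sided inverses in the monoid $\End_I(R)$) is just the componentwise verification that these are inverse to each other. Nothing is missing.
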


\begin{proof}
The natural transformations $\eta : \mG_I \leftrightarrows \GL_I : \rho$ defined by $\eta_R ( x , y) = x$ and $\rho_R(x) = (x , x^{-1})$ are inverses of each other. 
\end{proof}

Note that composing the inclusion map $\mG_{X , Y} \hookrightarrow \mG_I$ and the isomorphism $\mG_I \overset{\sim}{\to} \GL_I$, yields a monomorphism $\mG_{X , Y} \hookrightarrow \GL_I$ which sends $(x , y) \mapsto x$, the image of which is the subfunctor $\GL_{X , Y}$ of $\GL_I$ given by $R \mapsto \{ x \in \GL_I(R) \mid x \in \Mat_X(R) , x^{-1} \in \Mat_Y(R) \}$. Then $\GL_{X , Y} \cong \mG_{X ,Y}$, so as $\mG_{X, Y}$ is representable and is a closed subfunctor of $\mG_I$, so too $\GL_{X,Y}$ is representable and is a closed subfunctor of $\GL_I$. Combining this with Propositions \ref{prop: closed subfunctor version of GL is an ind-affine ind-scheme}, \ref{prop: closed subfunctor version of GL is isomorphic to GL}, and \ref{prop: ind-schemes are fppf sheaves} we obtain the following corollary.

\begin{corollary}
The functor $\GL_I$ is a filtered colimit of the closed subfunctors $\GL_{X , Y}$, and is therefore an ind-affine ind-scheme, and hence an fppf sheaf. 
\end{corollary}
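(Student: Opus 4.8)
The plan is simply to transport the filtered–colimit presentation of $\mG_I$ produced in Proposition \ref{prop: closed subfunctor version of GL is an ind-affine ind-scheme} across the isomorphism $\mG_I \cong \GL_I$ of Proposition \ref{prop: closed subfunctor version of GL is isomorphic to GL}. Recall that the proof of Proposition \ref{prop: closed subfunctor version of GL is an ind-affine ind-scheme} builds a diagram indexed by the poset of pairs $(X,Y)$ of functions $I \to \mP_\fin(I)$, ordered by componentwise inclusion, whose value at $(X,Y)$ is $\mG_{X,Y}$, whose transition maps are the evident inclusions $\mG_{X,Y} \hookrightarrow \mG_{X',Y'}$, and whose colimit cocone consists of the inclusions $\mG_{X,Y} \hookrightarrow \mG_I$; this index poset is filtered since, as in Proposition \ref{prop: end is an ind-affine ind-scheme}, $X \cup Y$ furnishes an upper bound for any two elements of $C_{I,I}$.

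First I would observe that the composite isomorphism $\mG_I \overset{\sim}{\to} \GL_I$, which sends $(x,y) \mapsto x$, restricts for each pair $(X,Y)$ to an isomorphism $\mG_{X,Y} \overset{\sim}{\to} \GL_{X,Y}$ onto the subfunctor of $\GL_I$ described in the discussion preceding this corollary (namely the image of the monomorphism $\mG_{X,Y} \hookrightarrow \GL_I$), and that these restricted isomorphisms are compatible with the transition inclusions. Hence they assemble into an isomorphism between the diagram $(X,Y) \mapsto \mG_{X,Y}$ and the diagram $(X,Y) \mapsto \GL_{X,Y}$ (with inclusion transition maps), which carries the colimit cocone of the former to a colimit cocone of the latter. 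Therefore $\GL_I$, together with the subfunctor inclusions $\GL_{X,Y} \hookrightarrow \GL_I$, is a filtered colimit of the $\GL_{X,Y}$, proving the first assertion; and each $\GL_{X,Y}$ is moreover a closed subfunctor of $\GL_I$ by Proposition \ref{prop: parts of closed subfunctor version of GL are closed in closed subfunctor version of GL} transported across the same isomorphism, as already noted before the statement.

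Next I would recall that each $\GL_{X,Y}$ is representable by an affine scheme, since $\GL_{X,Y} \cong \mG_{X,Y} \cong h^{k[x_{i,j},\, y_{n,m} \mid i \in X_j,\, n \in Y_m]/\mathfrak a}$ from the proof of Proposition \ref{prop: closed subfunctor version of GL is an ind-affine ind-scheme}; so $\GL_I$ is a filtered colimit of representable $k$-functors, i.e. an ind-affine ind-scheme by definition, and then Proposition \ref{prop: ind-schemes are fppf sheaves} gives that it is an fppf sheaf. I do not anticipate any real obstacle: all the substance is in the preceding propositions. The only point requiring a moment of care is the bookkeeping in the middle step — checking that transporting the colimit cocone along the isomorphism genuinely produces the subfunctor inclusions $\GL_{X,Y} \hookrightarrow \GL_I$ rather than merely an abstractly isomorphic diagram — and this is immediate from the identification, made just before the statement, of the monomorphism $\mG_{X,Y} \hookrightarrow \GL_I$ with the inclusion of $\GL_{X,Y}$.
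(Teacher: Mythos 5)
Your proposal is correct and follows essentially the same route as the paper: transport the filtered-colimit presentation of $\mG_I$ by the $\mG_{X,Y}$ across the isomorphism $\mG_I \cong \GL_I$, using the identification of the image of $\mG_{X,Y} \hookrightarrow \GL_I$ with the closed, representable subfunctor $\GL_{X,Y}$, then apply Proposition \ref{prop: ind-schemes are fppf sheaves}. The extra care you take in checking that the transported cocone really consists of the subfunctor inclusions is exactly the (implicit) content of the paper's remark preceding the corollary.
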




\subsection{The General Linear Action} 

If $I$ is any set, there is a left action of the general linear $k$-group functor $\GL_I$ on affine space $\aaa_k^I$ given by matrix multiplication on the left. This induces an action of $\GL_I$ on the Grassmannian $k$-functor $\Gr_I$ given by defining for each $k$-algebra $R$, the map $\GL_I(R) \times \Gr_I(R) \to \Gr_I(R)$ which sends $[x_{i , j}] , N \mapsto [x_{i , j}] N$. Similarly, if $I$ is a well-ordered set, then there is an action of the general linear $k$-group functor $\GL_{I_s}$ on the flag $k$-functor $\Fl_I$ given by defining for each $k$-algebra $R$, the map $\GL_{I_s}(R) \times \Fl_I(R) \to \Fl_I(R)$ which sends $[x_{p , q}] , F$ to the $I$-flag $i \mapsto [x_{p , q}] F_i$. 

If $I$ is a well-ordered set, let $E$ denote the standard $I$-flag in $k^{\oplus I_s}$, i.e. we let $E_i = \Span_R(e_j \mid j \in I_s, j \leq i)$ for any $i \in I$. We now describe the stabilizer and orbit of the standard flag under the $\GL_{I_s}$ action.


\begin{proposition}\label{prop: stab of standard flag is upper triangulars}
The stabilizer $\Stab_{\GL_{I_s}}(E)$ is the subgroup functor $B_{I_s} \subseteq \GL_{I_s}$ of all upper triangular matrices, given by $R \mapsto \{ [x_{i , j} ] \in \GL_{I_s}(R) \mid x_{i , j} = 0 \text{ if } i > j\}$. 
\end{proposition}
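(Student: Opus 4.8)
The plan is to show the two subfunctors $\Stab_{\GL_{I_s}}(E)$ and $B_{I_s}$ agree on every $k$-algebra $R$, since a subfunctor inclusion $B_{I_s} \subseteq \Stab_{\GL_{I_s}}(E)$ together with the reverse pointwise containment forces equality of subfunctors of $\GL_{I_s}$. So fix a $k$-algebra $R$ and recall that for $g = [x_{p,q}] \in \GL_{I_s}(R)$ we have $g \in \Stab_{\GL_{I_s}}(E)(R)$ precisely when $g E_i = E_i$ for all $i \in I$, where $E_i = \Span_R(e_j \mid j \in I_s,\ j \leq i)$ (here I must be slightly careful: the standard flag is indexed by $I$, but its nonzero graded pieces are indexed by $I_s$, and for $i \in I_s$ we get $E_i = \Span_R(e_j \mid j \in I_s,\ j \leq i)$, while at upper limit elements $E_i = E_{<i}$ is determined by the successors below it). Note that since $g$ is invertible, $g E_i \subseteq E_i$ for all $i$ automatically upgrades to $g E_i = E_i$: indeed applying the same containment to $g^{-1}$ gives $E_i \subseteq g E_i$. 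So it suffices to characterize when $g E_i \subseteq E_i$ for all $i$.

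First I would prove the easy inclusion $B_{I_s}(R) \subseteq \Stab_{\GL_{I_s}}(E)(R)$. If $g = [x_{p,q}]$ is upper triangular, meaning $x_{p,q} = 0$ whenever $p > q$, then for a fixed $j \in I_s$ the column $g e_j = \sum_{p \in I_s} x_{p,j} e_p$ has all its nonzero terms with $p \leq j$ (by upper triangularity, $x_{p,j} = 0$ for $p > j$), hence $g e_j \in E_j \subseteq E_i$ whenever $j \leq i$. Summing, $g E_i \subseteq E_i$ for every $i \in I$ (and for upper limit $i$ it follows from $g$ preserving all $E_j$ with $j < i$ together with $E_i = E_{<i}$). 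Thus $g$ stabilizes $E$.

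Second, for the reverse inclusion, suppose $g = [x_{p,q}] \in \GL_{I_s}(R)$ satisfies $g E_i = E_i$ for all $i \in I$; I want to deduce $x_{p,q} = 0$ for $p > q$. Fix $q \in I_s$. Since $q \in I_s \subseteq I$ and $g E_q = E_q$, in particular $g e_q \in E_q = \Span_R(e_j \mid j \in I_s,\ j \leq q)$. Writing $g e_q = \sum_{p \in I_s} x_{p,q} e_p$ and using that $(e_p \mid p \in I_s)$ is the standard basis of $R^{\oplus I_s}$ (so coordinate functionals are well-defined), the membership $g e_q \in E_q$ forces $x_{p,q} = 0$ for all $p \in I_s$ with $p \not\leq q$, i.e. (since $I_s$ is totally ordered) for all $p > q$. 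As $q$ was arbitrary, $g \in B_{I_s}(R)$.

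The main obstacle — really the only subtle point — is bookkeeping around the index sets: the flag $E$ is indexed by the well-ordered set $I$, but its graded pieces live over $I_s$, and one must be sure that the conditions ``$g E_i = E_i$ for all $i \in I$'' and ``$g E_i = E_i$ for all $i \in I_s$'' carry the same information, which they do because each $E_i$ for $i \in I$ is determined by (in fact equals a sum over) the $E_j$ with $j \in I_s$, $j \leq i$, using (FL3) at upper limit elements. One should also note that $B_{I_s}$ as defined is genuinely a subgroup functor of $\GL_{I_s}$ (column finiteness is automatic from being inside $\GL_{I_s}$, and the upper triangular condition is preserved under ring homomorphisms since they send $0$ to $0$, and closed under the multiplication and inversion inherited from $\GL_{I_s}$), so that the statement ``$\Stab_{\GL_{I_s}}(E) = B_{I_s}$'' is an equality of $k$-group subfunctors; but this is routine and already implicit in the paper's setup of stabilizer functors.
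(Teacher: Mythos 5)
Your two inclusions are exactly the paper's column-by-column argument, and the direction $\Stab_{\GL_{I_s}}(E)(R) \subseteq B_{I_s}(R)$ is fine as you wrote it. The genuine issue is the step you present as automatic: the upgrade from ``$gE_i \subseteq E_i$ for all $i$'' to ``$gE_i = E_i$ for all $i$.'' You justify it by ``applying the same containment to $g^{-1}$,'' but that containment for $g^{-1}$ is not a hypothesis; it is precisely the claim that $g^{-1}$ is again upper triangular, i.e.\ that $B_{I_s}(R)$ is closed under inversion, which is the only nontrivial content of the inclusion $B_{I_s}(R) \subseteq \Stab_{\GL_{I_s}}(E)(R)$. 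The same assumption is smuggled into your closing remark that closure of $B_{I_s}$ under ``inversion inherited from $\GL_{I_s}$'' is routine. Note that in the paper this fact appears only later, in Proposition \ref{prop: invertible upper triangulars}, and the proof given there cites the present proposition (the stabilizer is a subgroup, hence $b^{-1}$ is upper triangular), so you cannot borrow it without creating a circle. Over a field the finite-rank pieces rescue you by a dimension count, but over a general ring $R$ an injective endomorphism of $E_i$ need not be surjective, and for longer well-orders $I$ the modules $E_i$ need not have finite rank, so an actual argument is required.

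The gap is short to close. Let $g$ be upper triangular with column-finite two-sided inverse $a$, and induct transfinitely on $j \in I_s$ with inductive hypothesis that $a_{i,n} = 0$ whenever $n < j$ and $i > n$. Since $g_{n,j} = 0$ for $n > j$, the entry $(ag)_{i,j} = \sum_{n \leq j} a_{i,n} g_{n,j}$ reduces, for $i = j$, to $a_{j,j} g_{j,j} = 1$ (so $g_{j,j}$ is a unit, $R$ being commutative) and, for $i > j$, to $a_{i,j} g_{j,j} = 0$, whence $a_{i,j} = 0$. Thus $a$ is upper triangular, so $aE_i \subseteq E_i$, and applying $g$ gives $E_i \subseteq gE_i$, completing the equality. (Alternatively, once the diagonal entries are known to be units one can quote the explicit construction of an upper triangular inverse in the proof of Proposition \ref{prop: invertible upper triangulars}.) To be fair, the paper's own proof also passes from the containment $[x_{i,j}]E^R_n \subseteq E^R_n$ to the equality without comment, so your write-up is at the same level of terseness; but the upgrade should not be sold as automatic, since the justification you give presupposes the conclusion.
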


\begin{proof}
We wish to show that $\Stab_{\GL_{I_s}}(E) = B_{I_s}$, and since both are subfunctors of $\GL_{I_s}$, this amounts to showing that for all $k$-algebras $R$, we have that $\Stab_{\GL_{I_s}}(E)(R) = B_{I_s}(R)$. 

($\subseteq$). Given $[x_{i , j}] \in \Stab_{\GL_{I_s}}(E)(R)$, we have that $[x_{i , j}] E^R = E^R$, i.e. for all $n \in I$ we have $[x_{i , j}] E^R_n = E^R_n$. But in particular this means that $[x_{i , j} ] e_n \in \Span_R(e_m \mid m \in I_s, m \leq n)$. But $[x_{i , j} ] e_n$ is the $n$th column of $[x_{i , j}]$, i.e. the column vector $[x_{i , n}]_{i \in I}$. Since this is in the span of all $e_m$ with $m \leq n$, we have that $x_{i , n} = 0$ for $i > n$. Since this holds for all $n$, it follows that $[x_{i , j}] \in B_{I_s}(R)$. 

($\supseteq$). Given $[x_{i , j}] \in B_{I_s}(R)$ and any $m \leq n \in I$, we have that $[x_{i , j}] e_m = [x_{i , m}]_{i \in I}$, but because $[x_{i , j}]$ is upper triangular, we have that $x_{i , m} = 0$ for all $i > m$, and hence $[x_{i , m}]_{i \in I} \in E^R_m \subseteq E^R_n$. Therefore $[x_{i , j}] E^R_n = E^R_n$, and since $n$ was arbitrary, we have that $[x_{i , j}] E^R = E^R$. 

\end{proof}


\begin{proposition}\label{prop: naive orbit of the standard flag}
The image subfunctor of the natural transformation $\GL_I \to \Fl_I$ defined by sending $g \in \GL_I(R) \mapsto gE^R \in \Fl_I(R)$ for any $k$-algebra $R$ is given by 
\begin{equation*}
R \mapsto \{ F \in \Fl_I(R) \mid F_i / F_{<i} \text{ is free for all } i \in I \}. 
\end{equation*}
\end{proposition}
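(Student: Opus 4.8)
The plan is to fix a $k$-algebra $R$, write $f : \GL_{I_s} \to \Fl_I$ for the natural transformation $g \mapsto gE^R$ (the $\GL_{I_s}$-action on $\Fl_I$ from the previous subsection), and prove the asserted equality of subfunctors by checking both inclusions at $R$; since $(\im f)(R) = \{\, gE^R \mid g \in \GL_{I_s}(R) \,\}$, this amounts to an orbit computation. For the inclusion ``$\subseteq$'', I would first record that the standard flag carries the obvious gradation $D$ with $D_i = Re_i$ for $i \in I_s$ and $D_i = 0$ otherwise, so that each graded piece $E^R_i / E^R_{<i} \cong D_i$ is free. Then, given $g \in \GL_{I_s}(R)$ and $F = gE^R$, I would use that $g$ is an $R$-module automorphism of $R^{\oplus I_s}$, hence additive, so that it carries $E^R_{<i} = \sum_{i' < i} E^R_{i'}$ onto $F_{<i}$ and $E^R_i$ onto $F_i$, and therefore induces an isomorphism $E^R_i / E^R_{<i} \cong F_i / F_{<i}$; this shows every $F_i / F_{<i}$ is free.

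For the inclusion ``$\supseteq$'', suppose $F \in \Fl_I(R)$ has all quotients $F_i / F_{<i}$ free. By Corollary \ref{cor: I-flags have gradations} I would choose a gradation $C$ of $F$, so that $F_i = \bigoplus_{j \le i} C_j$ and, by Lemma \ref{lem:less than direct sum gradation}, $F_{<i} = \bigoplus_{j < i} C_j$, whence $C_i \cong F_i / F_{<i}$ is free for every $i$. If $i$ is an upper limit element then $F_{<i} = F_i$ by (FL3), forcing $C_i = 0$; if $i \in I_s$ then $C_i$ is moreover finitely generated projective of rank $1$ by (FL2), hence free of rank exactly $1$, and I would fix a basis vector $v_i$ of $C_i$. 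Using Corollary \ref{cor: top and bottom elements of a flag} to identify $R^{\oplus I_s} = F_{i_\infty} = \bigoplus_{i \in I_s} C_i = \bigoplus_{i \in I_s} R v_i$, the family $(v_i \mid i \in I_s)$ is a basis of $R^{\oplus I_s}$, so the $R$-linear map $g$ with $g(e_i) = v_i$ for all $i \in I_s$ is an automorphism; since each $v_i$ and each $e_i$ expressed in the $v$-basis has finite support, both $g$ and $g^{-1}$ have column-finite matrices, so $g \in \GL_{I_s}(R)$. Finally $gE^R_i = \Span_R(v_j \mid j \in I_s,\ j \le i) = \bigoplus_{j \le i} C_j = F_i$ for all $i$, so $F = gE^R$ lies in $(\im f)(R)$.

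The case $R = 0$ is trivial, since then every value of $\Fl_I$ and of $\GL_{I_s}$ is a singleton, so I may assume $R \ne 0$. The only step with genuine content is the reconciliation, in the ``$\supseteq$'' direction, of being free with being finitely generated projective of rank $1$: such a module is free of rank exactly $1$ (localize at a prime of $R$), and it is precisely this observation that upgrades the rank-one projective graded pieces $C_i$ ($i \in I_s$) coming from (FL2) into the honest rank-one free summands needed to assemble the change-of-basis element $g$. Everything else is bookkeeping with the index set $I_s$ and the routine case split between limit and successor elements of $I$, so I do not anticipate a real obstacle beyond organizing these pieces.
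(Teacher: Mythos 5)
Your argument is correct and follows essentially the same route as the paper: the forward inclusion via the standard gradation of $E^R$ transported by $g$, and the reverse inclusion by choosing a gradation of $F$ (Corollary \ref{cor: I-flags have gradations}), noting the successor pieces are free of rank $1$, and assembling the change-of-basis matrix $g$ with $gE^R = F$. Your extra checks (the $R=0$ case and column-finiteness of $g$ and $g^{-1}$) are harmless refinements of details the paper leaves implicit.
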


\begin{proof}
First we prove that if $F = gE^R$, then $F_i / F_{<i}$ is free for all $i \in I$. Note that $E^R$ has a gradation (which could be called the \emph{standard gradation}) defined by $C_i = \Span(e_i)$ if $i \in I_s$ and $C_i = 0$ otherwise. Then because $g : R^{\oplus I_s} \to R^{\oplus I_s}$ is an $R$-module isomorphism we have that $gC$ is a gradation of $g E^R = F$. Hence for any $i \in I$, $F_i/ F_{<i} \cong gC_i$ and the latter is free of rank $1$ when $i \in I_s$ and free of rank $0$ (i.e. is the zero module) otherwise. 

Now we show the converse. Suppose that $F \in \Fl_I(R)$ is such that $F_i / F_{<i}$ is free for all $i \in I$. Note that by the definition of a flag, this quotient is nonzero if and only if $i \in I_s$, and in that case has rank $1$. Let $C$ be a gradation for $F$ so that $C_i \cong F_i / F_{<i}$, and for all $i \in I_s$ let $(b_i)$ be a basis for $C_i$. Then by the definition of a gradation, $(b_i \mid i \in I_s )$ is a basis for $R^{\oplus I_s}$, so if we define $g \in \End_{I_s}(R)$ by letting the $i$th column be the coordinate vector of $b_i$ with respect to the standard basis then $g$ is invertible. Finally, again because of the definition of a gradation, we have for any $i \in I$, $F_i = \Span(b_j \mid j \leq i, j \in I_s)$ and so clearly we have that $g E^R = F$ as desired. 
\end{proof}


It therefore follows from Equation \ref{eq: fppf orbit} that the fppf-orbit of the standard flag is the subfunctor of $\Fl_I$ which sends a $k$-algebra $R$ to the set of $I$-flags in $R^{\oplus I_s}$ which have the property that there exists an fppf-ring map $\varphi: R \to S$ such that if $F' = \Fl_{I_s}(\varphi)(F)$ then $F'_i / F'_{<i}$ is free for all $i \in I$. Let us denote the fppf-orbit $\Orb_{\GL_{I_s}}(E)$ by $\mF_I$. Then by the orbit-stabilizer theorem we have that 
\begin{equation*}
\Gr_{I_s} / B_{I_s} \cong \mF_I
\end{equation*}
but we \textbf{do not know} if $\mF_I = \Fl_I$. In the finite-dimensional case this is proved by noting that given an $R$-module that is projective of rank $1$ there does exist an fppf-map $\varphi: R \mapsto S$ such that $\varphi_!(M)$ is free. If $I$ is finite, one can repeat this construction for all $F_i / F_{<i}$ and note that a composition of fppf-homomorphisms is also an fppf-homomorphism. However a colimit of infinitely many fppf extensions need not be finitely presented, so this method fails for $I$ infinite. However if it is true that $\mF_I = \Fl_I$, it answers this \cite[{05VF}]{stacks-project} question from the Stacks Project in the affirmative.


\section{The Bruhat Decomposition}\label{sec: the bruhat decomposition}\label{sec: the bruhat decomposition}

Let $I$ be a well-ordered set. The main tool in the proof of the Bruhat decomposition for $\Fl_I$ is gradations of the intersection of two $I$-flags. We begin with a general discussion of these objects before moving on to the aforementioned proof.


\begin{lemma}\label{lem: a gradation of the intersection gives a gradation of each filtration}
Let $(E , I)$ and $(F , J)$ be embedded linear filtrations of an $R$-module $M$, and let $C: I \times J \to \Sub(M)$ be a function. 

\begin{enumerate}

\item If $C$ is a gradation of $(E \cap F , I \times J)$, then $C^1$ is a gradation of $(E , I)$ and $C^2$ is a gradation of $(F , J)$, where we define $C^1 : I \to \Sub(M)$ by $C^1_i = \sum_{j \in J} C_{(i , j)}$, and similarly for $C^2$. 

\item If, for all $i \in I$ we have $E_i = \bigoplus_{(k , \ell) \leq (i , j_{\infty})} C_{(k , \ell)}$, and for all $j \in J$ we have $F_j = \bigoplus_{(k , \ell) \leq (i_{\infty} , j)} C_{(k , \ell)}$, then $C$ is a gradation of $E \cap F$. 

\end{enumerate}
\end{lemma}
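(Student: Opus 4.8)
The plan is to reduce both parts to the definition of a gradation: $C$ is a gradation of $(E \cap F, I \times J)$ exactly when, for every $(i,j)$, the family $(C_{(k,\ell)} \mid (k,\ell) \leq (i,j))$ is independent and $\sum_{(k,\ell) \leq (i,j)} C_{(k,\ell)} = [E \cap F]_{(i,j)} = E_i \cap F_j$. Since $(E,I)$ and $(F,J)$ are embedded linear filtrations, Proposition \ref{prop: embedded linear filtrations have least and greatest elements} supplies greatest elements $i_\infty \in I$ and $j_\infty \in J$ with $E_{i_\infty} = F_{j_\infty} = M$; because in the product order $(k,\ell) \leq (i, j_\infty)$ is equivalent to $k \leq i$ (and dually for $i_\infty$), this is what will let me compare the slices of $C$ directly with $E$ and with $F$.

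For part (1) I would evaluate the gradation identity for $E \cap F$ at the index $(i, j_\infty)$: since $F_{j_\infty} = M$ this gives $E_i = E_i \cap F_{j_\infty} = \bigoplus_{k \leq i,\ \ell \in J} C_{(k,\ell)}$. In particular this family is independent, so for each fixed $k$ the subfamily $(C_{(k,\ell)} \mid \ell \in J)$ is independent and $C^1_k = \sum_{\ell \in J} C_{(k,\ell)} = \bigoplus_{\ell \in J} C_{(k,\ell)}$. Spanning is then immediate, $\sum_{k \leq i} C^1_k = \sum_{k \leq i,\ \ell \in J} C_{(k,\ell)} = E_i$, and independence of $(C^1_k \mid k \leq i)$ follows by writing each contributing element of $C^1_{k_m}$ as a finite sum of elements of the various $C_{(k_m, \ell)}$, noting that the indices $(k_m, \ell)$ that occur are pairwise distinct whenever the $k_m$ are, and applying independence of $(C_{(k,\ell)} \mid k \leq i,\ \ell \in J)$. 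Hence $E_i = \bigoplus_{k \leq i} C^1_k$, so $C^1$ is a gradation of $(E,I)$; the statement for $C^2$ is the mirror image, obtained by evaluating at the index $(i_\infty, j)$ instead.

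For part (2), evaluating the two hypotheses at $i_\infty$ and at $j_\infty$ gives $M = E_{i_\infty} = \bigoplus_{(k,\ell) \in I \times J} C_{(k,\ell)}$, so the full family $(C_{(k,\ell)})$ is independent; every subfamily is then independent, which disposes of the independence half of the gradation condition for $E \cap F$ at every index. For the spanning half, the inclusion $\sum_{(k,\ell) \leq (i,j)} C_{(k,\ell)} \subseteq E_i \cap F_j$ is easy: for $(k,\ell) \leq (i,j)$ the hypothesis on $E_k$ gives $C_{(k,\ell)} \subseteq E_k \subseteq E_i$ and the hypothesis on $F_\ell$ gives $C_{(k,\ell)} \subseteq F_\ell \subseteq F_j$. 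The reverse inclusion is the only place where there is genuine work. Given $v \in E_i \cap F_j$, write $v$ uniquely as a finite sum $\sum c_{(k,\ell)}$ with $c_{(k,\ell)} \in C_{(k,\ell)}$; the hypothesis $E_i = \bigoplus_{k \leq i,\ \ell \in J} C_{(k,\ell)}$ together with independence of $(C_{(k,\ell)})$ forces every component with $k > i$ to vanish (subtract the given expansion of $v$ from one witnessing $v \in E_i$ and compare index by index), and dually $v \in F_j$ forces every component with $\ell > j$ to vanish, so only components with $(k,\ell) \leq (i,j)$ survive, i.e. $v \in \sum_{(k,\ell) \leq (i,j)} C_{(k,\ell)}$. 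Combining the two inclusions gives $E_i \cap F_j = \bigoplus_{(k,\ell) \leq (i,j)} C_{(k,\ell)}$, so $C$ is a gradation of $E \cap F$. The main obstacle, such as it is, is this coordinate argument: since the direct-sum decompositions range over possibly infinite index sets one must extract components via the independence of the family rather than via a literal basis, but this is routine once the bookkeeping is set up.
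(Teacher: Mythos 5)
Your proposal is correct and follows essentially the same route as the paper: part (1) by evaluating the gradation of $E \cap F$ at $(i, j_\infty)$ (using the greatest elements supplied by embeddedness) and regrouping the summands, and part (2) by extracting independence from the total decomposition $M = \bigoplus_{(k,\ell)} C_{(k,\ell)}$ and then killing the components with $k > i$ or $\ell > j$ by a component-comparison/subtraction argument, which is the same mechanism the paper uses inside $F_{j_\infty} = F_j \oplus \bigl(\bigoplus_{\ell > j} C_{(k,\ell)}\bigr)$. No gaps worth flagging.
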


\begin{proof}
\
\begin{enumerate}

\item We only show the result for $C^1$, the other case being similar. Given $i \in I$, we want to show that $E_i = \bigoplus_{k \leq i} C^1_k = \bigoplus_{k \leq i} \sum_{j \in J} C_{(k , j)}$. (DS1) is satisfied because the family of sets $( \{ (k , j ) : j \in J \} : k \in I )$ of the product $I \times J$ is disjoint, therefore the family of submodules $( \sum_{j \in J} C_{(k , j)} : k \in I)$ is independent, which implies for any $i \in I$, the family $( \sum_{j \in J} C_{(k , j)} : k \leq i)$ is independent as well. To prove (DS2), note that because $F$ is embedded, there exists a greatest element $j_\infty \in J$ such that $F_{j_\infty} = M$. Then because $C$ is a gradation of $E \cap F$, we have that 
\begin{equation*}
E_i = E_i \cap V = E_i \cap F_{j_\infty} = [E \cap F]_{(i , j_\infty)} = \bigoplus_{(k , j) \leq (i , j_\infty)} C_{(k , j)}  = \sum_{k \leq i} \sum_{j \in J} C_{(k , j)}. 
\end{equation*}

\item First of all, note that the two conditions imply that $C_{(i , j)} \subseteq E_i \cap F_j$ for all $i \in I , j \in J$. We now prove (G1). Let $(i , j) \in I \times J$. To prove (DS1), note that by hypothesis we have
\begin{equation*}
M = E_{i_\infty} = \bigoplus_{(k , \ell) \leq (i_\infty , j_\infty)} C_{(k , \ell)} = \bigoplus_{(k , \ell) \in I \times J} C_{(k , \ell)}.
\end{equation*}

Therefore in particular, the family $(C_{(k , \ell)} : (k , \ell) \in I \times J )$ is independent, hence any subfamily is independent, and in particular $(C_{(k , \ell)} : (k , \ell) \leq (i , j) )$ is. 

To prove (DS2), suppose we are given $v \in E_i \cap F_j$, since $E_i = \bigoplus_{(k , \ell) \leq (i , j_{\infty})} C_{(k , \ell)}$. Then we can write 
\begin{equation*}
v = w_{i_1, j_1} + \ldots + w_{i_r, j_r} + u_{k_1, \ell_1} + \ldots + u_{k_s, \ell_s}
\end{equation*}
where for all $1 \leq a \leq r$ we have $(i_a, j_a) \leq (i , j)$ and $w_{i_a, j_a} \in C_{(i_a, j_a)}$, and for all $1 \leq b \leq s$ we have $k_b \leq i$ but $\ell_b > j$ and $u_{k_b, \ell_b} \in C_{(k_b , \ell_b)}$. Subtraction yields
\begin{equation*}
u_{k_1, \ell_1} + \ldots + u_{k_s, \ell_s} = v - (w_{i_1, j_1} + \ldots + w_{i_r, j_r}).
\end{equation*}
But since $(i_a , j_a) \leq (i , j)$ implies $i_a \leq i$ and hence $E_{i_a} \subseteq E_i$, and similarly we have $F_{j_a} \subseteq F_j$, it follows that $C_{(i_a, j_a)} \subseteq E_{i_a} \cap F_{j_a} \subseteq E_i \cap F_j$. Therefore the RHS is in $F_j$, so the LHS is as well. However we have that 
\begin{equation*}
F_{j_\infty} = \bigoplus_{(k , \ell) \leq (i_\infty , j_\infty)} C_{(k , \ell)} = \left( \bigoplus_{(k , \ell) \leq (i_\infty , j)} C_{(k , \ell)} \right) \oplus  \left( \bigoplus_{k \leq i_\infty, \ell > j} C_{(k , \ell)} \right) = F_j \oplus \left( \bigoplus_{k \leq i_\infty, \ell > j} C_{(k , \ell)} \right),
\end{equation*}
and $u_{k_1, \ell_1} + \ldots + u_{k_s, \ell_s}  \in \bigoplus_{k \leq i_\infty, \ell > j} C_{(k , \ell)}$, so it must be that $u_{k_1, \ell_1} + \ldots + u_{k_s, \ell_s} = 0$, and 
\begin{equation*}
v = w_{i_1, j_1} + \ldots + w_{i_r, j_r} \in \sum_{(k , \ell) \leq (i , j)} C_{(k , \ell)}. 
\end{equation*}

\end{enumerate}

\end{proof}

If $M$ is an $R$-module and $L$ and $N$ are direct summands of $M$, it is not necessarily true that $L \cap N$ is a direct summand of $M$\footnote{Modules for which this is true are said to have the \emph{summand intersection property}. For example Kaplansky \cite{kaplansky2018} showed that the summand intersection property is satisfied by free modules over a PID.}. It follows that if $E$ and $F$ are two $I$-flags in an $R$-module $M$, even though each of $E$ and $F$ has a gradation, their intersection $E \cap F$ need not. Of course if $R$ is a field, then all subspaces are complemented, hence $E \cap F$ has an almost gradation which, by Corollary \ref{cor:every almost gradation of a well-ordered or product of well-ordered filtration is a gradation}, is a gradation. However in order to state the Bruhat decomposition over a general ring, we need to restrict the flags that we consider.

If $E$ is an $I$-flag and $F$ is a $J$-flag in $M$, we say that the pair $E , F$ \emph{is in constant (free) relative position} if there exists a gradation $C$ of $E \cap F$ such that for every $i \in I_s$ there exists a unique $j := w_{E , F}(i) \in J_s$ such that $C_{(i , j)}$ is projective (free) of constant rank $1$, and for all $j \in J_s$ there exists a unique $i := w_{E , F}(j) \in I_s$ such that $C_{(i , j)}$ is projective (free) of constant rank $1$. In this case we obtain functions $w_{E , F} : I_s \leftrightarrows J_s: w_{F , E}$ which do not depend on the choice of gradation, since $C_{(i , j)} \cong [E \cap F]_{(i , j)}  /  [ E \cap F ]_{< (i , j)}$, and which are inverses of each other. Denote by $\CRP_I(M)$ (resp. $\CRP_I^\free(M)$) the set of pairs of $I$-flags in $M$ that are in constant (resp. free) relative position. The following proposition describes all of the rest of the $C_{(i , j)}$. 

\begin{proposition}\label{prop: all non chosen pairs are zero for constant relative position}
Suppose that $E , F$ are in constant relative position and $C$ is a gradation of $E \cap F$. Then $C_{(i , j)} = 0$ for $j \neq w_{E, F}(i)$. 
\end{proposition}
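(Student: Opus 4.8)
The plan is to read off the ranks of the pieces $C_{(i,j)}$ from the two structural lemmas about gradations of an intersection of filtrations (Lemmas \ref{lem:less than direct sum gradation} and \ref{lem: a gradation of the intersection gives a gradation of each filtration}) together with the flag axioms (FL2) and (FL3). The first thing I would note is that for \emph{any} gradation $C$ of $(E\cap F, I\times J)$ and any index $(i,j)$, one has $C_{(i,j)} \cong [E\cap F]_{(i,j)} / [E\cap F]_{<(i,j)}$: by (G1) the numerator is $\bigoplus_{(k,\ell)\le(i,j)} C_{(k,\ell)}$, by Lemma \ref{lem:less than direct sum gradation} the denominator is $\bigoplus_{(k,\ell)<(i,j)} C_{(k,\ell)}$, and $(i,j)$ is the unique index that is $\le(i,j)$ but not $<(i,j)$. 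Hence the isomorphism type of $C_{(i,j)}$ — in particular whether it is zero, and whether it is finitely generated projective of constant rank $1$ — is intrinsic to the pair $(E,F)$ and independent of the gradation. Since vanishing of $C_{(i,j)}$ is therefore gradation-independent, it suffices to prove the statement for the particular gradation witnessing that $E,F$ are in constant relative position; so I assume henceforth that $C_{(i,w_{E,F}(i))}$ is projective of constant rank $1$ for every $i\in I_s$. Finally, evaluating (G1) at the top element $(i_\infty,j_\infty)$ of $I\times J$ (which exists by Corollary \ref{cor: top and bottom elements of a flag}) gives $M=\bigoplus_{(k,\ell)\in I\times J}C_{(k,\ell)}$, so the whole family $(C_{(k,\ell)})$ is independent; in particular, for each fixed $i$ the sum $C^1_i:=\sum_{j\in J}C_{(i,j)}$ is a direct sum.

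Next I would dispose of the non-successor rows. By Lemma \ref{lem: a gradation of the intersection gives a gradation of each filtration}(1), $C^1\colon I\to\Sub(M)$ is a gradation of the (embedded, linear) $I$-flag $(E,I)$, so once more by Lemma \ref{lem:less than direct sum gradation} we get $C^1_i\cong E_i/E_{<i}$. If $i$ is an upper limit element, i.e. $i\notin I_s$, then axiom (FL3) gives $E_{<i}=E_i$, so $C^1_i=0$, and by the independence established above $C_{(i,j)}=0$ for every $j$; this settles the claim for all such $i$. Running the symmetric argument with the gradation $C^2$ of $(F,J)$ also yields $C_{(i,j)}=0$ whenever $j\notin J_s$. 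So the remaining content of the proposition is the case $i\in I_s$.

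For $i\in I_s$ with predecessor $i^-$, one has $E_{<i}=E_{i^-}$ because $I$ is well-ordered, so $C^1_i\cong E_i/E_{i^-}=E_{i^-+1}/E_{i^-}$ is finitely generated projective of constant rank $1$ by (FL2). Write $w:=w_{E,F}(i)$ and $N:=\bigoplus_{j\in J,\ j\ne w}C_{(i,j)}$; independence of the family gives the internal direct sum decomposition $C^1_i = C_{(i,w)}\oplus N$, so $N$ is a direct summand of the finitely generated projective module $C^1_i$ and is therefore itself finitely generated projective. Localizing at an arbitrary prime $\fp$ of $R$ and using that rank is additive over direct sums of projective modules, the rank of $N_\fp$ equals the rank of $(C^1_i)_\fp$ minus the rank of $(C_{(i,w)})_\fp$, that is, $1-1=0$; thus $N_\fp=0$ for every prime $\fp$, whence $N=0$. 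Since $N$ is the internal direct sum of the submodules $C_{(i,j)}$ with $j\ne w$, each of these is zero, i.e. $C_{(i,j)}=0$ for all $j\ne w_{E,F}(i)$, as claimed.

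The only step that is more than bookkeeping is the last one, and there the single point to be careful about is that $N$ is genuinely finitely generated projective — which forces "$N_\fp=0$ for all $\fp$" to imply $N=0$ — and this is guaranteed precisely because $N$ is a direct summand of the finitely generated projective module $C^1_i$. Everything else is formal manipulation with Lemmas \ref{lem:less than direct sum gradation} and \ref{lem: a gradation of the intersection gives a gradation of each filtration}, the independence of the total family $(C_{(k,\ell)})$, and the flag axioms.
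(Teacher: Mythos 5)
Your proof is correct and takes essentially the same route as the paper's: pass to $C^1$ via Lemma \ref{lem: a gradation of the intersection gives a gradation of each filtration}, identify $\bigoplus_{j} C_{(i,j)}$ with $E_i/E_{<i}$, invoke (FL2)/(FL3), and localize at primes so that rank counting kills the non-chosen pieces, with vanishing being a local property. One minor remark: the finite generation of $N$ is not what makes ``$N_\fp = 0$ for all $\fp$ implies $N = 0$'' work --- vanishing is local for arbitrary modules, which is exactly what the paper uses, so your explicit reduction to the witnessing gradation and the summand $N$ are fine but slightly more machinery than needed.
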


\begin{proof}
By Lemma \ref{lem: a gradation of the intersection gives a gradation of each filtration} part (1), we have that $C^1$ is a gradation of $E$, hence it is an almost gradation (Proposition \ref{prop: gradation implies almost gradation}), so for any $i \in I$ we have $E_i = E_{<i} \oplus \bigoplus_{j \in J} C_{(i , j)}$, and therefore $E_i / E_{<i} \cong \bigoplus_{j \in J} C_{(i , j)}$. By definition of a flag, if $i \notin I_s$, then this quotient is $0$, hence $C_{(i , j)} = 0$ for all $j$, and if $i \in I_s$, then this quotient is finitely generated projective of rank $1$. Given a prime ideal $\fp \in \Spec(R)$, we localize this quotient at $\fp$ to obtain 
\begin{equation*}
( E_i / E_{<i} )_\fp \cong \bigoplus_{j \in J} (C_{(i , j)})_\fp. 
\end{equation*}
Since $C$ is a gradation, in particular each $C_{(i , j)}$ is projective, so each localization $(C_{(i , j)})_\fp$ is free. The LHS is free of rank $1$, and $(C_{(i , w_{E , F}(i))})_\fp$ is free of rank $1$. Hence $(C_{(i , j)})_\fp = 0$ for $j \neq w_{E, F}(i)$. Since $\fp$ was arbitrary and a module being zero is a punctual property, it follows that $C_{(i , j)} = 0$ for $j \neq w_{E, F}(i)$. 
\end{proof}


\begin{definition}
We say that an $I$-flag $E$ is \emph{successively free} if $E_i / E_{<i}$ is free for all $i \in I$. 
\end{definition}

We can compare $\CRP_I^\free(M)$ with $\CRP_I(M)$ for successively free flags. 

\begin{proposition}\label{prop: relative position of successively free flags}
Suppose that a pair of $I$-flags $F , E$ are in constant relative position. Then $E$ is successively free if and only if $F$ is, and in this happens if and only if they are in constant \textbf{free} relative position. 
\end{proposition}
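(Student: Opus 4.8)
The plan is to use the gradation $C$ of $E \cap F$ provided by the constant relative position hypothesis, together with Proposition \ref{prop: all non chosen pairs are zero for constant relative position}, which tells us that $C_{(i,j)} = 0$ unless $j = w_{E,F}(i)$, and in that case $C_{(i,j)}$ is projective of rank $1$. Combining this with Lemma \ref{lem: a gradation of the intersection gives a gradation of each filtration}(1), which says $C^1_i = \sum_{j \in J} C_{(i,j)}$ defines a gradation of $E$, we get that for $i \in I_s$ the single module $C^1_i = C_{(i, w_{E,F}(i))}$ is isomorphic to $E_i / E_{<i}$ (since $C^1$ is in particular an almost gradation, so $E_i = E_{<i} \oplus C^1_i$), while for $i \notin I_s$ both are zero. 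Symmetrically $C^2_j = C_{(w_{F,E}(j), j)} \cong F_j / F_{<j}$.

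The key observation is then that since $w_{E,F}$ is a bijection $I_s \leftrightarrow J_s$ with inverse $w_{F,E}$, for each $i \in I_s$ the module $E_i / E_{<i} \cong C_{(i, w_{E,F}(i))}$ is the \emph{same} module (up to isomorphism) as $F_{w_{E,F}(i)} / F_{<w_{E,F}(i)} \cong C_{(i, w_{E,F}(i))}$. Hence $E$ is successively free, i.e.\ $E_i / E_{<i}$ free for all $i \in I$, if and only if every $C_{(i,j)}$ is free (it suffices to check this for $j = w_{E,F}(i)$, $i \in I_s$, since all other $C_{(i,j)}$ vanish and the zero module is free), if and only if $F_j / F_{<j}$ is free for all $j \in J$, i.e.\ $F$ is successively free. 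For the case $i \notin I_s$ we use (FL3): $E_{<i} = E_i$ so the quotient is zero, which is free.

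Finally, for the equivalence with constant free relative position: if $E$ (equivalently $F$) is successively free, then by the above the chosen modules $C_{(i, w_{E,F}(i))} \cong E_i / E_{<i}$ are free of rank $1$ for all $i \in I_s$, and similarly $C_{(w_{F,E}(j), j)} \cong F_j / F_{<j}$ are free of rank $1$ for all $j \in J_s$, which is precisely the definition of $E, F$ being in constant free relative position (witnessed by the same gradation $C$ and the same functions $w_{E,F}, w_{F,E}$). Conversely, if $E, F$ are in constant free relative position, then there is a gradation $C'$ of $E \cap F$ with $C'_{(i, w_{E,F}(i))}$ free of rank $1$ for $i \in I_s$; since $E_i / E_{<i} \cong C'^1_i = C'_{(i, w_{E,F}(i))}$ by Proposition \ref{prop: all non chosen pairs are zero for constant relative position} applied to $C'$, and the quotient is zero for $i \notin I_s$, $E$ is successively free, and likewise $F$.

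I do not anticipate a serious obstacle here: the proof is essentially a bookkeeping exercise assembling Proposition \ref{prop: all non chosen pairs are zero for constant relative position} and Lemma \ref{lem: a gradation of the intersection gives a gradation of each filtration}. The one point requiring a little care is making sure the ``successively free'' condition is correctly matched against the correct index: $E_i/E_{<i}$ is controlled by $C_{(i, w_{E,F}(i))}$, whereas $F_j / F_{<j}$ is controlled by $C_{(w_{F,E}(j), j)}$, and the bijectivity of $w_{E,F}$ is what makes the two collections of modules coincide up to isomorphism. I would also note explicitly that for limit indices $i$ the relevant quotient is the zero module (by (FL3)), which is free, so those indices impose no condition and can be ignored throughout.
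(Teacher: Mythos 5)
Your proof is correct and takes essentially the same route as the paper: both arguments combine Lemma \ref{lem: a gradation of the intersection gives a gradation of each filtration} with Proposition \ref{prop: all non chosen pairs are zero for constant relative position} to identify each successive quotient $E_i/E_{<i}$ (resp.\ $F_j/F_{<j}$) with the single nonzero piece $C_{(i,w_{E,F}(i))}$ of a gradation of the intersection, and then transfer freeness back and forth through these identifications. The only cosmetic difference is that the paper phrases the transfer via the direct sum $\bigoplus_j C_{(i,j)}$ being a sum of free modules, while you invoke the bijectivity of $w_{E,F}$ directly; the content is the same.
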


\begin{proof}
We first prove that if $E$ is successively free, then so is $F$ (the converse is similar). By hypothesis there exists a gradation $C$ of $F \cap E$, and by Lemma \ref{lem: a gradation of the intersection gives a gradation of each filtration}, $C^2$ is a gradation of $E$. Therefore for any $j \in I$ we have $E_j / E_{<j} \cong \bigoplus_{i \in I} C_{(i , j)}$. If $j \in I_s$, by definition of an $I$-flag, the former is projective of rank $1$, but by hypothesis it is free, so $E_j / E_{<j}$ is a rank $1$ free module. But by definition of constant relative position, $C_{(i , j)}$ is projective of rank $1$ for $i = w_{E,  F}(j)$ and by Proposition \ref{prop: all non chosen pairs are zero for constant relative position}, the rest are $0$. Therefore $C_{(w_{E, F}(j) , j )}$ is also a rank $1$ free module. Hence $C_{(i , j)}$ is free for all $i , j \in I$, and so $F$ and $E$ are in constant free relative position. Furthermore $C^1$ is a gradation of $F$, and for all $i \in I$, we have $F_i / F_{<i} \cong \bigoplus_{j \in I} C_{(i , j)}$ and the latter is a direct sum of free modules so the former is free. 

Conversely, suppose that $E$ and $F$ are in constant free relative position. Choose a gradation $C$ of $F \cap E$. By hypothesis, $C_{(i , j)}$ is free for all $i , j \in I$, so by Lemma \ref{lem: a gradation of the intersection gives a gradation of each filtration}, $F_i / F_{<i} \cong \bigoplus_{j \in I} C_{(i , j)}$ is free as well, so $F$ is successively free, and so is $E$ by a similar computation. 

\end{proof}


The situation when working over a field becomes much less complicated. The following lemma shows that in this case, every pair of flags $E, F$ is in constant free relative position. 

\begin{lemma}\label{lem: over a field every pair is in constant free relative position}
Suppose that $I, J$ are well ordered sets, $E$ is an $I$-flag, and $F$ is a $J$-flag in a \textbf{vector space} $V$. Let $C: I \times J \to \Sub(V)$ be a gradation of $(E \cap F , I \times J)$. Then for each $i \in I_s$, there exists a unique $j \in J_s$ such that $C_{(i , j)}$ has dimension $1$.
\end{lemma}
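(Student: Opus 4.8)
The plan is to exploit the two gradations of the single flags $E$ and $F$ that $C$ induces via Lemma \ref{lem: a gradation of the intersection gives a gradation of each filtration}, together with the fact that over a field every submodule is complemented and hence every almost gradation is a gradation (Corollary \ref{cor:every almost gradation of a well-ordered or product of well-ordered filtration is a gradation}). By part (1) of that lemma, $C^1_i = \sum_{j \in J} C_{(i,j)}$ defines a gradation of the $I$-flag $E$. Since $E$ is a flag and $i \in I_s$, the quotient $E_i / E_{<i}$ has dimension exactly $1$; but from a gradation we also get $E_i / E_{<i} \cong C^1_i = \bigoplus_{j \in J} C_{(i,j)}$, where the sum is direct because the family $(C_{(i,j)} : j \in J)$ is a subfamily of the independent family $C$. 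Hence $\dim \bigl( \bigoplus_{j \in J} C_{(i,j)} \bigr) = 1$, which forces exactly one index $j \in J$ to have $\dim C_{(i,j)} = 1$ and all the others to have $C_{(i,j)} = 0$. This gives existence and uniqueness of $j$.

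It remains to observe that this $j$ lies in $J_s$ rather than being a limit element. I would argue this symmetrically: since $C$ is a gradation of $E \cap F$, part (1) of Lemma \ref{lem: a gradation of the intersection gives a gradation of each filtration} also tells us $C^2_j = \sum_{i \in I} C_{(i,j)}$ is a gradation of the $J$-flag $F$, so $F_j / F_{<j} \cong \bigoplus_{i \in I} C_{(i,j)}$. For the particular $j$ we produced, this direct sum contains the nonzero term $C_{(i,j)}$, so $F_j / F_{<j} \neq 0$; but by definition of a flag, $F_j / F_{<j}$ is nonzero precisely when $j \in J_s$. Therefore $j \in J_s$, as required.

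I do not anticipate a genuine obstacle here — the statement is essentially a counting argument once the right gradations are in place. The only points that need care are (a) making sure the sum $\bigoplus_{j} C_{(i,j)}$ really is direct, which follows because any subfamily of an independent family is independent (this is noted in the background section on free modules), and (b) being careful that the isomorphism $E_i / E_{<i} \cong C^1_i$ is the one coming from the gradation property $E_i = E_{<i} \oplus C^1_i$, which is valid since a gradation is an almost gradation (Proposition \ref{prop: gradation implies almost gradation}) and $E_{<i}$ is the relevant "lower" term. Since we are over a field the dimension function is additive on direct sums and detects the zero module, so no localization or rank subtleties arise, in contrast to the general-ring argument of Proposition \ref{prop: all non chosen pairs are zero for constant relative position}.
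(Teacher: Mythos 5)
Your proposal is correct and follows essentially the same route as the paper: apply Lemma \ref{lem: a gradation of the intersection gives a gradation of each filtration} to get $E_i/E_{<i} \cong \bigoplus_{j} C_{(i,j)}$ of dimension $1$ for $i \in I_s$, forcing a unique $j$ with $\dim C_{(i,j)} = 1$, and then use the gradation $C^2$ of $F$ to see $F_j/F_{<j} \neq 0$ and hence $j \in J_s$. The only cosmetic difference is that the paper phrases the last step as $1 \geq \dim(F_j/F_{<j}) = \dim\bigl(\bigoplus_i C_{(i,j)}\bigr) \geq 1$, which is the same counting argument you give.
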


\begin{proof}
Note that for any $i \in I$, we have that $\dim(E_i / E_{<i} ) = 0 , 1$, and $i \in I_s$ if and only if $\dim(E_i / E_{<i} ) = 1$. The same holds for $J$. By Lemma \ref{lem: a gradation of the intersection gives a gradation of each filtration}, $C^1 : I \to \Sub(V)$ is a gradation of $(E , I)$ and hence for any $i \in I$ we have 
\begin{equation*}
\dim( E_i / E_{<i} ) = \dim( C^1_i  ) = \dim (\bigoplus_{j \in J} C_{(i , j)} ).
\end{equation*}
If $i \in I_s$, then this number is $1$, hence there is a unique $j \in J$ such that $\dim(C_{(i , j)}) = 1$. But this means that $\dim( \bigoplus_{i \in I} C_{(i , j)} ) \geq 1$, and since we have 
\begin{equation*}
1 \geq \dim( F_j / F_{<j} ) = \dim( C^2_j  ) =  \dim( \bigoplus_{i \in I} C_{(i , j)} ).
\end{equation*}
it follows that $\dim(F_j / F_{<j}) = 1$, and therefore that $j \in J_s$. 
\end{proof}


\subsection{The General Linear Action on Relative Position Pairs}

If $P$ is a poset, $C : P \to \Sub(V)$ is a function, and $g \in \GL(V)$, then we define $gC : P \to \Sub(V)$ by $(gC)_p = gC_p$. It is clear that if $C : P \to \Sub(V)$ is a gradation of a poset filtration $(F , P)$ of $V$ then $g C$ is a gradation of $gF$. From this it follows that if $(E , I)$ and $(F , J)$ are linear filtrations of $V$, $C : I \times J \to \Sub(V)$ is a gradation of the filtration intersection $E \cap F$, and $g \in \GL(V)$, then $g C : I \times J \to \Sub(V)$ is a gradation of $g[E \cap F] = (gE) \cap (gF)$. Therefore there are left actions of the general linear group $\GL(M)$ on the sets $\CRP_I(M)$ and $\CRP_I^\free(M)$ by $(E , F) \mapsto (gE , gF)$. 

We now describe the set of orbits of $\GL(M)$ on $\CRP_I^\free(M)$. Recall that if $I$ is a set, then $\Perm(I)$ denotes the group of permutations of $I$, i.e. bijections $I \to I$ with group law given by composition.
 
\begin{theorem}\label{thm: relative position orbits}
Let $I$ be a well-ordered set and $M$ be a free $R$-module of rank $| I_s |$. The formula $[E, F] \mapsto w_{E , F}$ gives a well-defined bijection $\GL(M) \backslash \CRP_I^\free(M) \to \Perm(I_s)$.
\end{theorem}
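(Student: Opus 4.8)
The plan is to show that the map $\Phi : \GL(M) \backslash \CRP_I^\free(M) \to \Perm(I_s)$ given by $[E,F] \mapsto w_{E,F}$ is (i) well-defined on orbits, (ii) lands in $\Perm(I_s)$, (iii) surjective, and (iv) injective. For (i) and (ii): we already know from the discussion preceding the theorem that $w_{E,F} : I_s \to J_s$ (here $J = I$) and $w_{F,E}$ are mutually inverse bijections and do not depend on the choice of gradation $C$ of $E \cap F$, because $C_{(i,j)} \cong [E\cap F]_{(i,j)} / [E\cap F]_{<(i,j)}$; hence $w_{E,F} \in \Perm(I_s)$. For $\GL(M)$-invariance, recall that if $C$ is a gradation of $E \cap F$ and $g \in \GL(M)$, then $gC$ is a gradation of $(gE)\cap(gF)$, and $(gC)_{(i,j)} = gC_{(i,j)}$ is free of rank $1$ exactly when $C_{(i,j)}$ is (since $g$ is an $R$-module isomorphism). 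Therefore $w_{gE,gF} = w_{E,F}$, so $\Phi$ descends to orbits.

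Next, surjectivity. Given $\sigma \in \Perm(I_s)$, I would construct an explicit pair in constant free relative position realizing $\sigma$. Fix the standard basis $(e_j : j \in I_s)$ of $M$ and let $E$ be the standard flag $E_i = \Span_R(e_j : j \in I_s, j \le i)$. Define $F$ by first declaring a gradation: for $i \in I_s$ put the ``diagonal'' block $C_{(i,\sigma(i))} = \Span_R(e_i)$ and all other $C_{(k,\ell)} = 0$ for $k$ or $\ell$ outside $I_s$ or off the graph of $\sigma$, then set $F_j = \bigoplus_{\ell \le j} \bigoplus_{i} C_{(i,\ell)} = \bigoplus_{i \in I_s,\, \sigma(i)\le j} \Span_R(e_i)$. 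One checks $F$ is an $I$-flag (it is a well-ordered filtration by summands, the successive quotients $F_j/F_{<j}$ are $\Span_R(e_{\sigma^{-1}(j)})$, free of rank $1$ for $j \in I_s$, and $F_{<j} = F_j$ at upper limit elements since the same is true for $E$ reindexed by the order-isomorphism $\sigma$ transported back — or more safely, verify (FL3) directly from the description). Then I must check that $C$, as defined, is genuinely a gradation of $E \cap F$: this is exactly the content of Lemma~\ref{lem: a gradation of the intersection gives a gradation of each filtration} part (2), once I verify the two hypotheses $E_i = \bigoplus_{(k,\ell)\le(i,j_\infty)} C_{(k,\ell)}$ and $F_j = \bigoplus_{(k,\ell)\le(i_\infty,j)} C_{(k,\ell)}$, both of which are immediate from the definitions of $E$, $F$, and $C$. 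Since each nonzero $C_{(i,\sigma(i))}$ is free of rank $1$, the pair $(E,F)$ is in $\CRP_I^\free(M)$ and $w_{E,F} = \sigma$.

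Finally, injectivity, which I expect to be the main obstacle. Suppose $(E,F)$ and $(E',F')$ are in $\CRP_I^\free(M)$ with $w_{E,F} = w_{E',F'} = \sigma$; I must produce $g \in \GL(M)$ with $gE = E'$ and $gF = F'$. Choose gradations $C$ of $E \cap F$ and $C'$ of $E'\cap F'$. By Proposition~\ref{prop: all non chosen pairs are zero for constant relative position}, $C_{(i,j)} = 0$ unless $j = \sigma(i)$, and similarly for $C'$; and by Proposition~\ref{prop: relative position of successively free flags} all the surviving blocks $C_{(i,\sigma(i))}$, $C'_{(i,\sigma(i))}$ are free of rank $1$. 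Pick a basis vector $b_i$ of $C_{(i,\sigma(i))}$ and $b'_i$ of $C'_{(i,\sigma(i))}$ for each $i \in I_s$. The key point is that $(b_i : i \in I_s)$ is a basis of $M$: by Lemma~\ref{lem: a gradation of the intersection gives a gradation of each filtration}(1), $C^1$ is a gradation of $E$, so $E = \bigoplus_{i\in I_s} C^1_i = \bigoplus_{i\in I_s} C_{(i,\sigma(i))}$, and $E_{i_\infty} = M$; hence $M = \bigoplus_{i\in I_s} \Span_R(b_i)$, i.e. $(b_i)$ is a basis, and likewise $(b'_i)$. Define $g \in \GL(M)$ by $g(b_i) = b'_i$; this is an isomorphism since it sends a basis to a basis (column-finiteness of $g$ and $g^{-1}$ in the $e$-coordinates is automatic because each $b_i, b'_i$ lies in $R^{\oplus I_s}$). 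Then $gC$ is a gradation of $(gE)\cap(gF)$ with $(gC)_{(i,\sigma(i))} = \Span_R(b'_i) = C'_{(i,\sigma(i))}$. Since $E_i = \bigoplus_{k \le i} C_{(k,\sigma(k))}$ we get $gE_i = \bigoplus_{k\le i}\Span_R(b'_k) = E'_i$, using that $C'$ gives $E'_i = \bigoplus_{k\le i} C'_{(k,\sigma(k))}$ the same way; so $gE = E'$. Similarly $F_j = \bigoplus_{\sigma(k)\le j}\Span_R(b_k)$ (from $C^2$ being a gradation of $F$ together with Proposition~\ref{prop: all non chosen pairs are zero for constant relative position}), whence $gF_j = \bigoplus_{\sigma(k)\le j}\Span_R(b'_k) = F'_j$, i.e. $gF = F'$. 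Thus $[E,F] = [E',F']$, proving injectivity and completing the argument. The delicate bookkeeping is in confirming that the one-dimensional blocks, when summed in the order dictated by $\le$ on $I$ and by $\sigma$, reassemble $E_i$ and $F_j$ exactly — this is where Lemma~\ref{lem: a gradation of the intersection gives a gradation of each filtration}(1) and Proposition~\ref{prop: all non chosen pairs are zero for constant relative position} do the real work.
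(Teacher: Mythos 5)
Your proposal is correct and follows essentially the same route as the paper's own proof: well-definedness via transporting a gradation by $g$, surjectivity by exhibiting a standard/permuted flag pair with an explicit rank-one block gradation verified through Lemma \ref{lem: a gradation of the intersection gives a gradation of each filtration}(2), and injectivity by choosing basis vectors of the free rank-one blocks $C_{(i,w(i))}$, $C'_{(i,w(i))}$ and sending basis to basis. The only differences are cosmetic (which flag of the pair is the standard one in the surjectivity construction, and a slightly more explicit justification, via Lemma \ref{lem: a gradation of the intersection gives a gradation of each filtration}(1) and Proposition \ref{prop: all non chosen pairs are zero for constant relative position}, that the chosen block vectors form a basis of $M$).
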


\begin{proof}
First we prove that the formula is well defined. If $[E , F] = [E', F']$ in $\GL(M) \backslash \CRP_I^\free(M)$, by definition there exists an element $g \in \GL(V)$ such that $gE = E'$ and $gF = F'$. Let $C : I \times I \to \Sub(V)$ be a gradation of $E \cap F$. Then as remarked above,  $gC$ is a gradation of $(gE) \cap (gF) = E' \cap F'$. Since $g$ is invertible, for any $i , j \in I$, $g C_{(i , j)}$ is projective of constant rank $1$ if and only if $C_{(i , j)}$ is, hence $w_{E , F} = w_{E' , F'}$. 

Next we prove injectivity of the map $\GL(M) \backslash \CRP_I^\free(M) \to \Perm(I_s)$. Suppose that $[E , F]$ and $[E', F']$ are such that $w_{E , F} = w_{E' , F'}$. We'll call this permutation $w$ for short. Let $C : I \times I \to \Sub(M)$ and $D : I \times I \to \Sub(M)$ be gradations of the filtration intersections $E \cap F$ and $E' \cap F'$ respectively. Define 
\begin{equation*}
A = \{ (i , j) \in I_s \times J_s : j = w(i) \},
\end{equation*}
And recall that $C_{(i , j)}, D_{(i , j)} \neq 0$ if and only if $(i , j) \in A$. For each $(i , j) \in A$ let $( e_{i , j} )$ be a basis for the free module $C_{(i , j)}$ (it is here that we use the fact that $E, F$ are in constant \textbf{free} relative position). Similarly we define $f_{i , j} \in D_{(i , j)}$. Then we have that 
\begin{equation*}
( e_{i , j} : (i , j ) \in A ) \ \ \ \text{ and } \ \ \ ( f_{i , j} : (i , j ) \in A )
\end{equation*}
are bases for $V$. Define an $R$-module homomorphism $g : M \to M$ by $g(e_{i , j}) = f_{i , j}$. Since $g$ sends a basis to a basis, it is invertible. We claim that $gE = E'$ and $gF = F'$, from which it will follow that $[E , F] = [E'. F']$. Given any $i \in I$, we have 
\begin{equation*}
gE_i  = g \left( \bigoplus_{(k , \ell) \leq (i_\infty , j)} C_{(k , \ell)} \right) = \bigoplus_{(k , \ell) \leq (i_\infty , j)} g C_{(k , \ell)} = \bigoplus_{(k , \ell) \leq (i_\infty , j)} D_{(k , \ell)} = E'_i
\end{equation*}
where the first and last equalities hold because $C$ and $D$ are gradations of $E \cap F$ and $E' \cap F'$ respectively. A similar computation shows $g F = F'$. 

Finally we prove surjectivity of the map $\GL(M) \backslash \CRP_I^\free(M) \to \Perm(I_s)$. Let $w \in \Perm(I_s)$. Let $(e_i : i \in I_s)$ be a basis for $M$. Define functions $E, F : I \to \Sub(M)$ by 
\begin{equation*}
E_i = \Span( e_{w(j)} \mid j \in I_s , j \leq i ) \ \ \ \text{ and } \ \ \  F_i = \Span( e_j \mid j \in I_s , j \leq i ) .
\end{equation*}
Then $E$ and $F$ are $I$-flags in $M$. Furthermore, we define a function $C : I \times I \to \Sub(M)$ by 

\begin{equation*}
C_{(i , j)} = \left\{ \begin{matrix} \Span(e_j) & \text{ if } & i \in I_s \text{ and } j = w(i) \\ 0 & \text{ otherwise } & \end{matrix} \right. .
\end{equation*}

Note first that because $w$ is a bijection, $( C_{(i , j)} \mid (i , j) \in I \times I )$ is an independent family. We now show that $C$ satisfies the conditions of part (2) of Lemma \ref{lem: a gradation of the intersection gives a gradation of each filtration} for the filtrations $E$ and $F$. 

Given $i \in I$, we wish to show that $E_i = \bigoplus_{(k , \ell) \leq (i , j_{\infty})} C_{(k , \ell)}$. First of all (DS1) is satisfied because the family $( C_{(k , \ell)} \mid (i , j_{\infty}) \in I \times I )$ is a subfamily of an independent family and hence independent. Furthermore (DS2) follows from the computation
\begin{equation*}
\sum_{(k , \ell) \leq (i , j_{\infty})} C_{(k , \ell)} = \sum_{k \leq i , k \in I_s} \Span( e_{w(k)} ) = E_i.  
\end{equation*}
 
 Similarly, given $j \in I$, we wish to show that $F_j = \bigoplus_{(k , \ell) \leq (i_{\infty} , j)} C_{(k , \ell)}$. The proof of (DS1) is the same as before, and for (DS2), we compute
 \begin{equation*}
\sum_{(k , \ell) \leq (i_{\infty} , j)} C_{(k , \ell)} = \sum_{\ell \leq j, \ell \in I_s} \Span( e_{\ell} ) = F_j.  
\end{equation*}

Therefore by Lemma \ref{lem: a gradation of the intersection gives a gradation of each filtration}, $C: I \times I \to \Sub(V)$ is a gradation of the filtration intersection $E \cap F$. By construction of $C$, given any $i \in I_s$, the unique $j \in I_s$ such that $C_{(i , j)}$ is nonzero is given by $j = w(i)$. Hence $w_{E , F} = w$, so indeed the map is surjective. 

\end{proof}

If $V$ is a vector space then by Lemma \ref{lem: over a field every pair is in constant free relative position} we have that $\CRP^\free_I(V) = \Fl_I(V) \times \Fl_I(V)$, and so in this case Theorem \ref{thm: relative position orbits} can be stated as follows.  

\begin{corollary}
 If $V$ is a vector space, then the formula $[E , F] \mapsto w_{E, F}$ gives a well-defined bijection $\GL(V) \backslash [ \Fl_I(V) \times \Fl_I(V) ] \to \Perm(I_s)$. 
\end{corollary}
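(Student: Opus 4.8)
The plan is to derive this corollary directly from Theorem~\ref{thm: relative position orbits} by specializing to the case $R = V$ a field (i.e.\ a one-object ring which happens to be a field). The key observation linking the two statements is that, over a field, the notions of ``successively free'', ``constant relative position'', and ``constant free relative position'' all collapse, so that $\CRP^\free_I(V)$ coincides with the full product $\Fl_I(V) \times \Fl_I(V)$.

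First I would verify that $\CRP^\free_I(V) = \Fl_I(V) \times \Fl_I(V)$ when $V$ is a vector space. Given any pair of $I$-flags $E, F$ in $V$, the intersection filtration $(E \cap F, I \times I)$ is a poset filtration, and since every subspace of a vector space is complemented, we can build an almost gradation $C$ of $(E \cap F, I \times I)$ termwise: for each $(i,j)$ choose a complement $C_{(i,j)}$ of $[E\cap F]_{<(i,j)}$ inside $[E\cap F]_{(i,j)}$. Since $I \times I$ is a product of well-ordered sets, Corollary~\ref{cor:every almost gradation of a well-ordered or product of well-ordered filtration is a gradation} upgrades this almost gradation to a gradation. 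Then Lemma~\ref{lem: over a field every pair is in constant free relative position} tells us precisely that for each $i \in I_s$ there is a unique $j \in J_s$ with $\dim C_{(i,j)} = 1$, and by symmetry (the lemma applies with the roles of $E$ and $F$ reversed) for each $j \in I_s$ there is a unique such $i$; moreover any $1$-dimensional vector space is free of rank $1$. This is exactly the definition of $E, F$ being in constant free relative position, so $(E,F) \in \CRP^\free_I(V)$. The reverse inclusion is trivial since $\CRP^\free_I(V) \subseteq \Fl_I(V) \times \Fl_I(V)$ by definition.

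Second, I would simply invoke Theorem~\ref{thm: relative position orbits} with $M = V$, noting that a vector space of dimension $|I_s|$ is a free module of rank $|I_s|$ over the field, and that the general linear \emph{group} $\GL(V)$ in the sense used in that theorem is the group of invertible linear self-maps of $V$. The theorem gives a well-defined bijection $\GL(V) \backslash \CRP^\free_I(V) \to \Perm(I_s)$ via $[E,F] \mapsto w_{E,F}$, and substituting the identification from the previous paragraph yields the desired bijection $\GL(V)\backslash[\Fl_I(V) \times \Fl_I(V)] \to \Perm(I_s)$.

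I do not anticipate a serious obstacle here: the corollary is essentially a dictionary translation of an already-proven theorem under the simplifying hypothesis that the base ring is a field. The only point requiring a small amount of care is the application of Lemma~\ref{lem: over a field every pair is in constant free relative position} in \emph{both} directions (once as stated with $(E,F)$ and once with $(F,E)$) to get the uniqueness clause for indices $j \in I_s$ as well as for $i \in I_s$, together with the remark that over a field ``dimension $1$'' and ``free of rank $1$'' are the same condition — this is what promotes membership in $\CRP_I(V)$ to membership in $\CRP^\free_I(V)$.
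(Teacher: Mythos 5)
Your proposal is correct and follows the paper's own route: the paper likewise observes that over a field every pair of $I$-flags is in constant free relative position (via Lemma \ref{lem: over a field every pair is in constant free relative position}, whose proof already rests on complementedness of subspaces and Corollary \ref{cor:every almost gradation of a well-ordered or product of well-ordered filtration is a gradation}), so that $\CRP^\free_I(V) = \Fl_I(V) \times \Fl_I(V)$, and then specializes Theorem \ref{thm: relative position orbits}. Your extra care in applying the lemma symmetrically in $E$ and $F$ and in identifying ``dimension $1$'' with ``free of rank $1$'' is exactly the detail the paper leaves implicit.
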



\subsection{The Flag Bruhat Decomposition}\label{sub: the flag bruhat decomposition}

Fix an $I$-flag $E \in \Fl_I(M)$ and denote by $\CRP_I(M , E)$ the set of $I$-flags in $\Fl_I(M)$ that are in constant relative position with $E$. Furthermore denote by $B_E$ the stabilizer of $E$ under the action of $\GL(M)$ on $\Fl_I(M)$. Suppose that $F \in \CRP(M , E)$ and let $C$ be a gradation of $F \cap E$. Then for any $b \in B_E$, $bC$ is a gradation of $bF \cap bE = bF \cap E$. Since $(bC)_{(i , j)} \cong C_{(i, j)}$ for all $i , j \in I$, it follows that $bF$ is in constant relative position with $E$ as well and indeed that $w_{bF, E} = w_{F , E}$. Hence we have a left action of $B_E$ on $\CRP_I(M , E)$, and the map $\CRP_I(M , E) \to \Perm(I_s)$ defined by $F \mapsto w_{F , E}$ lifts to a well-defined map on the quotient $B_E \backslash \CRP_I(M , E) \to \Perm(I_s)$. 

We are now in a position to state the Bruhat decomposition. 

\begin{theorem}[The Bruhat Decomposition]\label{thm: the bruhat decomposition}
Suppose that $I$ is a well-ordered set, $M$ is a free $R$-module of rank $|I_s|$ and $E$ is a successively free $I$-flag in $M$. Then the map $B_E \backslash \CRP_I(M , E) \to \Perm(I_s)$ defined by $[F] \mapsto w_{F , E}$ is a bijection. 
\end{theorem}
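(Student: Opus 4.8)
The strategy is to reduce Theorem \ref{thm: the bruhat decomposition} to Theorem \ref{thm: relative position orbits} by a standard dévissage between the two quotients $B_E \backslash \CRP_I(M, E)$ and $\GL(M) \backslash \CRP_I^\free(M)$. Recall that since $E$ is successively free, Proposition \ref{prop: relative position of successively free flags} tells us that whenever $F$ is in constant relative position with $E$, the flag $F$ is automatically successively free and the pair $(F, E)$ is in fact in constant \emph{free} relative position. Thus $\CRP_I(M , E)$ is precisely the set of $F \in \Fl_I(M)$ with $(F , E) \in \CRP_I^\free(M)$, and the assignment $[F] \mapsto w_{F,E}$ is exactly the restriction of the map from Theorem \ref{thm: relative position orbits} to pairs whose second coordinate is the fixed flag $E$.

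\emph{Well-definedness} was already observed in the paragraph preceding the statement: if $b \in B_E$ and $C$ is a gradation of $F \cap E$, then $bC$ is a gradation of $(bF) \cap E$ with $(bC)_{(i,j)} \cong C_{(i,j)}$, so $w_{bF , E} = w_{F , E}$. \emph{Surjectivity}: given $w \in \Perm(I_s)$, run the surjectivity construction from the proof of Theorem \ref{thm: relative position orbits} but using for the basis $(e_i : i \in I_s)$ of $M$ a basis adapted to $E$ — i.e. a basis arising from the standard-type gradation of $E$, which exists by Corollary \ref{cor: I-flags have gradations} together with successive freeness. Then the flag $F_i = \Span(e_{w(j)} \mid j \le i)$ is in constant free relative position with $E = \Span(e_j \mid j \le i)$ via the explicit gradation $C_{(i,j)} = \Span(e_j)$ if $j = w(i)$ and $0$ otherwise, and $w_{F,E} = w$. \emph{Injectivity} is the crux: suppose $F, F' \in \CRP_I(M,E)$ with $w_{F,E} = w_{F',E} =: w$. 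By the injectivity argument in Theorem \ref{thm: relative position orbits}, choosing free bases $(e_{i,j})$ of the gradation pieces $C_{(i,j)}$ of $F \cap E$ and $(f_{i,j})$ of the gradation pieces $D_{(i,j)}$ of $F' \cap E$ — indexed by the same set $A = \{(i , w(i)) : i \in I_s\}$ — the map $g : M \to M$ sending $e_{i,j} \mapsto f_{i,j}$ is invertible and satisfies $gF = F'$ and $gE = E$. The point is that $g$ lands in $B_E = \Stab_{\GL(M)}(E)$ precisely because $gE = E$, which holds since both $(e_{i,j})$ and $(f_{i,j})$ restrict (over the indices with $j$ ranging and $i$ fixed, or more precisely via the gradations $C^2, D^2$ of $E$) to bases producing the same flag $E$. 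Hence $[F] = [F']$ in $B_E \backslash \CRP_I(M,E)$.

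The main obstacle I anticipate is verifying cleanly that the constructed $g$ really stabilizes $E$, not merely that it carries some flag to $E$. In the proof of Theorem \ref{thm: relative position orbits} one had the freedom to choose the bases $(e_{i,j})$ and $(f_{i,j})$ of $C_{(i,j)}$ and $D_{(i,j)}$ independently; here one must additionally arrange that the induced bases of $E$ (obtained from the gradations $C^2$ and $D^2$ of $E$ supplied by Lemma \ref{lem: a gradation of the intersection gives a gradation of each filtration}(1), which by Proposition \ref{prop: all non chosen pairs are zero for constant relative position} have $C^2_j = C_{(w^{-1}(j), j)}$ and $D^2_j = D_{(w^{-1}(j), j)}$) determine the \emph{same} filtration $E$. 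Since for each $j \in I_s$ the piece $C_{(w^{-1}(j),j)}$ is a rank-one free summand complementing $E_{<j}$ in $E_j$, any choice of basis vector there, together with the analogous data below $j$, spans $E_j$; so $g(E_j) = g\big(\bigoplus_{\ell \le j, \ell \in I_s} C_{(w^{-1}(\ell),\ell)}\big) = \bigoplus_{\ell \le j} D_{(w^{-1}(\ell),\ell)} = E_j$, and $gE = E$. Once this is in hand, $g \in B_E$ and the theorem follows. The remaining verifications (that $F$ successively free forces a gradation of $F \cap E$ to exist at all, that $w_{F,E}$ is genuinely a bijection $I_s \to I_s$) are already packaged in Propositions \ref{prop: relative position of successively free flags} and \ref{prop: all non chosen pairs are zero for constant relative position} and need only be cited.
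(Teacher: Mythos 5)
Your proposal is correct and follows essentially the same route as the paper: both reduce the statement to Theorem \ref{thm: relative position orbits} after invoking Proposition \ref{prop: relative position of successively free flags} to pass to constant free relative position. The only (harmless) deviations are that for surjectivity you build the witness flag directly from a basis adapted to $E$ instead of transporting a pair via Proposition \ref{prop: naive orbit of the standard flag}, and that your worry about whether $g$ genuinely stabilizes $E$ is already settled by the statement of Theorem \ref{thm: relative position orbits} itself: applied to the pairs $(F,E)$ and $(F',E)$, its injectivity furnishes a single $g$ with $gF = F'$ \emph{and} $gE = E$, so $g \in B_E$ without re-entering its proof.
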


\begin{proof}
Note first that by Proposition \ref{prop: relative position of successively free flags}, every flag $F \in \CRP_I(M , E)$ is successively free and $F$ and $E$ are in constant free relative position. 

Now suppose that $w_{F , E} = w_{F', E}$. Then under the map $\GL(M) \backslash \CRP^\free_I(M) \to \Perm(I_s)$ we have that $[F , E]$ and $[F' , E]$ map to the same permutation. Hence by Theorem \ref{thm: relative position orbits}, there exists some $g \in \GL(M)$ such that $g (F , E) = (F' , E)$, which implies that $(gF , gE) = (F' , E)$. In particular we have $gE = E$, so $g \in B_E$, but then since $gF = F'$, this implies that $[F] = [F']$ in $B_E \backslash \CRP_I(M , E)$, so indeed the map is injective. 

Furthermore, given $w \in \Perm(I_s)$, again by Theorem \ref{thm: relative position orbits}, there exists a pair $(F , F') \in \CRP^\free_I(M)$ such that $w_{F , F'} = w$. However by Proposition \ref{prop: naive orbit of the standard flag}, there exists $g \in \GL(M)$ such that $g F' = E$. But we have $[gF , E] = [gF , gF'] = [F , F']$ in $\GL(M) \backslash \CRP^\free_I(M) \to \Perm(I_s)$, so $w_{gF , E} = w$. Then $gF \in \CRP_I(M)$, and we have that $[gF] \mapsto w$, so the map is surjective as well. 
\end{proof}

The Bruhat decomposition is less complicated over a field. Indeed by Lemma \ref{lem: over a field every pair is in constant free relative position}, every $I$-flag $F$ in a vector space $V$ is in constant relative position to a fixed flag $E \in \Fl_I(V)$, so we obtain the following. 

\begin{corollary}[The Bruhat Decomposition for Vector Spaces]\label{cor: the bruhat decomposition for vector spaces}
If $I$ is a well-ordered set, $V$ is a vector space of dimension $|I_s|$, and $E$ is any $I$-flag in $V$, then the map $B_E \backslash \Fl_I(V) \to \Perm(I_s)$ defined by $[F] \mapsto w_{F , E}$ is a bijection. 
\end{corollary}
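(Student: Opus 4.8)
The plan is to obtain this as an immediate consequence of Theorem \ref{thm: the bruhat decomposition}, applied with $M = V$ and $R$ the ground field, after checking that over a field the hypotheses of that theorem hold automatically and that $\CRP_I(V,E)$ is nothing but $\Fl_I(V)$.

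First I would dispatch two field-specific observations. The first is that $V$, being a vector space of dimension $|I_s|$, is a free module of rank $|I_s|$ over the field (every vector space has a basis), and that every $I$-flag $E$ in $V$ is successively free: for each $i \in I$ the quotient $E_i/E_{<i}$ is a vector space, hence free, so the definition of successively free is met. The second, slightly less immediate, observation is that any two $I$-flags $E, F$ in $V$ are in constant free relative position, so that in particular every $F \in \Fl_I(V)$ lies in $\CRP_I(V,E)$. To see this, note that over a field every subspace is complemented and, by Lemma \ref{lem: product of well-founded posets is well-founded}, $I \times I$ is well-founded; hence one builds an almost gradation of the intersection $E \cap F$ by transfinite recursion, and by Corollary \ref{cor:every almost gradation of a well-ordered or product of well-ordered filtration is a gradation} this almost gradation is in fact a gradation $C$. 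Lemma \ref{lem: over a field every pair is in constant free relative position} then supplies, for each $i \in I_s$, a unique $j \in I_s$ with $\dim C_{(i,j)} = 1$; applying the same lemma with the roles of $E$ and $F$ exchanged (which only relabels the indexing and leaves the relevant subquotients $[E\cap F]_{(i,j)}/[E\cap F]_{<(i,j)}$ unchanged) gives, for each $j \in I_s$, a unique $i \in I_s$ with $\dim C_{(i,j)} = 1$. Since a one-dimensional vector space is free of rank $1$, these are precisely the conditions defining constant free relative position, and a fortiori constant relative position. Thus $\CRP_I(V,E) = \Fl_I(V)$.

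With these reductions in place, I would simply invoke Theorem \ref{thm: the bruhat decomposition} applied to $M = V$ and the successively free flag $E$: it yields a bijection $B_E \backslash \CRP_I(V,E) \to \Perm(I_s)$, $[F] \mapsto w_{F,E}$, and since the domain equals $B_E \backslash \Fl_I(V)$ this is exactly the asserted bijection. The real content is already contained in Theorem \ref{thm: the bruhat decomposition} (and, behind it, Theorem \ref{thm: relative position orbits} together with Proposition \ref{prop: naive orbit of the standard flag}); the only point that needs genuine care here is the passage from ``almost gradation'' to ``gradation'' for $E \cap F$, so that Lemma \ref{lem: over a field every pair is in constant free relative position} is actually applicable — and that is exactly what Corollary \ref{cor:every almost gradation of a well-ordered or product of well-ordered filtration is a gradation} guarantees.
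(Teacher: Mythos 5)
Your proposal is correct and follows essentially the same route as the paper: the paper also deduces the corollary from Theorem \ref{thm: the bruhat decomposition} by observing that over a field every flag is successively free and, via Lemma \ref{lem: over a field every pair is in constant free relative position} (whose applicability rests on the almost-gradation-to-gradation step you cite from Corollary \ref{cor:every almost gradation of a well-ordered or product of well-ordered filtration is a gradation}), that $\CRP_I(V,E) = \Fl_I(V)$. Your extra care in applying the lemma symmetrically to get both directions of the constant-free-relative-position condition is a reasonable filling-in of a detail the paper leaves implicit.
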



\subsection{The General Linear Bruhat Decomposition}

Let $I$ be a well-ordered set, $M$ a free $R$-module of rank $|I_s|$ and $E$ a fixed successively free $I$-flag in $M$. Define a map $m_E : \GL(M) \to \Fl_I(M)$ by sending $g \mapsto gE$. The inverse image $\GL_E(M) := m_E^{-1}( \CRP_I(M , E) )$ is closed under both left and right multiplication by $B_E$, because given $g \in \GL_E(M)$ and $b \in B_E$, first of all we have $g b E = g E \in \CRP_I(M , E)$, and second of all we have $b g E \in \CRP_I(M , E)$ by Subsection \ref{sub: the flag bruhat decomposition}. Therefore we have a left and right action of $B_E$ on $\GL_E(M)$ by multiplication. The following theorem describes the double cosets, i.e. the orbits under the corresponding $B_E \times B_E$ action. 

\begin{theorem}[The General Linear Bruhat Decomposition]\label{thm: the general linear bruhat decomposition}
Let $I$ be a well-ordered set, $M$ be a vector space of rank $|I_s|$, and $E$ be a successively free $I$-flag in $M$. The formula $[g] \mapsto w_{gE , E}$ gives a well-defined bijection 
\begin{equation*}
B_E \backslash \GL_E(M) / B_E \to \Perm(I_s). 
\end{equation*}
\end{theorem}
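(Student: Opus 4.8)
The plan is to reduce this double-coset statement to the flag-level Bruhat decomposition already established in Theorem \ref{thm: the bruhat decomposition}, using the map $m_E : \GL(M) \to \Fl_I(M)$ and the fact that $B_E$ is the stabilizer of $E$. Recall from Proposition \ref{prop: naive orbit of the standard flag} (applied with $M$ in place of $k^{\oplus I_s}$, noting $E$ is successively free) that $m_E$ surjects onto the subfunctor of successively free flags, and in particular every $F \in \CRP_I(M,E)$ — which is successively free by Proposition \ref{prop: relative position of successively free flags} — lies in the image of $m_E$. Thus $m_E$ restricts to a surjection $\GL_E(M) = m_E^{-1}(\CRP_I(M,E)) \twoheadrightarrow \CRP_I(M,E)$. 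The fibers of this map are exactly the right cosets $g B_E$: if $gE = hE$ then $h^{-1}g E = E$, so $h^{-1}g \in B_E$. Hence $m_E$ descends to a bijection $\GL_E(M)/B_E \overset{\sim}{\to} \CRP_I(M,E)$.

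Next I would track the left $B_E$-action. The $B_E$-action on $\GL_E(M)$ by left multiplication is intertwined by $m_E$ with the $B_E$-action on $\CRP_I(M,E)$ discussed in Subsection \ref{sub: the flag bruhat decomposition}: indeed $m_E(bg) = bgE = b\cdot m_E(g)$. Since $m_E$ induces a bijection $\GL_E(M)/B_E \overset{\sim}{\to} \CRP_I(M,E)$ that is $B_E$-equivariant for these actions, passing to $B_E$-orbits on both sides gives a bijection
\begin{equation*}
B_E \backslash \GL_E(M) / B_E \overset{\sim}{\to} B_E \backslash \CRP_I(M , E).
\end{equation*}
Composing with the bijection $B_E \backslash \CRP_I(M,E) \to \Perm(I_s)$ sending $[F] \mapsto w_{F,E}$ from Theorem \ref{thm: the bruhat decomposition} yields the claimed bijection, and chasing through the identifications shows it sends the class $[g]$ to $w_{gE,E}$, which is exactly the asserted formula. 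Well-definedness of $[g]\mapsto w_{gE,E}$ also falls out: if $g' = b_1 g b_2$ with $b_i \in B_E$, then $g'E = b_1 g b_2 E = b_1(gE)$, and $w_{b_1(gE), E} = w_{gE, E}$ by the $B_E$-invariance of relative position noted in Subsection \ref{sub: the flag bruhat decomposition}.

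The only genuine content beyond bookkeeping is verifying that $m_E$ really does hit all of $\CRP_I(M,E)$; this is where Proposition \ref{prop: naive orbit of the standard flag} together with Proposition \ref{prop: relative position of successively free flags} does the work, so the ``hard part'' is essentially already in hand. One small point to be careful about is that Proposition \ref{prop: naive orbit of the standard flag} is stated for the standard flag in $k^{\oplus I_s}$, so I would first note that its proof only uses that $E$ has a gradation whose graded pieces are free — i.e. that $E$ is successively free — and that $M$ is free of rank $|I_s|$, both of which hold here; alternatively one fixes an isomorphism $M \cong R^{\oplus I_s}$ carrying $E$ to a successively free flag and applies the cited proposition's argument verbatim. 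Everything else (the fiber computation, the equivariance, and the quotient-of-quotients step) is routine coset manipulation.
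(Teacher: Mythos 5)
Your proposal is correct and follows essentially the same route as the paper: both reduce the double-coset statement to the flag-level Bruhat decomposition (Theorem \ref{thm: the bruhat decomposition}) together with the transitivity of $\GL(M)$ on successively free flags from Proposition \ref{prop: naive orbit of the standard flag} and the fact that every member of $\CRP_I(M,E)$ is successively free (Proposition \ref{prop: relative position of successively free flags}). The paper just checks well-definedness, injectivity, and surjectivity of $[g]\mapsto w_{gE,E}$ directly rather than first packaging $m_E$ into a $B_E$-equivariant bijection $\GL_E(M)/B_E \to \CRP_I(M,E)$, but the content is the same.
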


\begin{proof}
If $[g] = [g']$, then by definition $g' = b g c$ for $b , c \in B_E$, and we have that $w_{g' E , E} = w_{b g c E , E} = w_{b g E , E}$. But by the Bruhat decomposition (Theorem \ref{thm: the bruhat decomposition}), $w_{b g E , E} = w_{ g E , E}$, so indeed our formula gives a well-defined map. 

Suppose that $w_{gE , E} = e_{g' E , E}$. Then again by the Bruhat decomposition, we have that $[gE] = [g'E]$ in $B_E \backslash \CRP_I(M, E) \to \Perm(I_s)$, so by definition there exists $b \in B_E$ such that $bgE = g'E$. But this implies that $(g')^{-1} bg \in B_E$, and hence that $bgc = g'$ for some $c \in B_E$. Hence our map is injective. 

Finally, given $w \in \Perm(I_s)$, by the Bruhat decomposition, there exists an $I$-flag $F \in \CRP_I(M , E)$ such that $w = w_{F , E}$. But because the $\GL(M)$ action on the set of successively free $I$-flags is transitive (Proposition \ref{prop: naive orbit of the standard flag}), there exists $g \in \GL(M)$ such that $gE = F$. However $F \in \CRP_I(M , E)$, so by definition $g \in \GL_E(M)$. Hence we have $[g] \mapsto w_{gE , E} = w_{F , E} = w$, and our map is surjective as well. 
\end{proof}

Again the situation is much simpler for vector spaces. For a vector space $V$, every $I$-flag is successively free and $\CRP_I(V , E) = \Fl_I(V)$, so we have $\GL_E(V) = \GL(V)$. Hence we obtain the following corollary. 

\begin{corollary}[The General Linear Bruhat Decomposition for Vector Spaces]\label{thm: the general linear bruhat decomposition}
Let $I$ be a well-ordered set, $V$ be a vector space of dimension $|I_s|$, and $E$ be any $I$-flag in $V$. The formula $[g] \mapsto w_{gE , E}$ gives a well-defined bijection 
\begin{equation*}
B_E \backslash \GL(V) / B_E \to \Perm(I_s). 
\end{equation*}
\end{corollary}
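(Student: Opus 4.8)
The plan is to deduce this statement as a direct specialization of the General Linear Bruhat Decomposition proved just above, by verifying that over a field its hypotheses hold automatically and that the subgroup $\GL_E(V)$ appearing there is in fact all of $\GL(V)$.

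First I would check that the fixed $I$-flag $E$ is successively free. This is immediate over a field: for each $i \in I$ the quotient $E_i / E_{<i}$ is a subquotient of the vector space $V$, hence is itself a vector space and therefore a free module (of rank $1$ if $i \in I_s$ and rank $0$ otherwise). So the successive-freeness hypothesis on $E$ required by the theorem is satisfied by \emph{every} $I$-flag in $V$, in particular by $E$.

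Next I would invoke Lemma \ref{lem: over a field every pair is in constant free relative position}, which says that any pair of $I$-flags in a vector space $V$ is in constant free relative position, and hence a fortiori in constant relative position. Taking one of the two flags to be our fixed $E$, this gives $\CRP_I(V , E) = \Fl_I(V)$. Recalling that $\GL_E(V) = m_E^{-1}(\CRP_I(V , E))$ where $m_E : \GL(V) \to \Fl_I(V)$ is the orbit map $g \mapsto gE$, and that $\CRP_I(V , E)$ is now the whole target, we conclude $\GL_E(V) = m_E^{-1}(\Fl_I(V)) = \GL(V)$.

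Substituting $\GL_E(V) = \GL(V)$ into the General Linear Bruhat Decomposition then yields the asserted bijection $B_E \backslash \GL(V) / B_E \to \Perm(I_s)$, $[g] \mapsto w_{gE , E}$, with well-definedness, injectivity and surjectivity all inherited verbatim from the theorem. There is no real obstacle here: the corollary is a pure reduction to the already-proved theorem, and the only content is the two field-specific observations above, both of which have been recorded earlier in the paper, so I expect the proof to be only a few lines. The one point worth stating explicitly is the equality $\GL_E(V) = \GL(V)$, since it is precisely what collapses the double-coset space of the theorem onto the one in the corollary.
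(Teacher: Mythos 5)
Your proposal is correct and matches the paper's own argument: the paper also deduces the corollary by noting that over a field every $I$-flag is successively free and $\CRP_I(V,E) = \Fl_I(V)$ (via Lemma \ref{lem: over a field every pair is in constant free relative position}), so $\GL_E(V) = \GL(V)$ and the statement specializes directly from the General Linear Bruhat Decomposition. The only difference is that you spell out the field-specific observations slightly more explicitly than the paper, which passes over them in a sentence.
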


Let $R$ be a ring and let $E$ be the standard flag in $R^{\oplus I_s}$. Note that $E$ is successively free. Then define $\GL_{I_s , E}(R)$ to be the group of $I_s \times I_s$ matrices $g$ with entries in $R$ such that $g E \in \CRP_I(R^{\oplus I_s} , E)$. The following corollary follows from applying Theorem \ref{thm: the general linear bruhat decomposition} to the $R$-module $R^{\oplus I_s}$. 

\begin{corollary}[The General Linear Bruhat Decomposition over a Ring]\label{cor: the general linear bruhat decomposition over a ring}
Let $I$ be a well-ordered set and $R$ be a ring. Then 
\begin{equation*}
\GL_{I_s, E}(R) = \bigsqcup_{\sigma \in \Perm(I_s)} B_{I_s}(R) \sigma B_{I_s}(R)
\end{equation*}
\end{corollary}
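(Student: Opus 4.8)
The plan is to obtain the statement by transporting Theorem~\ref{thm: the general linear bruhat decomposition}, applied to the free $R$-module $M = R^{\oplus I_s}$ with $E$ the standard flag, through the usual identification of automorphisms with matrices. First I would fix the isomorphism $\GL(M) \cong \GL_{I_s}(R)$ sending an $R$-module automorphism of $R^{\oplus I_s}$ to the column-finite matrix recording its action on the standard basis $(e_j)_{j \in I_s}$; under this isomorphism the $\GL(M)$-action on $\Fl_I(M)$ becomes the matrix action of $\GL_{I_s}(R)$ on $\Fl_I(R^{\oplus I_s})$, the stabilizer $B_E = \Stab_{\GL(M)}(E)$ becomes $B_{I_s}(R)$ by Proposition~\ref{prop: stab of standard flag is upper triangulars}, and the subset $\GL_E(M) = m_E^{-1}(\CRP_I(M,E))$ becomes exactly $\GL_{I_s,E}(R)$ by the very definition of the latter. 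Since $E$ is successively free, Theorem~\ref{thm: the general linear bruhat decomposition} then gives that $[g] \mapsto w_{gE,E}$ is a bijection $B_{I_s}(R) \backslash \GL_{I_s,E}(R) / B_{I_s}(R) \to \Perm(I_s)$.

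Next, since $B_{I_s}(R)$-double cosets always partition $\GL_{I_s,E}(R)$ into pairwise disjoint classes, the only thing left is to match each $\sigma \in \Perm(I_s)$ with the double coset it labels. I would view $\sigma$ as its permutation matrix, which is column-finite with column-finite inverse (the matrix of $\sigma^{-1}$), hence lies in $\GL_{I_s}(R)$, and then compute $w_{\sigma E, E}$. For this one notes that $(\sigma E)_i = \Span(e_{\sigma(j)} \mid j \in I_s,\ j \le i)$, and that the function $C : I \times I \to \Sub(M)$ with $C_{(i,j)} = \Span(e_j)$ when $i \in I_s$ and $j = \sigma(i)$, and $C_{(i,j)} = 0$ otherwise, satisfies the two hypotheses of Lemma~\ref{lem: a gradation of the intersection gives a gradation of each filtration}(2) for the pair $\sigma E$, $E$ — this is precisely the span computation already carried out in the surjectivity half of the proof of Theorem~\ref{thm: relative position orbits}, with $w = \sigma$. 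Hence $C$ is a gradation of $(\sigma E) \cap E$ all of whose nonzero pieces are free of rank $1$, which simultaneously shows $\sigma E \in \CRP_I(M,E)$, i.e. $\sigma \in \GL_{I_s,E}(R)$, and that $w_{\sigma E, E} = \sigma$.

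Putting these together, the double coset $B_{I_s}(R)\,\sigma\,B_{I_s}(R)$ is exactly the fiber over $\sigma$ of the bijection above: injectivity forces these cosets to be disjoint for distinct $\sigma$, and surjectivity forces them to exhaust $\GL_{I_s,E}(R)$, which yields $\GL_{I_s,E}(R) = \bigsqcup_{\sigma \in \Perm(I_s)} B_{I_s}(R)\,\sigma\,B_{I_s}(R)$. The substantive step is the verification that $C$ is a gradation of $(\sigma E) \cap E$ with the claimed rank-$1$ pieces; everything else is the bookkeeping of transporting Theorem~\ref{thm: the general linear bruhat decomposition} across $\GL(M) \cong \GL_{I_s}(R)$, and I expect keeping the flag action, the stabilizer computation, and the definition of $\GL_{I_s,E}(R)$ all mutually consistent to be the main place care is needed rather than a genuine difficulty.
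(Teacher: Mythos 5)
Your proposal is correct and follows essentially the same route as the paper: transport Theorem \ref{thm: the general linear bruhat decomposition} for $M = R^{\oplus I_s}$ through the standard-basis isomorphism $\GL(M) \cong \GL_{I_s}(R)$, identify $B_E$ with $B_{I_s}(R)$ via Proposition \ref{prop: stab of standard flag is upper triangulars}, and use $w_{\sigma E, E} = \sigma$ to match double cosets with permutations. The only difference is that you spell out the verification of $w_{\sigma E , E} = \sigma$ (and that $\sigma \in \GL_{I_s, E}(R)$) by redoing the gradation construction from the surjectivity part of Theorem \ref{thm: relative position orbits}, which the paper simply asserts.
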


\begin{proof}
Under the isomorphism $\GL(R^{\oplus I_s}) \cong \GL_{I_s}(R)$ given by the standard basis, the stabilizer subgroup $B_E \subseteq \GL(R^{\oplus I_s})$ corresponds to the subgroup $B_{I_s}(R)$ of upper triangular matrices (see Proposition \ref{prop: stab of standard flag is upper triangulars}). Notice that if $\sigma \in \Perm(I_s)$ (and by abuse of notation we denote the corresponding permutation matrix in $\GL_{I_s}(R)$ also by $\sigma$), then we have $w_{\sigma E , E} = \sigma$. Therefore the double coset $B_{I_s}(R) \sigma B_{I_s}(R)$ corresponds to the permutation $\sigma$ under the bijection of Theorem \ref{thm: the general linear bruhat decomposition}, so the desired result follows. 
\end{proof}

If $k$ is a field, then $\GL_{I_s, E}(k) = \GL_{I_s}(k)$ so from Corollary \ref{cor: the general linear bruhat decomposition over a ring} we obtain the following: 

\begin{corollary}[The General Linear Bruhat Decomposition over a Field]\label{cor: the general linear bruhat decomposition over a field}
Let $I$ be a well-ordered set and $\ff$ be a field. Then 
\begin{equation*}
\GL_{I_s}(k) = \bigsqcup_{\sigma \in \Perm(I_s)} B_{I_s}(k) \sigma B_{I_s}(k)
\end{equation*}
\end{corollary}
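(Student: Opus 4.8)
The plan is to derive this as an immediate corollary of Corollary~\ref{cor: the general linear bruhat decomposition over a ring}, by checking that when $k$ is a field the group $\GL_{I_s , E}(k)$ appearing there is in fact all of $\GL_{I_s}(k)$; equivalently, that the ``constant relative position'' condition is vacuous over a field.

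First I would fix the standard $I$-flag $E$ in $k^{\oplus I_s}$, which is successively free. Since $k$ is a field, every $k$-module is free, so every $I$-flag in $k^{\oplus I_s}$ is automatically successively free, and moreover every subspace of $k^{\oplus I_s}$ is complemented. Now let $g \in \GL_{I_s}(k)$ be arbitrary. By the $\GL_{I_s}$-action on $\Fl_I$, the flag $gE$ is again an $I$-flag in $k^{\oplus I_s}$. Because every subspace is complemented, the filtration intersection $(gE) \cap E$ (indexed by the well-ordered set $I \times I$) admits an almost gradation, and by Corollary~\ref{cor:every almost gradation of a well-ordered or product of well-ordered filtration is a gradation} this almost gradation is a gradation $C$. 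Applying Lemma~\ref{lem: over a field every pair is in constant free relative position} to $C$ shows that for each $i \in I_s$ there is a unique $j \in I_s$ with $\dim_k C_{(i,j)} = 1$, and symmetrically a unique such $i$ for each $j \in I_s$; since $k$ is a field these one-dimensional pieces are free of rank $1$. Hence $gE$ and $E$ are in constant free relative position, i.e. $gE \in \CRP_I(k^{\oplus I_s}, E)$. As $g$ was arbitrary, $\GL_{I_s , E}(k) = \GL_{I_s}(k)$ — this is precisely the observation recorded in the remark preceding the vector-space form of the General Linear Bruhat Decomposition, applied with $V = k^{\oplus I_s}$.

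Finally I would invoke Corollary~\ref{cor: the general linear bruhat decomposition over a ring} with $R = k$, which gives
\begin{equation*}
\GL_{I_s , E}(k) = \bigsqcup_{\sigma \in \Perm(I_s)} B_{I_s}(k)\,\sigma\,B_{I_s}(k),
\end{equation*}
and combining this with the identification $\GL_{I_s , E}(k) = \GL_{I_s}(k)$ from the previous paragraph yields the corollary. There is essentially no genuine obstacle: the only point requiring care is that the constant-relative-position hypothesis is automatic over a field, and that is exactly Lemma~\ref{lem: over a field every pair is in constant free relative position} together with the fact that the intersection of two flags in a vector space always carries a gradation (every subspace being complemented).
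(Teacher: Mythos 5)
Your proof is correct and follows essentially the same route as the paper: the paper's own proof is simply the observation that over a field $\GL_{I_s,E}(k) = \GL_{I_s}(k)$ (since every pair of flags in a vector space is in constant free relative position, as in Lemma \ref{lem: over a field every pair is in constant free relative position}), combined with Corollary \ref{cor: the general linear bruhat decomposition over a ring}. You have merely spelled out the field-case details that the paper had already recorded in its discussion preceding the vector-space statements.
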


\begin{remark}\label{rem: the general linear bruhat decomposition over a ring}
It is \textbf{not} true for general rings $R$ that $\GL_{I_s}(R) = \GL_{I_s}(R)$ because the LHS will typically be larger (see e.g. \cite{onn-prasad-vaserstein2006}). 
\end{remark}


\subsection{The Functorial Bruhat Decomposition}
 
Let $I$ be a well-ordered set. We collect the previous results to give statements about the $k$-functors $\Fl_I$ and $\GL_{I_s}$. For the rest of the paper we define $B = B_{I_s}$ when there is no chance of confusion. There is both a left and right action of the $k$-group functor $B$ on the $k$-functor $\GL_{I_s}$ by left and right multiplication respectively, which gives a left action of $B \times B$ on $\GL_{I_s}$, explicitly defined by $b , c \in B(R) , g \in \GL_{I_s}(R) \mapsto b g c^{-1}$. If $\sigma \in \Perm(I_s)$, then the permutation matrix of $\sigma$ (which we also denote by $\sigma$) is in $\GL_{I_s}(k)$, and we define a subfunctor $B \sigma B \subseteq \GL_{I_s}$ by 
\begin{equation*}
R \mapsto B(R) \sigma B(R) = \{ b \sigma c \mid b , c \in B(R) \}.
\end{equation*} 

By definition the fppf-orbit of $\sigma$ under the $B \times B$ orbit is the fppf-image of the natural transformation $B \times B \to \GL_{I_s}$ given by $b , c \in B(R) \mapsto b \sigma c^{-1}$, which we denote by $\sh(B \sigma B)$ and by Equation \ref{eq: fppf-image description} is explicitly given by
\begin{equation}\label{eq: sheafification of BsigmaB}
R \mapsto \{ g \in \GL_{I_s}(R) \mid \exists \text{ an fppf-$R$-algebra } \varphi : R \to S \text{ with } \GL_{I_s}(\varphi)(g) = b \sigma c \text{ for } b , c \in B(S) \}. 
\end{equation}
Clearly we have $B \sigma B \subseteq \sh(B \sigma B)$. 

Recall that two subfunctors $\mY$ and $\mZ$ of a $k$-functor $\mX$ are called \emph{disjoint} if for every $k$-algebra $R$, the subsets $\mY(R)$ and $\mZ(R)$ of $\mX(R)$ are disjoint. Also recall that a family of subfunctors $(\mY_\alpha \mid \alpha \in A)$ of a $k$-functor $\mX$ are said to \emph{cover} $\mX$ if for all $k$-algebras $R$ \textbf{which are fields}, we have that $\mX(R) = \bigcup_{\alpha \in A} \mY_\alpha(R)$. 

\begin{theorem}[The Functorial General Linear Bruhat Decomposition]\label{thm: the functorial general linear bruhat decomposition}
Let $I$ be a well-ordered set. The subfunctors $\sh(B \sigma B)$ are disjoint and cover the $k$-functor $\GL_{I_s}$. 
\end{theorem}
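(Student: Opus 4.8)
The plan is to prove the two assertions separately; the covering statement is essentially immediate from the field case proved earlier, while disjointness is the real content.

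\textbf{Covering.} Let $R$ be a field. By Corollary~\ref{cor: the general linear bruhat decomposition over a field} we have $\GL_{I_s}(R) = \bigsqcup_{\sigma \in \Perm(I_s)} B(R)\sigma B(R)$, and since $B\sigma B \subseteq \sh(B\sigma B)$ as subfunctors of $\GL_{I_s}$, this gives $\GL_{I_s}(R) = \bigcup_\sigma B(R)\sigma B(R) \subseteq \bigcup_\sigma \sh(B\sigma B)(R) \subseteq \GL_{I_s}(R)$, so the family $(\sh(B\sigma B))$ covers $\GL_{I_s}$.

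\textbf{Disjointness.} Fix $\sigma \neq \tau$ in $\Perm(I_s)$ and a $k$-algebra $R$; I want to show $\sh(B\sigma B)(R) \cap \sh(B\tau B)(R) = \emptyset$. (For $R = 0$ every $k$-functor is a singleton, so I read the statement for $R \neq 0$, consistently with ``cover'' only being demanded of fields.) Suppose $g$ lies in the intersection. By the explicit description in Equation~\ref{eq: sheafification of BsigmaB} there exist fppf-$R$-algebras $\varphi : R \to S$ and $\psi : R \to T$, together with $b,c \in B(S)$ and $b',c' \in B(T)$, such that $\GL_{I_s}(\varphi)(g) = b\sigma c$ and $\GL_{I_s}(\psi)(g) = b'\tau c'$. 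The key move is to merge these two witnesses over $U := S \otimes_R T$: the map $j_S : S \to U$, $s \mapsto s\otimes 1$, is finitely presented by Lemma~\ref{lem: finitely presented is preserved by pushouts} and faithfully flat by Lemma~\ref{lem: flatness is preserved by pushouts}, so the composite $\chi := j_S \circ \varphi : R \to U$ is fppf, and similarly $j_T \circ \psi$ is fppf; moreover $\chi = j_T\circ\psi$ since $\varphi(r)\otimes 1 = 1\otimes \psi(r)$ in $U$ for all $r \in R$, both maps being the structure morphism of $U$ as an $R$-algebra. Applying $\GL_{I_s}(j_S)$ to the first relation and $\GL_{I_s}(j_T)$ to the second, and using functoriality together with the fact that $B$ is a subfunctor and that the permutation matrices $\sigma,\tau$ (being defined over $k$) are carried to themselves, I obtain that $\GL_{I_s}(\chi)(g) \in B(U)\sigma B(U) \cap B(U)\tau B(U)$.

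Now I descend to a field. Since $R \neq 0$ and $\chi$ is faithfully flat, hence injective (Proposition~\ref{prop: equivalent characterizations of faithful flatness}), $U \neq 0$, so $U$ has a prime ideal $\mathfrak p$; let $\kappa$ be its residue field and $\pi : U \to \kappa$ the canonical map. Then $\GL_{I_s}(\pi)\bigl(\GL_{I_s}(\chi)(g)\bigr) \in B(\kappa)\sigma B(\kappa) \cap B(\kappa)\tau B(\kappa)$, because upper triangular matrices go to upper triangular matrices and permutation matrices go to permutation matrices under any $k$-algebra homomorphism. But $\kappa$ is a field, so by Corollary~\ref{cor: the general linear bruhat decomposition over a field} the double cosets $B(\kappa)\sigma B(\kappa)$ and $B(\kappa)\tau B(\kappa)$ are disjoint whenever $\sigma \neq \tau$; hence $\sigma = \tau$, a contradiction, and the intersection is empty. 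The main obstacle is exactly this merging step: membership in $\sh(B\sigma B)(R)$ only produces a genuine product $b\sigma c$ after an unknown fppf base change, and the witness for $\sigma$ need not agree with the one for $\tau$, so one must first manufacture a single fppf $R$-algebra (the fibered product $S \otimes_R T$) over which both descriptions hold, and only then pass to a residue field to invoke the classical-style Bruhat decomposition established earlier. The remaining ingredients — fppf-ness of composites, and preservation of upper-triangularity and of permutation matrices under ring maps — are routine.
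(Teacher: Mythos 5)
Your proposal is correct and follows essentially the same route as the paper: the covering argument from Corollary \ref{cor: the general linear bruhat decomposition over a field} together with $B\sigma B \subseteq \sh(B\sigma B)$ is identical, and your key disjointness step of merging the two fppf witnesses over $S \otimes_R T$ (using Lemmas \ref{lem: flatness is preserved by pushouts} and \ref{lem: finitely presented is preserved by pushouts} and the equality $\iota_S \circ \varphi = \iota_T \circ \psi$) is exactly the paper's argument. The only, harmless, difference is the final step: the paper concludes by citing disjointness of $B(S\otimes_R T)\sigma B(S\otimes_R T)$ and $B(S\otimes_R T)\tau B(S\otimes_R T)$ over the ring $S\otimes_R T$ via Corollary \ref{cor: the general linear bruhat decomposition over a ring}, whereas you push further to a residue field of $S\otimes_R T$ and invoke only the field case — both are valid.
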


\begin{proof}
First of all, it follows from Corollary \ref{cor: the general linear bruhat decomposition over a ring} that $B \sigma B$ and $B \tau B$ are disjoint. Suppose that $g \in \sh(B\sigma B)(R) \cap \sh(B \tau B)(R)$. By definition there exist fppf-ring maps $\varphi : R \to S$ and $\psi: R \to T$ such that $\GL_{I_s}(\varphi)(g) \in B(S) \sigma B(S)$ and $\GL_{I_s}(\psi)(g) \in B(T) \sigma B(T)$. By Lemma \ref{lem: flatness is preserved by pushouts}, $\iota_S:  S \to S \otimes_R T$ and $\iota_T : T \to S \to S \otimes_R T$ are both faithfully flat and by Lemma \ref{lem: finitely presented is preserved by pushouts} they are finitely presented, hence they are fppf ring maps. As $B \sigma B$ and $B \tau B$ are subfunctors, we obtain that 
\begin{equation*}
B(S \otimes_R T) \sigma B(S \otimes_R T) \ni \GL_{I_s}(\iota_S \circ \varphi)(g)  =  \GL_{I_s}(\iota_T \circ \varphi)(g) \in B(S \otimes_R T) \tau B(S \otimes_R T). 
\end{equation*}
But $B(S \otimes_R T) \sigma B(S \otimes_R T)$ and $B(S \otimes_R T) \tau B(S \otimes_R T)$ are disjoint, so we obtain the desired contradiction. 

The fact that the subfunctors $( B \sigma B \mid \sigma \in \Perm(I_s) )$ cover $\GL_{I_s}$ follows from Corollary \ref{cor: the general linear bruhat decomposition over a field}, and since $B \sigma B \subseteq \sh(B \sigma B)$, this implies the desired covering. 
\end{proof}

There is a left action of the $k$-group functor $B$ on the $k$-functor $\Fl_I$ given by $b \in B(R), F \in \Fl_I(R) \mapsto bF$. Given $\sigma \in \Perm(I_s)$, we denote by $B \sigma E$ the orbit of $\sigma E$ under the $B$ action and by $X^\circ_\sigma$ its $B$ fppf-orbit, i.e. $\sh(B \sigma E) = X_\sigma^\circ$. Explicitly the latter is given by 
\begin{equation*}
X_\sigma^\circ(R) = \{ F \in \Fl_{I}(R) \mid \exists \text{ an fppf-$R$-algebra } \varphi : R \to S \text{ with } \Fl_{I}(\varphi)(F) = b \sigma E \text{ for } b \in B(S) \}. 
\end{equation*}
The subfunctors $X^\circ_\sigma$ for $\sigma \in \Perm(I_s)$ are called \emph{Schubert cells}. We note that $B \sigma E \subseteq X^\circ_\sigma$ for all $\sigma$.

\begin{theorem}[The Functorial Bruhat Decomposition]
Let $I$ be a well-ordered set. The subfunctors $X^\circ_\sigma$ are disjoint and cover the $k$-functor $\Fl_I$. 
\end{theorem}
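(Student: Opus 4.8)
The plan is to transport the Functorial General Linear Bruhat Decomposition (Theorem~\ref{thm: the functorial general linear bruhat decomposition}) along the $\GL_{I_s}$-action on $\Fl_I$: disjointness will be reduced to the ring-level Bruhat Decomposition (Theorem~\ref{thm: the bruhat decomposition}) applied over a tensor product, and the covering property to the vector-space specialization (Corollary~\ref{cor: the bruhat decomposition for vector spaces}). The argument runs in close parallel with the proof of Theorem~\ref{thm: the functorial general linear bruhat decomposition}.

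First I would record a ring-level fact feeding both halves: for every $k$-algebra $R$ and every $\sigma \in \Perm(I_s)$, the pair $(\sigma E^R, E^R)$ lies in $\CRP_I^\free(R^{\oplus I_s})$ with $w_{\sigma E^R, E^R} = \sigma$, a gradation of $(\sigma E^R)\cap E^R$ being $C_{(i,\ell)} = \Span(e_\ell)$ when $i \in I_s$ and $\ell = \sigma(i)$, and $C_{(i,\ell)} = 0$ otherwise. Since $B_E$ preserves relative position and $E^R$ is successively free, Theorem~\ref{thm: the bruhat decomposition} applied over $R$ (under the identification $\GL(R^{\oplus I_s}) \cong \GL_{I_s}(R)$ and $B_E \cong B$ of Proposition~\ref{prop: stab of standard flag is upper triangulars}) identifies $B(R)\sigma E^R$ with $\{ F \in \CRP_I(R^{\oplus I_s}, E^R) \mid w_{F,E^R} = \sigma \}$; in particular $B(R)\sigma E^R$ and $B(R)\tau E^R$ are disjoint whenever $\sigma \neq \tau$.

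For disjointness of the Schubert cells, suppose $F \in X^\circ_\sigma(R) \cap X^\circ_\tau(R)$. By the explicit description of $X^\circ_\sigma$ displayed above there are fppf ring maps $\varphi : R \to S$ and $\psi : R \to T$ with $\Fl_I(\varphi)(F) \in B(S)\sigma E^S$ and $\Fl_I(\psi)(F) \in B(T)\tau E^T$. By Lemmas~\ref{lem: flatness is preserved by pushouts} and~\ref{lem: finitely presented is preserved by pushouts} the coprojections $\iota_S : S \to S \otimes_R T$ and $\iota_T : T \to S \otimes_R T$ are fppf, and $\iota_S \circ \varphi = \iota_T \circ \psi$. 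Using that the $\GL_{I_s}$-action on $\Fl_I$ is compatible with change of rings, that change of rings carries the standard flag to the standard flag, and that $\GL_{I_s}(\iota_S)$ sends $\sigma$ to $\sigma$ and $B(S)$ into $B(U)$ (likewise for $\iota_T$), the common flag $\Fl_I(\iota_S\circ\varphi)(F) = \Fl_I(\iota_T\circ\psi)(F)$ lies in both $B(U)\sigma E^U$ and $B(U)\tau E^U$, where $U = S \otimes_R T$. By the ring-level fact above, applied over $U$, this forces $\sigma = \tau$; hence the $X^\circ_\sigma$ are pairwise disjoint.

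For the covering, let $K$ be a field and $F \in \Fl_I(K)$. Over a field every pair of $I$-flags is in constant relative position (Lemma~\ref{lem: over a field every pair is in constant free relative position}), so $\CRP_I(K^{\oplus I_s}, E^K) = \Fl_I(K)$, and Corollary~\ref{cor: the bruhat decomposition for vector spaces} shows that $F$ lies in the $B_E$-orbit of $\sigma E^K$ for $\sigma = w_{F, E^K}$; that is, $F \in B(K)\sigma E^K \subseteq X^\circ_\sigma(K)$, using $B\sigma E \subseteq X^\circ_\sigma$. Thus the $X^\circ_\sigma$ cover $\Fl_I$. I expect the only real friction to be the bookkeeping step in the disjointness argument, namely checking that $\Fl_I(\iota_S)$ carries $B(S)\sigma E^S$ into $B(U)\sigma E^U$; this is just the naturality identity $\Fl_I(\iota_S)(g \cdot F') = \GL_{I_s}(\iota_S)(g) \cdot \Fl_I(\iota_S)(F')$ combined with $\Fl_I(\iota_S)(E^S) = E^U$, and everything else mirrors the proof of Theorem~\ref{thm: the functorial general linear bruhat decomposition} almost verbatim.
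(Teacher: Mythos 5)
Your proof is correct and follows essentially the same route as the paper: ring-level disjointness of the orbits $B(R)\sigma E^R$ (via $w_{\sigma E, E} = \sigma$ and the Bruhat decomposition), the fppf pushout argument over $S \otimes_R T$ to get disjointness of the sheafified cells, and covering over fields via Lemma \ref{lem: over a field every pair is in constant free relative position} and Corollary \ref{cor: the bruhat decomposition for vector spaces} together with $B\sigma E \subseteq X^\circ_\sigma$. The paper states these steps more tersely (citing the analogous $\GL$-case proof for the pushout step), but the content is the same.
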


\begin{proof}
The fact that $B \sigma E$ and $B \tau E$ are disjoint for $\sigma \neq \tau$ follows from the fact that they are distinct orbits of a group action. It can be deduced from this that the subfunctors $X^\circ_\sigma$ and $X^\circ_\tau$ are disjoint for $\sigma \neq \tau$ using a proof similar to that of Theorem \ref{thm: the functorial general linear bruhat decomposition}. 

Furthermore, if $R$ is a $k$-algebra which is a field, then we have that $\CRP_I(R^{\oplus I_s} , E) = \Fl_I(R)$, and by Corollary \ref{cor: the bruhat decomposition for vector spaces}, $\Fl_I(R) = \bigsqcup_{\sigma \in \Perm(I_s)} B(R) \sigma E$. This implies that the family $(B \sigma E \mid \sigma \in \Perm(I_s))$ covers $\Fl_I(R)$, and as $B \sigma E \subseteq X_\sigma^\circ$, the desired covering follows. 
\end{proof}

We now discuss the relationship between $\sh(B \sigma B)$ and $X^\circ_\sigma$. 

\begin{lemma}\label{lem: sheafification of BsigmaB is B stable}
The subfunctor $\sh(B \sigma B) \subseteq \GL_\nn$ is stable under right multiplication by $B$. 
\end{lemma}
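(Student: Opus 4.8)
The statement to prove is that $\sh(B\sigma B) \subseteq \GL_{I_s}$ is stable under right multiplication by $B$, meaning: for every $k$-algebra $R$, every $g \in \sh(B\sigma B)(R)$, and every $b \in B(R)$, we have $gb \in \sh(B\sigma B)(R)$.

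\begin{proof}[Proof proposal]
The plan is to unwind the explicit description of $\sh(B \sigma B)$ given in Equation \ref{eq: sheafification of BsigmaB} and observe that the defining property is manifestly preserved by right translation. Let $R$ be a $k$-algebra, let $g \in \sh(B\sigma B)(R)$, and let $b \in B(R)$. First I would use Equation \ref{eq: sheafification of BsigmaB} to produce an fppf-$R$-algebra $\varphi : R \to S$ such that $\GL_{I_s}(\varphi)(g) = b' \sigma c'$ for some $b' , c' \in B(S)$. Next, since $\GL_{I_s}$ is a functor and $\GL_{I_s}(\varphi)$ is in fact a group homomorphism, I would compute
\begin{equation*}
\GL_{I_s}(\varphi)(gb) = \GL_{I_s}(\varphi)(g) \cdot \GL_{I_s}(\varphi)(b) = b' \sigma c' \cdot \GL_{I_s}(\varphi)(b).
\end{equation*}
Now $\GL_{I_s}(\varphi)(b) = B(\varphi)(b) \in B(S)$ because $B$ is a subgroup functor and its transition maps are restrictions of those of $\GL_{I_s}$; hence $c'' := c' \cdot \GL_{I_s}(\varphi)(b) \in B(S)$ since $B(S)$ is closed under multiplication. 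Therefore $\GL_{I_s}(\varphi)(gb) = b' \sigma c'' \in B(S)\sigma B(S)$, and the same fppf-$R$-algebra $\varphi$ witnesses, via Equation \ref{eq: sheafification of BsigmaB}, that $gb \in \sh(B\sigma B)(R)$. Since $R$, $g$, and $b$ were arbitrary, this shows $\sh(B\sigma B)$ is stable under right multiplication by $B$.

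There is essentially no obstacle here: the only point requiring care is that one must pass to the \emph{same} fppf cover $S$ for $g$ and for $gb$, which works precisely because right multiplication by an element already defined over $R$ does not require enlarging the cover, and because $B$ is closed under the group operation over any ring $S$. (One could alternatively phrase this more abstractly: $B\sigma B$ is a subfunctor stable under right multiplication by $B$ by definition, the right $B$-action on $\GL_{I_s}$ restricts to an action on $\im(B \times B \to \GL_{I_s})$, and fppf-image commutes with this action since sheafification is a functor respecting the $B$-action; but the direct verification above is the cleanest.)
\end{proof}
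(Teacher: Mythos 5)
Your proof is correct and follows essentially the same argument as the paper: unwind the explicit description of $\sh(B\sigma B)$ from Equation \ref{eq: sheafification of BsigmaB}, use that $\GL_{I_s}(\varphi)$ is a group homomorphism and that $B$ is a subfunctor to absorb $\GL_{I_s}(\varphi)(b)$ into the right $B(S)$-factor over the same fppf cover. No changes needed.
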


\begin{proof}
Given a $k$-algebra $R$,  $x \in \sh(B \sigma B)(R)$, and $b \in B(R)$, by Equation \ref{eq: sheafification of BsigmaB} there exists an fppf ring homomorphism $\varphi : R \to S$ and $b' , c' \in B(S)$ such that $\GL_\nn(\varphi)(x) = b' \sigma c'$. Then because $\GL_\nn(\varphi)$ is a group homomorphism, we have that $\GL_\nn(\varphi)(x b) = b' \sigma c' \GL_\nn(\varphi)(b)$ and since $B \subseteq \GL_\nn$ is a subfunctor, $\GL_\nn(\varphi)(b) \in B(S)$. Therefore again by Equation \ref{eq: sheafification of BsigmaB}, $x b \in \sh(B \sigma B)(R)$.
\end{proof} 

It follows from Lemma \ref{lem: sheafification of BsigmaB is B stable} that the quotient $\sh(B \sigma B) / B$ makes sense and that there is a monomorphism $\sh(B \sigma B) / B \to \GL_\nn / B$. The following proposition follows from Equation \ref{eq: image of a subfunctor under the quotient map}. 

\begin{proposition}\label{prop: identification of the schubert cells with subfunctors of the quotient}
The subfunctors $\sh(B \sigma B) / B \subseteq \GL_{I_s} / B$ and $X^\circ_\sigma \subseteq \mF_I$ correspond under the isomorphism $\GL_{I_s} / B \cong \mF_I$, 
\end{proposition}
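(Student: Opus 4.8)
The plan is to establish the identity of subfunctors by comparing $R$-points. First I would unwind the left-hand side. Applying the orbit-stabilizer theorem to the action of $\GL_{I_s}$ on $\Fl_I$ with base point $E$, where $\Stab_{\GL_{I_s}}(E) = B$ by Proposition \ref{prop: stab of standard flag is upper triangulars}, and using that $\sh(B\sigma B)$ is stable under right multiplication by $B$ (Lemma \ref{lem: sheafification of BsigmaB is B stable}), Equation \ref{eq: image of a subfunctor under the quotient map} identifies the subfunctor $\sh(B\sigma B)/B \subseteq \GL_{I_s}/B$, transported along the isomorphism $\GL_{I_s}/B \cong \mF_I$, with the subfunctor $\mZ_\sigma \subseteq \mF_I$ given by
\begin{equation*}
\mZ_\sigma(R) = \bigl\{\, F \in \Fl_I(R) \mid \exists \text{ an fppf ring map } \varphi : R \to S \text{ with } \Fl_I(\varphi)(F) = z\,E^S \text{ for some } z \in \sh(B\sigma B)(S) \,\bigr\}.
\end{equation*}
So it suffices to prove $\mZ_\sigma(R) = X^\circ_\sigma(R)$ for every $k$-algebra $R$, with $X^\circ_\sigma$ described explicitly just before the proposition.

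The inclusion $X^\circ_\sigma \subseteq \mZ_\sigma$ should be immediate: if $F \in X^\circ_\sigma(R)$, then there are an fppf ring map $\varphi : R \to S$ and $b \in B(S)$ with $\Fl_I(\varphi)(F) = b\sigma E^S$, and since $b\sigma \in B(S)\sigma B(S) \subseteq \sh(B\sigma B)(S)$ (using $B\sigma B \subseteq \sh(B\sigma B)$), one may take $z = b\sigma$. For the reverse inclusion I would argue as follows. Given $F \in \mZ_\sigma(R)$, fix an fppf $\varphi : R \to S$ with $\Fl_I(\varphi)(F) = z\,E^S$ and $z \in \sh(B\sigma B)(S)$. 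The explicit description of $\sh(B\sigma B)$ in Equation \ref{eq: sheafification of BsigmaB} then yields a further fppf ring map $\psi : S \to T$ and elements $b, c \in B(T)$ with $\GL_{I_s}(\psi)(z) = b\sigma c$. Since a composite of faithfully flat (resp. finitely presented) ring maps is again faithfully flat (resp. finitely presented), $\psi \circ \varphi : R \to T$ is fppf, and by naturality of the action morphism $\GL_{I_s} \times \Fl_I \to \Fl_I$, together with the fact that $E^S$ base-changes to $E^T$ along $\psi$,
\begin{equation*}
\Fl_I(\psi \circ \varphi)(F) = \Fl_I(\psi)\bigl(z\,E^S\bigr) = \GL_{I_s}(\psi)(z)\cdot E^T = b\sigma c\, E^T = b\sigma\, E^T,
\end{equation*}
the last equality because $c \in B(T) = \Stab_{\GL_{I_s}}(E)(T)$ fixes $E^T$. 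This exhibits $F \in X^\circ_\sigma(R)$, completing the comparison.

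The argument is a sequence of identifications, so the real content lies in checking that each is legitimate: that Equation \ref{eq: image of a subfunctor under the quotient map} is applicable with $\mZ = \sh(B\sigma B)$ and base point $E$ --- which is exactly the right $B$-stability supplied by Lemma \ref{lem: sheafification of BsigmaB is B stable} --- and that composites of fppf ring maps remain fppf. I do not expect a genuine obstacle here; the one spot demanding care is the final display, where one must use naturality of the $\GL_{I_s}$-action to transport the group elements across the change of rings $\psi$ and to recognize that the base-changed standard flag $E^S$ goes to $E^T$.
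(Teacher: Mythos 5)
Your proposal is correct and follows essentially the same route as the paper, which simply asserts that the proposition follows from Equation \ref{eq: image of a subfunctor under the quotient map} applied (via Lemma \ref{lem: sheafification of BsigmaB is B stable}) to the $B$-stable subfunctor $\sh(B\sigma B)$ and the base point $E$. Your double-inclusion check identifying the resulting subfunctor with $X^\circ_\sigma$ (using $B\sigma B \subseteq \sh(B\sigma B)$ one way, and Equation \ref{eq: sheafification of BsigmaB}, closure of fppf maps under composition, naturality of the action, and $c\,E^T = E^T$ the other way) just makes explicit the details the paper leaves implicit.
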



\section{The Infinite Bruhat Order}\label{sec: the infinite bruhat order}


For the remainder of the paper, we turn our attention to describing the closures of the subfunctors $B \sigma B \subseteq \GL_{I_s}$ and the Schubert cells $\sh(B \sigma B) / B \subseteq \GL_{I_s} / B$. We shall specialize to the case of $I = \omega + 1 = \{ 0 , 1 , \ldots, \omega \}$ so that $I_s = \nn = \{ 1 , 2 , \ldots \}$. Many of the proofs here rely heavily on this specialization, and the extension of these results to the case of general well-ordered sets $I$ is a potential direction for future research.  



\subsection{A Topology on the Space of Permutations} 

If $I$ is a set, we place a topology on $\Perm(I)$ as follows. There is an injective map $\Perm(I) \to I^I$ sending $\sigma \mapsto (\sigma(i) )_{i \in I}$, so we can identify $\Perm(I)$ as a subset of $I^I$. We now give $I$ the discrete topology, $I^I$ the product topology, and $\Perm(I) \subseteq I^I$ the subspace topology. In fact, since $I$ has the discrete topology, all functions $I \to I$ are continuous, hence we can identify the space $C(I , I)$ of continuous functions $I \to I$ with $I^I$, and the product topology on $I^I$ gives the \emph{compact-open} topology on $C(I , I)$. Therefore we call our subspace topology on $\Perm(I)$ the compact-open topology as well. 


\begin{proposition}
A sequence $\sigma_1 , \sigma_2 , \ldots $ in $\Perm(\nn)$ converges to $\sigma$ in the compact-open topology if and only if the sequence is \emph{eventually equal} to $\sigma$, i.e. if and only if for all $m \in \nn$ there exists $N \in \nn$ such that if $n \geq N$ then $\sigma_n(\ell) = \sigma(\ell)$ for all $\ell \leq m$. 
\end{proposition}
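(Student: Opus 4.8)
The plan is to unwind the definition of the compact-open topology on $\Perm(\nn) \subseteq \nn^\nn$ and translate convergence of a sequence of permutations into convergence coordinatewise, which for the discrete space $\nn$ just means eventual equality in each coordinate. First I would recall that a basic open neighbourhood of $\sigma$ in $\nn^\nn$ (with $\nn$ discrete and $\nn^\nn$ carrying the product topology) is of the form
\begin{equation*}
U_{F} = \{ \tau \in \nn^\nn \mid \tau(\ell) = \sigma(\ell) \text{ for all } \ell \in F \}
\end{equation*}
where $F \subseteq \nn$ is finite, and intersecting with $\Perm(\nn)$ gives the basic open neighbourhoods of $\sigma$ in the subspace topology. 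Since any finite $F$ is contained in an initial segment $\{1, \ldots, m\}$, the sets $V_m := \{ \tau \in \Perm(\nn) \mid \tau(\ell) = \sigma(\ell) \text{ for all } \ell \leq m \}$ form a neighbourhood basis at $\sigma$.

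Next I would prove the two directions. For ($\Leftarrow$): suppose the sequence $\sigma_1, \sigma_2, \ldots$ is eventually equal to $\sigma$ in the stated sense. Given any open $U \ni \sigma$, choose $m$ with $V_m \subseteq U$; by hypothesis there is $N$ with $\sigma_n \in V_m$ for all $n \geq N$, so $\sigma_n \in U$ for $n \geq N$, i.e.\ $\sigma_n \to \sigma$. For ($\Rightarrow$): suppose $\sigma_n \to \sigma$. Fix $m \in \nn$. Since $V_m$ is an open neighbourhood of $\sigma$, convergence gives $N$ such that $\sigma_n \in V_m$ for all $n \geq N$, which is exactly the statement that $\sigma_n(\ell) = \sigma(\ell)$ for all $\ell \leq m$ once $n \geq N$. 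That is precisely eventual equality.

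The only genuinely non-routine point is the observation that the product topology on $\nn^\nn$ restricted to $\Perm(\nn)$ is generated by the cylinder sets $V_m$ fixing an initial segment — and this is where I would be slightly careful, since a priori one fixes arbitrary finite sets of coordinates, not just initial segments; the reduction uses that every finite subset of $\nn$ lies below some $m$, which is immediate. Everything else is a direct unwinding of "eventually in every basic neighbourhood." So I do not expect any real obstacle; the proof is short and I would write it in two short paragraphs matching the two implications, after one sentence setting up the neighbourhood basis.

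\begin{proof}
Recall that $\Perm(\nn)$ carries the subspace topology inherited from the product topology on $\nn^\nn$ (with $\nn$ discrete), under the identification $\sigma \mapsto (\sigma(i))_{i \in \nn}$. A neighbourhood basis of a point $\sigma \in \nn^\nn$ in the product topology is given by the cylinder sets fixing finitely many coordinates, and since every finite subset of $\nn$ is contained in an initial segment $\{1 , \ldots, m\}$, the sets
\begin{equation*}
V_m = \{ \tau \in \Perm(\nn) \mid \tau(\ell) = \sigma(\ell) \text{ for all } \ell \leq m \}, \qquad m \in \nn,
\end{equation*}
form a neighbourhood basis of $\sigma$ in $\Perm(\nn)$.

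Suppose first that the sequence $\sigma_1, \sigma_2, \ldots$ is eventually equal to $\sigma$, i.e. for every $m \in \nn$ there exists $N$ with $\sigma_n(\ell) = \sigma(\ell)$ for all $\ell \leq m$ whenever $n \geq N$. Let $U$ be any open subset of $\Perm(\nn)$ containing $\sigma$. Choose $m$ with $V_m \subseteq U$. By hypothesis there exists $N$ with $\sigma_n \in V_m \subseteq U$ for all $n \geq N$. Hence $\sigma_n \to \sigma$ in the compact-open topology.

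Conversely, suppose $\sigma_n \to \sigma$. Fix $m \in \nn$. Since $V_m$ is an open neighbourhood of $\sigma$, there exists $N$ such that $\sigma_n \in V_m$ for all $n \geq N$; that is, $\sigma_n(\ell) = \sigma(\ell)$ for all $\ell \leq m$ whenever $n \geq N$. As $m$ was arbitrary, the sequence is eventually equal to $\sigma$.
\end{proof}
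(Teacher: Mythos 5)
Your proof is correct and takes essentially the same approach as the paper: both unwind the product topology on $\nn^\nn$ via basic (cylinder) open sets and check the two directions directly, your only cosmetic difference being the observation that the sets $V_m$ fixing an initial segment already form a neighbourhood basis at $\sigma$.
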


\begin{proof}
Recall that there is a base for the product topology on $\nn^\nn$ which consists of sets of the form $U_1 \times U_2 \times \ldots $ such that each $U_i$ is open in $\nn$ and $U_i = \nn$ for all but finitely many $i$. Then the sequence $(\sigma_n)$ converges to $\sigma$ if and only if for all basic open subsets $U = U_1 \times U_2 \times \ldots $, there exists $N \in \nn$ such that if $n \geq N$, then $\sigma_n \in B$. 

Now suppose that $(\sigma_n)$ converges to $\sigma$. Then given $m \in \nn$, since $\nn$ carries the discrete topology, the set $U = \{ \sigma(1)  \times \sigma(2) \times \ldots \times \{ \sigma(m) \} \times \nn \times \ldots$ is a basic open set containing $\sigma$, so there exists $N \in \nn$ such that for all $n \geq N$, $\sigma_n \in U$, which implies in particular that if $n \geq N$ the $\sigma_n(\ell) = \sigma(\ell)$ for all $\ell \leq m$. 

Conversely, suppose $(\sigma_n)$ is eventually equal to $\sigma$ and let $U = U_1 \times U_2 \times \ldots \times U_m \times \nn \times \ldots$ be a basic open set containing $\sigma$. Then by definition there exists some $N \in \nn$ such that if $n \geq N$ then $\sigma_n(\ell) = \sigma(\ell)$ for all $\ell \leq m$. Since $\sigma \in B$, this implies that $\sigma_n \in U$ for all $n \geq N$, so $(\sigma_n) \to \sigma$ as desired. 
\end{proof}


\begin{corollary}\label{cor: inverse of a convergent sequence is convergent}
Suppose that $\sigma_1 , \sigma_2, \ldots $ is a sequence in $\Perm(\nn)$, that converges to $\sigma$. Then $\sigma_1^{-1} , \sigma_2^{-1}, \ldots$ converges to $\sigma^{-1}$.
\end{corollary}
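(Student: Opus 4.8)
The plan is to work directly with the ``eventually equal'' characterization of convergence established in the proposition immediately preceding this corollary, rather than with open sets in the product topology. That characterization says that $\sigma_n \to \sigma$ in $\Perm(\nn)$ precisely when for every $m \in \nn$ there is $N \in \nn$ with $\sigma_n(\ell) = \sigma(\ell)$ for all $\ell \leq m$ whenever $n \geq N$. So to prove $\sigma_n^{-1} \to \sigma^{-1}$ it suffices to verify the same condition for the inverse sequence: given $m \in \nn$, produce $N$ such that $\sigma_n^{-1}(j) = \sigma^{-1}(j)$ for all $j \leq m$ and all $n \geq N$.

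First I would fix $m \in \nn$ and consider the finite set $\{ \sigma^{-1}(1), \sigma^{-1}(2), \ldots, \sigma^{-1}(m) \} \subseteq \nn$, which has a maximum, say $M$. Then I would apply the hypothesis $\sigma_n \to \sigma$ at the threshold $M$ to obtain $N \in \nn$ such that $\sigma_n(\ell) = \sigma(\ell)$ for all $\ell \leq M$ whenever $n \geq N$. Now fix $n \geq N$ and $j \leq m$. Since $\sigma^{-1}(j) \leq M$, the choice of $N$ gives $\sigma_n(\sigma^{-1}(j)) = \sigma(\sigma^{-1}(j)) = j$. Because $\sigma_n$ is a bijection of $\nn$, applying $\sigma_n^{-1}$ to both sides yields $\sigma^{-1}(j) = \sigma_n^{-1}(j)$. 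As $j \leq m$ was arbitrary, this is exactly the ``eventually equal'' condition for $(\sigma_n^{-1})$ and $\sigma^{-1}$, so $\sigma_n^{-1} \to \sigma^{-1}$.

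There is essentially no obstacle here; the only point that needs a word of care is that one must pass through the values $\sigma^{-1}(1), \ldots, \sigma^{-1}(m)$ (and take their maximum $M$) rather than naively using the threshold $m$, since controlling $\sigma_n$ on $\{1, \ldots, m\}$ does not control $\sigma_n^{-1}$ there. Using $M$ instead of $m$, together with the finiteness of that preimage set and the bijectivity of each $\sigma_n$, closes the argument cleanly.
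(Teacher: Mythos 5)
Your proof is correct and follows the same route as the paper's own argument: take the maximum $M$ of the finite set $\sigma^{-1}(\{1,\ldots,m\})$, apply the eventual-equality characterization at threshold $M$, and use bijectivity of $\sigma_n$ to conclude $\sigma_n^{-1}(j) = \sigma^{-1}(j)$ for $j \leq m$. No gaps.
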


\begin{proof}
Given $m \in \nn$, because $\sigma$ is a bijection, $\sigma^{-1}( \{ 1, \ldots, m \} ) \subseteq \nn$ is a set of cardinality $m$. Let $m'$ be the maximal element of this set. Then because $\sigma_1, \sigma_2, \ldots$ converges to $\sigma$, there exists $N \in \nn$ such that if $n \geq N$ then $\sigma_n(\ell) = \sigma(\ell)$ for all $\ell \leq m'$. But then for any such $n \geq N$, given $j \leq m$, we know that $\sigma^{-1}(j) \leq m'$, hence $\sigma_n(\sigma^{-1}(j)) = \sigma(\sigma^{-1}(j))$, hence $\sigma_n(\sigma^{-1}(j)) = j$, so $\sigma^{-1}_n(j) = \sigma^{-1}(j)$. So indeed $(\sigma_n)$ converges to $\sigma$. 
\end{proof}


It is well known that $S_n := \Perm([n])$ is generated by transpositions, i.e. permutations swapping two elements of $[n]$, but this is no longer the case for $\Perm(\nn)$. Given any $\sigma \in \Perm(I)$ for any set $I$, we define $\Supp(\sigma) = \{ i \in I \mid \sigma(i) \neq i \}$. Then because all of the transpositions in $\Perm(\nn)$ have finite support, the subgroup they generate is contained inside the subgroup $\Perm_\fin(\nn)$ of finitely supported permutations. This is a proper subgroup since there exist permutations with infinite support, for example the permutation $\rho$ which sends all odd $n$ to $n + 2$, all even $m$ with $m \neq 2$ to $m - 2$, and $2 \mapsto 1$. However as we shall see, it is true that every $\sigma \in \Perm(\nn)$ is a limit (in the compact-open topology) of a sequence of the form $( t_1 , t_2 t_1, t_3 t_2 t_1 , \ldots )$ where each $t_i$ is a transposition.


\subsection{The Bruhat Order} 


Suppose that $X$ and $Y$ are subsets of $\nn$ both of cardinality $n$, and that the elements of $X$ are denoted by $x_i$ for $1 \leq i \leq n$ such that $x_1 < x_2 < \ldots < x_n$ and similarly for $Y$. Then we say that $X \leq Y$ if and only if $x_i \leq y_i$ for all $i$. This defines a partial order on the set $\mP_n(\nn)$ of subsets of $\nn$ of cardinality $n$. Using this, we define a partial order, called the \emph{Bruhat order}, on $\Perm(\nn)$ by setting $\tau \leq \sigma$ if and only if for every $n \in \nn$, we have that $\{ \tau(1) , \ldots, \tau(n) \} \leq \{ \sigma(1), \ldots, \sigma(n) \}$.

\begin{note}[Tableau Criterion for the Bruhat Order]
The Bruhat order has the following interpretation in terms of Young tableaux. To a permutation $\sigma \in \Perm(\nn)$, we associate an infinite tableau of shape $(1 , 2 , 3 , \ldots )$ where the $n$th row has entries $\sigma(1) , \sigma(2) , \ldots, \sigma(n)$ written in increasing order. For example, the permutation $\rho$ which sends all odd $n$ to $n + 2$, all even $m$ with $m \neq 2$ to $m - 2$, and $2 \mapsto 1$ has one-line notation and associated tableau given respectively by 
\begin{align*}
3, 1, 5, 2, 7, 4, 9, 6, 11 , \ldots \ \ \ \ \ \ \ \ \ &\young(3,13,135,1235,12357)
\\ &\ \ \ \ \ \ \ \ \vdots
\end{align*}
Then for another permutation $\tau$ we have that $\tau \leq \sigma$ if and only if the entry in each box of the tableau associated to $\tau$ is less than or equal to the corresponding box in the tableau associated to $\sigma$.
\end{note} 


There is an alternate description of Bruhat order that will be useful to us. If $g \in \End_\nn(R)$, any ordered pair of subsets $X , Y \subseteq \nn$ defines a $|X| \times |Y|$ submatrix of $g$ by $g_{X,Y} = [ g_{i,j} ]_{i \in X, j \in Y}$. Let $r_{X , Y}(g)$ denote the rank of the submatrix $g_{X , Y}$. If $\sigma$ is a permutation matrix, then $r_{X , Y}(\sigma)$ is simply the number of $1$'s in the submatrix $\sigma_{X , Y}$. We adopt the following notational shorthand: for $m , n \in \nn$ with $m \leq n$, we let $[m , n] = \{ m , m + 1 , \ldots , n \}$, if $m < n$, we let $(m , n] = \{ m+ 1, \ldots, n\}$ and $[m ,n) = \{ m,\ldots, n-1 \}$, and we further shorten $[1 , n]$ to $[n]$. Then as a special case of above, for any $p , q \in \nn$ and $g \in \End_\nn(R)$, define $r_{p , q}(g) = r_{[p],[q]}(g)$ and $r_{\geq p , q}(g) = r_{[p, \infty), [q]}(g)$. Note that if $\sigma \in \Perm(\nn)$ then $r_{p , q}(\sigma) =  | \{ n \leq q \mid \sigma(n) \leq p \} |$ and $r_{\geq p , q}(\sigma) = | \{ n \leq q \mid \sigma(n) \geq p \} |$. The former is the number of $1$'s in the first $p$ rows and $q$ columns of the matrix of $\sigma$, and the latter is the number of $1$'s in the lower left submatrix of $\sigma$ with upper right corner $(p, q)$. 

\begin{proposition}\label{prop: region criterion for bruhat order}
Given $\sigma, \tau \in \Perm(\nn)$, we have that $\sigma \leq \tau$ if and only if $r_{i , j}(\sigma) \geq r_{i , j}(\tau)$ for all $i, j \in \nn$.
\end{proposition}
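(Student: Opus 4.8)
The plan is to reduce the statement to a known finite-dimensional fact about the Bruhat order on $S_n$ and then handle the passage to infinity by a careful "stabilization" argument exploiting the structure of $\Perm(\nn)$.

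First I would recall the classical statement: for permutations $u, w \in S_n$, one has $u \leq w$ in the Bruhat order if and only if $r_{i,j}(u) \geq r_{i,j}(w)$ for all $1 \leq i, j \leq n$, where $r_{i,j}$ counts the number of $1$'s of the permutation matrix in the top-left $i \times j$ block. This is a standard criterion (the "rank criterion" or "dot criterion"; see e.g. Björner--Brenti). The content of Proposition~\ref{prop: region criterion for bruhat order} is that the \emph{same} equivalence survives verbatim when we replace $S_n$ by $\Perm(\nn)$ and let $i,j$ range over all of $\nn$, \emph{with the definition of $\leq$ being the one given earlier in terms of the sets $\{\sigma(1),\dots,\sigma(n)\} \leq \{\tau(1),\dots,\tau(n)\}$ for all $n$}. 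So the real task is to translate between two descriptions — the "sorted subset comparison" description (the \emph{definition} of $\leq$ in the paper) and the "rank function" description — and show these coincide.

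The key observation to carry this out: both conditions are "local" in $n$, i.e. each can be checked by truncating. Concretely, I would prove the following chain. (i) For fixed $n$, the condition $\{\sigma(1),\dots,\sigma(n)\} \leq \{\tau(1),\dots,\tau(n)\}$ in $\mathcal{P}_n(\nn)$ is equivalent, by the usual "counting how many of the first $n$ values are $\leq i$" bookkeeping, to the statement $r_{i,n}(\sigma) \geq r_{i,n}(\tau)$ for all $i \in \nn$. Indeed, writing the sorted elements of $\{\sigma(1),\dots,\sigma(n)\}$ as $s_1 < \cdots < s_n$, the number of them that are $\leq i$ is exactly $r_{i,n}(\sigma)$; and for two sorted $n$-tuples $s_\bullet, t_\bullet$ one has $s_k \leq t_k$ for all $k$ iff $\#\{k : s_k \leq i\} \geq \#\{k : t_k \leq i\}$ for all $i$ — this is the elementary "majorization-type" lemma I would spell out in one line. (ii) Therefore $\sigma \leq \tau$ (which by definition means the comparison in (i) holds for \emph{every} $n$) is equivalent to $r_{i,j}(\sigma) \geq r_{i,j}(\tau)$ for all $i,j \in \nn$, since ranging $j$ over all of $\nn$ recovers exactly "for every $n$". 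This finishes the proof; no limiting argument in the compact-open topology is actually needed for this particular proposition — it is purely combinatorial and the infinitude of $\nn$ causes no trouble because both sides are conjunctions over $n$ (or over $(i,j)$) of conditions each involving only finitely much data.

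The step I expect to be the only mildly delicate one is (i): getting the direction "$r_{i,n}(\sigma) \geq r_{i,n}(\tau)$ for all $i$ implies $s_k \leq t_k$ for all $k$" stated cleanly. I would argue by contraposition: if $s_k > t_k$ for some $k$, pick $i = t_k$; then at least $k$ of the $t_\bullet$ are $\leq i$ (namely $t_1,\dots,t_k$), so $r_{i,n}(\tau) \geq k$, whereas at most $k-1$ of the $s_\bullet$ are $\leq i$ (since $s_k > t_k = i$ forces $s_k,\dots,s_n > i$), so $r_{i,n}(\sigma) \leq k-1 < r_{i,n}(\tau)$, contradicting the hypothesis. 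The converse direction is immediate by monotonicity. Everything else is definition-chasing. I would present the whole proof in roughly a half page, with the lemma of (i) as the technical core and the assembly into (ii) as a one-line quantifier manipulation.
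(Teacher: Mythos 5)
Your proof is correct and is essentially the same argument as the paper's: both directions come down to the observation that $r_{i,j}$ counts how many of the sorted first $j$ values are $\leq i$, with your contrapositive step (pick $i = t_k$) matching the paper's evaluation of the rank inequality at $i = \tau(m_\ell)$ and its contradiction argument for the other direction. Your appeal to the classical $S_n$ criterion is never actually used, since the direct combinatorial argument you give already proves the statement, exactly as in the paper.
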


\begin{proof}
To prove ($\Rightarrow$), suppose that $\sigma \leq \tau$ and let $i , j \in \nn$. Then by definition we have that $\{ \sigma(1) , \ldots , \sigma(j) \} \leq \{ \tau(1) , \ldots , \tau(j) \}$. Let $n_1 , \ldots n_j$ be a rearrangement of the numbers $1 , \ldots , j$ such that $\sigma(n_1) < \ldots < \sigma(n_j)$ and similarly define $m_1 , \ldots m_j$ for $\tau$. Then by definition $\sigma(n_\ell) \leq \tau(n_\ell)$ for all $1 \leq \ell \leq j$.

Now suppose for contradiction that $a:= r_{i , j}(\sigma) < r_{i , j}(\tau):= b$. Then we have that $\{ n_1, \ldots, n_a \} = \{ n \leq j \mid \sigma(n) \leq i \}$ and $\{ m_1 , \ldots m_b \} = \{ m \leq j \mid \tau(m) \leq i \}$. But then we have 
\begin{equation*}
\sigma(n_{a + 1}) \overset{(1)}{>} i \overset{(2)}{\geq} \tau(m_{a + 1})
\end{equation*}
where (1) follows from the definition of $a$, and (2) follows because $a < b$ so $a + 1 \leq b$. This contradicts that $\sigma \leq \tau$, as desired. 

Now we prove ($\Leftarrow$). Suppose that $r_{i , j}(\sigma) \geq r_{i , j}(\tau)$ for all $i, j \in \nn$, and let $q \in \nn$. As before, let $n_1 , \ldots n_q$ be a rearrangement of the numbers $1 , \ldots , q$ such that $\sigma(n_1) < \ldots < \sigma(n_q)$, and similarly define $m_1 , \ldots m_q$ for $\tau$. We prove that $\sigma(n_\ell)  \leq \tau(n_\ell)$ for all $1 \leq \ell \leq q$. By hypothesis we have $r_{\tau(m_\ell) , q}(\sigma) \geq r_{\tau(m_\ell) , q}(\tau) = \ell$, therefore $n_1, \ldots, n_\ell \in \{ n \leq q \mid \sigma(n) \leq \tau(m_\ell) \}$, so in particular we have $\sigma(n_\ell) \leq \tau(m_\ell)$. Therefore $\{ \sigma(1) , \ldots , \sigma(q) \} \leq \{ \tau(1) , \ldots , \tau(q) \}$. Since $q$ was arbitrary, we have that $\sigma \leq \tau$. 
\end{proof}


Note that for any two permutations $\sigma$ and $\tau$, we have that $\Supp(\sigma^{-1} \tau) = \{ n \in \nn \mid \sigma(n) \neq \tau(n) \}$. If $\sigma \neq \tau$, then $\Supp(\sigma^{-1} \tau)$ is nonempty, so it has a minimal element which we call $d(\sigma , \tau)$.


\begin{lemma}\label{lem: less than in bruhat implies first differing place less than}
Suppose that $\sigma , \tau \in \Perm(\nn)$ are such that $\sigma < \tau$, and let $q = d(\sigma , \tau)$. Then $\sigma(q) < \tau(q)$. 
\end{lemma}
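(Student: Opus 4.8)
Let $\sigma < \tau$ in the Bruhat order, let $q = d(\sigma,\tau)$ be the first place they differ, so $\sigma(i) = \tau(i)$ for all $i < q$ and $\sigma(q) \neq \tau(q)$. We want $\sigma(q) < \tau(q)$. Suppose not; since they differ, we would have $\sigma(q) > \tau(q)$.

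The plan is to use the rank characterization of Bruhat order from Proposition \ref{prop: region criterion for bruhat order}: $\sigma \leq \tau$ iff $r_{i,j}(\sigma) \geq r_{i,j}(\tau)$ for all $i,j$. Since $\sigma$ and $\tau$ agree on $\{1,\ldots,q-1\}$, for $j = q-1$ and any $i$ we have $r_{i,q-1}(\sigma) = r_{i,q-1}(\tau)$. Now I would look at column $j = q$ and the row index $i = \tau(q)$. We have $r_{i,q}(\tau) = |\{n \leq q \mid \tau(n) \leq \tau(q)\}| = |\{n \leq q-1 \mid \tau(n) \leq \tau(q)\}| + 1$, the $+1$ coming from $n = q$ itself since $\tau(q) \leq \tau(q)$. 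On the other hand $r_{i,q}(\sigma) = |\{n \leq q-1 \mid \sigma(n) \leq \tau(q)\}| + [\sigma(q) \leq \tau(q)]$. Under the assumption $\sigma(q) > \tau(q)$ the bracket is $0$. But $\sigma(n) = \tau(n)$ for $n \leq q-1$, so the two counts over $\{1,\ldots,q-1\}$ are equal; hence $r_{i,q}(\sigma) = r_{i,q}(\tau) - 1 < r_{i,q}(\tau)$. This contradicts $\sigma \leq \tau$ via Proposition \ref{prop: region criterion for bruhat order}, so we must have $\sigma(q) < \tau(q)$.

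I expect this to be essentially a short bookkeeping argument once the rank criterion is invoked; there is no serious obstacle. The one point to be careful about is the edge case $q = 1$, where $\{1,\ldots,q-1\} = \emptyset$ and the sums over that range are empty — the argument still goes through, giving $r_{\tau(1),1}(\sigma) = [\sigma(1) \leq \tau(1)] = 0 < 1 = r_{\tau(1),1}(\tau)$. The only other thing worth double-checking is that $\sigma < \tau$ (strict) is actually used only to guarantee $q$ is well-defined (i.e. $\sigma \neq \tau$); the inequality $\sigma(q) < \tau(q)$ then follows purely from $\sigma \leq \tau$ together with $q$ being the first difference. So the proof reduces to: assume for contradiction $\sigma(q) > \tau(q)$, exhibit the pair $(i,j) = (\tau(q), q)$ at which the rank inequality fails, and cite Proposition \ref{prop: region criterion for bruhat order}.
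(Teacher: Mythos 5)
Your proof is correct and takes essentially the same route as the paper: both apply the rank criterion of Proposition \ref{prop: region criterion for bruhat order} at the single position $(\tau(q),q)$, using that the first $q-1$ columns of $\sigma$ and $\tau$ coincide, to force $\sigma(q)\leq\tau(q)$ and hence $\sigma(q)<\tau(q)$. Your version merely phrases the counting as a contradiction argument, while the paper argues directly; the content is identical.
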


\begin{proof}
By Proposition \ref{prop: region criterion for bruhat order} we must have $r_{\tau(q) , q}(\sigma) \geq r_{\tau(q) , q}(\tau)$. However since the first $q - 1$ columns of $\sigma$ and $\tau$ are the same and $\tau$ has a $1$ in the $(\tau(q) , q)$ entry, $\sigma$ must have that $\sigma(q) \leq \tau(q)$. But by hypothesis $\sigma$ and $\tau$ differ in the $q$th column so it must be that $\sigma(q) < \tau(q)$. 
\end{proof}


\begin{lemma} \label{lem: composing with a transposition in the bruhat order}
Suppose that $\tau \in \Perm(\nn)$ and $p , q \in \nn$ with $p < q$. Then $\tau (p , q) < \tau$ if and only if $\tau(p) > \tau(q)$. 
\end{lemma}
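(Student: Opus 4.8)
The plan is to set $\sigma := \tau(p,q)$ and first record its explicit action: $\sigma(p) = \tau(q)$, $\sigma(q) = \tau(p)$, and $\sigma(n) = \tau(n)$ for all $n \notin \{p,q\}$. Since $\tau$ is a bijection and $p \neq q$ we have $\tau(p) \neq \tau(q)$, so $\sigma \neq \tau$ in every case; hence the strict inequality $\sigma < \tau$ is equivalent to $\sigma \leq \tau$, and I will work with the latter throughout, invoking the characterization of the Bruhat order from Proposition \ref{prop: region criterion for bruhat order} in terms of the rank functions $r_{i,j}(\mu) = |\{n \leq j : \mu(n) \leq i\}|$.

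For the direction ($\Leftarrow$), suppose $\tau(p) > \tau(q)$. By Proposition \ref{prop: region criterion for bruhat order} it suffices to show $r_{i,j}(\sigma) \geq r_{i,j}(\tau)$ for all $i,j \in \nn$. Since $\sigma$ and $\tau$ agree off $\{p,q\}$, I split into three cases according to where $j$ sits. If $j < p$, then neither $p$ nor $q$ lies in $[j]$, so $r_{i,j}(\sigma) = r_{i,j}(\tau)$. If $j \geq q$, then both $p$ and $q$ lie in $[j]$, and the two contributions $[\tau(p)\leq i]$ and $[\tau(q)\leq i]$ appear (merely reindexed) on both sides, so again $r_{i,j}(\sigma) = r_{i,j}(\tau)$. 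Finally, if $p \leq j < q$, then only $p$ lies in $[j]$, giving $r_{i,j}(\sigma) - r_{i,j}(\tau) = [\tau(q)\leq i] - [\tau(p)\leq i]$, which is $\geq 0$ for every $i$ precisely because $\tau(q) < \tau(p)$. Hence $\sigma \leq \tau$, and since $\sigma \neq \tau$ this gives $\sigma < \tau$.

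For the direction ($\Rightarrow$), suppose $\sigma = \tau(p,q) < \tau$. Then $\Supp(\sigma^{-1}\tau) = \{n : \sigma(n) \neq \tau(n)\} = \{p,q\}$, and since $p < q$ its minimal element is $d(\sigma,\tau) = p$. Applying Lemma \ref{lem: less than in bruhat implies first differing place less than} to the pair $\sigma < \tau$ yields $\sigma(p) < \tau(p)$, i.e. $\tau(q) < \tau(p)$, which is exactly $\tau(p) > \tau(q)$.

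I do not anticipate any genuine obstacle here; the only points requiring care are the bookkeeping in the case analysis for $r_{i,j}$ (in particular confirming that the case $j \geq q$ yields an equality rather than a nontrivial inequality) and the observation that $\sigma \neq \tau$ always, which makes the passage between $\leq$ and $<$ in the Bruhat order harmless.
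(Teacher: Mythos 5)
Your proof is correct and follows essentially the same route as the paper: the forward direction applies Lemma \ref{lem: less than in bruhat implies first differing place less than} at the first differing place $p$, and the reverse direction verifies $r_{i,j}(\tau(p,q)) \geq r_{i,j}(\tau)$ and invokes Proposition \ref{prop: region criterion for bruhat order}. Your case analysis organized by the column index $j$ is just a mild repackaging of the paper's bookkeeping, and your explicit remark that $\tau(p,q) \neq \tau$ (so $\leq$ upgrades to $<$) is a point the paper leaves implicit.
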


\begin{proof} Let $\rho = \tau (p , q)$. It is clear from the definition of the transposition $(p , q)$ that for any $n \in \nn \smallsetminus \{ p , q \}$, we have $\rho(n) = \tau(n)$. 

First we prove ($\Rightarrow$). Suppose that $\tau (p , q)  < \tau$. Then $p = d(\tau , \tau(p, q))$ and so by Lemma \ref{lem: less than in bruhat implies first differing place less than} we have that $\tau(q) = \tau (p , q) (p) < \tau(p)$ as desired. 

Now we prove ($\Leftarrow$). Suppose that $\tau(p) > \tau(q)$. Since the matrix of $\tau (p , q)$ is obtained from that of $\tau$ by swapping columns $p$ and $q$, notice that $r_{i, j}( \tau (p , q) ) = r_{i, j}(\tau)$ as long as at least one of the following conditions is satisfied: (1) $j < p$, (2) $j \geq q$, (3) $i < \tau(q)$, or (4) $i \geq \tau(p)$. If $\tau(q) \leq i < \tau(p)$ and $p \leq j < q$, then the submatrix $(\tau (p , q))_{i , j}$ is identical to that of $\tau_{i , j}$ except with an extra $1$ in the $(\tau(q) , p)$ position, so we have $r_{i, j}( \tau (p , q) ) = r_{i, j}(\tau) + 1$. Therefore for any $i , j \in \nn$, we have $r_{i, j}( \tau (p , q) ) \geq r_{i, j}(\tau)$, which implies by Proposition \ref{prop: region criterion for bruhat order} that $\tau (p , q) < \tau$. 
\end{proof}


\begin{lemma}\label{lem: going down in bruhat using transpositions}
If $\sigma , \tau \in \Perm(\nn)$ with $\sigma < \tau$, let $p = d(\sigma , \tau)$. Then there exists $q \in \nn$ with $p < q$ such that if $t = (p ,  q)$ then

\begin{enumerate}

\item $\sigma \leq \tau t < \tau$,

\item $\sigma(p) \leq \tau t(p) < \tau(p)$, and 

\item $\tau t(m) = \tau(m)$ for all $m < p$. 

\end{enumerate}
\end{lemma}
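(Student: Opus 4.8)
The plan is to produce the witness $q$ explicitly and then verify the three properties using the rank criterion (Proposition \ref{prop: region criterion for bruhat order}) and the companion Lemmas \ref{lem: less than in bruhat implies first differing place less than} and \ref{lem: composing with a transposition in the bruhat order}. By Lemma \ref{lem: less than in bruhat implies first differing place less than} we know $\sigma(p) < \tau(p)$. The natural candidate is to take $q$ to be the position in $\tau$ of some value strictly between $\sigma(p)$ and $\tau(p)$ (inclusive of $\sigma(p)$), chosen as small as possible to be ``admissible.'' More precisely, I would set $q = \tau^{-1}(v)$ where $v$ is chosen from the set $\{\, \tau(m) : m > p,\ \tau(m) < \tau(p) \,\}$; I must first argue this set is nonempty. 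It is nonempty because $\sigma(p)$ is a value which, among $\sigma(1),\ldots$, appears; since $\sigma$ and $\tau$ agree on $\{1,\ldots,p-1\}$ and $\sigma(p) < \tau(p)$, the value $\sigma(p)$ is not hit by $\tau$ on $\{1,\ldots,p\}$, so $\tau^{-1}(\sigma(p)) > p$, and $\tau(\tau^{-1}(\sigma(p))) = \sigma(p) < \tau(p)$. So at minimum $v = \sigma(p)$ works, giving $q := \tau^{-1}(\sigma(p)) > p$ as a concrete choice. I expect this explicit choice $q = \tau^{-1}(\sigma(p))$ to be the cleanest, and I would use it rather than an abstract minimization.

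With $t = (p,q)$ and this choice of $q$, property (3) is immediate: $\tau t$ differs from $\tau$ only in positions $p$ and $q$, and since $p = d(\sigma,\tau)$ we have $p \geq 1$ and all $m < p$ are untouched. For property (2): $\tau t(p) = \tau(q) = \sigma(p)$, so $\tau t(p) = \sigma(p) \leq \sigma(p)$ trivially and $\sigma(p) < \tau(p)$ by Lemma \ref{lem: less than in bruhat implies first differing place less than}; hence $\sigma(p) \leq \tau t(p) < \tau(p)$ with the first inequality actually an equality in this choice (the statement only asks for $\leq$). For the right half of property (1), namely $\tau t < \tau$: since $p < q$ and $\tau(p) > \tau(q)$ (because $\tau(p) > \sigma(p) = \tau(q)$), Lemma \ref{lem: composing with a transposition in the bruhat order} gives $\tau(p,q) = \tau t < \tau$ directly.

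The main work — and the main obstacle — is the left half of property (1): $\sigma \leq \tau t$. For this I would use Proposition \ref{prop: region criterion for bruhat order} and show $r_{i,j}(\sigma) \geq r_{i,j}(\tau t)$ for all $i,j$. I know $r_{i,j}(\sigma) \geq r_{i,j}(\tau)$ already (since $\sigma < \tau$), and from the analysis in the proof of Lemma \ref{lem: composing with a transposition in the bruhat order}, $r_{i,j}(\tau t)$ equals $r_{i,j}(\tau)$ except in the ``rectangle'' $\tau(q) \leq i < \tau(p)$ and $p \leq j < q$, where $r_{i,j}(\tau t) = r_{i,j}(\tau) + 1$. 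So I only need $r_{i,j}(\sigma) \geq r_{i,j}(\tau) + 1$ for $(i,j)$ in that rectangle. Here I use that $\sigma$ agrees with $\tau$ on columns $1,\ldots,p-1$ and has a $1$ in column $p$ at row $\sigma(p) = \tau(q)$. For $(i,j)$ with $p \leq j < q$ and $\tau(q) \leq i < \tau(p)$: the submatrix $\sigma_{[i],[j]}$ contains all the $1$'s of $\tau_{[i],[j]}$ coming from columns $< p$ (they agree there), plus the $1$ in position $(\sigma(p),p) = (\tau(q),p)$ which lies inside $[i]\times[j]$ since $\tau(q) \leq i$ and $p \leq j$; meanwhile column $p$ of $\tau$ restricted to rows $[i]$ contributes $0$ since $\tau(p) > i$. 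A careful bookkeeping of these $1$'s — comparing $r_{i,j}(\sigma)$ against $r_{i,j}(\tau)$ column by column on $\{1,\ldots,j\}$ — should yield $r_{i,j}(\sigma) \geq r_{i,j}(\tau) + 1$ on the rectangle, completing the verification. I anticipate the delicate point is handling the contribution of columns $p+1,\ldots,j$ of $\sigma$ (about which we have no direct information) and arguing that $\sigma$ cannot ``lose'' $1$'s relative to $\tau$ there faster than it ``gains'' the one at $(\tau(q),p)$; this may require invoking $r_{i,j}(\sigma)\geq r_{i,j}(\tau)$ together with the column-$p$ gain, rather than a naive count.
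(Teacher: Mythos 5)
Your construction of $q$ is where the proof breaks. The paper takes $q$ to be the \emph{minimal} element of the (nonempty) set $\{\, n > p \mid \sigma(p) \le \tau(n) < \tau(p) \,\}$, and this minimality is exactly what makes the rank bookkeeping go through: for $p \le j < q$ and $\sigma(p) \le i < \tau(p)$, minimality forces $r_{[\sigma(p), i],[p,j]}(\tau) = 0$, while $\sigma$ contributes a $1$ at $(\sigma(p),p)$; splitting $[i]\times[j]$ into three regions then yields $r_{i,j}(\sigma) \ge r_{i,j}(\tau) + 1$ on the critical rectangle. With your explicit choice $q = \tau^{-1}(\sigma(p))$ this inequality --- and in fact conclusion (1) itself --- can fail. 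Concretely, extend by the identity the permutations $\tau = 4\,2\,1\,3$ and $\sigma = 1\,4\,2\,3$ (one-line notation): then $\sigma < \tau$, $p = d(\sigma,\tau) = 1$, and your $q = \tau^{-1}(1) = 3$, so $\tau t = 1\,2\,4\,3$; but $\{\sigma(1),\sigma(2)\} = \{1,4\} \not\le \{1,2\} = \{\tau t(1), \tau t(2)\}$, so $\sigma \not\le \tau t$. (At $(i,j) = (2,2)$, which lies in your rectangle, $r_{2,2}(\sigma) = 1 = r_{2,2}(\tau)$, not $r_{2,2}(\tau)+1$.) The paper's minimal choice here is $q = 2$, giving $\tau t = 2\,4\,1\,3 \ge \sigma$.

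The difficulty you flag at the end --- controlling columns $p+1,\dots,j$ of $\sigma$ versus $\tau$ --- is not a technicality that more careful counting can absorb; it is precisely where an arbitrary admissible $q$ fails and where minimality is used. The rest of your outline (nonemptiness of the candidate set via $\tau^{-1}(\sigma(p)) > p$, properties (2) and (3), and $\tau t < \tau$ via Lemma \ref{lem: composing with a transposition in the bruhat order}) matches the paper and is fine; the repair is to replace your $q$ by the minimum of $\{\, n > p \mid \sigma(p) \le \tau(n) < \tau(p) \,\}$ and then carry out the three-region comparison as in the paper's proof.
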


\begin{proof}
First of all, we let $r = \tau^{-1}(\sigma(p))$, and we claim that $p < r$. Indeed if $p = r$, then $\sigma^{-1} \tau(r) = p = r$, and hence $p = r \notin \Supp(\sigma^{-1} \tau)$, a contradiction. On the other hand, if $r < p$ then $\sigma^{-1} \tau(r) = p \neq r$, hence $r \in \Supp(\sigma^{-1} \tau)$ contradicting the minimality of $p$. 

Now we note that Lemma \ref{lem: less than in bruhat implies first differing place less than} implies that $\sigma(p) < \tau(p)$. Therefore we may consider the set $\{ n > p \mid \sigma(p) \leq \tau(n) < \tau(p) \}$. As we have just shown, $r > p$ and $\tau(r) = \sigma(p)$, so this set contains $r$, and is therefore nonempty. Hence it has a minimal element, which we call $q$. By definition we have $p < q$ and also $\tau(p) > \tau(q)$, hence by Lemma \ref{lem: composing with a transposition in the bruhat order} it follows that $\tau (p , q) < \tau$. Additionally (3) is clear since $p < q$. Furthermore, by definition of $q$ we have $\sigma(p) \leq \tau(q) <\tau(p)$, so (2) follows because $\tau t(p) = \tau(q)$.

Now we claim that $\sigma \leq \tau (p , q)$. On the one hand, since the matrix of $\tau (p , q)$ is obtained from that of $\tau$ by swapping columns $p$ and $q$, notice that $r_{i, j}( \tau (p , q) ) = r_{i, j}(\tau)$ as long as at least one of the following conditions is satisfied: (1) $j < p$, (2) $j \geq q$, (3) $i < \sigma(p)$, or (4) $i \geq \tau(p)$.

Therefore as $\sigma \leq \tau$, for any such $i$ and $j$ by Proposition \ref{prop: region criterion for bruhat order} we have that $r_{i, j}( \sigma ) \geq r_{i, j}( \tau (p , q) )$. Now suppose that $\sigma(p) \leq i < \tau(p)$ and $p \leq j < q$. We break up the submatrices of $\sigma$ and $\tau(p , q)$ consisting of the first $i$ rows and $j$ columns into three disjoint regions: (a) $[1 , \sigma(p) ) \times [j]$, (b) $(\sigma(p) , i] \times [1 , p)$, and (c) $[ \sigma(p) , i] \times [p , j]$ so that we have
\begin{equation*}
r_{i , j}(\sigma) = r_{[1 , \sigma(p) ),  [j]}(\sigma) + r_{ (\sigma(p) , i] , [1 , p)}(\sigma) + r_{[ \sigma(p) , i] , [p , j] }(\sigma).
\end{equation*}
and similarly for $\tau$ and $\tau(p. q)$. 

For region (a), we have that $r_{[1 , \sigma(p) ),  [j]}(\tau ( p , q) ) = r_{[1 , \sigma(p) ),  [j]}(\tau ) \geq r_{[1 , \sigma(p) ),  [j]}(\sigma)$, where the equality follows because this submatrix satisfies condition (3) from above, and the inequality from the fact that $\sigma \leq \tau$. For region (b), we note that the $m$th columns of $\sigma$, $\tau$, and $\tau(p , q)$ are all the same for $1 \leq m < p$, hence $r_{ [\sigma(p) , i] , [1 , p)}(\sigma) = r_{ (\sigma(p) , i] , [1 , p)}(\tau(p , q))$. Finally for region (c), $\sigma$ has a $1$ in the $(\sigma(p) , p)$-entry, hence $r_{[ \sigma(p) , i] , [p , j] }(\sigma) \geq 1$. However by minimality of $q$, we have that $r_{[ \sigma(p) , i] , [p , j] }(\tau) = 0$. Since the matrix of $\tau(p , q)$ is obtained from that of $\tau$ by swapping columns $p$ and $q$, we have that $r_{[ \sigma(p) , i] , [p , j] }(\tau(p , q)) \leq 1$. Combining our results for all three regions together yields $r_{i , j}(\sigma) \geq r_{i ,j}(\tau(p , q))$, hence this inequality holds for every $i$ and $j$, so by Proposition \ref{prop: region criterion for bruhat order}, we have that $\sigma \leq \tau(p , q)$. 

\end{proof}


\begin{corollary}\label{cor: can make the support of a lower permutation smaller by applying transpositions}
Suppose that $\sigma, \tau \in \Perm(\nn)$ are such that $\sigma < \tau$. There exists transpositions $t_1,\ldots, t_n$ such that $\sigma \leq \tau t_1 \ldots t_n < \ldots < \tau t_1 \leq \tau$ and $d(\sigma ,  \tau  t_1 \ldots t_n) > d(\sigma , \tau)$.
\end{corollary}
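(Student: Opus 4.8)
The plan is to iterate Lemma~\ref{lem: going down in bruhat using transpositions}, using the value $\tau(p)$ at the position $p = d(\sigma,\tau)$ as a strictly decreasing progress measure. Set $\tau_0 = \tau$ and $p = d(\sigma,\tau)$, and note $\sigma(p) \ne \tau(p)$ by definition of $d$. Suppose inductively that we have produced transpositions $t_1,\dots,t_i$ with $\sigma \le \tau_i := \tau t_1\cdots t_i < \tau_{i-1} < \cdots < \tau_0 = \tau$, such that $\tau_i(m) = \tau(m) = \sigma(m)$ for all $m < p$. If $\sigma = \tau_i$ or $\tau_i(p) = \sigma(p)$ we stop; otherwise $d(\sigma,\tau_i) = p$, since $\sigma$ and $\tau_i$ already agree below $p$ and differ at $p$. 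Then Lemma~\ref{lem: going down in bruhat using transpositions} applies and yields $q > p$ such that, with $t_{i+1} = (p,q)$ and $\tau_{i+1} = \tau_i t_{i+1}$, we have $\sigma \le \tau_{i+1} < \tau_i$, $\sigma(p) \le \tau_{i+1}(p) < \tau_i(p)$, and $\tau_{i+1}(m) = \tau_i(m)$ for $m < p$. This preserves all the inductive hypotheses, so the construction continues.

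Next I would argue termination. Along the construction the natural numbers $\tau_0(p) > \tau_1(p) > \cdots$ form a strictly decreasing sequence bounded below by $\sigma(p)$, so after at most $\tau(p) - \sigma(p)$ steps we reach an index $n$ at which the process halts, i.e.\ with $\sigma = \tau_n$ or $\tau_n(p) = \sigma(p)$. In either case $\sigma$ and $\tau_n = \tau t_1\cdots t_n$ agree on all of $\{1,\dots,p\}$; hence if $\sigma \ne \tau_n$ then $d(\sigma,\tau_n) \ge p+1 > p = d(\sigma,\tau)$, and if $\sigma = \tau_n$ the chain simply reaches $\sigma$ outright. Together with the chain of strict inequalities $\tau_n < \tau_{n-1} < \cdots < \tau_1 < \tau_0 = \tau$ and the relation $\sigma \le \tau_n$ furnished by the construction, this gives exactly the assertion of the corollary. (In the degenerate case $\sigma = \tau t_1\cdots t_n$ one reads the inequality $d(\sigma, \tau t_1\cdots t_n) > d(\sigma,\tau)$ with the convention that $d$ of a permutation with itself exceeds every integer, or else records this case separately.)

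Essentially all of the substance is already contained in Lemma~\ref{lem: going down in bruhat using transpositions}, whose proof carries the delicate choice of $q$ (minimal with $\sigma(p)\le\tau(q)<\tau(p)$) and the region-by-region rank comparison via Proposition~\ref{prop: region criterion for bruhat order}. The only genuinely new ingredients here are organizational: maintaining the invariant that each $\tau_i$ agrees with $\sigma$ on $\{1,\dots,p-1\}$ — so that $d(\sigma,\tau_i)$ never drops below $p$ and the lemma remains applicable with the \emph{same} $p$ at every step — and recognizing $\tau_i(p)$ as a monotone finite statistic that forces termination. The main point to be careful about is therefore just the edge case in which the sequence of transpositions actually carries $\tau$ all the way down to $\sigma$; beyond that there is no real obstacle.
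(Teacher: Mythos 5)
Your proposal is correct and follows essentially the same route as the paper: fix $p = d(\sigma,\tau)$ and iterate Lemma~\ref{lem: going down in bruhat using transpositions} at the same position $p$, using the strictly decreasing values $\tau t_1\cdots t_i(p)$ (bounded below by $\sigma(p)$) to force termination with agreement at $p$, hence $d(\sigma,\tau t_1\cdots t_n) > p$. Your version simply spells out the invariant, the termination bound, and the degenerate case $\sigma = \tau t_1\cdots t_n$ more explicitly than the paper does.
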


\begin{proof}
Let $p= d(\sigma, \tau)$. We repeatedly apply Lemma \ref{lem: going down in bruhat using transpositions} to obtain transpositions $t_1, \ldots, t_n$ such that $\sigma \leq \tau t_1 \ldots t_n < \ldots < \tau t_1 < \tau$ and $\sigma(p) = \tau t_1 \ldots t_n(p) < \ldots < \tau t_1(p) < \tau(p)$. Then because $\sigma$, $\tau$, and $\tau t_1 \ldots t_n$ all agree on all $m < p$, and $\sigma$ and $\tau t_1 \ldots t_n$ agree at $p$ as well, we have $d(\sigma , \tau t_1 \ldots t_n) > p = d(\sigma , \tau)$ as desired. 
\end{proof}


Compare the following proposition to \cite{fulton1997} \S 10.5 Corollary 1 for the finite case. 


\begin{proposition}\label{prop: going infinitely far down in the bruhat order}
Suppose that $\sigma, \tau \in \Perm(\nn)$ are such that $\sigma < \tau$. Then one of the following is true: 

\begin{enumerate}

\item There exists transpositions $t_1,\ldots, t_n$ such that $\tau t_1 \ldots t_{i + 1} \leq \tau t_1 \ldots t_{i }$ for all $0 \leq i < n$ and $\tau t_1 \ldots t_n= \sigma$. 

\item There exists a sequence of transpositions $t_1, t_2, \ldots $ such that $\tau t_1 \ldots t_{i + 1} < \tau t_1 \ldots t_{i}$ for all $i \geq 0$, and the sequence $(\tau t_1 \ldots t_n)_{n = 0}^\infty$ converges to $\sigma$. 

\end{enumerate}
\end{proposition}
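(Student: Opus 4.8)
The plan is to iterate Corollary \ref{cor: can make the support of a lower permutation smaller by applying transpositions}. Starting from $\rho_0 := \tau$, suppose $\rho_N$ is the current partial product and $\sigma < \rho_N$. Applying the corollary to the pair $\sigma < \rho_N$ produces transpositions, say $t_{N+1}, \ldots, t_{N'}$, such that, writing $\rho_{N+i} = \rho_N t_{N+1} \cdots t_{N+i}$, the partial products satisfy $\sigma \le \rho_{N'} < \rho_{N'-1} < \cdots < \rho_N$ and $d(\sigma, \rho_{N'}) > d(\sigma, \rho_N)$. Repeating this procedure yields either a finite list of transpositions $t_1, \ldots, t_n$ with $\rho_n = \sigma$ and strictly decreasing partial products — which is exactly case (1), and is nonempty since $\sigma < \tau$ — or an infinite list $t_1, t_2, \ldots$ together with an increasing sequence of ``milestone'' indices $0 = N_0 < N_1 < N_2 < \cdots$ for which the milestone permutations $\tau_m := \rho_{N_m}$ satisfy $\sigma \le \tau_{m+1} < \cdots < \tau_m$ and $p_m := d(\sigma, \tau_m)$ is strictly increasing. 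In the infinite case the partial products $(\rho_j)$ are strictly decreasing and all $\ge \sigma$, so we are in the situation of (2) and it remains only to verify that $(\rho_j)_{j \ge 0}$ converges to $\sigma$ in the compact-open topology.

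For the convergence step the key point is that $p_m \to \infty$, being a strictly increasing sequence of positive integers. So it suffices to show that every partial product $\rho_j$ lying in block $m$ (i.e.\ with $N_m \le j \le N_{m+1}$) satisfies $d(\sigma, \rho_j) \ge p_m$: given $M \in \nn$, choosing $m$ with $p_m > M$ then forces $\rho_j(\ell) = \sigma(\ell)$ for all $\ell \le M$ and all $j \ge N_m$, which is precisely the ``eventually equal'' condition that characterises convergence to $\sigma$ in this topology. To prove $d(\sigma, \rho_j) \ge p_m$ inside block $m$, I would argue by induction on $i$ that $\rho_{N_m + i}$ agrees with $\sigma$ on $\{1, \ldots, p_m - 1\}$: it agrees with $\tau_m$ there because $d(\sigma, \tau_m) = p_m$, and each factor $t_{N_m + i + 1}$ is, by construction, the transposition furnished by Lemma \ref{lem: going down in bruhat using transpositions}, whose part (3) says it fixes all coordinates strictly below $d(\sigma, \rho_{N_m + i})$ — a number which by the inductive hypothesis is $\ge p_m$.

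Most of the substance here is already carried by the preceding lemmas, so the argument is largely bookkeeping; the one genuinely delicate point, and the step I expect to be the main obstacle, is the monotonicity of $d(\sigma, -)$ \emph{within} a single block. The milestone-to-milestone strict increase is handed to us directly by Corollary \ref{cor: can make the support of a lower permutation smaller by applying transpositions}, but the convergence proof also needs that $d(\sigma, -)$ never dips below $p_m$ at the intermediate partial products, and extracting this from part (3) of Lemma \ref{lem: going down in bruhat using transpositions} — while correctly threading the block/milestone indices — is where care is required. I would finish by observing that the dichotomy is exhaustive: the iteration terminates exactly when some $\tau_m$ equals $\sigma$ (case (1)), and otherwise it runs forever and produces case (2).
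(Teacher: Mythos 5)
Your proposal is correct and follows essentially the same route as the paper: iterate Corollary \ref{cor: can make the support of a lower permutation smaller by applying transpositions}, note that the milestone values $d(\sigma,\cdot)$ strictly increase and hence tend to infinity, and conclude eventual agreement with $\sigma$, i.e.\ convergence in the compact-open topology. The within-block bookkeeping you single out as delicate is exactly the content already established in the proof of that corollary (the intermediate partial products agree with $\sigma$ below $p_m$ by part (3) of Lemma \ref{lem: going down in bruhat using transpositions}), so your argument just makes explicit a step the paper passes over quickly.
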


\begin{proof}
We need to show that if (1) does not hold, then (2) must hold. Since $\sigma < \tau$, by Corollary \ref{cor: can make the support of a lower permutation smaller by applying transpositions}, there exist transpositions $t_1, \ldots, t_{n_1}$ such that $\sigma \leq \tau t_1 \ldots t_n < \ldots < \tau t_1 \leq \tau$ and $d(\sigma ,  \tau  t_1 \ldots t_{n_1}) > d(\sigma , \tau)$. Since (1) does not hold, we must have $\sigma < \tau t_1 \ldots t_n$. Therefore we may recursively apply the corollary to obtain $n_1 < n_2 < \ldots$ and $t_i$ for all $i \in \nn$ such that $\sigma < \tau t_1 \ldots t_{i + 1} \leq \tau t_1 \ldots t_{i} < \tau$ for all $i \geq 0$, and 
\begin{equation*}
d(\sigma , \tau) < d(\sigma ,  \tau  \prod_{i = 1}^{n_1} t_i ) < d(\sigma ,  \tau  \prod_{i = 1}^{n_2} t_i ) < \ldots
\end{equation*}
This sequence of strict inequalities implies that for any $m \in \nn$, there exists $N \in \nn$ such that $m < d(\sigma ,  \tau  \prod_{i = 1}^{N} t_i )$, and hence for any $n \geq N$, $\tau  \prod_{i = 1}^{n} t_i (m) = \sigma(m)$, therefore $(\tau t_1 \ldots t_n)_{n = 0}^\infty$ converges to $\sigma$. 
\end{proof}



\section{Closure of the Schubert Cells}\label{sec: closure of the schubert cells}


In this section we define $B := B_\nn$. Given $\sigma \in \Perm(\nn)$, let $B \sigma B$ denote the image subfunctor of the natural transformation $m_\sigma : B \times B \to \GL_\nn$ defined by sending $b , c \in B(R)$ to $b \sigma c$ for any $k$-algebra $R$. 

If $I$ and $J$ are sets, $R$ is a $k$-algebra, and $x \in \Hom_{I, J}(R)$ and $\ell \in \nn$, we define $\fd_\ell(x)$ to be the ideal of $R$ generated by the $\ell \times \ell$ minors of $x$.  


\begin{lemma} \label{lem: schubert cells contained in affine subfunctors}
Given any $\sigma \in \Perm(\nn)$, there exist functions $X , Y : \nn \to \mP_{\text{fin}}(\nn)$ such that $X_i = [1, \ldots, n_i]$ and if $i \leq j$ then $n_i \leq n_j$ and similarly for $Y$, and such that the natural transformation $m_\sigma: B \times B \to \GL_\nn$ factors through the inclusion map $\GL_{X , Y} \hookrightarrow \GL_\nn$.
\end{lemma}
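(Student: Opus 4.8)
The plan is to write down $X$ and $Y$ explicitly in terms of $\sigma$ and then check that every matrix in the image of $m_\sigma$, together with its inverse, has columns supported inside the prescribed finite initial segments. For $j \in \nn$ set $n_j = \max\{\sigma(1), \dots, \sigma(j)\}$ and $m_j = \max\{\sigma^{-1}(1), \dots, \sigma^{-1}(j)\}$, and define $X_j = [n_j]$ and $Y_j = [m_j]$. These are finite, and since $\{\sigma(1),\dots,\sigma(i)\} \subseteq \{\sigma(1),\dots,\sigma(j)\}$ for $i \le j$ we get $n_i \le n_j$, and likewise $m_i \le m_j$, so $X$ and $Y$ have the required monotonicity. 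The remaining task is: for every $k$-algebra $R$ and all $b,c \in B(R)$, show $b\sigma c \in \Mat_X(R)$ and $(b\sigma c)^{-1} \in \Mat_Y(R)$; then $b\sigma c \in \GL_{X,Y}(R)$, and since $\GL_{X,Y}$ is a subfunctor of $\GL_\nn$ the natural transformation $m_\sigma$ factors through the inclusion $\GL_{X,Y} \hookrightarrow \GL_\nn$ automatically.

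The one observation driving everything is the reformulation already contained in Proposition \ref{prop: stab of standard flag is upper triangulars}: a column-finite matrix $x$ is upper triangular exactly when $x\,e_j \in V_j$ for all $j$, where $V_j := \Span_R(e_1,\dots,e_j)$, i.e.\ when $x$ carries each $V_j$ into itself. So for $b,c \in B(R)$ the $j$-th column is $(b\sigma c)e_j$, and we trace it through: $c\,e_j \in V_j$; the permutation matrix sends $V_j$ to $\Span_R(e_{\sigma(1)},\dots,e_{\sigma(j)}) \subseteq V_{n_j}$; and $b$ sends $V_{n_j}$ into $V_{n_j}$. Hence $(b\sigma c)e_j \in V_{n_j}$, i.e.\ is supported on $X_j$, so $b\sigma c \in \Mat_X(R)$.

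For $Y$ I would first record that $B(R)$ is closed under inversion, which is the reason $B$ is a subgroup functor: splitting $R^{\oplus\nn} = V_j \oplus \Span_R(e_{j+1},e_{j+2},\dots)$, upper-triangularity of $b$ makes its lower-left block vanish, the relation $b^{-1}b = I$ forces $a\beta = I_j$ for $a,\beta$ the top-left blocks of $b^{-1},b$ (so $\det\beta$ is a unit and $\beta \in \GL_j(R)$) and $d\beta = 0$ for $d$ the lower-left block of $b^{-1}$, whence $d = 0$; letting $j$ vary shows $b^{-1}$ is upper triangular. So $b^{-1},c^{-1} \in B(R)$ and $(b\sigma c)^{-1} = c^{-1}\sigma^{-1}b^{-1}$; running the previous tracing with $\sigma^{-1}$ in place of $\sigma$ — $b^{-1}e_j \in V_j$, $\sigma^{-1}V_j \subseteq V_{m_j}$, $c^{-1}V_{m_j} \subseteq V_{m_j}$ — shows the $j$-th column of $(b\sigma c)^{-1}$ is supported on $Y_j = [m_j]$, so $(b\sigma c)^{-1} \in \Mat_Y(R)$, and the proof is complete.

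There is no genuine obstacle: this is a bookkeeping lemma. The only points needing care are (i) verifying that $B$ is actually closed under inverses in the column-finite setting, which the block-matrix argument above handles, and (ii) not confusing the two factors — the left factor $b$ is controlled by $n_j = \max\sigma([j])$ while, after inverting, the surviving left factor is $c^{-1}$, controlled by $m_j = \max\sigma^{-1}([j])$ — so that the correct index is attached to the correct matrix and to $X$ versus $Y$.
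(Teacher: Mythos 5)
Your proof is correct and follows essentially the same route as the paper: the same choice $X_j=[\max\sigma([j])]$, $Y_j=[\max\sigma^{-1}([j])]$, and the same column-tracing argument (yours phrased via $bV_j\subseteq V_j$ rather than via entries, which is equivalent). The block-matrix verification that $B(R)$ is closed under inverses is fine but already available, since $B=\Stab_{\GL_{I_s}}(E)$ is a subgroup functor (Proposition \ref{prop: stab of standard flag is upper triangulars}).
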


\begin{proof}
Let $X(n) = [1 ,  \max\{ \sigma(1) , \sigma(2), \ldots, \sigma(n) \} ]$ and $Y = [1 ,  \max\{ \sigma^{-1}(1) , \sigma^{-1}(2), \ldots, \sigma^{-1}(n) \} ]$. Then for any $k$-algebra $R$ and any $b , c \in B(R)$, note that the $i$th column of $b \sigma$ is the $\sigma(i)$th column of $b$ which is upper triangular, therefore if $j > i$ then the $i , j$ entry of $b \sigma$ is zero if $j > \sigma(i)$. Furthermore, $i$th column of $b \sigma c$ is obtained by taking linear combinations of columns $1 , 2 , \ldots , i$ of $b \sigma$. Therefore the $i , j$ entry $b \sigma c$ is zero as long as $j > [1 ,  \max\{ \sigma(1) , \sigma(2), \ldots, \sigma(i) \} ]$. Hence $b \sigma c \in \Mat_X(R) \subseteq \End_\nn(R)$. A similar argument shows that $(b \sigma c)^{-1} = c^{-1} \sigma^{-1} b^{-1} \in \Mat_Y(R)$. Therefore $b \sigma c \in \GL_{X , Y}(R)$, so $m_\sigma$ factors through $\GL_{X, Y}$. 
\end{proof}


\begin{definition}
For $\sigma \in \Perm(\nn)$, we say that a pair of functions $X , Y : \nn \to \mP_{\text{fin}}(\nn)$ satisfying the conditions of Lemma \ref{lem: schubert cells contained in affine subfunctors} is \emph{adapted} to $\sigma$. 
\end{definition}


\begin{corollary}
\
\begin{enumerate}

\item For any $\sigma \in \Perm(\nn)$, there exists functions $X , Y : \nn \to \mP_{\text{fin}}(\nn)$ such that $\overline{B \sigma B} \subseteq \GL_{X , Y}$. 

\item $\overline{B \sigma B}$ is affinely representable and hence an fppf-sheaf. 

\item $\overline{B \sigma B} = \overline{\sh( B \sigma B )}$.

\end{enumerate}
\end{corollary}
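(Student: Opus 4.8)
The plan is to prove the three parts of the corollary in order, using Lemma \ref{lem: schubert cells contained in affine subfunctors} as the starting point.

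\textbf{Part (1).} By Lemma \ref{lem: schubert cells contained in affine subfunctors} there exist functions $X, Y : \nn \to \mP_\fin(\nn)$ adapted to $\sigma$, so that $m_\sigma$ factors through $\GL_{X,Y} \hookrightarrow \GL_\nn$, and hence $B \sigma B \subseteq \GL_{X,Y}$ as subfunctors of $\GL_\nn$. Recall that $\GL_{X,Y}$ is representable and is a \emph{closed} subfunctor of $\GL_\nn$ (this was established right after Proposition \ref{prop: closed subfunctor version of GL is isomorphic to GL}). Since $\overline{B\sigma B}$ is by definition the intersection of all closed subfunctors of $\GL_\nn$ containing $B\sigma B$, and $\GL_{X,Y}$ is one such, we conclude $\overline{B\sigma B} \subseteq \GL_{X,Y}$.

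\textbf{Part (2).} Since $\GL_{X,Y}$ is representable, say $\GL_{X,Y} \cong h^A$ for a $k$-algebra $A$, the closure $\overline{B\sigma B}$, computed inside $\GL_\nn$, is also a closed subfunctor of $\GL_{X,Y}$: indeed a closed subfunctor $\mZ$ of $\GL_\nn$ with $B\sigma B \subseteq \mZ$ restricts to a closed subfunctor $\mZ \cap \GL_{X,Y}$ of $\GL_{X,Y}$ (closedness is preserved by inverse images under the inclusion, by Proposition \ref{prop: properties of closed subfunctors}(3)), and conversely the closure of $B\sigma B$ taken inside $\GL_{X,Y}$ is closed in $\GL_{X,Y}$, hence of the form $\mV(\fr)$ for an ideal $\fr \subseteq A$ by Proposition \ref{prop: properties of closed subfunctors}(1), hence representable by $A/\fr$; and since $\GL_{X,Y}$ is itself closed in $\GL_\nn$, this $\mV(\fr)$ is also closed in $\GL_\nn$ and clearly the smallest such containing $B\sigma B$. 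Thus $\overline{B\sigma B} \cong h^{A/\fr}$ is affinely representable, and by Theorem \ref{thm: representable by schemes implies fppf sheaf} it is an fppf sheaf.

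\textbf{Part (3).} Since $B\sigma B \subseteq \sh(B\sigma B)$ and closure is monotone, $\overline{B\sigma B} \subseteq \overline{\sh(B\sigma B)}$. For the reverse inclusion it suffices to show $\sh(B\sigma B) \subseteq \overline{B\sigma B}$, because $\overline{B\sigma B}$ is the smallest closed subfunctor containing $B\sigma B$ (and in particular closed), so it contains the closure of any subfunctor it contains. Now $\overline{B\sigma B}$ is an fppf sheaf by part (2), and $B\sigma B$ is a subfunctor of it; moreover $B\sigma B$ satisfies (FP1) — this is the content of the fact that it is the image of a morphism of $k$-functors one of which ($B$) is an fppf sheaf (an image subfunctor always satisfies (FP1), as used in the discussion around Equation \ref{eq: fppf-image description}; one checks (FP1) directly from the product-preserving property of $B$ and $\GL_\nn$). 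Therefore Corollary \ref{cor: fppf subfunctor that contains a subfunctor contains its sheafification}, applied with $\mY = \GL_\nn$, $\mX = B\sigma B$, $\mZ = \overline{B\sigma B}$, gives $\sh(B\sigma B) \subseteq \overline{B\sigma B}$. Hence $\overline{\sh(B\sigma B)} \subseteq \overline{B\sigma B}$, and combined with the other inclusion we get equality.

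The main obstacle is the bookkeeping in part (2): making precise that "closure inside $\GL_\nn$" and "closure inside the representable closed subfunctor $\GL_{X,Y}$" agree, so that one may invoke Proposition \ref{prop: properties of closed subfunctors}(1) to get representability. Once that identification is clean, parts (1) and (3) are short formal consequences of the monotonicity and minimality of closure together with Corollary \ref{cor: fppf subfunctor that contains a subfunctor contains its sheafification}; the only mild care needed elsewhere is confirming that $B\sigma B$ satisfies (FP1), which follows from it being an image subfunctor.
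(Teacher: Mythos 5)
Your proposal is correct and follows essentially the same route as the paper: part (1) from Lemma \ref{lem: schubert cells contained in affine subfunctors} plus closedness of $\GL_{X,Y}$, part (2) from the fact that a closed subfunctor of a representable functor is of the form $\mV(\fr) \cong h^{A/\fr}$ together with Theorem \ref{thm: representable by schemes implies fppf sheaf}, and part (3) from monotonicity of closure plus Corollary \ref{cor: fppf subfunctor that contains a subfunctor contains its sheafification}. The extra details you supply (identifying the closure in $\GL_\nn$ with the closure in $\GL_{X,Y}$, and checking that $B\sigma B$ satisfies (FP1) as an image subfunctor) are points the paper leaves implicit, and they are verified correctly.
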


\begin{proof}
\
\begin{enumerate}

\item By Lemma \ref{lem: schubert cells contained in affine subfunctors}, we have that $B \sigma B \subseteq \GL_{X , Y}$ which is a closed subfunctor of $\GL_\nn$. Therefore $\overline{B \sigma B} \subseteq \GL_{X , Y}$ as well. 

\item Recall that $\GL_{X , Y}$ is affinely representable and as discussed in the background section, all closed subfunctors of affinely representable functors are also affinely representable. By Theorem \ref{thm: representable by schemes implies fppf sheaf} all representable functors are fppf-sheaves.

\item For one containment, we have $B \sigma B \subseteq \sh(B \sigma B)$, and hence $\overline{B \sigma B} \subseteq \overline{\sh( B \sigma B)}$. For the other containment, by the previous part, $\overline{B \sigma B}$ is an fppf-sheaf, and it contains $B \sigma B$, hence by Corollary \ref{cor: fppf subfunctor that contains a subfunctor contains its sheafification} it contains $\sh(B \sigma B)$, but it is also closed so it contains $\overline{\sh( B \sigma B )}$ as desired. 

\end{enumerate}
\end{proof}


\begin{proposition}\label{prop: invertible upper triangulars}
A upper triangular matrix $b \in \End_\nn(R)$ is invertible if and only if $b_{i ,i} \in R^\times$ for all $i \in \nn$. 
\end{proposition}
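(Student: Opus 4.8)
The plan is to prove this by establishing both directions separately, treating the column-finiteness condition on $b$ carefully since we are working with infinite matrices. Recall that $b \in \End_\nn(R)$ means $b$ is a column-finite $\nn \times \nn$ matrix over $R$, and $b$ is upper triangular means $b_{i,j} = 0$ whenever $i > j$; in particular each column $b_{-,j}$ is automatically finitely supported (with support in $\{1, \ldots, j\}$), so the column-finiteness is no obstruction here.

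For the forward direction, suppose $b$ is invertible with inverse $c \in \End_\nn(R)$, so $bc = cb = \delta$. I would argue that $c$ is also upper triangular and that $b_{i,i} c_{i,i} = 1$ for each $i$. To see $c$ is upper triangular: proceed by strong induction on the ``distance below the diagonal.'' Fix $j$ and show $c_{i,j} = 0$ for all $i > j$ by downward... actually upward induction on $i - j$. Look at the $(i,j)$ entry of $cb = \delta$ for $i > j$: $\sum_n c_{i,n} b_{n,j} = 0$. Since $b$ is upper triangular, $b_{n,j} = 0$ for $n > j$, so the sum runs over $n \le j$; isolating $n = j$ gives $c_{i,j} b_{j,j} = -\sum_{n < j} c_{i,n} b_{n,j}$. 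Hmm — this requires knowing $b_{j,j}$ is a unit, which is what we are trying to prove. Instead I would run the induction the other way: show by induction on $j$ that $c_{i,j} = 0$ for all $i > j$, using instead the equation $bc = \delta$ and a parallel statement. A cleaner route: first observe that restricting to the top-left $n \times n$ block, upper-triangularity of $b$ forces the $(i,j)$-entries of $bc$ with $i,j \le n$ to depend only on the $n \times n$ blocks, so the $n\times n$ leading principal block $b^{(n)}$ of $b$ satisfies $b^{(n)} c^{(n)} = I_n$ where... this also is not quite immediate because $c$ need not be upper triangular a priori. The most robust approach is: establish upper-triangularity of $c$ first by a direct induction using $cb = \delta$ working from $j = 1$ upward — for $j=1$, $b_{n,1} = 0$ for $n \ge 2$ so $(cb)_{i,1} = c_{i,1}b_{1,1}$, and separately from $bc=\delta$ the first column relation $\sum_n b_{i,n}c_{n,1} = \delta_{i,1}$ for $i \ge 2$ combined with upper triangularity of $b$ eventually pins things down — and then read off $b_{i,i}c_{i,i} = 1$ from the diagonal of $bc = \delta$, giving $b_{i,i} \in R^\times$.

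For the converse, suppose $b$ is upper triangular with every $b_{i,i} \in R^\times$. I would construct the inverse $c$ explicitly and column-by-column, building $c$ to be upper triangular (hence automatically column-finite). Define $c_{j,j} = b_{j,j}^{-1}$, and for $i < j$ define $c_{i,j}$ by the standard back-substitution recursion coming from the requirement $(bc)_{i,j} = 0$: $c_{i,j} = -b_{i,i}^{-1} \sum_{i < n \le j} b_{i,n} c_{n,j}$, which is a finite sum and well-defined by downward induction on $i$ (from $i = j$ down to $i = 1$) for each fixed $j$. Then verify $bc = \delta$ directly from the construction, and verify $cb = \delta$ — for a two-sided check one can either redo the analogous back-substitution from the other side or invoke the standard fact that for these matrices a one-sided inverse among upper triangular column-finite matrices is two-sided (which again follows by the same recursion being invertible at each diagonal step). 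I'd then note $c \in \End_\nn(R)$ since it is upper triangular, completing the proof.

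The main obstacle I anticipate is the bookkeeping around infinite matrices: making sure every sum that appears is genuinely finite (guaranteed by upper-triangularity, which bounds supports) and making the inductions well-founded. In particular, for the forward direction, the cleanest phrasing is probably to first prove directly that $c$ must be upper triangular — this is the one step that needs genuine care, since a priori the inverse is only column-finite — after which everything else is the familiar finite-dimensional argument applied blockwise or entrywise. An alternative that sidesteps proving upper-triangularity of $c$: observe that for each $n$, the leading $n \times n$ principal block $b^{(n)}$ of $b$ is an ordinary upper triangular matrix over $R$, and invertibility of $b$ restricts to give invertibility of each $b^{(n)}$ (using upper-triangularity to see the restriction of $bc=\delta$ descends to the block), whence by the classical finite-dimensional result $\det b^{(n)} = \prod_{i\le n} b_{i,i} \in R^\times$, forcing each $b_{i,i} \in R^\times$; I would likely present this block version as it is shortest.
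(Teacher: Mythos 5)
Your proof is correct in substance but takes a genuinely different route from the paper in the forward direction. The paper never touches principal blocks: it observes that an invertible upper triangular matrix lies in $B(R)$, which was already identified (Proposition \ref{prop: stab of standard flag is upper triangulars}) as the stabilizer of the standard flag and hence is a subgroup of $\GL_\nn(R)$ closed under inversion; so $b^{-1}$ is again upper triangular and the diagonal of $b^{-1}b=\delta$ gives $b_{i,i}(b^{-1})_{i,i}=1$. Your block argument is more elementary and self-contained (it needs no input from the flag action), reducing everything to the finite-dimensional fact that a one-sidedly invertible square matrix over a commutative ring has unit determinant, and that a divisor of a unit is a unit. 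One correction, though: it is the relation $cb=\delta$, not $bc=\delta$, whose restriction to the leading $n\times n$ block closes up. For $i,j\le n$ one has $(cb)_{i,j}=\sum_{m\le j}c_{i,m}b_{m,j}$ because upper-triangularity of $b$ kills the terms with $m>j$, giving $c^{(n)}b^{(n)}=I_n$; by contrast $(bc)_{i,j}$ involves entries $b_{i,m}$ with $m>n$, which need not vanish. Since the inverse is two-sided this costs nothing, and a left inverse already forces $\det b^{(n)}=\prod_{i\le n}b_{i,i}\in R^\times$. Your first sketch (proving directly that the inverse is upper triangular) is, as you noticed, circular as stated, so presenting the block version is the right call. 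For the converse, your column-by-column back-substitution is exactly the paper's construction of the right inverse; the paper then runs a second, row-indexed recursion for a left inverse (the asymmetry being that these matrices are column-finite but not row-finite), whereas your alternative of deducing two-sidedness from the one-sided inverse (e.g.\ give the upper triangular $c$, which has unit diagonal, its own right inverse $d$ and use associativity to get $b=d$) is a legitimate shortcut, though you should state it at that level of precision rather than as an invoked ``standard fact.''
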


\begin{proof}
To prove ($\Rightarrow$), suppose that an upper triangular matrix $b \in \End_\nn(R)$ is invertible. Then $b \in \GL_\nn(R)$ and $b \in B(R)$. Recall (Proposition \ref{prop: stab of standard flag is upper triangulars}) that $B(R)$ is the stabilizer subgroup of the standard flag under the action of $\GL_\nn(R)$ on $\Fl_{\omega + 1}(R)$, and is therefore a subgroup of $\GL_\nn(R)$. Hence $a = b^{-1}$ is also upper triangular. In particular this means that $a_{i , i} b_{i , i} = 1$ for all $i \in \nn$ so indeed $b_{i , i}$ is a unit. 

Now to prove ($\Leftarrow$), suppose that $b_{i , i}$ is a unit for all $i$. We first construct a right inverse of $b$ which we shall call $c$. We first make $c$ upper triangular by setting $c_{i , j} = 0$ if $i > j$. Then we define the entries $c_{i , n}$ with $i \leq n$ of the $n$th column of $c$ recursively. Let $c_{n , n} = b_{n , n}^{-1}$. Now assuming we have defined $c_{j + 1, n}, c_{j + 2, n} , \ldots, c_{n , n}$, we define 
\begin{equation*}
c_{j , n} = - b_{j , j}^{-1} ( b_{j, j + 1} c_{j + 1 , n} + b_{j , j + 2} c_{j + 2 , n} \ldots +  b_{j , n}  c_{n , n} ). 
\end{equation*}
Since $b$ is upper triangular, it follows that $bc = \delta$. Hence we have constructed a right inverse for $b$. 
Now we construct a left inverse $a$ for $b$. This is similar to the construction of $c$ but not exactly the same as these matrices are column finite but not row finite. Again we make $a$ upper triangular by setting $a_{i , j} = 0$ if $i > j$ and this time define the entries $a_{n , j}$ with $j \geq n$ of the $n$th row of $a$ recursively. Let $a_{n , n} = b_{n , n}^{-1}$. If we have defined $a_{n , n}, a_{n  n + 1}, \ldots a_{n , j - 1}$, we set 
\begin{equation*}
a_{n , j } = b_{j , j}^{-1} (a_{n , n} b_{n , j} + a_{n, n + 1} b_{n + 1 , j} + \ldots + a_{n , j - 1 } b_{j - 1 , j} ). 
\end{equation*}
Again since $b$ is upper triangular, we have $ab = \delta$. Since $b$ has both a left and a right inverse, it is invertible (and in fact $a = c$). 
\end{proof}


\begin{corollary}\label{cor: B represented by a flat algebra}
$B$ is representable by a flat $k$-algebra.  
\end{corollary}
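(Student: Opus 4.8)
The plan is to write down the representing algebra by hand and verify flatness directly. First I would unwind the definition of $B = B_\nn$: an element of $B(R)$ is a column-finite, invertible matrix $b \in \End_\nn(R)$ with $b_{i,j} = 0$ for $i > j$. The key observation is that column-finiteness is automatic here, since an upper triangular matrix has its $j$-th column supported in $\{1,\dots,j\}$; so the only genuine constraints are upper-triangularity and invertibility. By Proposition~\ref{prop: invertible upper triangulars}, an upper triangular matrix over $R$ is invertible in $\GL_\nn(R)$ precisely when every diagonal entry $b_{i,i}$ lies in $R^\times$. Hence
\begin{equation*}
B(R) = \{\, b \in \End_\nn(R) \mid b_{i,j} = 0 \text{ for } i > j,\ b_{i,i} \in R^\times \text{ for all } i \in \nn \,\}.
\end{equation*}

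Next I would produce the candidate algebra. Let $P = k[x_{i,j} \mid i,j \in \nn,\ i \le j]$ be the polynomial ring in the indicated (infinitely many) variables, let $S \subseteq P$ be the multiplicative subset generated by $\{x_{i,i} \mid i \in \nn\}$, and set $A = S^{-1}P$. Flatness of $A$ over $k$ is then immediate: $P$ is free, hence flat, as a $k$-module; the localization map $P \to A$ is flat; and a composite of flat ring maps is flat, so $A$ is a flat $k$-algebra.

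For representability I would invoke the Yoneda lemma. Inside $B(A)$ sits the universal matrix $\beta$ with $\beta_{i,j} = x_{i,j}$ for $i \le j$ and $\beta_{i,j} = 0$ otherwise; it lies in $B(A)$ because it is upper triangular and its diagonal entries $x_{i,i}$ are units in the localization $A$. This $\beta$ determines a morphism of $k$-functors $\Phi : h^A \to B$, $\Phi_R(\psi) = [\psi(\beta_{i,j})]$, and it remains to check that each $\Phi_R$ is a bijection. For surjectivity, given $b \in B(R)$ the assignment $x_{i,j} \mapsto b_{i,j}$ extends uniquely to a $k$-algebra map $P \to R$ sending every element of $S$ to a unit (a product of the units $b_{i,i}$), hence by the universal property of localization factors uniquely through a $k$-algebra map $\psi : A \to R$ with $\Phi_R(\psi) = b$. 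For injectivity, $A$ is generated as a $k$-algebra by the $x_{i,j}$ together with the $x_{i,i}^{-1}$, and the latter are forced by the former, so $\psi$ is determined by the values $\psi(x_{i,j})$, i.e.\ by $\Phi_R(\psi)$. Thus $\Phi$ is an isomorphism, $B \cong h^A$, and $A$ is flat over $k$.

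There is essentially no deep obstacle here: the statement reduces to the observation used above that upper-triangularity forces column-finiteness, which is exactly why $B$ is representable even though the ambient functors $\GL_\nn$ and $\End_\nn$ are only ind-representable. The only point requiring a little care is confirming via Proposition~\ref{prop: invertible upper triangulars} that no condition beyond ``unit diagonal'' is needed to land in $\GL_\nn(R)$ — in particular that the inverse of such a matrix is again column-finite, so that membership in $B(R)$ depends only on polynomial-and-unit data in the entries.
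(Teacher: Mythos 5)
Your proof is correct and follows essentially the same route as the paper: both identify the representing object as the localization of $k[x_{i,j} \mid i \leq j]$ at the multiplicative set generated by the $x_{i,i}$ (using Proposition \ref{prop: invertible upper triangulars} to reduce membership in $B(R)$ to upper-triangularity plus unit diagonal), and both deduce flatness from freeness of the polynomial ring, flatness of localization, and closure of flatness under composition. Your additional verification of the Yoneda isomorphism via the universal matrix just makes explicit what the paper leaves implicit.
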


\begin{proof}
By Proposition \ref{prop: invertible upper triangulars}, $B$ is represented by the localization of the ring $k[x_{i , j} \mid i \leq j ]$ at the multiplicative subset $T$ generated by $x_{i , i}$ for all $i \in \nn$. Since the polynomial ring $k[x_{i , j} \mid i \leq j ]$ is a free and hence flat $k$-module, the inclusion $k \to k[x_{i , j} \mid i \leq j ]$ is a flat $k$-algebra homomorphism, and since localization is always flat, the localization map $k[x_{i , j} \mid i \leq j ] \to T^{-1} k[x_{i , j} \mid i \leq j ]$ is also flat. Because the composition of flat $k$-algebra homomorphisms is also flat, it follows that $T^{-1} k[x_{i , j} \mid i \leq j ]$ a flat $k$-algebra. 
\end{proof}


A morphism of representable $k$-functors $h^S \to h^R$ is said to be \emph{flat} if the corresponding ring homomorphism $R \to S$ under the Yoneda embedding is flat.


\begin{proposition}
For any $\sigma \in \Perm(\nn)$, $\overline{B \sigma B}$ is stable under left and right multiplication by $B$, i.e. for any $k$-algebra $R$ and any $z \in \overline{B \sigma B}(R)$ and $b \in B(R)$ we have $b z , z b \in \overline{B \sigma B}(R)$. 
\end{proposition}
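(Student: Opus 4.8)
The plan is to use the action of $B$ on $\GL_\nn$ and the fact that the closure operation for subfunctors of a representable (hence affinely representable) $k$-functor is compatible with flat base change in a way that lets us "transport" the closed subfunctor $\overline{B\sigma B}$ along multiplication maps. More concretely, fix $b_0 \in B(R)$; I want to show left multiplication by $b_0$ carries $\overline{B\sigma B}(R)$ into itself, and symmetrically for right multiplication. The natural way to phrase this functorially: consider the $k$-group functor $B$ acting on $\GL_\nn$ on the left via $\lambda : B \times \GL_\nn \to \GL_\nn$, $(b,g)\mapsto bg$. Since $B \sigma B$ is $B$-stable on the left (from its very definition $b(B\sigma B) = B\sigma B$), the subfunctor $\lambda^{-1}(\overline{B\sigma B}) \subseteq B \times \GL_\nn$ contains $B \times (B\sigma B)$, and I claim it is closed in $B \times \GL_\nn$. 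Granting that claim, $\overline{B \times (B \sigma B)} \subseteq \lambda^{-1}(\overline{B\sigma B})$; and because $B$ is representable by a flat $k$-algebra (Corollary \ref{cor: B represented by a flat algebra}) and $\overline{B\sigma B} \subseteq \GL_{X,Y}$ is affinely representable, the product $B \times \overline{B\sigma B}$ is affinely representable and closed in $B \times \GL_\nn$, and it equals $\overline{B \times (B\sigma B)}$. Chasing this through gives $\lambda(B \times \overline{B\sigma B}) \subseteq \overline{B\sigma B}$, which is exactly left $B$-stability. The right-multiplication case is identical using $\rho:(g,c)\mapsto gc^{-1}$ and the fact that $B\sigma B$ is right $B$-stable.

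The key steps, in order, are: (1) record that $B\sigma B$ is stable under left and right multiplication by $B$ — immediate from the definition $B(R)\sigma B(R) = \{b\sigma c\}$. (2) Show that for a closed subfunctor $\mZ \subseteq \mX$ and any $k$-functor $\mW$, the subfunctor $\mW \times \mZ$ is closed in $\mW \times \mX$: this follows by pulling back $\mZ$ along the projection $\mW \times \mX \to \mX$, using Proposition \ref{prop: properties of closed subfunctors}(3) (inverse images of closed subfunctors are closed). (3) Show $\lambda^{-1}(\overline{B\sigma B})$ is closed in $B\times \GL_\nn$, again by Proposition \ref{prop: properties of closed subfunctors}(3) since $\lambda$ is a natural transformation and $\overline{B\sigma B}$ is closed in $\GL_\nn$ (using that $\overline{B\sigma B}$ is closed in $\GL_{X,Y}$, which is closed in $\GL_\nn$, and Proposition \ref{prop: properties of closed subfunctors}(4) or transitivity of closedness). (4) Observe $B \times (B\sigma B) \subseteq \lambda^{-1}(\overline{B\sigma B})$ and $B\times(B\sigma B)\subseteq B\times\overline{B\sigma B}$, the latter being closed by step (2); take closures: $\overline{B\times(B\sigma B)} \subseteq B\times\overline{B\sigma B}$ and $\overline{B\times(B\sigma B)}\subseteq \lambda^{-1}(\overline{B\sigma B})$. (5) Evaluate at a $k$-algebra $R$: given $z\in\overline{B\sigma B}(R)$ and $b\in B(R)$, the pair $(b,z)$ lies in $(B\times\overline{B\sigma B})(R)$; I need it to lie in $\overline{B\times(B\sigma B)}(R)$ so that $\lambda_R(b,z) = bz \in \overline{B\sigma B}(R)$. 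This last inclusion $B\times\overline{B\sigma B} \subseteq \overline{B\times(B\sigma B)}$ requires $B\times\overline{B\sigma B}$ to itself be the closure of $B\times(B\sigma B)$, i.e. to be the smallest closed subfunctor containing it.

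The main obstacle is precisely step (5): showing $B \times \overline{B\sigma B} = \overline{B \times (B\sigma B)}$, equivalently that $B \times \overline{B\sigma B}$ is contained in every closed subfunctor of $B \times \GL_\nn$ containing $B \times (B\sigma B)$. This is a statement that "closure commutes with product by $B$", and it is where flatness of the representing algebra of $B$ is needed — it is not true for an arbitrary factor. The cleanest route is: since everything in sight is affinely representable ($B \cong h^{T^{-1}k[x_{ij}]}$ flat over $k$, and $\overline{B\sigma B}\subseteq \GL_{X,Y}$ cut out by an ideal $\fa$), translate to rings. The inclusion $B\times(B\sigma B)\hookrightarrow B\times \GL_{X,Y} \cong h^{T^{-1}k[x_{ij}]\otimes_k A}$ (where $\GL_{X,Y}\cong h^A$) has image a subfunctor whose closure is $V$ of the ideal generated by $\fa\otimes 1$ inside $T^{-1}k[x_{ij}]\otimes_k A$; because $T^{-1}k[x_{ij}]$ is flat over $k$, this ideal is exactly $T^{-1}k[x_{ij}]\otimes_k \fa$, which is the ideal defining $B\times \overline{B\sigma B}$. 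That flatness is what guarantees no "extra" closure is picked up. I would spell this out carefully: reduce to affinely representable $k$-functors, identify the defining ideals, and invoke that tensoring an ideal inclusion $\fa \hookrightarrow A$ with a flat $k$-module is still injective, so $\mathrm{flat}\text{-}k\text{-algebra}\otimes_k \fa$ is genuinely an ideal computing the product closure. Everything else is a routine diagram chase through Proposition \ref{prop: properties of closed subfunctors}, the Yoneda lemma, and the descriptions of pullbacks of representable $k$-functors given in the background section.
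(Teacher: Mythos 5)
Your overall strategy is the same as the paper's: both proofs reduce the statement to the identity $\overline{B\times(B\sigma B)} = B\times\overline{B\sigma B}$ (equivalently $\overline{B\sigma B\times B}=\overline{B\sigma B}\times B$), and then conclude by pulling $\overline{B\sigma B}$ back along the multiplication map, using that inverse images of closed subfunctors are closed and that $B\sigma B$ is $B$-stable. The difference lies in how that identity is justified, and this is where your argument has a genuine gap. In step (5) you assert that the closure of $B\times(B\sigma B)$ inside $B\times\GL_{X,Y}\cong h^{T\otimes_k A}$ is $V$ of the ideal generated by $1\otimes\fa$, where $\overline{B\sigma B}=V(\fa)$ in $h^A\cong\GL_{X,Y}$, and you invoke flatness of $T$ over $k$ only to identify that ideal with $T\otimes_k\fa$. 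But injectivity of $T\otimes_k\fa\to T\otimes_k A$ is beside the point: the substantive claim is that \emph{every} element of $T\otimes_k A$ vanishing on all points of $B\times(B\sigma B)$ already lies in $\langle 1\otimes\fa\rangle$, i.e.\ that no extra functions vanish on the product, and that is precisely the statement to be proved. As written it is asserted, not proved, and bare flatness does not obviously deliver it (the naive attempt, embedding $A/\fa$ into a product of test algebras and tensoring with $T$, founders on the failure of $T\otimes_k(-)$ to commute with infinite products). There is also a glossed point that the closure of $B\sigma B$ in $\GL_\nn$ agrees with its closure in $\GL_{X,Y}$, which needs $\GL_{X,Y}$ closed in $\GL_\nn$ and transitivity of closedness; this is true but should be said.

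The paper fills the key step by a different device: it notes that the projection $\pi:\GL_{X,Y}\times B\to\GL_{X,Y}$ corresponds to the flat ring map $S\to S\otimes_k T$ (Corollary \ref{cor: B represented by a flat algebra} together with Lemma \ref{lem: flatness is preserved by pushouts}), that $m_\sigma:B\times B\to\GL_{X,Y}$ is a morphism of representable functors (hence quasi-compact and quasi-separated), and then cites Demazure--Gabriel I.2.6.15 (formation of the closed image commutes with flat base change) to obtain $\pi^{-1}(\overline{B\sigma B})=\overline{B\sigma B\times B}$, i.e.\ exactly your missing identity. If you prefer to keep your direct ideal-theoretic route, it can be repaired by using that $T=T^{-1}k[x_{i,j}\mid i\le j]$ is actually \emph{free} as a $k$-module: write $f=\sum_\lambda e_\lambda\otimes a_\lambda$ over a $k$-basis $(e_\lambda)$ of $T$, and for any $k$-algebra $R$ and $\varphi\in(B\sigma B)(R)$ pair the tautological point $T\to T\otimes_k R$, $t\mapsto t\otimes 1$, with the base change of $\varphi$ along $R\to T\otimes_k R$ (legitimate because $B\sigma B$ is a subfunctor); vanishing of $f$ on $B\times(B\sigma B)$ then forces $\varphi(a_\lambda)=0$ for all $\lambda$ and all such $\varphi$, hence each $a_\lambda\in\fa$ and $f\in\langle 1\otimes\fa\rangle$. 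With that lemma supplied, your steps (1)--(4) and the final evaluation at $R$ go through and give a proof essentially parallel to, but more hands-on than, the paper's.
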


\begin{proof}
By Lemma \ref{lem: schubert cells contained in affine subfunctors}, we may choose functions $X , Y :\nn \to \mP_\fin(\nn)$ that are adapted to $\sigma$. Note that for such $X$ and $Y$, $\GL_{X , Y}$ is stable under left and right multiplication by $B$. Consider the diagram of $k$-functors $\GL_{X , Y} \times B \overset{\pi}{\longrightarrow} \GL_{X , Y} \overset{m_\sigma}{\longleftarrow} B \times B$ where $\pi$ is the projection map. All three of these $k$-functors are representable, hence in particular $m_\sigma$ is quasi-compact and quasi-separated. 

Furthermore we claim that $\pi$ is flat. Indeed suppose that  $\GL_{X , Y} \cong h^S$ and $B \cong h^T$. Then $\GL_{X , Y} \times B \cong h^S \times h^T \cong h^S \times_{h^k} \times h^T \cong h^{S \otimes_k T}$. Therefore $\pi$ is flat if and only if the homomorphism $S \to S \otimes_k T$ given by $s \mapsto s \otimes 1$ is flat. But by Corollary \ref{cor: B represented by a flat algebra}, $k \to T$ is flat, hence by Lemma \ref{lem: flatness is preserved by pushouts}, $S \to S \otimes_k T$ is also flat. 

We now consider the pullback of $\GL_{X , Y} \times B \overset{\pi}{\longrightarrow} \GL_{X , Y} \overset{m_\sigma}{\longleftarrow} B \times B$ shown below.

\begin{center}
\begin{tikzcd}
(\GL_{X , Y} \times B) \times_{\GL_{X , Y}} (B \times B) \arrow[r, "f"] \arrow[d , "g" '] &  B \times B \arrow[d , "m_\sigma"] \\
\GL_{X , Y} \times B \arrow[r, "\pi" '] &  \GL_{X , Y}
\end{tikzcd}
\end{center}

By \cite{demazure-gabriel1980} I.2.6.15, we have that $\overline{\im g} = \pi^{-1}(\overline{\im m_\sigma})$. Now $\im m_\sigma$ is the subfunctor $B \sigma B \subseteq \GL_{X , Y}$. Furthermore, since pullbacks of $k$-functors can be computed component-wise, we have that $\im g$ is the subfunctor $B \sigma B \times B$ of $\GL_{X , Y} \times B$. Hence we have that $\overline{B \sigma B \times B} = \pi^{-1}(\overline{B \sigma B}) = \overline{B \sigma B} \times B$.

Now since $\GL_{X , Y}$ is stable under right multiplication by $B$, we consider the multiplication map $m : \GL_{X , Y} \times B \to \GL_{X , Y}$. Since $B \sigma B$ is stable under right multiplication by $B$, we have $B \sigma B \times B \subseteq m^{-1}(B  \sigma B)$, and hence we can calculate as follows. 
\begin{equation*}
\overline{B \sigma B} \times B = \overline{B \sigma B \times B} \subseteq \overline{ m^{-1}(B  \sigma B) } \subseteq m^{-1}( \overline{B \sigma B} )
\end{equation*}
The last equality follows because $m^{-1}( B \sigma B) \subseteq m^{-1}(\overline{ B \sigma B })$ and the latter is a closed subfunctor. Hence indeed $\overline{B \sigma B}$ is stable under right multiplication by $B$. A similar proof shows stability under left multiplication. 
\end{proof}


\begin{proposition}
Let $k$ be a \textbf{domain}. Suppose that $\sigma , \tau \in \Perm(\nn)$ are such that $\sigma < \tau$. If $\mathcal{Z}$ is a closed subfunctor of $\GL_\nn$ which contains $\tau$, is stable under left and right multiplication by $B$, and $\mZ \subseteq \GL_{X , Y}$ for some $X , Y : \nn \to \mP_\fin(\nn)$, then $\mathcal{Z}$ also contains $\sigma$. 
\end{proposition}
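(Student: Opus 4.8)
The plan is to descend from $\tau$ to $\sigma$ one transposition at a time, realising each elementary descent by an explicit $\SL_2$-type degeneration curve inside $\GL_\nn$, and then (when the descent is infinite) to pass to a limit. Since $\sigma<\tau$, I would first invoke Proposition~\ref{prop: going infinitely far down in the bruhat order} to get transpositions $t_1,t_2,\dots$ (finitely many in case~(1), infinitely many in case~(2)) such that, writing $\rho_i:=\tau t_1\cdots t_i$ with $\rho_0=\tau$, one has $\rho_{i+1}=\rho_i t_{i+1}<\rho_i$ for all $i$, with $\rho_n=\sigma$ in case~(1) and $\rho_i\to\sigma$ in the compact-open topology in case~(2). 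Writing $t_{i+1}=(p_{i+1},q_{i+1})$ with $p_{i+1}<q_{i+1}$, Lemma~\ref{lem: composing with a transposition in the bruhat order} rewrites $\rho_{i+1}<\rho_i$ as $\rho_i(p_{i+1})>\rho_i(q_{i+1})$. So everything reduces to the one-step claim

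\noindent$(\ast)$: \emph{if $\mZ\subseteq\GL_\nn$ is a closed subfunctor, stable under left and right multiplication by $B$, and containing a permutation $\rho$, and $t=(p,q)$ with $p<q$ and $\rho(p)>\rho(q)$, then $\mZ$ contains $\rho t$,}

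\noindent together with, for case~(2), a limiting argument that will be where the hypothesis $\mZ\subseteq\GL_{X,Y}$ is used.

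For $(\ast)$, note first that two-sided $B$-stability and $\rho\in\mZ$ give $B\rho B\subseteq\mZ$. Let $x(s)\in\GL_\nn(k[s])$ be the matrix agreeing with the identity $\delta$ outside rows and columns $p,q$ and whose $\{p,q\}$-block (rows and columns in the order $p,q$) is $\left(\begin{smallmatrix} s & 1\\ 1 & 0\end{smallmatrix}\right)$; it and its inverse are finite modifications of $\delta$, so $x(s)\in\GL_\nn(k[s])$, hence $g(s):=\rho\,x(s)\in\GL_\nn(k[s])$, and $g(0)=\rho t$. Over $A:=k[s,s^{-1}]$ one has $\left(\begin{smallmatrix} s & 1\\ 1 & 0\end{smallmatrix}\right)=\left(\begin{smallmatrix} 1 & 0\\ s^{-1} & 1\end{smallmatrix}\right)\left(\begin{smallmatrix} s & 1\\ 0 & -s^{-1}\end{smallmatrix}\right)$, i.e. $x(s)=(\delta+s^{-1}E_{q,p})\,u(s)$ with $E_{q,p}$ the matrix unit (a $1$ in position $(q,p)$) and $u(s)$ upper triangular with all diagonal entries units in $A$, so $u(s)\in B(A)$ by Proposition~\ref{prop: invertible upper triangulars}. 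Since $\rho(q)<\rho(p)$, conjugating yields $\rho(\delta+s^{-1}E_{q,p})=(\delta+s^{-1}E_{\rho(q),\rho(p)})\,\rho$ with $\delta+s^{-1}E_{\rho(q),\rho(p)}$ upper unipotent, hence in $B(A)$. Therefore the image of $g(s)$ in $\GL_\nn(A)$ lies in $B(A)\rho B(A)\subseteq\mZ(A)$.

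Let $\gamma\colon h^{k[s]}\to\GL_\nn$ be the morphism corresponding to $g(s)$ by the Yoneda lemma. By Proposition~\ref{prop: properties of closed subfunctors}, $\gamma^{-1}(\mZ)$ is a closed subfunctor of the representable functor $h^{k[s]}$, so $\gamma^{-1}(\mZ)=\mV(\fr)$ for an ideal $\fr\subseteq k[s]$; the previous step says the localisation map $\iota\colon k[s]\to A$ lies in $\gamma^{-1}(\mZ)(A)=\mV(\fr)(A)$, so $\iota(\fr)=0$, and since $k$ is a domain $\iota$ is injective, forcing $\fr=0$ and $\gamma^{-1}(\mZ)=h^{k[s]}$. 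Evaluating $\gamma$ at $s=0$ gives $\rho t=g(0)\in\mZ(k)$; and since $\mZ$ is closed in $\GL_\nn$, a $k$-point of $\GL_\nn$ lying in $\mZ(k)$ defines a constant subfunctor contained in $\mZ$ (its preimage under the corresponding morphism $h^k\to\GL_\nn$ is a closed subfunctor of the terminal functor $h^k$ containing the identity point, hence all of $h^k$). This proves $(\ast)$. Applying $(\ast)$ successively to $\rho_0,\rho_1,\dots$ (the hypotheses on $\mZ$ persisting) gives $\sigma=\rho_n\in\mZ$ in case~(1). In case~(2) the same iteration gives $\rho_i\in\mZ(k)$ for all $i$; recalling that $\GL_{X,Y}$ is affinely representable, say $\GL_{X,Y}\cong h^C$, and that $\mZ$, being closed in $\GL_\nn$ hence in $\GL_{X,Y}$, equals $\mV(\fa)$ for an ideal $\fa\subseteq C$, I would note that each $f\in\fa$ is a polynomial in finitely many matrix-entry coordinates, hence involves only finitely many columns; as $\rho_i\to\sigma$, the matrices $\rho_i$ and $\sigma$ agree on all those columns once $i$ is large, so $f(\sigma)=f(\rho_i)=0$. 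Hence $\fa$ vanishes at $\sigma$, i.e. $\sigma\in\mZ(k)$, and then the constant subfunctor $\sigma$ lies in $\mZ$ as before.

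The hard part is the one-step claim $(\ast)$, and inside it the verification that for $s$ a unit the curve $g(s)$ genuinely lies in $B\rho B$: this is exactly where the descent condition $\rho(p)>\rho(q)$ is essential, since it is what makes $\rho E_{q,p}\rho^{-1}=E_{\rho(q),\rho(p)}$ strictly upper triangular and so keeps the lower-unipotent factor of $x(s)$ in $B$ after it is moved past $\rho$. The other ingredients — the reduction via Proposition~\ref{prop: going infinitely far down in the bruhat order}, the formal manipulations with closed subfunctors of representable functors, and the ``finitely many columns'' observation in the infinite case — should be routine once $(\ast)$ is in hand.
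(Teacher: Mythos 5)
Your proposal is correct and follows essentially the same route as the paper: reduce to a chain of single Bruhat descents via Proposition \ref{prop: going infinitely far down in the bruhat order}, handle the one-transposition step with a one-parameter curve over $k[s]$ together with the closed-subfunctor/domain argument, and finish the infinite case by a finiteness-of-polynomials argument inside $\GL_{X,Y}$. In fact your curve $g(s)=\rho\,x(s)$ coincides with the paper's matrix $\sigma L_{q,p}(x)$ (under $\sigma=\rho t$, $x=s$); your $2\times 2$ factorization over $k[s,s^{-1}]$ together with the conjugation $\rho(\delta+s^{-1}E_{q,p})=(\delta+s^{-1}E_{\rho(q),\rho(p)})\rho$ is simply a tidier way of exhibiting membership in $B\rho B$ than the paper's four explicit elementary row and column operations, and your inlined localization-injectivity argument reproduces the proof of Lemma \ref{lem: closure of basic affine open is the whole spectrum for domains}.

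One point in the limit step needs repair as written. The coordinate ring of $\GL_{X,Y}$ is $k[x_{i,j},y_{p,q}\mid i\in X_j,\ p\in Y_q]/\fa$, so an element of the ideal cutting out $\mZ$ is a polynomial in finitely many entries of $g$ \emph{and of $g^{-1}$}. Agreement of $\rho_i$ with $\sigma$ on finitely many columns is therefore not by itself enough to conclude $f(\sigma)=f(\rho_i)$; you also need $\rho_i^{-1}\to\sigma^{-1}$, which is exactly Corollary \ref{cor: inverse of a convergent sequence is convergent}, and you should also record that $\sigma$ lies in $\GL_{X,Y}(k)$ (this follows from the same two convergence statements, or, as in the paper, by enlarging $X$ if necessary), so that evaluating the ideal at $\sigma$ makes sense. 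With those two small supplements your argument is complete.
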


\begin{proof}
First we prove the result if $\sigma = \tau (p , q)$ for some $p , q \in \nn$ with $p < q$. We define $g = \sigma L_{q,p}(x) \in \End_\nn(k[x])$, where $L_{q,p}(x)$ is the elementary matrix which is the same as the identity matrix but with an $x$ in the $( q, p )$-entry, so that $g$ is obtained from $\sigma$ by adding $x$ times the $q$th column to the $p$th column. Then since $\sigma$ and $L_{q,p}(x)$ are invertible, so is $g$. By the Yoneda lemma, $g \in \GL_\nn(k[x])$ corresponds to a natural transformation $\eta: h^{k[x]} \to \GL_\nn$. Explicitly for any $k$-algebra $R$, the component $\eta_R : h^{k[x]}(R) \to \GL_\nn(R)$ sends a $k$-algebra homomorphism $\varphi : k[x] \to R$ to the matrix   
\begin{equation*}
[\varphi(g_{i, j})] = \sigma L_{q,p}(\varphi(x)) = 
\begin{blockarray}{cccccc}
&  & p &  & q &  \\
\begin{block}{c[ccccc]}
  & \ddots &  &  &  &  \\
 \sigma(p)  &  & 1 &  &  &  \\
   &  &  &  &  &  \\
 \sigma(q)  &  & \varphi(x) &  & 1 &  \\
   &  &  &  &  & \ddots \\
\end{block}
\end{blockarray} \in \GL_\nn(R)
\end{equation*}
Note that by Lemma \ref{lem: composing with a transposition in the bruhat order} the fact that $\tau (p , q) < \tau$ implies that indeed $\sigma(p) = \tau(q) < \tau(p) = \sigma(q)$. We claim that if $r := \varphi(x)$ is a unit of $R$, then $\sigma L_{q,p}(r) = b \tau c$ for some $b , c \in B_\nn(R)$. Indeed let $D_{\sigma(q)}(r^{-1})$ be the elementary matrix which is the same as the identity matrix but with $r^{-1}$ in the $(\sigma(q), \sigma(q))$-entry. Then $D_{\sigma(q)}(r^{-1})$ is upper triangular and we have that 
\begin{equation*}
D_{\sigma(q)}(r^{-1}) \sigma L_{q,p}(r) = 
\begin{blockarray}{cccccc}
&  & p &  & q &  \\
\begin{block}{c[ccccc]}
  & \ddots &  &  &  &  \\
 \sigma(p)  &  & 1 &  &  0 &  \\
   &  &  &  &  &  \\
 \sigma(q)  &  & 1 &  & r^{-1} &  \\
   &  &  &  &  & \ddots \\
\end{block}
\end{blockarray}
\end{equation*}
is obtained from $\sigma L_{q,p}(r)$ by scaling the $\sigma(q)$th row by $r^{-1}$. Since $p < q$, the elementary matrix $L_{p , q}(-r^{-1})$ is upper triangular as well, and 
\begin{equation*}
D_{\sigma(q)}(r^{-1}) \sigma L_{q,p}(r) L_{p , q}(-r^{-1}) = 
\begin{blockarray}{cccccc}
&  & p &  & q &  \\
\begin{block}{c[ccccc]}
  & \ddots &  &  &  &  \\
 \sigma(p)  &  & 1 &  & -r^{-1} &  \\
   &  &  &  &  &  \\
 \sigma(q)  &  & 1 &  & 0 &  \\
   &  &  &  &  & \ddots \\
\end{block}
\end{blockarray}
\end{equation*}
is obtained from the previous matrix by adding $-r^{-1}$ times the $p$th column to the $q$th column. Next, $D_{\sigma(p)}(-r)$ is diagonal, hence also upper triangular and 
\begin{equation*}
D_{\sigma(p)}(-r) D_{\sigma(q)}(r^{-1}) \sigma L_{q,p}(r) L_{p , q}(-r^{-1}) = 
\begin{blockarray}{cccccc}
&  & p &  & q &  \\
\begin{block}{c[ccccc]}
  & \ddots &  &  &  &  \\
 \sigma(p)  &  & -r &  & 1 &  \\
   &  &  &  &  &  \\
 \sigma(q)  &  & 1 &  & 0 &  \\
   &  &  &  &  & \ddots \\
\end{block}
\end{blockarray}
\end{equation*}
is obtained from the previous matrix by scaling the $\sigma(p)$th row by $-r$. Finally, recall that $\sigma(q) > \sigma(p)$, hence $L_{\sigma(p), \sigma(q) }(r)$ is upper triangular, and 
\begin{equation*}
L_{\sigma(p), \sigma(q) }(r)D_{\sigma(p)}(-r) D_{\sigma(q)}(r^{-1}) \sigma L_{q,p}(r) L_{p , q}(-r^{-1})  = 
\begin{blockarray}{cccccc}
&  & p &  & q &  \\
\begin{block}{c[ccccc]}
  & \ddots &  &  &  &  \\
 \sigma(p)  &  & 0 &  & 1 &  \\
   &  &  &  &  &  \\
 \sigma(q)  &  & 1 &  & 0 &  \\
   &  &  &  &  & \ddots \\
\end{block}
\end{blockarray} = \tau
\end{equation*}
is obtained from the previous matrix by adding $r$ times the $\sigma(q)$th row to the $\sigma(p)$th row. Hence $b := (L_{\sigma(p), \sigma(q) }(r)D_{\sigma(p)}(-r) D_{\sigma(q)}(r^{-1}))^{-1}$ and $c := L_{p , q}(-r^{-1})^{-1}$ are upper triangular and $\sigma L_{q,p}(r) = b \tau c$. Note that the latter is in $\mZ(R)$ since by hypothesis $\mZ$ is stable under left and right multiplication by $B$. 
 
Therefore the closed subfunctor $\eta^{-1}(\mZ) \subseteq h^{k[x]}$ contains the subfunctor $R \mapsto \{ \varphi : k[x] \to R \mid \varphi(x) \in R^\times \}$ which is precisely the open subfunctor $\mD(x)$. Since $k$ is a domain, $k[x]$ is also a domain, therefore by Lemma \ref{lem: closure of basic affine open is the whole spectrum for domains} it follows that $\eta^{-1}(\mZ)$ must be all of $h^{k[x]}$. Hence in particular for any $k$-algebra $R$, if $\psi : k[x] \to R$ is the unique map which sends $x \mapsto 0$, then we have that $\sigma = \eta_R(\psi) \in \mZ(R)$, as desired. 
 
Next, if there are transpositions $t_1,\ldots, t_n$ such that $\sigma = \tau t_1 \ldots t_n < \ldots < \tau t_1 t_2 < \tau t_1 < \tau$, we prove that $\sigma \in \mZ$ by induction on $n$. It follows from what we just proved that $\tau t_1 \in \mZ$. Now suppose that $\tau t_1 t_2 \ldots t_i \in \mZ$. Since $\mZ$ is stable under left and right multiplication by $B$, it follows that $B \tau t_1 t_2 \ldots t_i B \subseteq \mZ$. Then again the case we just proved yields that $\tau t_1 t_2 \ldots t_{i + 1} \in \mZ$.

Finally by Proposition \ref{prop: going infinitely far down in the bruhat order} the only remaining option is that there exists a sequence of transpositions $t_1, t_2, \ldots $ such that $\tau t_1 \ldots t_{i + 1} < \tau t_1 \ldots t_{i}$ for all $i \geq 0$, and the sequence $(\tau t_1 \ldots t_n)_{n = 0}^\infty$ converges to $\sigma$. By the previous case $\tau t_1 \ldots t_n \in\mZ$ for all $n \geq 0$. Let $\tau_n = \tau t_1 \ldots t_n$. By hypothesis, $\mZ \subseteq \GL_{X , Y}$ for some $X$ and $Y$, and possibly by enlarging $X_{i}$ for each $i$ if necessary, we can assume that $\sigma \in \GL_{X ,Y}$ as well. Now $\GL_{X , Y}$ is isomorphic to $h^A$ where $A = k[x_{i , j} , y_{p , q} \mid i \in X_j , p \in Y_q ] / \mathfrak{a}$ and $\fa$ is defined in the proof of Proposition \ref{prop: closed subfunctor version of GL is an ind-affine ind-scheme}. Explicitly, for any $k$-algebra $R$, this isomorphism sends a matrix $g \in \GL_{X , Y}(R)$ to the unique ring homomorphism $A \to R$ given by $x_{ i , j } \mapsto g_{i , j}$ and $y_{i , j} \mapsto (g^{-1})_{i , j}$. Under this isomorphism, $\mZ$ corresponds to a closed subfunctor of $h^A$, which by Proposition \ref{prop: properties of closed subfunctors} must be of the form $V(\fb)$ for some ideal $\fb \subseteq A$, and each $\tau_n \in \mZ$, as well as $\sigma \in \GL_{X , Y}$ corresponds to some element $\psi_n \in V(\fb)$ and $\varphi \in h^A$ respectively. We wish to show that $\psi \in V(\fb)$. To that end, let $R$ be a $k$-algebra and $a \in \fb$. Then $a = f + \mathfrak{a}$ where $f$ is some polynomial in $k[x_{i , j} , y_{p , q} \mid i \in X_j , p \in Y_q]$. Since $f$ is a polynomial, there exists some $m \in \nn$ such that all terms of $f$ are products of the variables $x_{i, j}$ and $y_{p , q}$ with $j , q \leq m$. Since $(\tau_n)$ converges to $\sigma$, by Corollary \ref{cor: inverse of a convergent sequence is convergent} we have that $(\tau^{-1}_n)$ converges to $\sigma^{-1}$ as well, hence there exists some $N \in \nn$ such that for all $n \geq N$, we have $\tau_n(\ell) = \sigma(\ell)$ and $\tau_n^{-1}(\ell) = \sigma^{-1}(\ell)$ for all $\ell \leq m$. Of course this implies $(\tau_n)_{i, j} = \sigma_{i, j}$ and $(\tau_n^{-1})_{i, j} = (\sigma^{-1})_{i, j}$ for all pairs $(i, j)$ with $j \leq m$. In particular, because $\tau_N \in \mZ$, we have that $\psi_N \in V(\fb)$, so $0 = \psi_N(a) = f(\tau_N, \tau_N^{-1})$. But $f(\tau_N, \tau_N^{-1}) = f(\sigma, \sigma^{-1})$ since $f$ only contains variables $x_{i , j}$ and $y_{i , j}$ with $j \leq m$. Therefore $\varphi(a) = f(\sigma, \sigma^{-1}) = 0$ as well. Since $a  \in \bf$ was arbitrary, it follows that $\varphi(\fb) = 0$, and hence that $\varphi \in V(\fb)$, which implies $\sigma \in \mZ$. 
\end{proof}

\begin{lemma}\label{lem: closure of basic affine open is the whole spectrum for domains}
Suppose that $R$ is a domain. If $f \in R$ is nonzero, then $\overline{\mD(f)} = h^R$

\end{lemma}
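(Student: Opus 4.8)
The plan is to reduce the statement to the classification of closed subfunctors of a representable functor. Recall from Proposition \ref{prop: properties of closed subfunctors}(1) that every closed subfunctor of $h^R$ has the form $\mV(\fr)$ for some ideal $\fr \subseteq R$, that $\mV(0) = h^R$ (every ring homomorphism kills $0$), and that $\mD(f) \cong h^{R_f}$. By definition $\overline{\mD(f)}$ is the intersection of all closed subfunctors of $h^R$ containing $\mD(f)$, so it suffices to show that the only such closed subfunctor is $h^R$ itself; equivalently, that whenever $\fr \subseteq R$ is an ideal with $\mD(f) \subseteq \mV(\fr)$, one must have $\fr = 0$.

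To prove this, I would evaluate the hypothetical inclusion $\mD(f) \subseteq \mV(\fr)$ on the $k$-algebra $R_f$ (which is a $k$-algebra via the composite $k \to R \to R_f$) at the localization homomorphism $\loc_f : R \to R_f$. Since $\loc_f(f) = f/1$ is a unit in $R_f$, we have $\loc_f \in \mD(f)(R_f)$, hence $\loc_f \in \mV(\fr)(R_f)$, which by definition of $\mV(\fr)$ means $\loc_f(\fr) = 0$ in $R_f$.

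Finally I would unwind this vanishing: for every $a \in \fr$, the condition $a/1 = 0$ in $R_f$ gives some $n \geq 0$ with $f^n a = 0$ in $R$. This is where the hypothesis that $R$ is a domain is genuinely used: since $f \neq 0$ we have $f^n \neq 0$, and cancellation in the domain $R$ yields $a = 0$. As $a \in \fr$ was arbitrary, $\fr = 0$, so $\mV(\fr) = \mV(0) = h^R$. Thus the only closed subfunctor of $h^R$ containing $\mD(f)$ is $h^R$, and therefore $\overline{\mD(f)} = h^R$.

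There is no substantial obstacle in this argument; the only slightly delicate points are invoking Proposition \ref{prop: properties of closed subfunctors}(1) to know that closed subfunctors of $h^R$ are exactly the $\mV(\fr)$, and noting that a module-theoretic vanishing over a domain forces the ideal to be trivial — the single place the ``domain'' hypothesis enters. One should also record that $R_f$ carries its natural $k$-algebra structure so that $\loc_f$ is a morphism in $\bAlg_k$ and the evaluations above make sense.
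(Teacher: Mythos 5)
Your proof is correct and follows essentially the same route as the paper: evaluate the hypothetical inclusion $\mD(f) \subseteq \mV(\fr)$ at the localization map $\loc_f \in \mD(f)(R_f)$, deduce $\loc_f(\fr) = 0$, and use the domain hypothesis to conclude $\fr = 0$. The only difference is that you make explicit the appeal to Proposition \ref{prop: properties of closed subfunctors}(1), which the paper leaves implicit.
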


\begin{proof}
Suppose that $\mV(\fa)$ is a closed subfunctor containing $\mD(f)$. Then in particular we have that $\mD(f)(R_f) \subseteq \mV(\fa)(R_f)$. However the localization map $\loc_f : R \to R_f$ which sends $r \mapsto r/f$ certainly satisfies $\loc_f(f) \in R_f^\times$, therefore $\loc_f \in \mD(f)(R_f)$, and hence $\loc_f \in \mV(\fa)(R_f)$, so by definition we have $\loc_f(\fa) = 0$. This means that for any $a \in \fa$, we have $a / 1 = 0$ in $R_f$, which implies that $f^n a = 0$ in $R$ for some $n \in \zz_{\geq 0}$. Since $R$ is a domain and $f \neq 0$, this implies $a = 0$, hence $\fa = 0$ and $\mV(\fa) = h^R$. Since $\mV(\fa)$ was an arbitrary closed subfunctor containing $\mD(f)$, it follows that $\overline{\mD(f)} = h^R$. 

\end{proof}

\begin{corollary}\label{cor: containment of the closure of schubert cells}
Let $k$ be a \textbf{domain} and let $\sigma \in \Perm(\nn)$. Then $\overline{B \tau B} \subseteq \overline{B\sigma B}$ for all $\tau < \sigma$. 
\end{corollary}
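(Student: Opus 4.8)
The plan is to obtain this as an immediate consequence of the preceding proposition once it is fed the right closed subfunctor. Fix $\tau , \sigma \in \Perm(\nn)$ with $\tau < \sigma$ and set $\mZ = \overline{B \sigma B}$. I would first record the three properties of $\mZ$ that the preceding proposition requires: (i) $\mZ$ is a closed subfunctor of $\GL_\nn$ and $\mZ \subseteq \GL_{X , Y}$ for suitable $X , Y : \nn \to \mP_{\fin}(\nn)$, which is the three-part corollary above; (ii) $\mZ$ is stable under left and right multiplication by $B$, which is the proposition asserting that $\overline{B\sigma B}$ is $B$-bistable; and (iii) $\mZ$ contains $\sigma$, which holds because $\sigma \in (B \sigma B)(k) \subseteq \mZ(k)$ and $\mZ$ is a subfunctor, so $\sigma^R \in \mZ(R)$ for every $k$-algebra $R$.

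Next I would apply the preceding proposition with the roles of its two permutations exchanged: in its notation, take its ``$\tau$'' to be our $\sigma$ and its ``$\sigma$'' to be our $\tau$ (this is legitimate precisely because $\tau < \sigma$). Since $k$ is a domain and $\mZ$ is closed, $B$-bistable, contained in $\GL_{X , Y}$, and contains $\sigma$, the proposition yields $\tau \in \mZ = \overline{B\sigma B}$.

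Finally, using $B$-bistability once more, from $\tau^R \in \overline{B\sigma B}(R)$ for all $R$ we get $B(R)\,\tau\,B(R) \subseteq \overline{B\sigma B}(R)$, i.e. $B \tau B \subseteq \overline{B\sigma B}$; since $\overline{B\sigma B}$ is a closed subfunctor, passing to closures gives $\overline{B \tau B} \subseteq \overline{B\sigma B}$, as desired.

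I expect essentially no obstacle here beyond bookkeeping: all the real content lives in the preceding proposition (the elementary-matrix factorization $\sigma L_{q,p}(r) = b \tau c$, the density lemma $\overline{\mD(f)} = h^R$ over a domain, and the convergence argument from the section on the infinite Bruhat order). The one thing to be careful about is the interchange of the two permutations' roles relative to the proposition's notation, together with the implicit reading of ``$\mZ$ contains a permutation $\pi$'' as ``$\pi^R \in \mZ(R)$ for all $k$-algebras $R$'', which for a subfunctor is equivalent to $\pi \in \mZ(k)$.
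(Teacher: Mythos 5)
Your proof is correct and is exactly the argument the paper intends: the corollary is stated without proof as an immediate consequence of the preceding proposition, applied (with the two permutations' roles swapped, as you note) to $\mZ = \overline{B\sigma B}$, whose closedness, containment in some $\GL_{X,Y}$, and $B$-bistability are supplied by the preceding results, followed by the same bistability-plus-closure step to pass from $\tau \in \overline{B\sigma B}$ to $\overline{B\tau B} \subseteq \overline{B\sigma B}$.
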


Given finite subsets $P , Q \subseteq \nn$ with $|P| = |Q|$, we define a natural transformation $\det_{P , Q} : \GL_\nn \to \aaa^1$ by sending $g \in \GL_\nn(R) \mapsto \det g_{P , Q} \in R$. Furthermore, we define a subfunctor $Y_\sigma$ of $\GL_\nn$ by letting $Y_\sigma(R)$ be the set of all $g \in \GL_\nn(R)$ such that for all $p , q \in \nn$, all $\ell > r_{\geq p , q}(\sigma)$ and all $P \subseteq [1 , p], Q \subseteq [q , \infty)$ with $|P| = |Q| = \ell$, we have $\text{det}_{P , Q} (g) = 0$. It follows from Proposition \ref{prop: properties of closed subfunctors} that $Y_\sigma$ is a closed subfunctor.

\begin{proposition}
For any $\sigma \in \Perm(\nn)$, $Y_\sigma$ is an fppf sheaf.
\end{proposition}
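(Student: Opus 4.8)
The plan is to reduce the statement to the two facts already established in the excerpt: that $\GL_\nn$ is an fppf sheaf, and that closed subfunctors of fppf sheaves are again fppf sheaves (Corollary \ref{cor: closed subfunctors of sheaves are sheaves}). So the only real work is to check carefully that $Y_\sigma$ is indeed a \emph{closed} subfunctor of $\GL_\nn$.

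First I would spell out why each condition in the definition of $Y_\sigma$ is cut out by a natural transformation to $\aaa^1$. For finite subsets $P,Q \subseteq \nn$ with $|P| = |Q|$, the entrywise projections $g \mapsto g_{i,j}$ are natural transformations $\GL_\nn \to \aaa^1$, and since $\sO(\GL_\nn) = \Nat(\GL_\nn, \aaa^1)$ is a ring (with componentwise operations), the polynomial expression $g \mapsto \det g_{P,Q}$ defines a natural transformation $\det_{P,Q} : \GL_\nn \to \aaa^1$, exactly as in the definition of $Y_\sigma$ given just above the statement. Then
\[
Y_\sigma = \bigcap \, \det_{P,Q}^{-1}(\aaa^0),
\]
the intersection ranging over all $p,q \in \nn$, all $\ell > r_{\geq p,q}(\sigma)$, and all $P \subseteq [1,p]$, $Q \subseteq [q,\infty)$ with $|P| = |Q| = \ell$. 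By Proposition \ref{prop: properties of closed subfunctors}(5) (applied to the family $(\det_{P,Q})$ of natural transformations $\GL_\nn \to \aaa^1$), this common zero locus is a closed subfunctor of $\GL_\nn$.

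Next I would recall that $\GL_\nn$ is an fppf sheaf: by the Corollary following Proposition \ref{prop: closed subfunctor version of GL is isomorphic to GL}, $\GL_\nn$ is a filtered colimit of the closed representable subfunctors $\GL_{X,Y}$, hence an ind-affine ind-scheme, and every ind-scheme is an fppf sheaf by Proposition \ref{prop: ind-schemes are fppf sheaves}. Finally, applying Corollary \ref{cor: closed subfunctors of sheaves are sheaves} with $\mY = \GL_\nn$ and $\mX = Y_\sigma$ gives that $Y_\sigma$ is an fppf sheaf, as claimed.

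The argument is essentially formal; the only point requiring genuine attention — and the place where one must be slightly careful rather than facing a real obstacle — is the bookkeeping of the first step, namely confirming that the (infinitely many) minor-vanishing conditions really do assemble into an intersection of preimages of $\aaa^0$ under bona fide natural transformations, so that the closed-subfunctor criterion of Proposition \ref{prop: properties of closed subfunctors} applies. Once that is in place, nothing else is needed.
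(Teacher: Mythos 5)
Your proof is correct, and it takes a genuinely different and shorter route than the paper's. You reduce everything to formal machinery: $Y_\sigma = \bigcap \det_{P,Q}^{-1}(\aaa^0)$ is closed in $\GL_\nn$ by Proposition \ref{prop: properties of closed subfunctors}(5) (the paper itself already records this closedness immediately after defining $Y_\sigma$, and defines the $\det_{P,Q}$ as natural transformations, so no extra bookkeeping is really needed), $\GL_\nn$ is an fppf sheaf since it is an ind-affine ind-scheme (Proposition \ref{prop: ind-schemes are fppf sheaves}), and Corollary \ref{cor: closed subfunctors of sheaves are sheaves} then gives the conclusion at once. The paper instead verifies the sheaf axioms by hand via Theorem \ref{thm: fppf sheaf characterization}: for (FP1) it checks that the determinant conditions are detected componentwise over a finite product $R_1 \times \cdots \times R_n$, and for (FP2) it uses Proposition \ref{prop: characterization of equalizers in set}, deducing (EQ1) from the sheaf property of $\GL_\nn$ and (EQ2) by descending each vanishing minor along a faithfully flat $\varphi\colon R \to S$ using injectivity of $\varphi$ (Proposition \ref{prop: equivalent characterizations of faithful flatness}). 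Your argument buys brevity and makes the proposition an immediate corollary of results already proved; the paper's direct computation buys explicitness (it shows concretely how the minor conditions behave under products and faithfully flat base change, in the same hands-on style as its sheaf proofs for $\Gr_I$ and $\Fl_I$) and avoids routing through Corollary \ref{cor: closed subfunctors of sheaves are sheaves}, whose proof relies on the sheafification description of Theorem \ref{thm: easy fppf sheafification}. Both are complete proofs from results available in the paper.
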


\begin{proof}
First of all, given any $k$-algebra $R$ and any $R$-algebras $R_1, \ldots, R_n$ we must show that the map $Y_\sigma(R_1 \times \ldots \times R_n) \to Y_\sigma(R_1) \times \ldots \times Y_\sigma(R_n)$ induced by the projection maps $\pi_\ell : R_1 \times \ldots \times R_n \to R_\ell$ is bijective. Injectivity follows from the fact that $Y_\sigma$ is a subfunctor of $\GL_\nn$ which is an fppf-sheaf (or alternatively is an obvious direct calculation). For surjectivity, given $(g_1, \ldots, g_n) \in Y_\sigma(R_1) \times \ldots \times Y_\sigma(R_n)$, since $\GL_\nn$ is an fppf-sheaf, there exists a unique $g \in \GL_\nn( R_1 \times \ldots \times R_n)$, namely given by $g_{i , j} = ( (g_1)_{i, j} , \ldots, (g_n)_{i , j})$, which is sent to $(g_1, \ldots, g_n)$ under the map $Y_\sigma(R_1 \times \ldots \times R_n) \to Y_\sigma(R_1) \times \ldots \times Y_\sigma(R_n)$. But notice that for any finite subsets $P, Q \subseteq \nn$ with $|P| = |Q|$, we have $\text{det}_{P , Q} g = ( \text{det}_{P , Q} g_1, \ldots, \text{det}_{P , Q} g_n)$, so the vanishing of certain minors of $g_1, \ldots, g_n$, implies that the same minors of $g$ vanish as well, whence $g \in Y_\sigma(R_1 \times \ldots \times R_n)$. 

Second of all, given a $k$-algebra $R$ and an fppf-$R$-algebra $\varphi :  R \to S$, we use Proposition \ref{prop: characterization of equalizers in set} to show that the sequence $Y_\sigma(R) \to Y_\sigma(S) \rightrightarrows Y_\sigma(S \otimes_R S)$ is an exact sequence of sets. 

(EQ1). Injectivity of the map $Y_\sigma(R) \to Y_\sigma(S)$ follows from the fact that $\GL_\nn$ is an fppf-sheaf and $Y_\sigma$ is a subfunctor of $\GL_\nn$. 

(EQ2). Suppose that $h \in Y_\sigma(S)$ is sent to the same matrix under the two maps $Y_\sigma(S) \rightrightarrows Y_\sigma(S \otimes_R S)$. Again since $\GL_\nn$ is an fppf-sheaf, there exists $g \in \GL_\nn(R)$ such that $[\varphi(g_{i , j})] = [h_{i , j}]$. if we have that $\text{det}_{P , Q} (h) = 0$, then we compute
\begin{equation*}
\varphi(\text{det}_{P , Q} [g_{i , j}]) = \text{det}_{P , Q} ( [\varphi (g_{i , j} ) ] ) = \text{det}_{P , Q} (h) = 0. 
\end{equation*}

However recall (Proposition \ref{prop: equivalent characterizations of faithful flatness}) that $\varphi$ is injective, therefore $\text{det}_{P , Q} [g_{i , j}] = 0$. Hence $h \in Y_\sigma(S)$ implies $g \in Y_\sigma(R)$. 
\end{proof}

\begin{lemma}\label{lem: Y sigma stable under B mult}
$Y_\sigma$ is stable under left and right multiplication by $B$. 
\end{lemma}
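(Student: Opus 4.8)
The plan is to reduce the statement to one structural observation: because $b\in B(R)$ is upper triangular, the submatrix of $bg$ (resp.\ of $gb$) that controls membership in $Y_\sigma$ is a \emph{one-sided} matrix multiple of the corresponding submatrix of $g$. The claim then follows from the standard fact that passing to a one-sided matrix multiple can only shrink the ideal generated by the $\ell\times\ell$ minors.

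First I would restate the definition of $Y_\sigma$ in terms of the ideals $\fd_\ell$: for a $k$-algebra $R$ and $g\in\GL_\nn(R)$, one has $g\in Y_\sigma(R)$ if and only if $\fd_\ell\bigl(g_{[p,\infty),[q]}\bigr)=0$ for all $p,q\in\nn$ and all $\ell>r_{\geq p,q}(\sigma)$, where $g_{[p,\infty),[q]}$ is the submatrix of $g$ on rows $[p,\infty)$ and columns $[q]=[1,q]$, and $r_{\geq p,q}(\sigma)=r_{[p,\infty),[q]}(\sigma)$ is the rank of the corresponding submatrix of the permutation matrix (the $\ell\times\ell$ minors of $g_{[p,\infty),[q]}$ being exactly the $\det_{P,Q}(g)$ with $P\subseteq[p,\infty)$, $Q\subseteq[q]$, $|P|=|Q|=\ell$). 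Now fix $b\in B(R)$. Using $b_{i,n}=0$ for $i>n$ and $b_{n,j}=0$ for $n>j$, a direct check gives, for all $i\geq p$ and $j\leq q$,
\[
(bg)_{i,j}=\sum_{n\geq p} b_{i,n}\,g_{n,j}=\bigl(b_{[p,\infty),[p,\infty)}\,g_{[p,\infty),[q]}\bigr)_{i,j},\qquad
(gb)_{i,j}=\sum_{n\leq q} g_{i,n}\,b_{n,j}=\bigl(g_{[p,\infty),[q]}\,b_{[q],[q]}\bigr)_{i,j},
\]
where the displayed products make sense because $g$ is column finite, so $g_{[p,\infty),[q]}$ has finitely supported columns. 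In other words $(bg)_{[p,\infty),[q]}$ is a left multiple, and $(gb)_{[p,\infty),[q]}$ a right multiple, of $g_{[p,\infty),[q]}$. Here is where upper-triangularity of $b$ enters: the row range $[p,\infty)$ is an up-set (stable under the rows that left multiplication by $b$ feeds into a given row) and the column range $[q]$ is a down-set (stable under the columns that right multiplication feeds in).

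The last ingredient is Cauchy--Binet: if $A=CB$ is a well-defined product then every $\ell\times\ell$ minor of $A$ is an $R$-linear combination of $\ell\times\ell$ minors of $B$, hence $\fd_\ell(A)\subseteq\fd_\ell(B)$, and symmetrically for $A=BC$; in our case the relevant expansions are finite sums because $g_{[p,\infty),[q]}$ has only $q$ columns, each finitely supported. Combining this with the two identities yields $\fd_\ell\bigl((bg)_{[p,\infty),[q]}\bigr)\subseteq\fd_\ell\bigl(g_{[p,\infty),[q]}\bigr)$ and $\fd_\ell\bigl((gb)_{[p,\infty),[q]}\bigr)\subseteq\fd_\ell\bigl(g_{[p,\infty),[q]}\bigr)$. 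Thus if $g\in Y_\sigma(R)$, then for every $p,q$ and every $\ell>r_{\geq p,q}(\sigma)$ the ideal on the right vanishes, so the one on the left does too; since also $bg,gb\in\GL_\nn(R)$, we conclude $bg,gb\in Y_\sigma(R)$. As $R$ and $b$ were arbitrary, this is exactly the assertion that the subfunctor $Y_\sigma$ is stable under left and right multiplication by $B$.

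I do not anticipate a genuine difficulty here. The only steps requiring (brief) care are the two block identities in the middle paragraph — verifying that they hold and that the semi-infinite sums are meaningful, which is precisely where upper-triangularity of $b$ and column-finiteness of $g$ are used — together with routine index bookkeeping; the behaviour of minors (equivalently, of the ideals $\fd_\ell$) under matrix multiplication is entirely standard.
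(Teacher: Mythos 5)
Your proof is correct, and its core structural step is the same as the paper's: using upper-triangularity of $b$ to see that the lower-left block $(bgc)_{[p,\infty),[q]}$ is obtained from $g_{[p,\infty),[q]}$ by multiplying on the left by the block $b_{[p,\infty),[p,\infty)}$ and on the right by $c_{[q],[q]}$ (the paper phrases this as $\pi_p\circ bgc\circ\iota_q = b'\,(\pi_p\circ g\circ\iota_q)\,c'$ with $\pi_p,\iota_q$ the evident projection and inclusion). Where you diverge is the finishing step. The paper notes that $b'$ and $c'$ are \emph{invertible} upper triangular matrices (via Proposition \ref{prop: invertible upper triangulars}) and then applies the exterior-power functor together with Proposition \ref{prop: minors vs wedge} to conclude that all $\ell\times\ell$ minors of $\pi_p h\iota_q$ vanish iff those of $\pi_p g\iota_q$ do. You instead invoke Cauchy--Binet to get the one-sided containment $\fd_\ell\bigl((bg)_{[p,\infty),[q]}\bigr)\subseteq\fd_\ell\bigl(g_{[p,\infty),[q]}\bigr)$ (and its mirror for $gb$), which is all that is needed and is slightly more economical: it avoids exterior powers and does not use invertibility of the triangular blocks at all, only their triangular shape. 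You also correctly identify and dispose of the one point requiring care in the infinite setting, namely that the Cauchy--Binet expansion over the infinite inner index set is a finite sum because the columns of $g_{[p,\infty),[q]}$ are finitely supported; this plays the role that finiteness of the source $R^q$ plays in the paper's wedge argument. One small remark: the definition of $Y_\sigma$ as printed in the paper quantifies over $P\subseteq[1,p]$, $Q\subseteq[q,\infty)$, whereas both the paper's proof of this lemma and your argument use the lower-left block (rows $[p,\infty)$, columns $[q]$), consistent with $r_{\geq p,q}(\sigma)$; your reading matches the intended definition, so no harm is done, but it is worth being aware of the discrepancy in the printed statement.
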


\begin{proof}
Let $R$ be a $k$-algebra, $g \in Y_\sigma(R)$ and $b , c \in B(R)$. We wish to show that $h := b g c \in Y_\sigma$. Given $p , q \in \nn$, define $\iota_q : \Span(e_i \mid i \leq q) \to R^{\oplus \nn}$ to be the inclusion map, and $\pi_p : R^{\oplus \nn} \to \Span(e_i \mid i \geq p)$ the projection map associated to the direct sum decomposition $R^{\oplus \nn} =  \Span(e_i \mid i < p) \oplus \Span(e_i \mid i \geq p)$. Then one can see that the matrix of the $R$-linear map $\pi_p \circ h \circ \iota_q : \Span(e_i \mid i \leq q) \to \Span(e_i \mid i \geq p)$ with respect to the standard bases is the lower left submatrix $h_{\geq p , q}$ of $h$. Because $b$ and $c$ are upper triangular, we have that $\pi_p \circ b g c \circ \iota_q = b' \pi_p \circ g \circ \iota_q c'$ where $b'$ and $c'$ are also upper triangular, and indeed submatrices of $b$ and $c$ respectively.

This implies that the diagonal entries of $b'$ and $c'$ are units, so by Proposition \ref{prop: invertible upper triangulars}, they are both isomorphisms. Now let $\ell > r_{\geq p , q}(\sigma)$. Then by Proposition \ref{prop: minors vs wedge} all $\ell \times \ell$ minors of $\pi_p h  \iota_q$ vanish if and only if $\bigwedge^\ell( \pi_p h \iota_q ) = 0$. However we compute 
\begin{equation*}
\bigwedge^\ell( \pi_p  h  \iota_q ) = \bigwedge^\ell( \pi_p b g c \iota_q ) = \bigwedge^\ell( b' \pi_p g \iota_q c' ) \overset{(1)}{=} \bigwedge^\ell( b' ) \bigwedge^\ell ( \pi_p g \iota_q) \bigwedge^\ell(c'). 
\end{equation*}
Here (1) follows because $\bigwedge^\ell : \bMod_R \to \bMod_R$ is a functor, as does the fact that since $b'$ and $c'$ are isomorphisms, so are $\bigwedge^\ell( b' )$ and $\bigwedge^\ell( c' )$. Therefore $\bigwedge^\ell( b' ) \bigwedge^\ell ( \pi_p g \iota_q) \bigwedge^\ell(c') = 0$ if and only if $\bigwedge^\ell ( \pi_p g \iota_q) = 0$, which holds if and only if all $\ell \times \ell$ minors of $\pi_p g \iota_q$ vanish, which holds by definition of $\ell$. Therefore indeed all $\ell \times \ell$ minors of $\pi_p h  \iota_q$ vanish, so $h \in Y_\sigma$, as desired. 
\end{proof}

\begin{proposition}\label{prop: closed version of a schubert cell contains closure of the schubert cell}
Let $k$ be a \textbf{domain}. For any $\sigma , \tau \in \Perm(\nn)$ we have that $B \tau B \subseteq Y_\sigma$ if and only if $\tau \leq \sigma$. 
\end{proposition}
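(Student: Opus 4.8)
The plan is to first pin down exactly which permutation matrices $\tau$ lie in $Y_\sigma(k)$, show this coincides with $\tau \le \sigma$ using the rank criterion for the Bruhat order, and then promote the statement from $\tau$ to all of $B\tau B$ via the $B$-stability of $Y_\sigma$.

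The main step is a rank translation. For any permutation matrix $\pi$ and any $i, j$, each of the first $j$ columns has its unique nonzero entry either in a row $\le i$ or in a row $\ge i+1$, so $r_{i,j}(\pi) + r_{\ge i+1, j}(\pi) = j$, and $r_{\ge 1, q}(\pi) = q$. Feeding this into Proposition~\ref{prop: region criterion for bruhat order} turns the criterion ``$r_{i,j}(\tau) \ge r_{i,j}(\sigma)$ for all $i,j$'' into the equivalent statement that $r_{\ge p, q}(\tau) \le r_{\ge p, q}(\sigma)$ for all $p, q \in \nn$; thus $\tau \le \sigma$ if and only if this family of inequalities holds. Separately, the submatrix $\tau_{\ge p, q}$ (rows $\ge p$, columns $\le q$) of the permutation matrix $\tau$ is a partial permutation matrix: it has exactly $r_{\ge p, q}(\tau)$ entries equal to $1$, all in distinct rows and columns, and all other entries $0$. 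Hence an $\ell \times \ell$ submatrix of $\tau_{\ge p, q}$ has determinant $0$ unless it is itself a permutation matrix, in which case $\ell \le r_{\ge p, q}(\tau)$ and its determinant is $\pm 1$. Since $k$ is a domain, $\pm 1 \ne 0_k$, so over $k$ the largest size of a nonvanishing minor of $\tau_{\ge p, q}$ is exactly $r_{\ge p, q}(\tau)$. Comparing with the defining vanishing condition of $Y_\sigma$ (all $\ell \times \ell$ minors of $g_{\ge p, q}$ vanish once $\ell > r_{\ge p, q}(\sigma)$; cf.\ Lemma~\ref{lem: Y sigma stable under B mult}) we obtain: $\tau \in Y_\sigma(k)$ if and only if $r_{\ge p, q}(\tau) \le r_{\ge p, q}(\sigma)$ for all $p, q$, i.e.\ if and only if $\tau \le \sigma$.

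Given this, both implications are short. If $\tau \le \sigma$ then $\tau \in Y_\sigma(k)$; since $Y_\sigma$ is a closed subfunctor of $\GL_\nn$ cut out by the vanishing of the natural transformations $\det_{P,Q}$ (Proposition~\ref{prop: properties of closed subfunctors}), naturality — equivalently, the fact that the entries of $\tau^R$ are the images of those of $\tau$ under the structure map $k \to R$ — gives $\tau^R \in Y_\sigma(R)$ for every $k$-algebra $R$; then Lemma~\ref{lem: Y sigma stable under B mult} shows $b \tau^R c \in Y_\sigma(R)$ for all $b, c \in B(R)$, i.e.\ $B\tau B \subseteq Y_\sigma$. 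Conversely, if $B\tau B \subseteq Y_\sigma$ then evaluating at $R = k$ with $b = c = 1$ yields $\tau \in Y_\sigma(k)$, hence $\tau \le \sigma$.

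I expect no serious obstacle here: the non-formal content — the identity $r_{i,j} + r_{\ge i+1, j} = j$ for permutations and the evaluation of minors of a partial permutation matrix — is elementary, and the substantive inputs (that $Y_\sigma$ is $B$-stable, and the region criterion for the Bruhat order) are already available. The only points requiring care are the bookkeeping of index ranges matching $r_{\ge p, q}$ to the submatrix $\tau_{\ge p, q}$, and noting that the domain hypothesis enters solely through $\pm 1 \ne 0_k$, which is exactly what makes the direction $B\tau B \subseteq Y_\sigma \Rightarrow \tau \le \sigma$ go through.
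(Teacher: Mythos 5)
Your proof is correct, but the forward implication takes a genuinely different route from the paper. The paper derives $\tau\le\sigma \Rightarrow B\tau B\subseteq Y_\sigma$ indirectly: $\sigma\in Y_\sigma$ and Lemma \ref{lem: Y sigma stable under B mult} give $B\sigma B\subseteq Y_\sigma$, closedness of $Y_\sigma$ gives $\overline{B\sigma B}\subseteq Y_\sigma$, and then Corollary \ref{cor: containment of the closure of schubert cells} (which rests on the long elementary-matrix and limit-of-transpositions argument, and is where the domain hypothesis enters on that side) yields $B\tau B\subseteq\overline{B\sigma B}\subseteq Y_\sigma$. You instead verify the defining minor conditions for the permutation matrix $\tau$ directly: the identity $r_{i,j}(\pi)+r_{\ge i+1,j}(\pi)=j$ converts Proposition \ref{prop: region criterion for bruhat order} into the equivalent family $r_{\ge p,q}(\tau)\le r_{\ge p,q}(\sigma)$, and any $\ell\times\ell$ submatrix of the partial permutation matrix $\tau_{\ge p,q}$ with $\ell$ exceeding its number of ones has a zero column, so the relevant minors vanish over every $k$-algebra; Lemma \ref{lem: Y sigma stable under B mult} then upgrades $\tau\in Y_\sigma$ to $B\tau B\subseteq Y_\sigma$. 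This is more elementary and self-contained --- it bypasses the closure machinery entirely --- and it makes visible that the forward direction needs no hypothesis on $k$, the domain assumption being used only for the converse, precisely through $\pm 1\ne 0$; your converse is essentially the paper's (evaluate at a permutation, extract a nonvanishing $\pm 1$ minor, apply the rank criterion). One small point: you read $Y_\sigma$ as imposing vanishing of minors of the lower-left blocks $g_{\ge p,q}$ (rows $\ge p$, columns $\le q$); this matches how the functor is used in Lemma \ref{lem: Y sigma stable under B mult} and in the paper's own proof of the proposition, even though the displayed definition in the text literally takes $P\subseteq[1,p]$ and $Q\subseteq[q,\infty)$, which appears to be a slip in the paper rather than an issue with your argument.
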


\begin{proof}
Suppose that $\tau \leq \sigma$. Since $\sigma \in Y_\sigma$, by Lemma \ref{lem: Y sigma stable under B mult} we have that $B \sigma B \subseteq Y_\sigma$. Then because $Y_\sigma$ is a closed subfunctor, this implies $\overline{B \sigma B} \subseteq Y_\sigma$, and hence by Corollary \ref{cor: containment of the closure of schubert cells}, we obtain $B \tau B \subseteq Y_\sigma$.  

Conversely, suppose that $B \tau B \subseteq Y_\sigma$. Then for any $k$-algebra $R$, we have that $\tau \in Y_\sigma(R)$, which implies that for any $p, q \in \nn$ and any $\ell > r_{\geq p , q}(\sigma)$, we have that all $\ell \times \ell$ minors of $\pi_p  \tau  \iota_q$ vanish. This implies that $r_{\geq p , q}(\tau) \leq r_{\geq p , q}(\sigma)$ and hence $r_{p , q}(\tau) \geq r_{p , q}(\sigma)$. Since $p$ and $q$ were arbitrary, by Proposition \ref{prop: region criterion for bruhat order} it follows that $\tau \leq \sigma$. 
\end{proof}

\begin{proposition}\label{prop: minors vs wedge}
Suppose that $f : R^n \to R^{\oplus \nn}$ is an $R$-linear map. Then all $\ell \times \ell$ minors of $f$ vanish if and only if $\bigwedge^\ell f = 0$. 
\end{proposition}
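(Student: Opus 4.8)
The plan is to relate the $\ell\times\ell$ minors of $f\colon R^n\to R^{\oplus\nn}$ to the matrix entries of the induced map $\bigwedge^\ell f\colon \bigwedge^\ell R^n\to\bigwedge^\ell R^{\oplus\nn}$. The key observation is that both $\bigwedge^\ell R^n$ and $\bigwedge^\ell R^{\oplus\nn}$ are free $R$-modules, with bases given by the wedges of standard basis vectors indexed by $\ell$-element subsets. Concretely, fixing $Q\subseteq\{1,\dots,n\}$ with $|Q|=\ell$ and $P\subseteq\nn$ with $|P|=\ell$, write $e_Q = e_{q_1}\wedge\cdots\wedge e_{q_\ell}$ (with $q_1<\cdots<q_\ell$) and similarly $e_P$. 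Then the standard fact from multilinear algebra is that the coefficient of $e_P$ in the expansion of $(\bigwedge^\ell f)(e_Q)$ with respect to the basis $\{e_P\}$ is exactly the $\ell\times\ell$ minor $\det f_{P,Q}$ of the matrix of $f$. This is precisely the Cauchy–Binet-type expansion of $\bigwedge^\ell$ of a linear map in coordinates.

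First I would set up the bases carefully. Since $R^n$ is free of rank $n$, $\bigwedge^\ell R^n$ is free with basis $\{e_Q : Q\subseteq\{1,\dots,n\},\ |Q|=\ell\}$, a finite set. For the target, $R^{\oplus\nn}$ is free with basis $(e_i : i\in\nn)$, and $\bigwedge^\ell R^{\oplus\nn}$ is free with basis $\{e_P : P\subseteq\nn,\ |P|=\ell\}$ — here one uses that exterior powers commute with direct sums/colimits, so $\bigwedge^\ell(\bigoplus_i Re_i) = \bigoplus_{|P|=\ell}R\,e_P$, and in particular every element of $\bigwedge^\ell R^{\oplus\nn}$ is supported on finitely many $e_P$. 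Next I would compute $(\bigwedge^\ell f)(e_Q)$ by multilinearity: writing $f(e_{q_t}) = \sum_{i} f_{i,q_t}e_i$, expanding the wedge $f(e_{q_1})\wedge\cdots\wedge f(e_{q_\ell})$ and collecting terms, the coefficient of $e_P$ is the signed sum over bijections $P\to Q$, i.e.\ $\det f_{P,Q}$.

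Given that identification, the proposition is nearly immediate: $\bigwedge^\ell f = 0$ as an $R$-linear map iff it kills each basis element $e_Q$, iff for every such $Q$ all coordinates of $(\bigwedge^\ell f)(e_Q)$ vanish, iff $\det f_{P,Q}=0$ for every pair $(P,Q)$ of $\ell$-element subsets of $\nn$ and $\{1,\dots,n\}$ respectively — which is exactly the statement that all $\ell\times\ell$ minors of $f$ vanish. (If $\ell>n$ both sides are trivially true, since there are no $\ell$-element subsets $Q$.) I would present this in the forward direction — minors vanish $\Rightarrow$ $\bigwedge^\ell f=0$ — and the converse, noting both follow from the coordinate formula, so really it is a single biconditional coming from one computation.

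The main obstacle, such as it is, is purely bookkeeping: making the sign conventions in the wedge expansion precise enough to recognize the determinant, and being careful that in the infinite-rank target one genuinely has a basis indexed by \emph{finite} subsets $P$ (so that "all coordinates vanish" is a well-posed condition and matches "all minors vanish"). There is no deep content; the one place to be slightly careful is that $f$ has image in $R^{\oplus\nn}$ rather than $R^\nn$, but since $R^n$ is finitely generated each $f(e_i)$ already has finite support, so this is automatic and the computation takes place inside a finite-rank free submodule of the target containing $\im f$, where the classical statement applies verbatim.
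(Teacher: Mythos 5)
Your proposal is correct and is essentially the paper's own argument: both compute the matrix of $\bigwedge^\ell f$ with respect to the standard bases of $\bigwedge^\ell R^n$ and $\bigwedge^\ell R^{\oplus \nn}$ (indexed by $\ell$-element subsets), identify its entries with the $\ell \times \ell$ minors of $f$, and conclude the biconditional immediately. The extra care you take about the infinite-rank target is a fine addition but does not change the route.
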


\begin{proof}
Denote the standard bases of $R^n$ and $R^{\oplus \nn}$ by $\{ d_i \mid 1 \leq i \leq n \}$ and $\{ e_j \mid j \in \nn \}$ respectively. Then a basis for $\bigwedge^\ell(R^n)$ is $\{ d_{i_1} \wedge \ldots \wedge d_{i_\ell} \mid 1 \leq i_1 < \ldots < i_\ell \leq n \}$ and a basis for $\bigwedge^\ell(R^{\oplus \nn})$ is $\{ e_{j_1} \wedge \ldots \wedge e_{j_\ell} \mid 1 \leq j_1 < \ldots < j_\ell \}$. In particular these exterior powers are both free, so we can compute the matrix of $\bigwedge^\ell f$ with respect to them. The entry of this matrix in the $(j_1, \ldots, j_\ell)  , (i_1, \ldots, i_\ell)$ position is precisely the $\ell \times \ell$ minor $\det_{\{ j_1, \ldots, j_\ell \}, \{ i_1, \ldots, i_\ell \}} f$. It therefore follows that all $\ell \times \ell$ minors of $f$ vanish if and only if $\bigwedge^\ell f = 0$. 
\end{proof}

\begin{proposition}\label{prop: y subfunctor is covered by schubert cells}
Let $k$ be a \textbf{domain}. Let $\sigma \in \Perm(\nn)$. Then $Y_\sigma$ is covered by the subfunctors $B \tau B$ for $\tau \leq \sigma$. 
\end{proposition}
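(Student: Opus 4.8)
The plan is to prove that $Y_\sigma(k)$, and more generally $Y_\sigma$ evaluated at fields, decomposes as a union of double cosets $B\tau B$ with $\tau \le \sigma$. Since ``covered'' means that $Y_\sigma(R) = \bigcup_{\tau \le \sigma} B\tau B(R)$ for all $k$-algebras $R$ which are fields, I would first reduce to the case that $R = K$ is a field which is a $k$-algebra (note $K$ is automatically a domain, but we already assume $k$ is a domain so that Proposition \ref{prop: closed version of a schubert cell contains closure of the schubert cell} applies). One containment is immediate: if $\tau \le \sigma$ then $B\tau B \subseteq Y_\sigma$ by Proposition \ref{prop: closed version of a schubert cell contains closure of the schubert cell}, so $\bigcup_{\tau \le \sigma} B\tau B(K) \subseteq Y_\sigma(K)$. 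The work is in the reverse containment.

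For the reverse containment, suppose $g \in Y_\sigma(K)$ with $K$ a field. Since $g \in \GL_\nn(K)$, by the Functorial General Linear Bruhat Decomposition (Theorem \ref{thm: the functorial general linear bruhat decomposition}), and more concretely by the Bruhat decomposition over a field (Corollary \ref{cor: the general linear bruhat decomposition over a field}), we have $g \in B(K)\tau B(K)$ for a \emph{unique} $\tau \in \Perm(\nn)$. So it remains to show that this $\tau$ satisfies $\tau \le \sigma$. The key point is that the rank functions $r_{\ge p, q}$ are constant on double cosets: for $b, c \in B(K)$ upper triangular invertible, $\pi_p (b\tau c)\iota_q = b' (\pi_p \tau \iota_q) c'$ where $b', c'$ are invertible (this is exactly the computation in the proof of Lemma \ref{lem: Y sigma stable under B mult}), so $r_{\ge p, q}(g) = r_{\ge p, q}(b\tau c) = \operatorname{rank}(\pi_p \tau \iota_q) = r_{\ge p, q}(\tau)$ for all $p, q$. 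Now $g \in Y_\sigma(K)$ means precisely that for all $p, q$ and all $\ell > r_{\ge p, q}(\sigma)$, every $\ell \times \ell$ minor of $g_{\ge p, q}$ vanishes, i.e. $r_{\ge p, q}(g) \le r_{\ge p, q}(\sigma)$. Combining, $r_{\ge p, q}(\tau) \le r_{\ge p, q}(\sigma)$ for all $p, q$, which by the relation $r_{p,q} + r_{\ge p+1, q} = $ (total number of ones in the first $q$ columns restricted appropriately) translates into $r_{p, q}(\tau) \ge r_{p, q}(\sigma)$ for all $p, q$; one should double-check the exact bookkeeping here since $r_{\ge p, q}$ uses the index set $[p, \infty)$ rather than $[p+1, \infty)$, but in any case for a permutation $r_{p,q}(\pi) + r_{\ge p+1, q}(\pi) = q$ whenever $q$ is fixed, giving the translation. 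Then by the region criterion Proposition \ref{prop: region criterion for bruhat order}, $\tau \le \sigma$, as desired.

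The main obstacle I anticipate is getting the index conventions exactly right in converting between the ``upper-left rank'' functions $r_{p,q}$ used in Proposition \ref{prop: region criterion for bruhat order} and the ``lower-left rank'' functions $r_{\ge p, q}$ used in the definition of $Y_\sigma$; there is a genuine risk of off-by-one errors between $[p, \infty)$ and $(p, \infty)$, and one must verify that for a permutation matrix $r_{p, q}(\pi) = q - r_{\ge p+1, q}(\pi)$ (every one of the first $q$ columns has its single $1$ either in rows $\le p$ or in rows $\ge p+1$). Once that identity is nailed down the argument is purely formal. A secondary minor point is to confirm that the uniqueness of $\tau$ in the Bruhat decomposition over $K$ combined with the double-coset invariance of $r_{\ge p, q}$ is all that is needed — we do not need $g$ itself but only its coset representative $\tau$, and the rank conditions defining membership in $Y_\sigma$ are coset invariant, so there is no subtlety there.

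Concretely, the proof I would write is: Let $K$ be a $k$-algebra which is a field. By Proposition \ref{prop: closed version of a schubert cell contains closure of the schubert cell}, $B\tau B \subseteq Y_\sigma$ for each $\tau \le \sigma$, giving the inclusion $\bigcup_{\tau \le \sigma} B\tau B(K) \subseteq Y_\sigma(K)$. Conversely let $g \in Y_\sigma(K) \subseteq \GL_\nn(K)$. By Corollary \ref{cor: the general linear bruhat decomposition over a field} there is a unique $\tau \in \Perm(\nn)$ with $g = b\tau c$ for some $b, c \in B(K)$. For any $p, q \in \nn$, writing $\iota_q : \Span(e_i \mid i \le q) \to K^{\oplus \nn}$ and $\pi_p : K^{\oplus \nn} \to \Span(e_i \mid i \ge p)$, the argument in the proof of Lemma \ref{lem: Y sigma stable under B mult} shows $\pi_p g \iota_q = b'(\pi_p \tau \iota_q)c'$ with $b', c'$ invertible, so $r_{\ge p, q}(g) = \operatorname{rank}(\pi_p \tau \iota_q) = r_{\ge p, q}(\tau)$. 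On the other hand $g \in Y_\sigma(K)$ forces $r_{\ge p, q}(g) \le r_{\ge p, q}(\sigma)$ for all $p, q$ (else some nonvanishing minor would contradict membership), hence $r_{\ge p, q}(\tau) \le r_{\ge p, q}(\sigma)$. Since for any permutation $\rho$ and any $q$ we have $r_{p, q}(\rho) + r_{\ge p+1, q}(\rho) = q$, it follows that $r_{p, q}(\tau) \ge r_{p, q}(\sigma)$ for all $p, q$, so $\tau \le \sigma$ by Proposition \ref{prop: region criterion for bruhat order}. Thus $g \in B\tau B(K)$ with $\tau \le \sigma$, completing the proof.
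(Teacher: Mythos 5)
Your proposal is correct and follows essentially the same route as the paper: one inclusion from Proposition \ref{prop: closed version of a schubert cell contains closure of the schubert cell}, then over a field write $g = b\tau c$ via the Bruhat decomposition and use the $B$-invariance computation from Lemma \ref{lem: Y sigma stable under B mult} together with the rank/region-criterion argument (exactly the converse direction of Proposition \ref{prop: closed version of a schubert cell contains closure of the schubert cell}) to conclude $\tau \leq \sigma$. The only difference is cosmetic: you inline the $r_{\geq p,q}$ versus $r_{p,q}$ bookkeeping that the paper delegates to the cited proof, and your identity $r_{p,q}(\rho) + r_{\geq p+1,q}(\rho) = q$ handles the off-by-one correctly.
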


\begin{proof}
We have just shown (Proposition \ref{prop: closed version of a schubert cell contains closure of the schubert cell}) that $B \tau B \subseteq Y_\sigma$ for all $\tau \leq \sigma$. Now suppose that $A$ is a $k$-algebra which is a field and let $g \in Y_\sigma(A)$. Since $A$ is a field, by the Bruhat decomposition we have that $g = b \tau c$ for $b , c \in B(A)$ and $\tau \in \Perm(\nn)$. By Lemma \ref{lem: Y sigma stable under B mult}, it follows that $\tau \in Y_\sigma(A)$ hence by the proof of Proposition \ref{prop: closed version of a schubert cell contains closure of the schubert cell}, it follows that $\tau \leq \sigma$. Therefore $g \in B \tau B(A)$ as desired. 
\end{proof}

\begin{theorem}
Let $k$ be a \textbf{domain} and let $\sigma \in \Perm(\nn)$. Then we have that $\overline{B \sigma B}$ is covered by the disjoint subfunctors $B \tau B$ for $\tau \leq \sigma$. 
\end{theorem}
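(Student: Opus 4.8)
The plan is to prove the final theorem by sandwiching $\overline{B\sigma B}$ between two subfunctors that we have already shown to have the right covering behavior, namely $Y_\sigma$ and the union of the closures $\overline{B\tau B}$ over $\tau \le \sigma$. First I would establish the inclusion $\overline{B\sigma B} \subseteq Y_\sigma$: by Lemma~\ref{lem: Y sigma stable under B mult} and the fact that $\sigma \in Y_\sigma(k)$ (its lower-left rank submatrices literally have the right ranks), we get $B\sigma B \subseteq Y_\sigma$, and since $Y_\sigma$ is a closed subfunctor this forces $\overline{B\sigma B} \subseteq Y_\sigma$. Next I would use Proposition~\ref{prop: y subfunctor is covered by schubert cells}, which says $Y_\sigma$ is covered by the $B\tau B$ for $\tau \le \sigma$; combined with the previous step this shows that for any $k$-algebra $R$ that is a field, every element of $\overline{B\sigma B}(R) \subseteq Y_\sigma(R)$ lies in some $B\tau B(R)$ with $\tau \le \sigma$. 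That already gives the covering direction (that the $B\tau B$, $\tau\le\sigma$, cover $\overline{B\sigma B}$).

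For the reverse direction — that each $B\tau B$ with $\tau \le \sigma$ is actually contained in $\overline{B\sigma B}$, so the cover is genuine and not merely a cover of a larger object — I would invoke Corollary~\ref{cor: containment of the closure of schubert cells}, which states precisely that $\overline{B\tau B} \subseteq \overline{B\sigma B}$ for all $\tau < \sigma$ when $k$ is a domain (and trivially $\overline{B\sigma B}\subseteq\overline{B\sigma B}$ for $\tau=\sigma$). In particular $B\tau B \subseteq \overline{B\tau B} \subseteq \overline{B\sigma B}$, so each cell $B\tau B$ with $\tau\le\sigma$ really is a subfunctor of $\overline{B\sigma B}$. Disjointness of the family $(B\tau B \mid \tau \le \sigma)$ is immediate from Corollary~\ref{cor: the general linear bruhat decomposition over a ring}, which exhibits the $B\tau B$ as distinct pieces of a disjoint union for every ring $R$ (restricted appropriately), so no additional argument is needed there.

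Putting these together: the subfunctors $B\tau B$ for $\tau \le \sigma$ are pairwise disjoint, each is contained in $\overline{B\sigma B}$, and for fields $R$ they jointly exhaust $\overline{B\sigma B}(R)$ — which is exactly the assertion that $\overline{B\sigma B}$ is covered by the disjoint subfunctors $B\tau B$, $\tau\le\sigma$. I do not anticipate a genuine obstacle here, since all the real work has been front-loaded into Propositions~\ref{prop: closed version of a schubert cell contains closure of the schubert cell} and~\ref{prop: y subfunctor is covered by schubert cells} and Corollary~\ref{cor: containment of the closure of schubert cells}; the only point requiring a little care is making sure the two notions of ``cover'' agree — namely that the covering statement for $Y_\sigma$ transports to $\overline{B\sigma B}$ via the inclusion $\overline{B\sigma B}\subseteq Y_\sigma$ together with the reverse containments $B\tau B \subseteq \overline{B\sigma B}$. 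Concretely I would note that for a field $R$, the chain $B\tau B(R) \subseteq \overline{B\sigma B}(R) \subseteq Y_\sigma(R) = \bigcup_{\tau\le\sigma} B\tau B(R)$ pins $\overline{B\sigma B}(R)$ exactly to $\bigcup_{\tau\le\sigma} B\tau B(R)$, completing the proof.
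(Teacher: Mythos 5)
Your proposal is correct and assembles the theorem in essentially the same way the paper intends: the containment $\overline{B\sigma B}\subseteq Y_\sigma$ (via $B$-stability and closedness of $Y_\sigma$), the field-point covering of $Y_\sigma$ by the cells $B\tau B$ with $\tau\le\sigma$, the reverse containments $B\tau B\subseteq\overline{B\sigma B}$ from Corollary~\ref{cor: containment of the closure of schubert cells}, and disjointness from the ring-level Bruhat decomposition. No gaps; this matches the paper's argument.
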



\subsection{Closure in the Quotient}

We now turn our attention to the closure of the Schubert cells in the quotient $\GL_\nn/B$. Suppose that $k$ is a \textbf{domain}. We call the closure $\overline{\sh(B \sigma B) / B} \subseteq \GL_\nn/B$ a \emph{Schubert subfunctor} (in the finite-dimensional case it is called a \emph{Schubert variety}). Suppose that $\mX$ is a closed subfunctor of $\GL_\nn/B$ containing $\sh(B \sigma B) / B$. Then $\pi^{-1}(\mX)$ is a closed subfunctor of $\GL_\nn$ which contains $B \sigma B$, where $\pi : \GL_\nn \to \GL_\nn/B$ denotes the fppf quotient map. Hence by Corollary \ref{cor: containment of the closure of schubert cells}, if $\tau \leq \sigma$ then we have that $\overline{\sh(B \tau B)} = \overline{B \tau B} \subseteq \pi^{-1}(\mX)$. It therefore follows that $\sh(B \tau B) / B$ is contained in $\sh( \mX)$. But the latter is equal $\mX$ because $\GL_\nn/B$ is an fppf sheaf and closed subfunctors of fppf sheaves are again fppf sheaves (Corollary \ref{cor: closed subfunctors of sheaves are sheaves}). Since $\mX$ was an arbitrary closed subfunctor containing $\sh(B \sigma B) / B$, it follows that if $\tau \leq \sigma$ then $\sh(B \tau B) / B \subseteq \overline{\sh(B \sigma B) / B}$. The proof of the following proposition is trivial. 

\begin{proposition}\label{ref: quotient of y subfunctor is closed}
$Y_\sigma / B$ is a closed subfunctor of $\GL_\nn/B$. 
\end{proposition}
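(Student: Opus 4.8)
The plan is to reduce everything to the fact that $Y_\sigma$ is already known to be closed in $\GL_\nn$, and to transport that closedness across the fppf quotient map $\pi\colon \GL_\nn \to \GL_\nn/B$. I would begin by collecting the inputs: $Y_\sigma$ is a closed subfunctor of $\GL_\nn$ (it is a common zero locus of the natural transformations $\det_{P,Q}$, cf.\ Proposition~\ref{prop: properties of closed subfunctors}), it is an fppf sheaf, and it is stable under right multiplication by $B$ (Lemma~\ref{lem: Y sigma stable under B mult}); since the right $B$-action on $\GL_\nn$ is free, $Y_\sigma/B$ is a well-defined subfunctor of $\GL_\nn/B$, equal to the image of the monomorphism $Y_\sigma/B \hookrightarrow \GL_\nn/B$, which by the discussion preceding Equation~\eqref{eq: image of a subfunctor under the quotient map} coincides with the fppf image $\sh(\pi(Y_\sigma))$.

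The first real step is to verify $\pi^{-1}(Y_\sigma/B) = Y_\sigma$. Unwinding the description $Y_\sigma/B = \sh(\pi(Y_\sigma))$ through Theorem~\ref{thm: easy fppf sheafification}, an element $g\in\GL_\nn(R)$ has $\pi_R(g)\in(Y_\sigma/B)(R)$ if and only if some fppf ring map $\varphi\colon R\to S$ sends $g$ into the $B(S)$-orbit of $Y_\sigma(S)$; by $B(S)$-stability of $Y_\sigma(S)$ together with freeness of the action this happens exactly when $\GL_\nn(\varphi)(g)\in Y_\sigma(S)$, and since $Y_\sigma$ is an fppf sheaf inside the fppf sheaf $\GL_\nn$ this is in turn equivalent to $g\in Y_\sigma(R)$ (the argument is that of the proof of Corollary~\ref{cor: closed subfunctors of sheaves are sheaves}). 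I would also record that $\pi$ lifts fppf-locally: using the isomorphism $\GL_\nn/B\cong\mF_I$ and the orbit formula~\eqref{eq: fppf orbit}, any morphism $f\colon h^R\to\GL_\nn/B$ becomes, after some fppf base change $R\to S$, the image under $\pi$ of the morphism $h^S\to\GL_\nn$ determined by an element $g\in\GL_\nn(S)$.

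To finish, take an arbitrary $f\colon h^R\to\GL_\nn/B$, choose an fppf map $R\to S$ and a lift $\tilde f\colon h^S\to\GL_\nn$ of $f$ restricted along $h^S\to h^R$. Because $Y_\sigma$ is closed, $\tilde f^{-1}(Y_\sigma)=\mV(\fa)$ for an ideal $\fa\subseteq S$ (Proposition~\ref{prop: properties of closed subfunctors}), and since $\tilde f^{-1}(Y_\sigma)=\tilde f^{-1}(\pi^{-1}(Y_\sigma/B))$ this is the pullback to $h^S$ of $f^{-1}(Y_\sigma/B)\subseteq h^R$. As $R\to S$ is faithfully flat, faithfully flat descent of closed subschemes — which rests on exactness of the Amitsur complex, Theorem~\ref{thm: amitsur complex is exact for faithfully flat homomorphisms} — shows that $\fa$ is extended from the ideal $\fr=\{\,r\in R\mid r\otimes 1\in\fa\,\}$ and that $f^{-1}(Y_\sigma/B)=\mV(\fr)$; as $f$ was arbitrary, $Y_\sigma/B$ is closed. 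The only point requiring care is this last descent step (checking that the descent datum carried by $\mV(\fa)$ is the canonical one, then invoking faithfully flat descent); an alternative is to cite \cite{demazure-gabriel1980} for the principle that a subfunctor of a quotient is closed exactly when its preimage under the faithfully flat quotient map is, and apply it to $\pi$ via $\pi^{-1}(Y_\sigma/B)=Y_\sigma$.
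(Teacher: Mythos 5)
Since the paper declares this proof trivial and records no argument, you are not shadowing an existing proof but supplying one, and your route is the natural one: identify $Y_\sigma/B$ with $\sh(\pi(Y_\sigma))$, show $\pi^{-1}(Y_\sigma/B)=Y_\sigma$ using right $B$-stability and the fact that $Y_\sigma$ is an fppf subsheaf of $\GL_\nn$, use that $\pi$ admits fppf-local sections, and descend the ideal cutting out the preimage. This is correct in outline, and the descent-datum verification you flag does reduce to the identity $\delta\circ\varphi=\epsilon\circ\varphi$, after which Lemma \ref{lem: DG Lemma 3.14} (or Amitsur exactness) gives $\fa=\fr S$ with $\fr=\varphi^{-1}(\fa)$, all with tools already in the paper.

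Two points deserve to be spelled out. First, in your computation of $\pi^{-1}(Y_\sigma/B)$, the equality $\pi_S(\GL_\nn(\varphi)(g))=\pi_S(x)$ a priori only gives $\GL_\nn(\varphi)(g)\in xB$ after a further fppf cover; it does hold over $S$ itself, but because $B$ is a closed subfunctor of $\GL_\nn$ (hence an fppf sheaf), so the locally defined element $x^{-1}\GL_\nn(\varphi)(g)$ descends to $B(S)$ --- freeness of the action is not what does the work here, though the point is harmless since your subsequent sheaf argument absorbs a further cover anyway. Second, and more substantively, knowing that the pullback of $\mZ:=f^{-1}(Y_\sigma/B)$ along $h^S\to h^R$ equals $\mV(\fa)$ with $\fa=\fr S$ extended does not by itself yield $\mZ=\mV(\fr)$: a subfunctor of $h^R$ is not determined by its pullback along an fppf cover. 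It is here because $\mZ$ is the fiber product $h^R\times_{\GL_\nn/B}(Y_\sigma/B)$ of fppf sheaves, hence itself an fppf subsheaf of $h^R$; the inclusion $\mV(\fr)\subseteq\mZ$ then follows by the membership-descent pattern used in the proof of Corollary \ref{cor: fppf subfunctor that contains a subfunctor contains its sheafification}, while $\mZ\subseteq\mV(\fr)$ uses only $\fr=\varphi^{-1}(\fa)$ and injectivity of $A\to A\otimes_R S$. With these two clarifications your argument is complete, and your alternative of quoting the quotient criterion from \cite{demazure-gabriel1980} is the cleanest packaging of the same content.
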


It follows from Proposition \ref{ref: quotient of y subfunctor is closed} that $\overline{\sh(B \sigma B) / B} \subseteq Y_\sigma / B$. The proof of the following result is similar to that of Proposition \ref{prop: y subfunctor is covered by schubert cells}. It is an easy application of the Bruhat decomposition. 

\begin{proposition}
Let $k$ be a \textbf{domain}. Let $\sigma \in \Perm(\nn)$. Then $Y_\sigma / B$ is covered by the subfunctors $B \tau B / B$ for $\tau \leq \sigma$. 
\end{proposition}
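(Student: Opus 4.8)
The plan is to transcribe the proof of Proposition~\ref{prop: y subfunctor is covered by schubert cells} into the quotient $\GL_\nn/B$, via the identification of $Y_\sigma/B$ with the fppf image $\sh(\pi(Y_\sigma)) \subseteq \GL_\nn/B$, where $\pi : \GL_\nn \to \GL_\nn/B$ denotes the quotient map. Observe first that this quotient is legitimate: $Y_\sigma$ is an fppf sheaf, it is stable under right multiplication by $B$ by Lemma~\ref{lem: Y sigma stable under B mult}, and the right $B$-action on $\GL_\nn$ is free, so $Y_\sigma/B$ embeds in $\GL_\nn/B$. For the inclusion part of the covering, fix $\tau \leq \sigma$. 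By Proposition~\ref{prop: closed version of a schubert cell contains closure of the schubert cell} we have $B\tau B \subseteq Y_\sigma$, and since $B\tau B$ and $Y_\sigma$ are both right-$B$-stable, pushing this containment through $\pi$ yields $B\tau B/B \subseteq Y_\sigma/B$ as subfunctors of $\GL_\nn/B$.

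For the covering itself, let $A$ be a $k$-algebra which is a field and let $x \in (Y_\sigma/B)(A)$. Over a field every module is free, so by Proposition~\ref{prop: naive orbit of the standard flag} the naive orbit of the standard flag is all of $\Fl_I(A)$; combined with the orbit-stabilizer isomorphism $\GL_\nn/B \cong \mF_I$ and $\Stab_{\GL_\nn}(E) = B$ this gives $(\GL_\nn/B)(A) = \GL_\nn(A)/B(A)$, so we may write $x = g\,B(A)$ for some $g \in \GL_\nn(A)$. Unwinding the description $Y_\sigma/B = \sh(\pi(Y_\sigma))$ via Theorem~\ref{thm: easy fppf sheafification} and using that $Y_\sigma$ is right-$B$-stable, there is an fppf ring homomorphism $\varphi : A \to S$ with $\GL_\nn(\varphi)(g) \in Y_\sigma(S)$. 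Since $Y_\sigma$ is a closed subfunctor of $\GL_\nn$ it is an fppf sheaf, and the argument of Corollary~\ref{cor: closed subfunctors of sheaves are sheaves} descends this to $g \in Y_\sigma(A)$. Now apply the Bruhat decomposition over the field $A$ (Corollary~\ref{cor: the general linear bruhat decomposition over a field}) to write $g = b_1 \tau b_2$ with $b_1, b_2 \in B(A)$ and $\tau \in \Perm(\nn)$. Lemma~\ref{lem: Y sigma stable under B mult} gives $\tau = b_1^{-1} g\, b_2^{-1} \in Y_\sigma(A)$, and then the minors-versus-rank computation in the proof of Proposition~\ref{prop: closed version of a schubert cell contains closure of the schubert cell} shows $r_{\geq p,q}(\tau) \leq r_{\geq p,q}(\sigma)$ for all $p, q \in \nn$, hence $\tau \leq \sigma$ by Proposition~\ref{prop: region criterion for bruhat order}. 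Therefore $g \in B(A)\tau B(A)$ and $x = g\,B(A) \in (B\tau B/B)(A)$, which completes the covering.

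The only genuinely new ingredient beyond transcribing the $Y_\sigma$-case is establishing $(\GL_\nn/B)(A) = \GL_\nn(A)/B(A)$ when $A$ is a field --- i.e.\ that the fppf sheafification adds no new points over fields --- together with the correct unwinding of $Y_\sigma/B$ as $\sh(\pi(Y_\sigma))$ so as to descend from an fppf cover. This bookkeeping around the quotient is the step where I expect to spend the most care; once it is in hand, the remainder is the same minors computation already carried out for $Y_\sigma$, so no further obstacle is anticipated.
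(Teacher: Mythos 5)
Your proof is correct and follows exactly the route the paper intends: the paper omits the argument, remarking only that it is "similar to" the proof that $Y_\sigma$ is covered by the $B\tau B$ and "an easy application of the Bruhat decomposition," which is precisely what you carry out. Your extra bookkeeping --- identifying $Y_\sigma/B$ with $\sh(\pi(Y_\sigma))$, noting that over a field every point of $(\GL_\nn/B)(A)$ is of the form $gB(A)$ via $\mF_I(A)=\Fl_I(A)$, and descending $g$ into $Y_\sigma(A)$ using that $Y_\sigma$ is a closed subfunctor and hence an fppf sheaf --- is sound and just fills in the details the paper leaves implicit.
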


As a consequence of this we obtain our closure result. 

\begin{theorem}
Let $k$ be a \textbf{domain} and let $\sigma \in \Perm(\nn)$. Then we have that the Schubert subfunctor $\overline{\sh(B \sigma B) / B}$ is covered by the disjoint subfunctors $\sh(B \tau B) / B$ for $\tau \leq \sigma$. 
\end{theorem}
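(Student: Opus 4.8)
The plan is to assemble the final theorem from the three ingredients already established for the unquotiented setting, transported through the fppf quotient map $\pi : \GL_\nn \to \GL_\nn / B$. Concretely, I would proceed as follows. First, recall that the discussion immediately preceding Proposition \ref{ref: quotient of y subfunctor is closed} shows that $\sh(B\tau B)/B \subseteq \overline{\sh(B\sigma B)/B}$ for all $\tau \leq \sigma$: given a closed subfunctor $\mX \subseteq \GL_\nn/B$ containing $\sh(B\sigma B)/B$, the inverse image $\pi^{-1}(\mX)$ is a closed subfunctor of $\GL_\nn$ containing $B\sigma B$, hence contains $\overline{B\sigma B} \supseteq \overline{B\tau B} = \overline{\sh(B\tau B)}$ by Corollary \ref{cor: containment of the closure of schubert cells}, and then pushing forward and using that closed subfunctors of the fppf sheaf $\GL_\nn/B$ are fppf sheaves (Corollary \ref{cor: closed subfunctors of sheaves are sheaves}) together with Corollary \ref{cor: fppf subfunctor that contains a subfunctor contains its sheafification} gives $\sh(B\tau B)/B \subseteq \mX$; taking the intersection over all such $\mX$ yields the containment. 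Second, for the reverse containment I would invoke Proposition \ref{ref: quotient of y subfunctor is closed}, which says $Y_\sigma/B$ is a closed subfunctor of $\GL_\nn/B$; since $Y_\sigma$ contains $B\sigma B$ and is stable under right multiplication by $B$ (Lemma \ref{lem: Y sigma stable under B mult}), the image $\sh(B\sigma B)/B$ lands inside $Y_\sigma/B$, and because the latter is closed we get $\overline{\sh(B\sigma B)/B} \subseteq Y_\sigma/B$.

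Third, I would apply the proposition stated just before this theorem: $Y_\sigma/B$ is covered by the subfunctors $B\tau B/B$ for $\tau \leq \sigma$. Combining the chain
\begin{equation*}
\bigcup_{\tau \leq \sigma} \sh(B\tau B)/B \;\subseteq\; \overline{\sh(B\sigma B)/B} \;\subseteq\; Y_\sigma/B
\end{equation*}
with the fact that for a field $A$ one has $Y_\sigma(A)/B(A) = \bigcup_{\tau \leq \sigma} (B\tau B)(A)/B(A) \subseteq \bigcup_{\tau \leq \sigma} (\sh(B\tau B)/B)(A)$ forces equality of all three on field-valued points, which is exactly what ``covered by'' demands. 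Finally, disjointness of the $\sh(B\tau B)/B$ for distinct $\tau$ follows from the disjointness established in the Functorial Bruhat Decomposition (Theorem \ref{thm: the functorial general linear bruhat decomposition}) transported through the identification of $\sh(B\tau B)/B$ with the Schubert cell $X_\tau^\circ$ (Proposition \ref{prop: identification of the schubert cells with subfunctors of the quotient}), since the $X_\tau^\circ$ are pairwise disjoint by the Functorial Bruhat Decomposition for $\Fl_I$.

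I do not expect any genuine obstacle here: every hard step has already been done in the non-quotient setting (the closure containment in Corollary \ref{cor: containment of the closure of schubert cells}, the identification of $\overline{B\sigma B}$ with $Y_\sigma$, and the sheaf properties). The only thing to be careful about is bookkeeping with the fppf sheafification $\sh(-)$ — making sure that $\sh(B\tau B)/B$ really is the subfunctor $X_\tau^\circ$ and that ``quotient by $B$'' commutes with the comparisons as expected, which is handled by Proposition \ref{prop: identification of the schubert cells with subfunctors of the quotient} and Lemma \ref{lem: sheafification of BsigmaB is B stable}. The write-up will therefore be short: invoke Proposition \ref{ref: quotient of y subfunctor is closed} and the covering proposition for the two containments, note the resulting equality on field points, and cite Theorem \ref{thm: the functorial general linear bruhat decomposition} together with Proposition \ref{prop: identification of the schubert cells with subfunctors of the quotient} for disjointness.
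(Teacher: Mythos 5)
Your proposal is correct and follows essentially the same route as the paper: sandwiching $\overline{\sh(B\sigma B)/B}$ between $\bigcup_{\tau\leq\sigma}\sh(B\tau B)/B$ (via pullback along $\pi$ and Corollary \ref{cor: containment of the closure of schubert cells}) and the closed subfunctor $Y_\sigma/B$, then using the covering of $Y_\sigma/B$ on field-valued points and the disjointness from the functorial Bruhat decomposition. The paper leaves exactly these steps implicit ("as a consequence of this we obtain our closure result"), so your write-up simply makes the same argument explicit.
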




\bibliographystyle{alpha}

\bibliography{AG_paper}

\end{document}